\numberwithin{equation}{section}
\newcommand{\cT}{\mathcal T}
\newcommand{\C}{\mathbb{C}}
\newcommand{\R}{\mathbb{R}}
\newcommand{\cO}{\mathcal{O}}
\renewcommand{\d}{ {\delta} }
\newcommand{\tb}{\bullet}
\newcommand{\tw}{\circ}
\theoremstyle{plain}
\newtheorem{theorem}{Theorem}[section]
\newtheorem*{thm*}{Theorem}
\newtheorem{corollary}[theorem]{Corollary}
\newtheorem*{assumption*}{Assumption}
\newtheorem{lemma}[theorem]{Lemma}
\newtheorem{definition}[theorem]{Definition}
\newtheorem{proposition}[theorem]{Proposition}
\theoremstyle{remark}
\newtheorem{remark}[theorem]{Remark}
\newcommand{\old}[1]{}
\def\mnote{\color{blue}\bf}
\renewcommand{\Re}{\operatorname{Re}}
\newcommand{\G}{\mathcal{G}}
\newcommand{\T}{\mathcal{T}}
\newcommand{\eps}{\varepsilon}
\newcommand{\ZZ}{\mathbb{Z}}
\newcommand{\RR}{\mathbb{R}}
\newcommand{\CC}{\mathbb{C}}
\newcommand{\Ordo}{O}
\DeclareMathOperator{\im}{Im}
\DeclareMathOperator{\re}{Re}
\renewcommand{\d}{\,\mathrm{d}}
\renewcommand{\i}{\mathrm{i}}
\newcommand{\e}{\mathrm{e}}
\DeclareMathOperator{\Tr}{Tr}
\DeclareMathOperator{\adj}{adj}
\def\cF{\mathcal{F}}
\title[title]{Perfect t-embeddings of doubly periodic Aztec diamonds}
\author[Tomas Berggren]{Tomas Berggren$^\mathrm{a}$}
\author[Matthew Nicoletti]{Matthew Nicoletti$^\mathrm{b}$}
\author[Marianna Russkikh]{Marianna Russkikh$^\mathrm{c}$}
\thanks{\textsc{${}^\mathrm{A}$  KTH Royal Institute of Technology, Department of Mathematics, Sweden}} 
\thanks{\textsc{${}^\mathrm{B}$  University of California, Berkeley, Department of Statistics, USA}}
\thanks{\textsc{${}^\mathrm{C}$  University of Notre Dame, Department of Mathematics, USA}}
\thanks{\texttt{tobergg@kth.se}, \texttt{mnicoletti@berkeley.edu}, \texttt{mrusskik@nd.edu}}
\begin{document}

\maketitle

\begin{abstract} 
We study the large-scale geometry of t-surfaces -- pairs of perfect t-embeddings and their associated origami maps -- arising from dimer models on Aztec diamonds with periodic edge weights. We prove that these t-surfaces converge to space-like maximal surfaces in the Minkowski space~$\mathbb{R}^{2,2}$. We observe that the frozen and gas regions influence the geometry of the limiting surface in striking ways: all frozen regions collapse to four boundary points, regardless of the number of frozen regions, while each gas region collapses to a distinct light-like cusp in the interior of the surface. In the absence of gas regions, the limiting surface lies entirely within~$\mathbb{R}^{2,1}$; in the general case, however, this is no longer true. 

The limiting surface is sensitive to the detailed structure of the model: both the positions of the cusps, and the placement of the boundary vertices, depend on the precise way the edge weights are distributed on the Aztec diamond. Nevertheless, we show that the global conformal structure remains robust and coincides with the Kenyon--Okounkov conformal structure. We further conjecture that the cusp locations encode the shift in the discrete Gaussian component that appears in the global fluctuations of the dimer model.
\end{abstract}

\tableofcontents


\section{Introduction}
   \begin{figure}
        \subfloat{%
			\includegraphics[angle=-45, width=.45\linewidth]{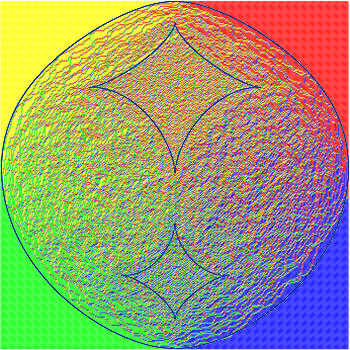}%
        }\hfill
        \subfloat{%
            \includegraphics[width=.45\linewidth]{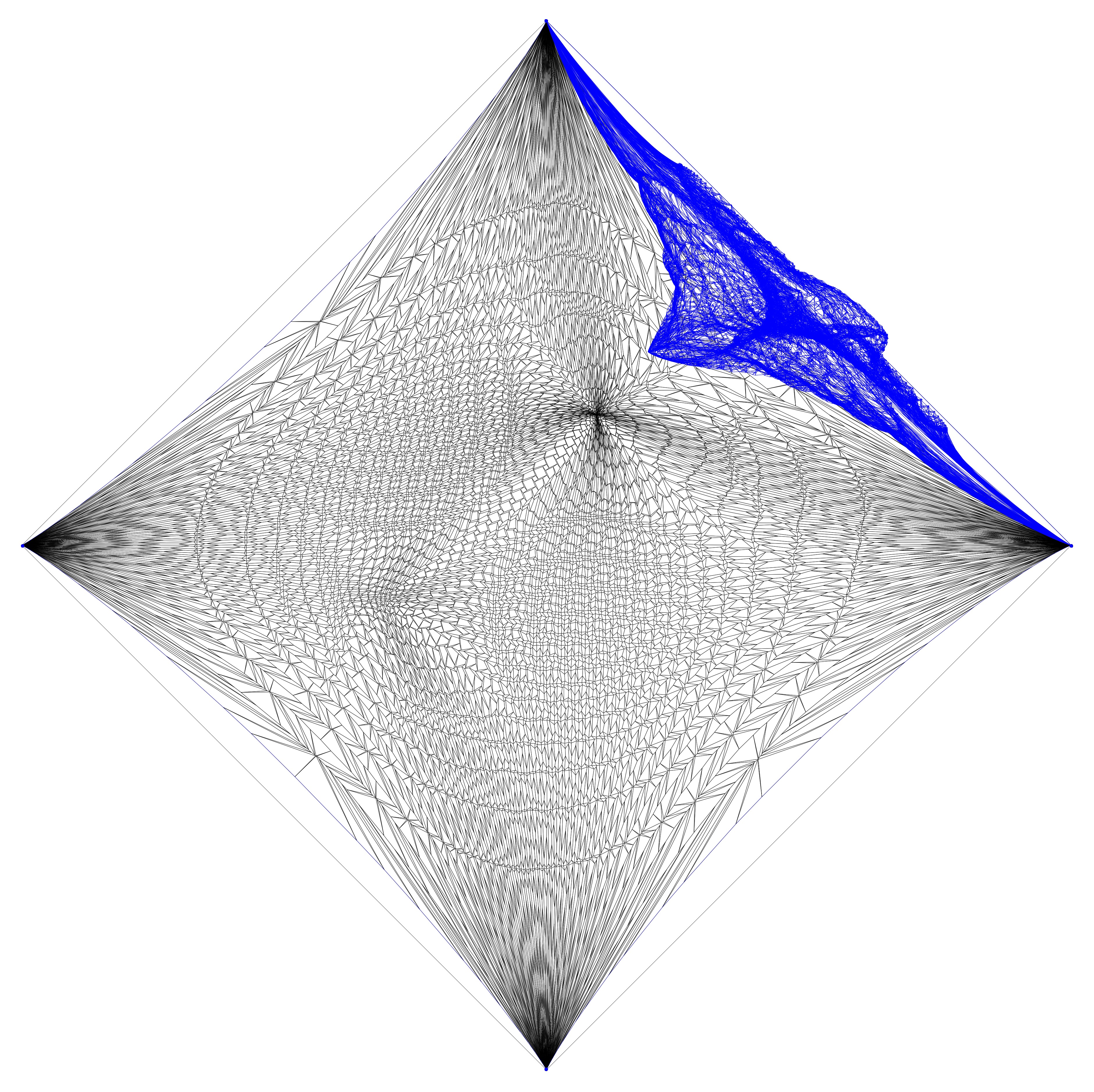}%
        }
        \caption{Left: A typical dimer configuration of a weighted Aztec diamond, with~$(2\times3)$-periodic weights. Right:  A t-embedding (black) and its origami map (blue) of the Aztec diamond with~$(2 \times 3)$-periodic weights.\label{fig:Aztec_2_gaz}}
    \end{figure}
A dimer configuration on a graph is a subset of edges covering every vertex exactly once. Given edge weights, the dimer model is a probability measure on the set of dimer configurations, where each configuration has probability proportional to the product of its edge weights. Asymptotic behavior of the model is often characterized via the \emph{height function} associated to each dimer configuration, first defined by Thurston \cite{Thu90}. The dimer model on weighted planar bipartite graphs is known to exhibit a rich set of behaviors. One of its most striking features is the existence of three distinct phases -- frozen, liquid, and gaseous (also known as frozen, rough, and smooth) -- within a single system. In the frozen phase, the dimer configuration becomes rigid and highly ordered, with no local fluctuations; certain patterns dominate, and entropy is minimal. 
In contrast, the liquid phase exhibits long-range correlations, with the variance of the height fluctuations growing logarithmically. Meanwhile, in the gaseous phase, the height fluctuations have bounded variance.
These phases often coexist within a single large configuration, separated by sharp interfaces called arctic curves, leading to a striking interplay of order and randomness that makes the dimer model a central object in statistical mechanics and combinatorics. 
In recent years, systems exhibiting all three phases simultaneously have attracted considerable attention, as evidenced by the following (non-exhaustive) list of recent works,
\cite{ADPZ20, Ber21, BB23, BB24b, CY14, CJ16, BCJ18, BCJ20, JM23, Bai23a, Bai23b, DK21, Rue22, CD22, BD22, BD19, BB24a, BT24, Kui25, BN25, FSG14, FV24, KP25, Pio24, She25, NHB84, KP24}.

We are interested in exploring discrete geometric structures naturally associated to planar bipartite graphs with edge weights through the framework of \emph{t-surfaces}. 
In the setting of the doubly periodic dimer model, Kenyon and Okounkov~\cite{KO07} conjectured that the conformal structure governing the height fluctuations of the dimer model in the scaling limit can be described in a precise way in terms of the scaling limit of the liquid region and the Harnack curve associated with the underlying weighted lattice.
More recently, a different perspective on this conformal structure has emerged through the notion of t-surfaces. A  t-surface $(\cT,\cO)$ is a discrete space-like surface in Minkowski space $\mathbb{R}^{2,2}$, where $\cT$ is a \emph{perfect t-embedding} of the underlying graph and $\cO$ is its associated \emph{origami map}. The notion of a t-embedding was originally introduced in~\cite{KLRR22} under the name \emph{Coulomb gauges}, and this framework is expected to capture (in the scaling limit) the correct conformal structure underlying the dimer model, see~\cite{CLR1, CLR2}. The problem of finding planar embeddings that reflect the statistical properties of models defined on abstract planar graphs has recently attracted significant attention; see, e.g., \cite{Bas23, CLR1, CLR2, KLRR22, CR24, BNR23, BNR24, BCT22, Aff21, Che18, Che24, ADMPS25, ADMPS24, Lis19, Mah23, KS04, Ken08, Gal24}. 

Recall that a t-embedding is a proper embedding of the \emph{augmented} dual graph in which the dual edge lengths define the same probability measure as the original edge weights, and at each interior vertex, the angles around adjacent white and black faces both sum to~$\pi$. Together with a t-embedding, we consider the corresponding origami map of the dual graph. Informally speaking, to get the origami map out of the t-embedding, one should fold the plane along each edge of the t-embedding. The angle condition guarantees that this folding procedure is consistent. A t-embedding is said to be a perfect t-embedding if its boundary polygon is tangential to a 
circle, and if edges connecting outer vertices with interior ones lie on angle bisectors.

In previously studied settings, such as the uniform dimer measure on the Aztec diamond~\cite{CR24, BNR23}, the tower graph~\cite{BNR23}, and the hexagon~\cite{BNR24}, it is known that: each frozen region collapses to a vertex of the boundary polygon under both the perfect t-embedding and its origami map; the limit of the origami map lies on a straight line; the image of the liquid region under the t-surface converges to a maximal surface in Minkowski space $\mathbb{R}^{2,1}$. An interesting open question is how the emergence of quasi-frozen and gaseous regions affects the limiting t-surfaces. This paper provides the first investigation of this question, focusing on a rich and highly structured example: the doubly periodic dimer model on the Aztec diamond.

It is conjectured that for dimer models exhibiting gaseous phases, the t-surface converges to a maximal surface with cusps, while it is an open question whether the ambient space may enlarge from~$\mathbb{R}^{2,1}$ to~$\mathbb{R}^{2,2}$, that is, whether the origami map is one- or two-dimensional in the limit. Our main theorem confirms this conjecture and settles the open question. 
In addition, it shows that, as in the previously studied examples, the t-embedding identifies the conformal structure governing the scaling limit of height fluctuations: in the case without gaseous regions, the Gaussian Free Field, and in the case with gaseous regions, the mixture of the Gaussian Free Field with a discrete Gaussian. Informally, our main result is the following; see Section \ref{sec:mainresults}, and in particular Theorem \ref{thm:2byl_intro} and Proposition \ref{prop:cusp_intro}, for precise statements.
\begin{thm*}
The scaling limit of the t-surface is a space-like maximal surface with light-like cusps which are in one-to-one correspondence with the gaseous facets. In general, the surface is contained in~$\RR^{2,2}$ and not in~$\RR^{2,1}$. Moreover, the conformal structure on the liquid region defined from the maximal surface coincides with the Kenyon--Okounkov conformal structure.
\end{thm*}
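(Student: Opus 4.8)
The plan is to reduce the theorem to an explicit asymptotic analysis of the perfect t-embedding $\cT$ and its origami map $\cO$, which for $(2\times l)$-periodic weights are encoded by the inverse Kasteleyn operator of the Aztec diamond. First I would establish exact formulas: express the coupling function (entries of the inverse Kasteleyn matrix) as double contour integrals whose integrand is built from the characteristic polynomial $P(z,w)$ of the periodic model, and then write $\cT$ and $\cO$ on the faces as explicit combinations of these. Since a perfect t-embedding is characterized by its angle and tangency conditions, a preliminary step is to verify that this candidate genuinely satisfies those conditions, so that, by the uniqueness of perfect t-embeddings up to the natural symmetries, it is the object we must analyze.

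Second, I would run a steepest-descent analysis of these integrals. For a macroscopic position $x$ in the liquid region the relevant saddle $\zeta(x)$ is the critical point of the action $\log P$ shifted by $x$, and the assignment $x \mapsto \zeta(x)$ is precisely the Kenyon--Okounkov diffeomorphism from the liquid region onto a branch of the upper half of the spectral curve. This provides a natural complex coordinate $\zeta$ on the liquid region. The output is the pointwise limit of the four coordinates $(\cT,\cO)\in\RR^{2,2}$, obtained as (anti)holomorphic primitives of explicit rational differentials on the Harnack curve, that is, a Weierstrass-type representation of the limiting map.

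Third, I would prove maximality, identify the signature, and match the conformal structure. The discrete origami relation $|d\cO|=|d\cT|$, together with the discrete harmonicity satisfied by t-embeddings, passes to the limit and yields both harmonicity of each coordinate and the null (conformality) relation for the $(+,+,-,-)$ metric; these are exactly the Weierstrass equations of a space-like maximal surface. Space-likeness reduces to strict positivity of the induced metric on the interior of the liquid region, which I would read off from the differentials at the saddle. That the limit genuinely leaves $\RR^{2,1}$ in the presence of a gas region I would show by computing the differential of the second origami coordinate and exhibiting that it is not real-collinear with the others exactly when the Newton polygon carries an interior (gas) facet. Finally, the conformal-structure statement is essentially built in: since the isothermal coordinate $\zeta$ is the Kenyon--Okounkov map by construction, the conformal structure it induces coincides with the Kenyon--Okounkov structure.

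Fourth, for the cusps I would analyze the local behavior near a gas facet -- a region of locked (lattice-point) slope that collapses under the scaling limit. Each such facet corresponds to a bounded component of the complement of the amoeba, equivalently to a node of the Harnack curve, near which the conformal coordinate $\zeta$ degenerates. A careful local expansion of the coupling function in this regime (where two saddles merge) should show the limiting surface develops a light-like cusp at the collapsed facet: the tangent plane degenerates and its normal becomes null. The bijection with gas facets follows because the curve carries exactly one such node per facet and the local model is universal. I expect this local analysis at the nodes to be the main obstacle -- establishing the precise light-like cusp asymptotics while simultaneously controlling the steepest-descent estimates uniformly up to the arctic boundary, where saddles collide with branch points, so that the pointwise limits assemble into a single surface with exactly the claimed singularities and metric signature.
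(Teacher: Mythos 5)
Your overall skeleton matches the paper's strategy: Coulomb gauges written via the inverse Kasteleyn matrix, exact double-contour-integral formulas for $\cT$ and $\cO$, steepest descent with the critical point map onto $\mathcal R_0$ realizing the Kenyon--Okounkov structure, and a Weierstrass-type representation by $1$-forms on the spectral curve whose harmonicity and conformality give maximality. However, two of your steps rest on incorrect local pictures. First, your criterion for the surface leaving $\RR^{2,1}$ --- that the second origami coordinate fails to be real-collinear ``exactly when the Newton polygon carries an interior (gas) facet'' --- is false. The paper's two-periodic examples (weights $1$--$4$ in \eqref{eq:weights_2x2}) all have the same Newton polygon, the same spectral curve, and a gas region, yet two of the four limiting surfaces lie in $\RR^{2,1}$ and two do not (Proposition~\ref{prop:cusps_2x2}). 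Whether $\vartheta$ is real-valued is controlled by the \emph{standard divisor}, i.e.\ the parameter $t$ entering through $Q(z,w)$ and Fay's identity (Corollary~\ref{cor:origami_t-embedding_theta}), not by the limit shape or the polygon; your proposed computation would not detect this and would prove a wrong dichotomy.

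Second, your cusp analysis is built on the wrong degeneration. The gas facets do not correspond to nodes of the Harnack curve, and no saddle points merge there: the paper assumes maximal genus, so each gas region corresponds to a \emph{smooth compact oval} $A_i$ of $\mathcal R$. The integrands of $\mathcal Z$ and $\vartheta$ have no poles on $A_i$, so the entire oval maps to a single point $P_i$, and light-likeness is simply the statement that $|\d\mathcal Z|^2-|\d\vartheta|^2\to 0$ on all of $\partial\mathcal R_0$ (Lemma~\ref{lem:parametrization}, Proposition~\ref{prop:cusp_light-like}); the local shape of the cusp is read off by approaching the oval along rays, with no colliding-saddle or uniform-up-to-the-arctic-boundary analysis required. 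Relatedly, your claim that interior space-likeness can be ``read off from the differentials at the saddle'' hides the one genuinely new analytic input: since $f$ and $g$ are vector-valued here, the zero-location argument of the earlier $\RR^{2,1}$ papers is unavailable, and strict positivity of $|\d\mathcal Z|^2-|\d\vartheta|^2$ on $\mathcal R_0$ requires the cone/fixed-point argument for left and right eigenvectors of $\Phi(z)$ (Lemma~\ref{lem:eigenvectors}). Without a substitute for that lemma and for the divisor computation, your outline does not close.
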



In slightly more detail, we study the limiting t-surface of the Aztec diamond with $(k\times\ell)$-periodic weights (i.e., edge weights that are periodic in both coordinate directions). It is known that when both the parameters~$k$ and~$\ell$ are at least two, all three types of macroscopic regions might appear in the limit of large domains -- frozen, liquid, and gas regions. 
We focus on two specific subfamilies of weights, analyzing each case separately. The first family corresponds to the $(1\times\ell)$-periodic case (with some mild additional assumptions), which features $2(\ell+1)$ frozen regions and no gas regions. The second family is within the $(2\times\ell)$-periodic setting, exhibiting $4$ frozen regions and $\ell-1$ gas regions. This second family coincides with the model studied in~\cite{FSG14, Ber21}. We focus on this setting because it is rich enough to exhibit arbitrarily many gas regions, while retaining sufficient symmetry to simplify our analysis. In particular, these symmetries allow us to rely on both the rather elementary techniques used in \cite{Ber21} as well as the more technical machinery developed in~\cite{BB23}.

\old{
\begin{figure}
 \begin{center}
\includegraphics[width=0.5 \textwidth]{figure.png}
  \caption{
  {\mnote Could someone please make a picture with the arctic curve inscribed into the square (and the corresponding amoeba, if it is easy, the amoeba is not so important) for $k=1, \ell=3$ case 
with $\beta_1<\beta_3< \beta_2$ and $\frac{\alpha_1}{\gamma_1}>\frac{\alpha_2}{\gamma_2}>\frac{\alpha_3}{\gamma_3}$?}
  }
 \label{fig:intro}
 \end{center}
\end{figure}}

 Let us first consider the \emph{case without gas regions} -- the case of~$(1 \times \ell)$-periodic weights. For a generic choice of weights, the scaling limit features a liquid region tangent to the boundary at~$2(\ell + 1)$ points and surrounded by~$2(\ell + 1)$ frozen regions: four frozen regions in the corners and~$2(\ell - 1)$ 
 so-called quasi-frozen regions -- frozen regions consisting of two types of dimers. 
 In this setting, we show that:
\begin{itemize}
    \item the scaling limit of the liquid region under the t-embedding is a rhombus, with the ratio of its diagonals determined by the weights in the first and last columns of the~$(1 \times \ell)$-period;
    \item each of the~$2(\ell + 1)$ frozen regions collapses to one of the four boundary vertices of the rhombus under the t-embedding: the four corner frozen regions map to distinct vertices, while how the~$2(\ell - 1)$ quasi-frozen regions are distributed among the four vertices depends on the weights in the first and last rows of the period;
    \item the image of the origami map in the limit lies on a straight line;
    \item the t-surface converges to the unique space-like surface in~$\mathbb{R}^{2,1}$ with zero mean curvature, whose boundary is a quadrilateral in~$\mathbb{C} \times \mathbb{R}$ determined solely by the weights in the first and last columns of the period.
\end{itemize}
It is worth noting that while the liquid region remains unchanged under shifts of the fundamental domain on the square lattice,  the limiting t-surface is sensitive to such shifts.
This raises another natural question: do the limiting t-surfaces obtained above all describe the same conformal structure underlying the dimer model? Does it coincide with the Kenyon-Okounkov conformal structure? We show that the answer to both questions is yes, and it follows from the way we express the limit of the t-embedding and its associated origami map. More precisely, in our analysis the scaling limit of the t-surface naturally appears as a composition of harmonic functions on the lower half-plane with a diffeomorphism~$\Omega$ between the liquid region and the lower half-plane. Moreover, this diffeomorhism encodes the Kenyon–Okounkov conformal structure. 
This supports the theory of~\cite{CLR1, CLR2}, providing yet another example demonstrating that the scaling limit of the t-surface correctly captures the conformal structure of the underlying dimer model.

We now turn to the setting that includes gas regions. For generic~$(k\times\ell)$-periodic weights on the Aztec diamon~\cite{BB23}, the scaling limit of the dimer model features a liquid region with~${(k-1)(\ell-1)}$ holes, each corresponding to a gas region. 
Moreover, the liquid region is tangent to the boundary of the Aztec diamond at~$2(k + \ell)$ points and is surrounded by~$2(k+\ell)$ frozen regions: four frozen regions in the corners and~$2(k+\ell - 2)$ so-called quasi-frozen regions. For simplicity, in our paper we consider a non-generic subfamily of~$(2 \times \ell)$-periodic weights that correspond to the case of \emph{no quasi-frozen regions} but give us~$(2-1)(\ell-1)=\ell-1$ gas regions. So, the liquid region has~$\ell-1$ holes, tangent to the boundary at~$4$ points and surrounded by four frozen regions in the corners. 
In this setting, we show that:
\begin{itemize}
    \item the scaling limit of the liquid region under the t-embedding is a rhombus, with the ratio of its diagonals determined by the weights in the first and last columns of the~$(2 \times \ell)$-period;
    \item each of the~$\ell-1$ gas regions collapses to a point inside the rhombus under the t-embedding;
     \item frozen regions collapse under the t-embedding to the four boundary vertices of the rhombus;     
     \item the image of the origami map in the limit is no longer one-dimensional in general;
    \item the t-surface converges to a maximal surface in~$\mathbb{R}^{2,2}$ with~$\ell-1$ light-like cusps corresponding to the gaseous facets;
    \item 
    and we exhibit a conformal isomorphism between the maximal surface minus cusps, and the liquid region equipped with the Kenyon-Okounkov conformal structure.
\end{itemize}


In the $(1 \times \ell)$-periodic case, we observed that the limiting maximal surface is sensitive to the precise definition of the dimer model; for instance, shifting the edge weights alters the boundary. Similarly, in the $(2 \times \ell)$-periodic case, we study how the specific choice of weights affects the limiting maximal surface with cusps. In particular, we show that the positions of the apices of the cusps can be explicitly expressed in terms of the \emph{spectral data} of the dimer model.
To illustrate this phenomenon, we study a special case known as the \emph{two-periodic Aztec diamond} in detail, and we observe that microscopic changes to the weights induced merely by shifts of the fundamental domain cause the macroscopic location of cusps to change. We believe 
that this sensitivity of the cusps reflects a deep 
structural feature of the model, with implications for the computation of height fluctuations, and in particular the \emph{discrete component} which arises when gas regions are present.
More precisely, results of~\cite{BN25} illustrate that the limiting height fluctuation field on the Aztec diamond with $(k \times \ell)$-periodic weights depends on two pieces of data: 
\begin{enumerate}
\item[(A)] The Kenyon-Okounkov conformal structure on the liquid region; 
\item[(B)] A certain vector $e \in \mathbb{R}^g$, where $g = (k-1)(\ell-1)$ is the number of gas regions.
\end{enumerate}
It was shown in~\cite{BN25}, that the parameter~$e$ enters into the \emph{discrete component} of the fluctuations field -- it is the \emph{shift parameter} in the discrete Gaussian. 
In our setting, $e=t$, where~$t$ is a part of the spectral data of the dimer model, and we show 
the macroscopic positions of cusp apices depend nontrivially on $t$.
See 
Section~\ref{sec:outlook} for further discussion. In contrast to the boundary and cusps data, the conformal structure obtained from the limiting t-surface depends only on the limit shape and agrees with the Kenyon--Okounkov conformal structure. This agreement provides further support for the theory developed in~\cite{CLR1, CLR2}, even in this setting of a non-simply connected liquid region.


\old{
\bigskip
\bigskip

{\mnote [May be we don't need the next 2 paragraphs]}

\bigskip
\bigskip

\textcolor{red}{
The present study of the geometry of t-surfaces is motivated not only by intrinsic interest in discrete-geometric structures {\mnote [discrete-geometric structures?]}, but also by the goal to extend the theory of discrete complex analysis developed in \cite{CLR1, CLR2} to settings involving gaseous facets. Results of~\cite{BN25} illustrate that the limiting height fluctuation field on the Aztec diamond with $(k \times \ell)$-periodic weights depends on two pieces of data: (A) The Kenyon-Okounkov conformal structure on the liquid region ({\mnote[I think this parenthesis can be removed, we are not referring this precise else where in the introduction, and we have kind of already talked about the critical point map above]} this is realized in the Aztec case by the \emph{critical point map}, see \cite[Theorem 1.3]{BB23}, and see Proposition \ref{prop:diffeomorphism_2xell} and Lemma \ref{lem:action_match} for the definition of this map in the context of our work); and (B) a certain vector $e \in \mathbb{R}^g$, where $g = (k-1)(\ell-1)$ is the genus {\mnote[genus of what?]} and also the number of gas regions. The parameter $e$ enters into the \emph{discrete component} of the field of fluctuations; it is the \emph{shift parameter} in the discrete Gaussian ({\mnote[I think this reference can be removed as well]} see \cite[Theorem 1.2]{BN25}), and in our case it is equal to $t$, a part of the \emph{spectral data} defined in Section~\ref{sec:spectral_curve} {\mnote[maybe remove the reference]}. }

\textcolor{red}{If both pieces of data (A) and (B) can be identified from the limiting t-surface, then the t-surface carries enough data to identify height fluctuations. {\mnote[The rest of this paragraph is in the Outlook, so I think most of it can be removed. I would say, either end here, and say something like``...identify height fluctuations. See Section~\ref{sec:outlook} for a discussion in this direction.'' Or include also that identifying these objects where done in \cite{Bas23} and \cite{Nic25}, and say a few words about those works.]} It is not immediately clear to us, however, that (B) can be recovered from the t-surface, and we did not pursue that in this work, although our computations in Section \ref{sec:2x2} indicate that this might be possible. Indeed, we show in a minimal setting that when fixing the spectral curve and varying only $t$, the conformal structure stays the same, but the cusp changes its position in $\mathbb{R}^{2, 2}$ as a function of $t$. We note that in \cite{Bas23}, t-embeddings and discrete complex analysis were used to compute fluctuations in a large class of multiply connected and higher genus dimer models, and these models include discrete Guassian components. In these models, however, and also in \cite{Nic25}, the parameter analogous to $t$ can be computed using the locations of certain microscopic defects in the graph, in contrast to our setting where $t$ is ``hidden in the weights'' {\mnote[Are you trying to say here that the reason for the parameter $t$ is fundamentally different for us compared with \cite{Bas23}?]}.}
}

\subsection{Main results} \label{sec:mainresults} Let us now discuss our results in more detail.

\subsubsection{Aztec diamonds with ($1\times\ell$)-periodic weights}\label{sec:1xell_intro}
The main purpose of our paper is to study the geometric properties of the scaling limit of the t-surface of the Aztec diamond with periodic weights.
In this section we focus on the Aztec diamond of size~$\ell N$ with $(1\times\ell)$-periodic weights~$\alpha_i, \beta_i, \gamma_i \in\mathbb{R}_{>0}$, $i\in \{1,\ldots, \ell\}$, as shown in Figure~\ref{fig:aztec_1_l}; see Section~\ref{sec:per_aztec} and Section~\ref{sec:frozen} for more details. We also assume that~$\beta_i\neq \beta_j$ and~$\alpha_i/\gamma_i\neq \alpha_j/\gamma_j$ if~$i\neq j$ and that~$\beta_i<1<\alpha_i/\gamma_i$ for all~$i, j \in\{1,\ldots,\ell\}$. 
\begin{figure}
 \begin{center}
\includegraphics[width=0.67 \textwidth]{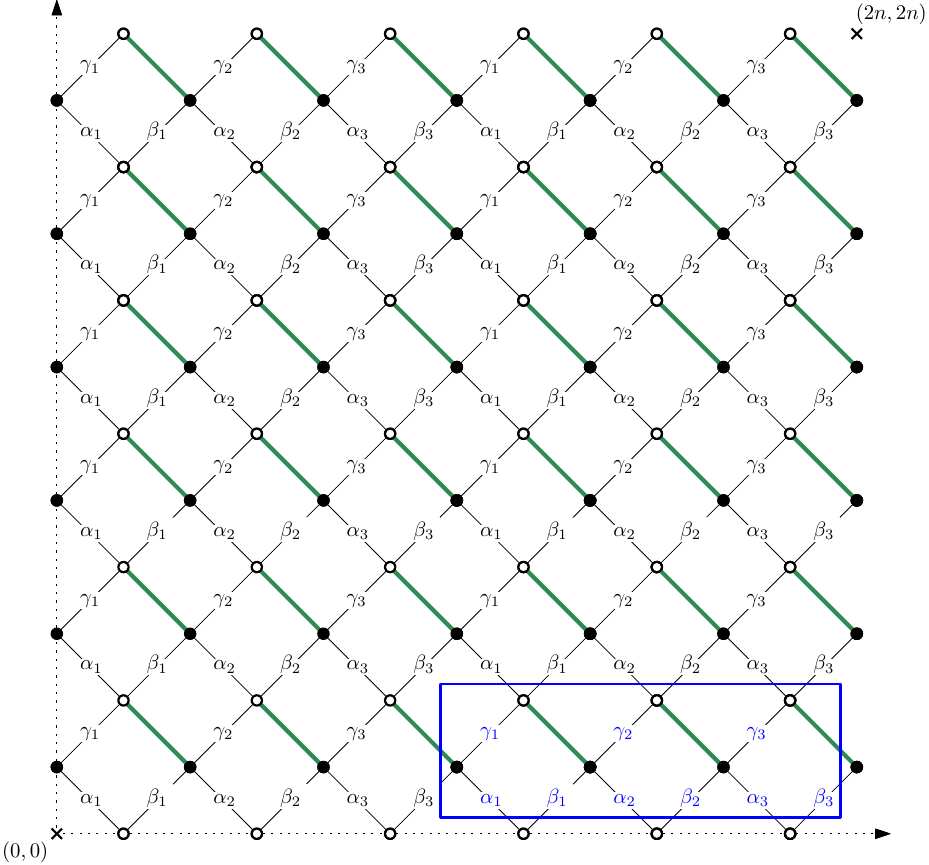}
  \caption{An example of size~$6$ Aztec diamond (drawn on the tilted square lattice) with~$(1 \times 3)$-periodic weights. Kasteleyn weights on all green edges are~$-1$, and on all other edges are as marked, where~$\alpha_i, \beta_i, \gamma_i \in\mathbb{R}_{>0}$ for~$i=1,\ldots, 3$. The fundamental domain is shown in blue.}
 \label{fig:aztec_1_l}
 \end{center}
\end{figure}
Define
\begin{equation}\label{eq:def_a_intro}
a=\sqrt{\frac{\gamma_1\beta_\ell}{\alpha_1}}.
\end{equation}


Recent work~\cite{BB23} provides a detailed asymptotic analysis of the~$k\times\ell$ doubly periodic Aztec diamond dimer model, computing the limit shape and arctic curves, and uncovering a homeomorphism between the liquid region and the amoeba of an associated Harnack curve that reveals the geometric structure of the arctic boundaries. 
Our analysis illustrates a new type of geometry that emerges naturally even in the doubly periodic case, namely that dimer models naturally lead to a maximal surfaces in~$\mathbb{R}^{2,2}$. 


To state the results, we need to introduce some notation. 
Let functions~$f(z)=f_{\alpha_1, \gamma_1, \beta_\ell}(z)$ and~$g(z)=g_{\alpha_1, \gamma_1, \beta_\ell}(z)$  be defined by
\begin{equation}\label{eq:f_g_def_intro}
f(z)=\sqrt{a+\i}\,(-a+\i)\frac{z+\i\sqrt{\frac{\alpha_1\beta_\ell}{\gamma_1}}}{(z+\frac{\alpha_1}{\gamma_1})(z-\beta_\ell)}, \quad \text{and} \quad
g(z)=\sqrt{a-\i}\,\frac{z+\i\sqrt{\frac{\alpha_1\beta_\ell}{\gamma_1}}}{z},
\end{equation}
with~$a$ given by~\eqref{eq:def_a_intro}.

Now we can define limiting functions for the t-embedding and its origami map using functions~$f$ and~$g$.  
For~$\zeta\in \mathbb H^-=\{\zeta\in \CC:\im \zeta<0\}$ we define
\begin{equation}\label{eq:def_lim_T_O_intro}
\mathcal Z(\zeta)=2a\sqrt{a^2+1}+\frac{1}{2\pi\i}\int_{\gamma_\zeta}f(z)g(z)\d z, \quad \text{and} \quad \vartheta(\zeta)=\frac{1}{2\pi\i}\int_{\gamma_\zeta}f(z)\bar g(z)\d z,
\end{equation}
where~$\bar g(z)=\overline{g(\bar z)}$ and the curve~$\gamma_\zeta$ is a simple curve going from~$\zeta$ to~$\bar\zeta$ crossing the real line in the interval~$(\beta_\ell,\infty)$. 

Our first theorem explicitly describes the scaling limits of perfect t-embeddings and origami maps $\mathcal{T}_N$ and $\mathcal{O}_N$ in the $1 \times \ell$ case; for lightness of notation, in the introduction we will denote by $\mathcal{T}_N(\xi, \eta) \in \mathbb{C}$ the embedded location of a face in the Aztec diamond with continuum coordinates $(\xi, \eta)$ (as defined in \eqref{eq:global_coordinates_1xell}), and similarly we will use the notation $\mathcal{O}_N(\xi, \eta)$. See Section \ref{sec:def_t_emb} for precise definitions of t-embeddings and their origami maps.

\begin{theorem}\label{thm:main_asymptotic_no_gas_intro}
Let~$\cT_N$ and  $\cO_N$ denote the perfect t-embedding and its associated origami map of the Aztec diamond of size~$\ell N$ with $(1\times\ell)$-periodic weights described above. Let~$(\xi,\eta)$ be a point in the liquid region~$\mathcal F_R$ of the Aztec diamond. Then
\begin{equation*}
(\cT_N(\xi,\eta),\cO_N(\xi,\eta))\to \left(\mathcal Z(\Omega(\xi,\eta)),\vartheta(\Omega(\xi,\eta))\right)
\end{equation*}
as~$N\to \infty$, 
where~$\Omega:\mathcal F_R\to \mathbb H^-$ 
is a diffeomorphism (properly defined in Proposition~\ref{prop:diffeomorphism_1xell}) known as the critical point map. Moreover, the convergence is uniform on compact subsets of~$\mathcal F_R$. 
\end{theorem}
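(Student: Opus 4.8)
The plan is to establish the convergence by identifying explicit discrete approximations to the functions $f$ and $g$ and then passing to the limit in a contour-integral representation of the t-embedding and origami map. First I would recall that the perfect t-embedding $\cT_N$ and origami map $\cO_N$ can be written in terms of discrete holomorphic data on the Aztec diamond. In the doubly periodic setting, the natural starting point is the Kenyon--Okounkov / Kasteleyn inverse (the coupling function), whose asymptotics are governed by a steepest-descent analysis around critical points of an action. The map $\Omega(\xi,\eta)$ is precisely the map sending a point in the liquid region to the relevant critical point in $\mathbb{H}^-$, so the diffeomorphism statement of Proposition~\ref{prop:diffeomorphism_1xell} is the geometric backbone of the argument.

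Concretely, I would proceed in the following steps. First, express the increments of $\cT_N$ and $\cO_N$ between adjacent faces in terms of the Kasteleyn weights and the inverse Kasteleyn matrix; these increments are, up to normalization, discrete analogues of $f(z)g(z)\,\d z$ and $f(z)\bar g(z)\,\d z$ respectively. The key observation is that the two factors $f$ and $\bar g$ arise from the two ``square roots'' that appear when one takes the perfect t-embedding (which pairs up a coupling function with its conjugate), and the appearance of $\bar g$ rather than $g$ in the origami map reflects the complex-conjugate folding built into the definition of $\cO_N$. Second, I would use the saddle-point asymptotics from~\cite{BB23} (available through the symmetries of the $1\times\ell$ model) to show that summing these increments along a discrete path from a fixed base face to the face at $(\xi,\eta)$ converges to the contour integral $\frac{1}{2\pi\i}\int_{\gamma_\zeta}$ with $\zeta=\Omega(\xi,\eta)$, where the endpoint $\zeta$ is exactly the critical point. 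The constant $2a\sqrt{a^2+1}$ is fixed by evaluating at the base point, and the contour $\gamma_\zeta$ crossing $(\beta_\ell,\infty)$ is dictated by which sheet of the steepest-descent path the discrete sum tracks.

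The convergence should be made uniform on compact subsets by noting that the critical-point map $\Omega$ is a diffeomorphism onto $\mathbb{H}^-$, so compact subsets of $\mathcal F_R$ map to compact subsets of $\mathbb{H}^-$ bounded away from the real axis; on such sets the saddle point is non-degenerate and uniformly separated from the poles of $f$ (at $z=-\alpha_1/\gamma_1$ and $z=\beta_\ell$) and from the branch points, so the error terms in the steepest-descent expansion are uniformly controlled. The remaining bookkeeping is to verify that the discrete increments telescope correctly and that path-independence of the limiting integral (a consequence of $f g$ and $f \bar g$ being holomorphic away from the prescribed poles, together with a residue/winding computation ensuring the choice of crossing interval is consistent) makes the limit independent of the chosen discrete path.

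The hard part, I expect, will be controlling the passage from the discrete increment formula to the continuum integrand \emph{uniformly} near the tangency points and the frozen boundaries, where the critical point $\Omega(\xi,\eta)$ approaches the real axis and the saddle-point analysis degenerates; this is exactly where the factors $(z+\frac{\alpha_1}{\gamma_1})^{-1}(z-\beta_\ell)^{-1}$ in $f$ produce singular behavior matching the collapse of frozen regions to boundary vertices. Establishing that the discrete objects are genuinely the perfect t-embedding (i.e.\ verifying the tangency-to-a-circle and angle-bisector conditions survive in the limit, so that the limiting $(\mathcal Z,\vartheta)$ really is the t-surface and not merely some discrete-holomorphic observable) is the second delicate point, and I would handle it by checking these conditions at the level of the explicit limiting formulas in~\eqref{eq:def_lim_T_O_intro} rather than trying to preserve them discretely at every scale.
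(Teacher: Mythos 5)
Your proposal follows essentially the same route as the paper: increments of $\cT_N,\cO_N$ expressed through the boundary rows and columns of the inverse Kasteleyn matrix, telescoping sums yielding double contour integrals, steepest descent with the action function of~\cite{BB23} whose critical point is $\Omega(\xi,\eta)$, the residue at $z_1=z_2$ along $\gamma_{\Omega(\xi,\eta)}$ producing the limiting single integral, the constant $2a\sqrt{a^2+1}$ fixed by the base face, and $\bar g$ replacing $g$ for the origami map. The two ``delicate points'' you flag are not where the work actually lies: degeneration of the saddle near the arctic curve is irrelevant for uniform convergence on compact subsets of $\mathcal F_R$, and the perfect-t-embedding conditions are verified exactly at finite $N$ (Proposition~\ref{prop:FG}), not a posteriori from the limiting formulas.
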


See Section~\ref{sec:frozen} and Theorem~\ref{thm:main_asymptotic_1xell} in there for more details. 
To show this result, we apply Theorem~\ref{thm:FG_gen_intro}, and use explicit formulas for the inverse Kasteleyn matrix of the reduced Aztec diamond, stated in Lemma~\ref{lem:k_reduced_1xell}, in order to obtain the exact formulas for the Coulomb gauges in Corollary~\ref{cor:goulomb_gauge}, which then allow us to obtain the exact formulas for~$\cT_N$ and~$\cO_N$, see Proposition~\ref{prop:ct_co_finite_1xell}.

Furthermore, the geometry of the scaling limit of the t-surface is characterized by the following conditions.

    \begin{figure}
        \subfloat{%
            \includegraphics[width=.5\linewidth]{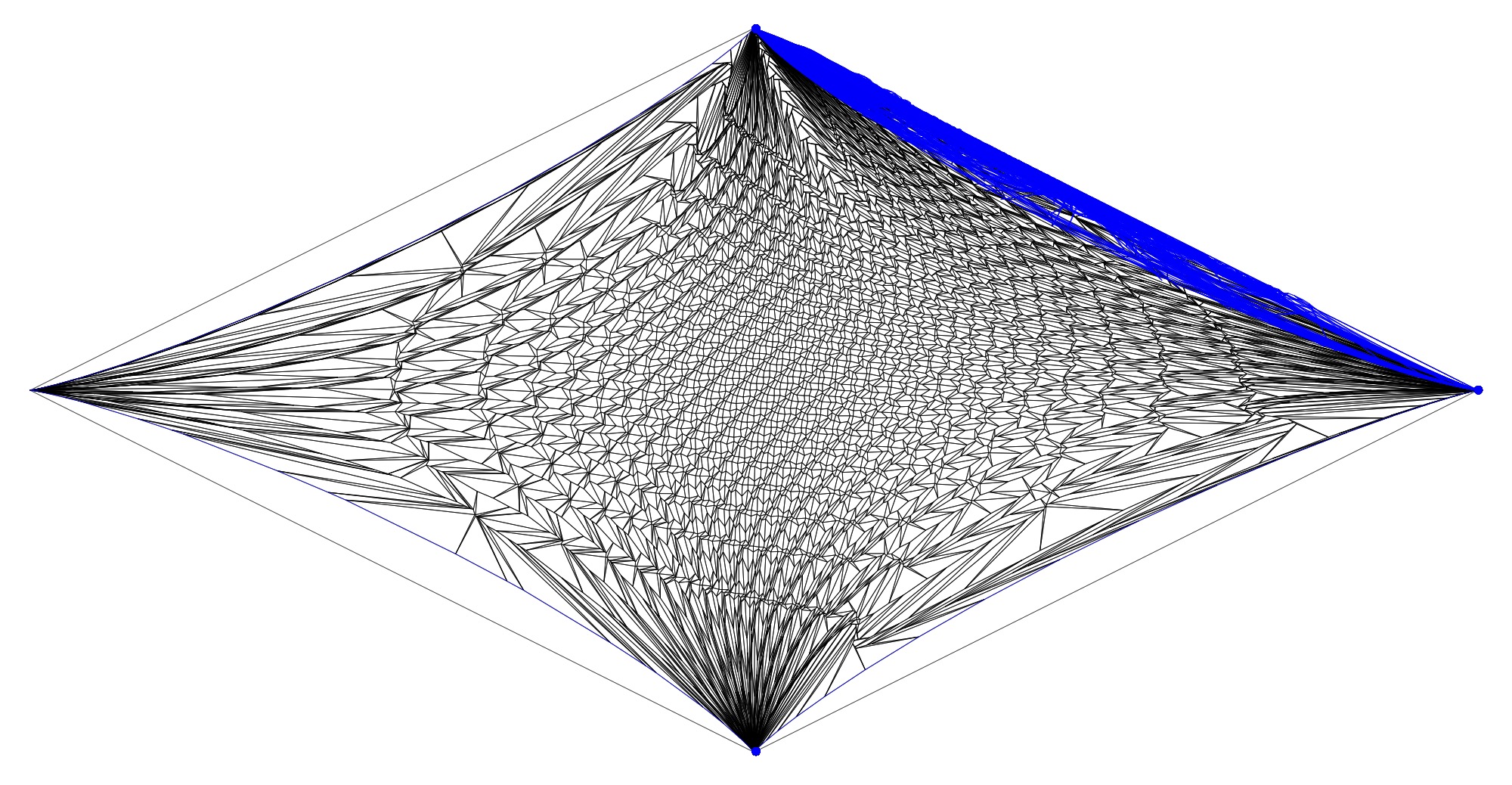}%
            \label{subfig:a}%
        }\hfill
        \subfloat{%
            \includegraphics[width=.5\linewidth]{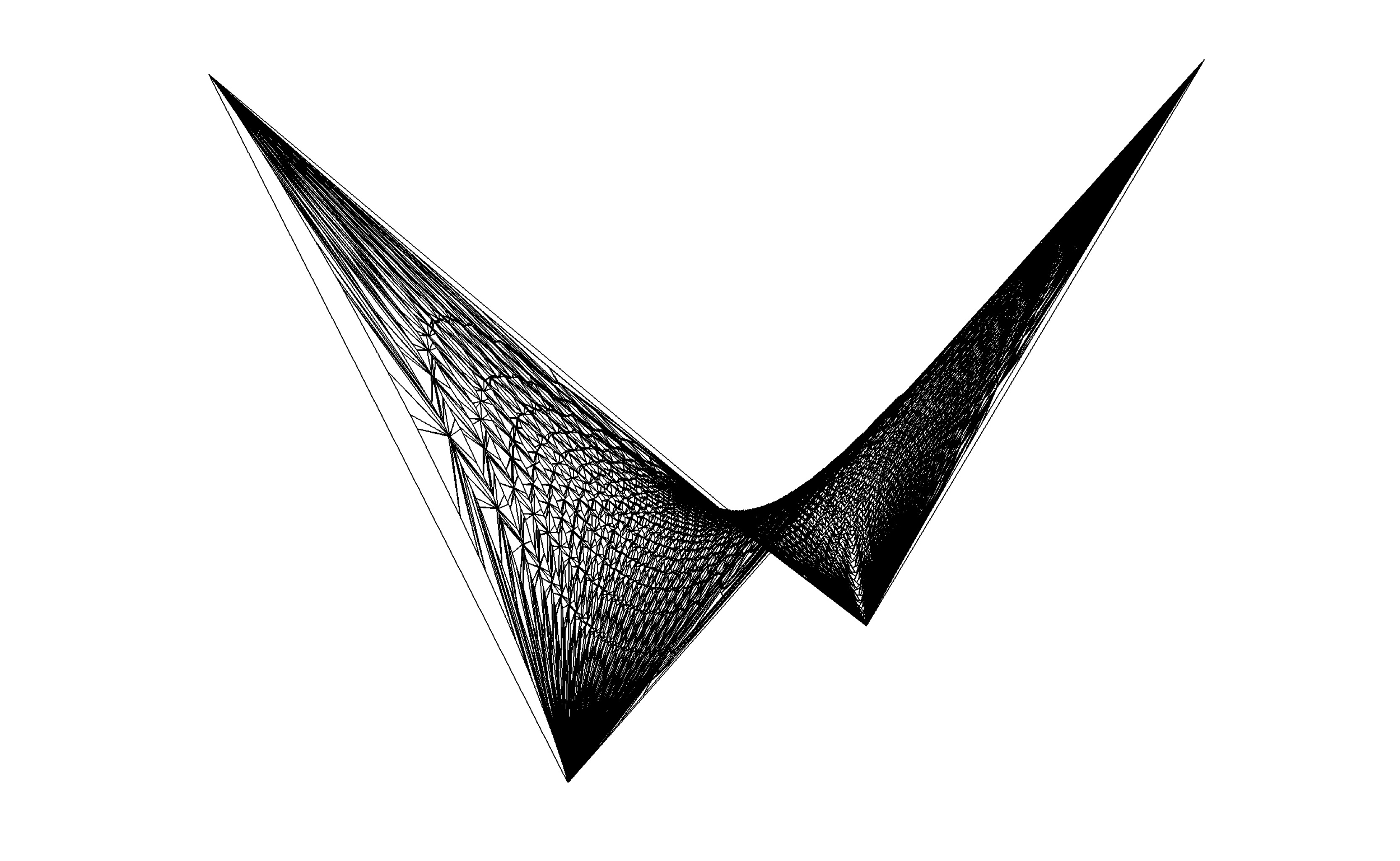}%
            \label{subfig:b}%
        }
        \caption{
        Left: A t-embedding (black) and its origami map (blue) of the Aztec diamond of size~$70$ with~$(1 \times 3)$-periodic weights. Right: An approximation of the corresponding t-surface~$(\cT, \cO)$. To visualize this surface, we take~$\operatorname{Proj}_{\iota}$, where~$\iota$ is the direction of the corresponding boundary edge of the t-embedding, as the third coordinate since~$\cO$ becomes one-dimensional in the limit.
        }
        \label{fig:1by3}
    \end{figure}

\begin{corollary}\label{cor:1byl_intro}
1) The scaling limit of the origami maps~$\cO_N$ as~$N\to \infty$ is contained in $\mathbb{R}$;\\
2) In the scaling limit, all~$2(\ell+1)$ frozen regions collapse to just~$4$ boundary points under the perfect t-embedding (and under corresponding origami maps as well);\\ 
3) The t-surface~$(\cT_N, \cO_N)$ of the Aztec diamond of size~$\ell N$ with $(1\times\ell)$-periodic weights (as shown on Figure~\ref{fig:aztec_1_l}) converges to the unique space-like surface in~$\mathbb R^{2,1}$ with zero mean curvature and with the boundary given by a quadrilateral in~$\mathbb{C}\times\mathbb{R}$ with vertices at 
\[
(0,0), \quad
\left((a+\i)\sqrt{a^2+1},-(a^2+1)\right), \quad
\left(2a\sqrt{a^2+1} ,\,0\right) \quad
\text{ and } \quad
\left((a-\i)\sqrt{a^2+1},-(a^2+1)\right).
\]
with~$a$ given by~\eqref{eq:def_a_intro}.
\end{corollary}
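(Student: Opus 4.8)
The plan is to push everything through the explicit limiting maps $\mathcal Z$ and $\vartheta$ of Theorem~\ref{thm:main_asymptotic_no_gas_intro}, reducing the three claims to properties of the contour integrals in~\eqref{eq:def_lim_T_O_intro}. For~(1) I would first simplify the origami integrand. Since $\bar g(z)=\overline{g(\bar z)}=\sqrt{a+\i}\,(z-\i\sqrt{\alpha_1\beta_\ell/\gamma_1})/z$, multiplying by $f$ collapses the root prefactors to $(a+\i)(-a+\i)=-(a^2+1)$ and the two conjugate linear factors to $z^2+\alpha_1\beta_\ell/\gamma_1$, giving
\[
f(z)\bar g(z)=-(a^2+1)\,\frac{z^2+\alpha_1\beta_\ell/\gamma_1}{z\,(z+\alpha_1/\gamma_1)(z-\beta_\ell)},
\]
a rational function with real coefficients. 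Thus $\overline{f(z)\bar g(z)}=f(\bar z)\bar g(\bar z)$; conjugating $\vartheta(\zeta)$ and substituting $z\mapsto\bar z$ replaces $\gamma_\zeta$ by its complex conjugate contour, whose reversal again runs from $\zeta$ to $\bar\zeta$ crossing the pole-free interval $(\beta_\ell,\infty)$ and is therefore homotopic to $\gamma_\zeta$ in $\mathbb C\setminus\{0,-\alpha_1/\gamma_1,\beta_\ell\}$. The sign from $\overline{1/(2\pi\i)}=-1/(2\pi\i)$ is cancelled by the orientation reversal, so $\overline{\vartheta(\zeta)}=\vartheta(\zeta)$ and $\vartheta$ is real-valued; together with Theorem~\ref{thm:main_asymptotic_no_gas_intro} this gives~(1).

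Next I would compute the boundary values of $(\mathcal Z,\vartheta)$. As $\zeta\to x\in\mathbb R$ the endpoints $\zeta,\bar\zeta$ collide, so $\gamma_\zeta$ degenerates to a closed loop based at $x$ whose homotopy type is fixed by which of the poles $-\alpha_1/\gamma_1,0,\beta_\ell$ separate $x$ from the crossing point in $(\beta_\ell,\infty)$; this loop is positively oriented, its enclosed signed area being proportional to (crossing point)$-x>0$. Hence the residue theorem shows that $(\mathcal Z,\vartheta)$ is \emph{constant} on each of the four arcs cut out of $\mathbb R\cup\{\infty\}$ by the points $\{-\alpha_1/\gamma_1,0,\beta_\ell,\infty\}$. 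Using $\alpha_1/\gamma_1=\beta_\ell/a^2$ and $\alpha_1\beta_\ell/\gamma_1=\beta_\ell^2/a^2$ the residues evaluate cleanly: for instance $\operatorname{Res}_{\beta_\ell}(fg)=-(a+\i)\sqrt{a^2+1}$ and $\operatorname{Res}_0(fg)=(\i-a)\sqrt{a^2+1}$, while $\operatorname{Res}_{\beta_\ell}(f\bar g)=-(a^2+1)$ and $\operatorname{Res}_0(f\bar g)=a^2+1$ supply the third coordinates; adding these to the constant $2a\sqrt{a^2+1}$ reproduces exactly the four pairs $(0,0)$, $((a\pm\i)\sqrt{a^2+1},-(a^2+1))$ and $(2a\sqrt{a^2+1},0)$. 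Finally, by Proposition~\ref{prop:diffeomorphism_1xell} the critical point map $\Omega$ extends to $\partial\mathcal F_R$ and carries each of the $2(\ell+1)$ frozen components into one of these four arcs; since $(\mathcal Z,\vartheta)$ is constant on each arc, every frozen region collapses to the corresponding vertex, which is~(2) and simultaneously identifies the vertices in~(3).

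Combining~(1) with Theorem~\ref{thm:main_asymptotic_no_gas_intro}, the limit surface lies in $\mathbb C\times\mathbb R=\mathbb R^{2,1}$, and by~(2) its boundary is the stated quadrilateral. To recognise it as the maximal surface I would use the Weierstrass-type structure of~\eqref{eq:def_lim_T_O_intro}: up to the locally defined antiderivatives one has $\partial_\zeta\mathcal Z\propto f(\zeta)g(\zeta)$, $\partial_{\bar\zeta}\mathcal Z\propto f(\bar\zeta)g(\bar\zeta)$, and likewise $\partial\vartheta$ built from $f\bar g$, which is precisely the data making $(\mathcal Z,\vartheta)$ space-like with vanishing mean curvature (cf.\ the general framework of Theorem~\ref{thm:FG_gen_intro}). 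Uniqueness then follows from elliptic theory: a space-like zero-mean-curvature surface solves the maximal surface equation, realised here as a graph of $\vartheta$ over the rhombus spanned by the projected vertices, and the Dirichlet problem for this equation has a unique space-like solution by the maximum principle applied to the difference of two solutions.

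The step I expect to be the main obstacle is the boundary analysis in~(2): rigorously justifying the contour degeneration and the uniformity of the limit, and---more substantially---importing from Proposition~\ref{prop:diffeomorphism_1xell} that $\Omega$ maps each frozen region \emph{entirely} into a single arc, with the tangency points landing exactly on the distinguished boundary points $\{-\alpha_1/\gamma_1,0,\beta_\ell,\infty\}$. Establishing this frozen-to-arc correspondence, rather than the residue bookkeeping or the formal verification of maximality, is where the real work lies.
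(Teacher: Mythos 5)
Your overall strategy matches the paper's: realness of $\vartheta$ from the fact that $f\bar g$ has real coefficients, boundary values of $(\mathcal Z,\vartheta)$ via residues at $\beta_\ell$, $0$, $-\alpha_1/\gamma_1$ (your residue values are correct and agree with the paper's), and recognition of $(\mathcal Z,\vartheta)$ as a conformal harmonic parametrization of the unique space-like zero-mean-curvature graph over the rhombus. Parts (1) and (3) are essentially the paper's argument; for (3) the paper likewise outsources the orientation-reversing-diffeomorphism and space-likeness claims to Lemmas 5.2 and 5.4 of the earlier work it cites, so your level of detail there is comparable, though your appeal to Theorem~\ref{thm:FG_gen_intro} is misplaced (that result concerns finite Coulomb gauges, not the continuum Weierstrass data; what is actually needed is the sign of $\bigl(|f(\zeta)|^2-|f(\bar\zeta)|^2\bigr)\bigl(|g(\zeta)|^2-|g(\bar\zeta)|^2\bigr)$ on $\mathbb H^-$, which follows from the common zero of $f$ and $g$ at $-\i\sqrt{\alpha_1\beta_\ell/\gamma_1}$).

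The genuine gap is in part (2). What you prove is that the \emph{continuum} maps $(\mathcal Z,\vartheta)$ are constant on each of the four arcs of $\mathbb R\cup\{\infty\}$ cut out by $\{-\alpha_1/\gamma_1,0,\beta_\ell,\infty\}$ — this is the paper's Proposition~\ref{prop:boundary_1xell}. But the assertion that ``frozen regions collapse'' is a statement about the \emph{discrete} embeddings $\cT_N,\cO_N$ evaluated at points $(\xi,\eta)$ lying \emph{inside} a frozen component, and Theorem~\ref{thm:main_asymptotic_no_gas_intro} gives convergence only on compact subsets of the liquid region. Your bridge — that $\Omega$ ``extends to $\partial\mathcal F_R$ and carries each frozen component into one of the arcs'' — does not make sense as stated: $\Omega$ is a diffeomorphism $\mathcal F_R\to\mathbb H^-$ and is not defined on the frozen components at all; its boundary extension sees only the arctic curve. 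What is actually required, and what the paper's Corollary~\ref{cor:frozen_1xell} supplies, is a second steepest-descent analysis of $I_{f,g}(x,y)$ for $(\xi,\eta)$ in a frozen component, where $F'(\cdot;\xi,\eta)$ has only real critical points lying in a specific interval $A_{0,m}$; the contour deformation then produces a residue along a \emph{closed} loop $\gamma_{z_m}$ with $z_m\in A_{0,m}$, and the residue theorem yields the vertex $P_j$ with $A_{0,m}\subset A_j$. Your residue bookkeeping would finish the job once that saddle-point input is in place, but without it the collapse of the frozen regions is not established. You correctly sensed that this boundary analysis is the hard step, but located the difficulty in the wrong object (an extension of $\Omega$ rather than a new asymptotic analysis of the discrete integrals).
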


For a detailed formulation of the results stated in the above corollary, see Corollary~\ref{cor:frozen_1xell} and Section~\ref{sec:max_surf_1_by_ell}. 

\begin{remark} 
Although multiple frozen regions are mapped to the same boundary vertex, as a point in the Aztec diamond approaches a frozen region, it corresponds under the t-embedding to approaching the associated vertex within a specific sector. See Remark~\ref{rmk:frozen_sectors} for a precise statement.
\end{remark}

\begin{remark}\label{rmk:permutations}
Permuting the weights~$\beta_i$ among themselves, as well as permuting the pairs~$\{\alpha_i, \gamma_i\}$, each in all~$\ell !$ possible ways, does not affect the scaling limit of the liquid region nor the Kenyon-Okounkov conformal structure.
However, the associated maximal surface does depend on a parameter~$a$, which is determined by the specific values of~$\frac{\alpha_1}{\gamma_1}$ and $\beta_\ell$.  
This leads to a family of~$\ell^2$ distinct maximal surfaces for a generic choice of weights. Each such surface corresponds to~${((\ell-1) !)^2}$ permutations that preserve~$\{\alpha_1, \gamma_1\}$ and~$\beta_\ell$.
However, all of them, due to Theorem~\ref{thm:main_asymptotic_no_gas_intro}, describe the same conformal structure, the same as given by the
diffeomorphism~$\Omega:\mathcal F_R\to \mathbb H^-$. 
\end{remark}

\begin{remark}
As discussed above, the main objective of this paper is to analyze the behavior of t-surfaces in the presence of multiple frozen regions and gas regions. Consequently, we did not check the discrete regularity conditions on the t-surfaces required to directly apply the main theorem from~\cite{CLR2} for proving convergence of height fluctuations to the Gaussian Free Field. However, we expect that the \emph{rigidity condition} introduced in~\cite{BNR23} still holds in our setting, and as explained in~\cite{BNR23, BNR24}, this would ensure that all assumptions needed for the main theorem in~\cite{CLR2} are satisfied.
\end{remark}


\subsubsection{Aztec diamonds with ($2\times\ell$)-periodic weights}\label{sec:2xell_intro} 
In this section, we focus on Aztec diamonds with multiple gas regions, and we specialize to weights studied in~\cite{Ber21, FSG14}. More precisely, we are interested in the Aztec diamond of size~$2\ell N$ with~$(2\times\ell)$-periodic weights~$\alpha_i, \beta_i \in\mathbb{R}_{>0}$,~$i\in \{1,\ldots, \ell\}$, as shown in Figure~\ref{fig:aztec_2_l}; see Section~\ref{sec:per_aztec} and Definition~\ref{def:2_l_weights} for a precise definition. We further assume that~$\prod_{i=1}^{\ell} \alpha_i = \prod_{i=1}^{\ell} \beta_i$. For this choice of weights, the liquid region~$\mathcal F_R$ has~$g\leq \ell-1$ holes and is surrounded by four frozen regions in the corners of the Aztec diamond, see the left image in Figure~\ref{fig:Aztec_2_gaz} for an example, and~\cite{Ber21, FSG14} for more details. 
The holes in the liquid region correspond to gas regions. Generically, the number of holes of~$\mathcal F_R$ is maximal,~$g=\ell-1$, and, 
for simplicity, we will assume this to be the case throughout this paper.
\begin{figure}
 \begin{center}
\includegraphics[width=0.49 \textwidth]{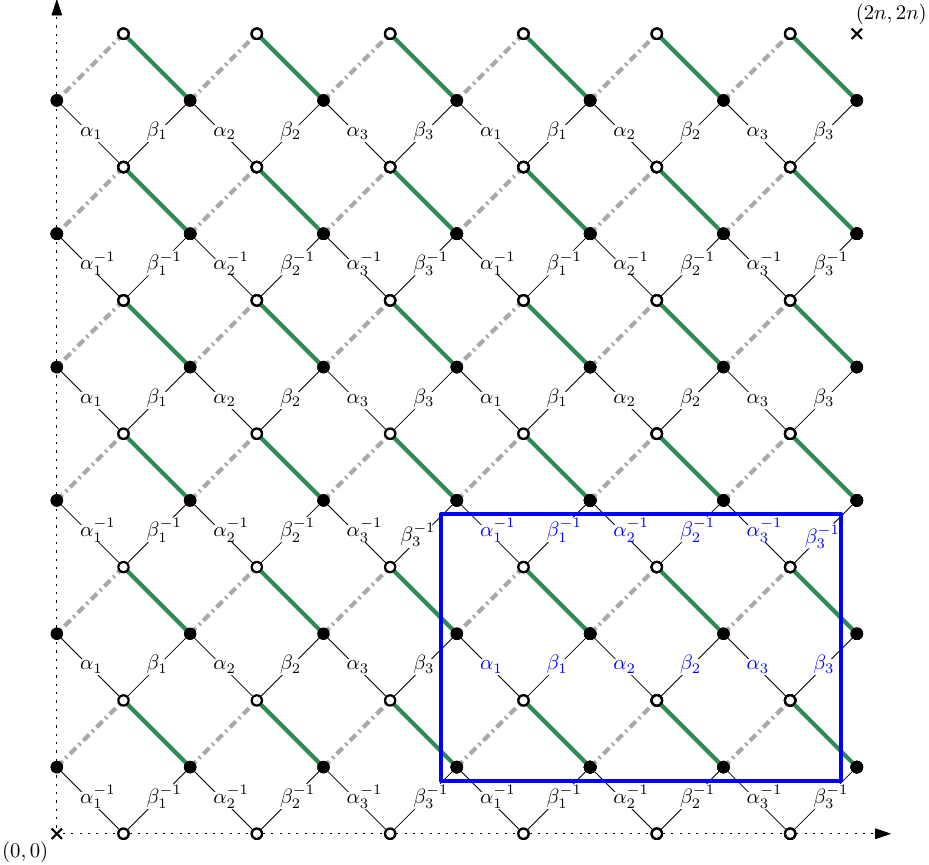}
\includegraphics[width=0.49 \textwidth]{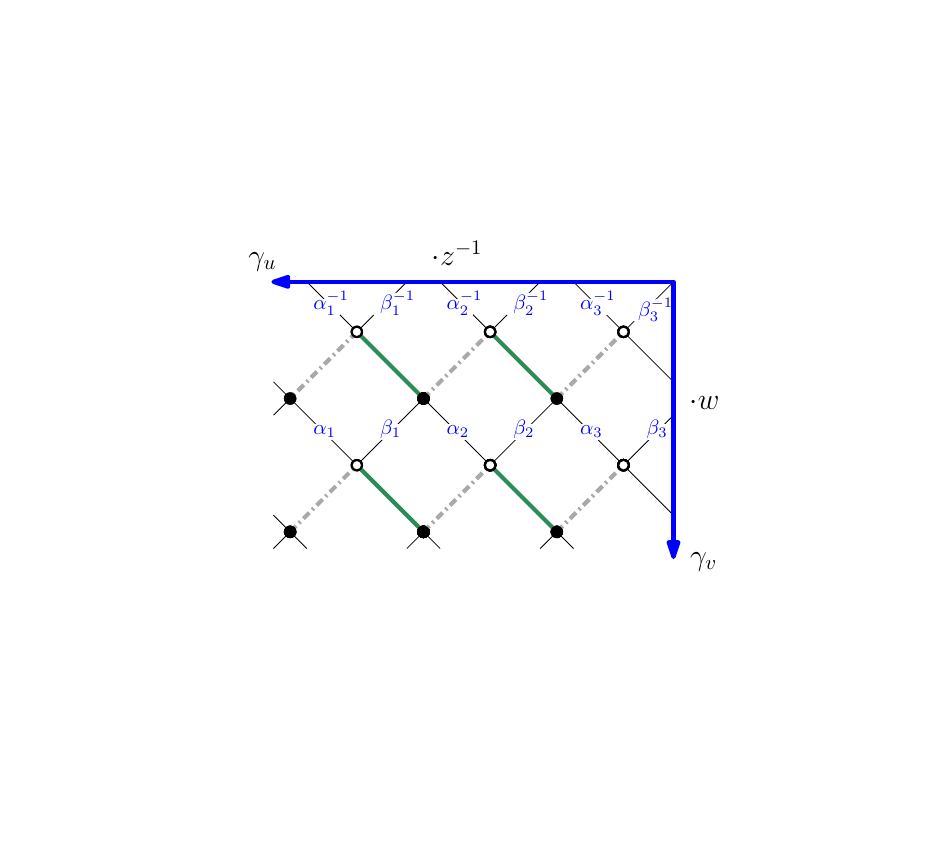}
  \caption{{\bf Left:} An example of size~$6$ Aztec diamond with~$(2 \times 3)$-periodic weights. Kasteleyn weights on all green edges are~$-1$, on all dashed-dotted grey ones are~$1$, and on all other edges are as marked, where~$\alpha_i, \beta_i\in\mathbb{R}_{>0}$ for~$i=1,\ldots, 3$. The fundamental domain is shown in blue. {\bf Right: } To get a magnetically altered Kasteleyn matrix~$K_{G_1}=K_{G_1}(z,w)$ on the corresponding torus, multiply all weights on edges crossing the loop~$\gamma_u$ by~$z^{-1}$ and all weights on edges crossing the loop~$\gamma_v$ by~$w$.}\label{fig:aztec_2_l}
 \end{center} 
\end{figure}

The fact that the liquid region is no longer simply connected introduces new and interesting challenges. One key consequence is that the scaling limit of the t-surface is no longer given by a contour integral on the complex plane, as in~\eqref{eq:def_lim_T_O_intro}. Instead, it is expressed as a contour integral on a higher-genus Riemann surface -- namely, the spectral curve.


The spectral curve was central in the study of the limit shape and local statistics of the doubly periodic Aztec diamond in~\cite{DK21, Ber21, BB23}. For instance, it was proven that the number of holes in the liquid region is equal to the genus of the spectral curve, and there is a precise correspondence between the gas regions and the holes in~$\mathcal R_0$. We remark that in those works, however, the spectral curve emerged from the framework of matrix-valued orthogonal polynomials. 
Let us recall the notion of the spectral curve. It was introduced in~\cite{KOS06, KO06} as the first part of the \emph{spectral data}. Given a dimer model on the torus, the characteristic polynomial~$P(z,w)$ is the determinant of the \emph{magnetically altered Kasteleyn matrix}, and the associated spectral curve is defined by
\[\mathcal{R}^\circ=\{(z,w)\in(\mathbb{C}^*)^2 \, | \, P(z,w)=0\}.\]
We denote the closure of~$\mathcal R^{\circ}$ by~$\mathcal R$. It is known that~$\operatorname{Re}[\mathcal{R}]=\{(z,w)\in(\mathbb{R}^*)^2 \, | \, P(z,w)=0\}$ divides~$\mathcal{R}$ into two parts~$\mathcal{R}_0$ and~$\sigma(\mathcal{R}_0)$, where~$\sigma(z,w)=(\bar z,\bar w)$. Note that in the setup of the previous section 
the (genus $0$) spectral curve 
 is simply identified with the complex plane via the coordinate $z$, and the lower half-plane~$\mathbb{H}^{-}$ there plays a role analogous to~$\mathcal{R}_0$ in this context.

The second part of the spectral data is the \emph{standard divisor} (in fact, there is one associated to each of the vertices of the fundamental domain)~\cite{KO06}. Recall that the standard divisor can be defined from the adjugate of the magnetically altered Kasteleyn matrix. Our limiting results will depend on this divisor. However, for us, it comes into play through the matrix~$Q$ given by~\eqref{eq:def_Q_sec_2} below, which is closely related to the adjugate of the magnetically altered Kasteleyn matrix.

Let us continue by discussing our main results for the~$(2\times \ell)$-periodic setting. The structure of our first results of this section are similar to the ones in Section~\ref{sec:1xell_intro}. However, the fact that the liquid region has $g$ holes and, hence, the spectral curve has genus~$g>0$ lead to a significant difference. 

We need to modify the parameter~$a$ from the previous section.
 Define
\begin{equation}\label{eq:def_a_intro_2_ell}
a=\sqrt{\frac{\alpha_1}{\beta_\ell}}.
\end{equation}

Similarly to the previous section,  we want to write limiting functions for the t-embedding and its origami map using the Weierstrass parameterization. To do that, we need to modify functions~$f$ and~$g$. Let~$f(z)=f_{\alpha_1, \beta_\ell}(z)$ and~$g(z)=g_{\alpha_1, \beta_\ell}(z)$  be two functions defined by

\begin{equation}\label{eq:intro_f_g_def_2xell}
f(z)=\frac{\sqrt{a+\i}\left(\i-a\right)}{z-1}
\begin{pmatrix}
\i\sqrt{\alpha_1\beta_\ell} & 1
\end{pmatrix} 
\quad\text{ and }\quad
g(z)=\sqrt{a-\i}\left(\frac{\i}{\sqrt{\alpha_1\beta_\ell}}\frac{1}{z}
\begin{pmatrix}
 1 \\
 0
\end{pmatrix}
+
\begin{pmatrix}
  0 \\
  1
\end{pmatrix}
\right).
\end{equation}
Note that in contrast to~\eqref{eq:f_g_def_intro}, these functions are \emph{vector}-valued; $f$ is a row vector and $g$ is a column vector.

Similarly to~\eqref{eq:def_lim_T_O_intro}, we first define the limiting functions for the t-embedding and its origami map on the corresponding spectral curve.  For~$q=(z,w) \in\mathcal{R}_0$ define

\begin{equation}\label{eq:z_def_intro}
\mathcal Z(q)=2a\sqrt{a^2+1}+\frac{1}{2\pi\i}\int_{\gamma_q}f(z)Q(z,w)g(z)\d z 
\end{equation}
and
\begin{equation}\label{eq:v_def_intro}
\vartheta(q)=\frac{1}{2\pi\i}\int_{\gamma_q}f(z)Q(z,w)\bar g(z)\d z,
\end{equation}
where~$Q$ is a matrix mentioned above and defined by~\eqref{eq:def_Q_sec_2}, and~$\gamma_q$ is a symmetric under conjugation simple curve in $\mathcal{R}$ going from~$q$ to~$\bar q$ precisely described in Definition~\ref{def:curve}. 

With these definitions, we move on to state the main theorem of this work, which includes three closely related statements. The first part is an explicit computation of the scaling limit of~$\cT_N$ and~$\cO_N$, the perfect t-embedding and origami map in the $(2\times \ell)$-periodic setting, in terms of the maps $\mathcal{Z}$ and $\vartheta$. The second statement says that gas regions collapse to interior points in the quadrilateral containing the embedding, and frozen regions collapse to boundary points. The third statement says that the image of the map defined by the pair~$(\mathcal Z,\vartheta)$
is a space-like maximal surface in $\mathbb{R}^{2,2}$, with cusps which are in one-to-one correspondence with the gaseous facets.

Let $(u,v)=(2(\ell x+i)+\eps,2(2y+j)+\eps)$, with $x,y\in \ZZ$, $i=0,\dots,\ell-1$, $j=0,1$, and $\eps=0,1$, denote the coordinates of the faces in the Aztec diamond (see Section \ref{sec:per_aztec} and Figure \ref{fig:aztec_reduction}). Let~$(\xi,\eta)\in [-1,1]^2$ be the global coordinates in the scaled Aztec diamond, related to~$(u,v)$ via~\eqref{eq:global_coordinates_2xell}.

\begin{theorem}\label{thm:2byl_intro}
Let~$\cT_N$ and~$\cO_N$ denote the perfect t-embedding and associated origami map of the Aztec diamond of size~$2\ell N$ with~$(2\times\ell)$-periodic weights described above and let~$(\xi,\eta)$ and~$(u,v)$ be as above.\\ 
1) If~$(\xi,\eta)$ is a point in the liquid region~$\mathcal F_R$, then
\begin{equation*}
(\cT_N(u,v)),\cO_N(u,v)))\to \left(\mathcal Z(\Omega(\xi,\eta)),\vartheta(\Omega(\xi,\eta))\right)
\end{equation*}
as~$N\to \infty$, where~$\Omega:\mathcal F_R\to \mathcal{R}_0$ 
is a diffeomorphism (precisely defined in Proposition~\ref{prop:diffeomorphism_2xell}) known as the critical point map, and the convergence is uniform on compact subsets of~$\mathcal F_R$; \\
2) If~$(\xi,\eta)$ is in a gas or frozen region, let~$(\xi',\eta')\in \mathcal F_R$ be a sequence in the liquid region converging to a point~$(\xi_0,\eta_0)$ on the boundary of the liquid region and the gas or frozen component containing~$(\xi,\eta)$, then
\begin{equation}\label{eq:gas_frozen_limt_intro}
\lim_{N\to \infty}(\cT_N(u,v),\cO_N(u,v))=\lim_{(\xi',\eta')\to (\xi_0,\eta_0)} \left(\mathcal Z(\Omega(\xi',\eta')),\vartheta(\Omega(\xi',\eta'))\right);
\end{equation}
3) The map~$\mathcal R_0\ni q\mapsto (\mathcal Z(q),\vartheta(q))$ defines a conformal parameterization of a space-like surface in~$\mathbb R^{2,2}$ with zero mean curvature and with the boundary given by a quadrilateral in~$\mathbb{C}\times\mathbb{R}$ with vertices at 
\[
(0,0), \quad
\left((a+\i)\sqrt{a^2+1},-(a^2+1)\right), \quad
\left(2a\sqrt{a^2+1} ,\,0\right), \quad
\left((a-\i)\sqrt{a^2+1},-(a^2+1)\right),
\]
and~$\ell-1$ cusps with apices~$P_1, \dots, P_{\ell-1}$, where~$P_i$ is the right hand side of \eqref{eq:gas_frozen_limt_intro} with~$(\xi_0,\eta_0)$ in the boundary of the~$i$th gas region.
\end{theorem}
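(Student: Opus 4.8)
The plan is to derive all three parts from an exact finite-$N$ integral representation of the pair $(\cT_N,\cO_N)$ followed by a steepest-descent analysis on the spectral curve $\mathcal R$. First I would establish the $(2\times\ell)$ analogues of Corollary~\ref{cor:goulomb_gauge} and Proposition~\ref{prop:ct_co_finite_1xell}, writing $\cT_N$ and $\cO_N$ as contour integrals whose integrand is built from the magnetically altered Kasteleyn matrix $K_{G_1}(z,w)$ and its adjugate; the matrix $Q$ of~\eqref{eq:def_Q_sec_2} packages exactly the adjugate data. For Part~1 I would then apply Theorem~\ref{thm:FG_gen_intro} together with the saddle-point machinery of~\cite{BB23}: for each $(\xi,\eta)\in\mathcal F_R$ the phase has a unique relevant critical point $\Omega(\xi,\eta)\in\mathcal R_0$ (the critical point map of Proposition~\ref{prop:diffeomorphism_2xell}), and a steepest-descent argument, uniform on compact subsets of $\mathcal F_R$, collapses the integral to the single contour integrals $\mathcal Z(\Omega(\xi,\eta))$ and $\vartheta(\Omega(\xi,\eta))$ of~\eqref{eq:z_def_intro}--\eqref{eq:v_def_intro}.

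For Part~2 I would use that inside a gas or frozen component the measure is exponentially close to a translation-invariant Gibbs state, so that $\cT_N$ and $\cO_N$ exhibit no macroscopic variation there and are pinned to their boundary value. Quantitatively, I would show that the steepest-descent estimate of Part~1 degenerates continuously as $(\xi',\eta')$ leaves $\mathcal F_R$: the saddle $\Omega(\xi',\eta')$ tends to the appropriate boundary oval of $\partial\mathcal R_0$, and the exact formula converges to the right-hand side of~\eqref{eq:gas_frozen_limt_intro}. The same limit simultaneously sends each frozen region to the outer oval and each gas region to the single point $P_i$ carried by the $i$-th inner oval.

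The core is Part~3, which I would treat as a Weierstrass representation on $\mathcal R_0$ (here $\bar g(z)=\overline{g(\bar z)}$ is again meromorphic, so both $fQg\,dz$ and $fQ\bar g\,dz$ are meromorphic differentials on $\mathcal R$). Differentiating~\eqref{eq:z_def_intro}--\eqref{eq:v_def_intro} along the $\sigma$-symmetric path $\gamma_q$ gives, in a local holomorphic coordinate $\zeta$, the scalar relations $\partial_\zeta\mathcal Z\propto (fQg)|_q$, $\partial_{\bar\zeta}\mathcal Z\propto (fQg)|_{\bar q}$, and likewise for $\vartheta$ with $g$ replaced by $\bar g$. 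Harmonicity of all four real coordinates is then immediate, since $\partial_\zeta\partial_{\bar\zeta}$ annihilates a meromorphic integrand evaluated at the conjugate endpoint; harmonic plus conformal yields a conformally parametrized zero-mean-curvature (maximal) surface in $\mathbb R^{2,2}$. Conformality reduces to the identity $\partial_\zeta\mathcal Z\,\overline{\partial_{\bar\zeta}\mathcal Z}=\partial_\zeta\vartheta\,\overline{\partial_{\bar\zeta}\vartheta}$; in the scalar $(1\times\ell)$ case this holds trivially, whereas here, $f,g$ being vector-valued, I would deduce it from the rank-one factorization of $Q$ forced by $\det K_{G_1}=0$ on $\mathcal R$ (so that the adjugate factors through the left and right null vectors). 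Space-likeness is the positivity of the conformal factor, i.e.\ the pointwise inequality $|fQg|>|fQ\bar g|$ on $\mathcal R_0$, which I would establish from the Harnack property of $\mathcal R$ together with the fact that $q$ lies on the $\mathcal R_0$-side, so that $g$ dominates its reflection $\bar g$; on $\partial\mathcal R_0$ the two agree and the metric degenerates to light-like. Single-valuedness of $\mathcal Z,\vartheta$ on $\mathcal R_0$ I would obtain from the $\sigma$-symmetry of $\gamma_q$, which forces the relevant periods of $fQg\,dz$ and $fQ\bar g\,dz$ to vanish.

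Finally I would read off the boundary and cusp data from the limits of $(\mathcal Z,\vartheta)$ along $\partial\mathcal R_0$. On the outer oval the origami degenerates ($\vartheta$ becomes real, exactly as in Corollary~\ref{cor:1byl_intro}) and $\mathcal Z$ traces a quadrilateral whose four vertices are the values acquired at the distinguished points of $\mathcal R$ over $z\in\{0,1,\infty\}$, computed by residues of $fQg\,dz$; since these residues involve only the local data $\alpha_1,\beta_\ell$, the vertices coincide with those of the $(1\times\ell)$ case. On the $i$-th inner oval the adjugate, hence $Q$, carries the $i$-th standard-divisor point, at which $fQg\,dz$ and $fQ\bar g\,dz$ are singular; a local expansion there should show that the conformal factor $|fQg|^2-|fQ\bar g|^2$ vanishes, that the tangent cone becomes light-like, and that $(\mathcal Z,\vartheta)$ approaches the finite apex $P_i$ of~\eqref{eq:gas_frozen_limt_intro}, reconciling the collapse of the whole oval to a point (Part~2) with non-constancy of the map in the interior. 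I expect the two genuinely new difficulties to be exactly these: proving the global space-likeness inequality on $\mathcal R_0$, and carrying out the local analysis at the standard-divisor points precisely enough to certify that each singularity is a light-like \emph{cusp} (rather than a conical or branch point) with apex exactly $P_i$. Both are absent in the gas-free $(1\times\ell)$ setting and constitute the main obstacle.
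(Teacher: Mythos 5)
Your overall skeleton matches the paper's: exact finite-$N$ contour-integral formulas for $(\cT_N,\cO_N)$ built from $Q$, steepest descent on $\mathcal R$ with the critical point map for Part~1, and a Weierstrass-type representation with the rank-one factorization $Q=uv^{T}$ (so $fQg=FG$, $fQ\bar g=F\bar G$) giving harmonicity and conformality for Part~3. However, there are two genuine gaps. The first is space-likeness. You state the condition as the pointwise inequality $|fQg|>|fQ\bar g|$ and propose to get it "from the Harnack property, so that $g$ dominates its reflection $\bar g$". This is not the right inequality and the proposed mechanism is not an argument. What actually controls the sign of $|\d\mathcal Z|^2-|\d\vartheta|^2$ is the product
\begin{equation*}
\bigl(|F(q)|^2-|F(\sigma q)|^2\bigr)\bigl(|G(q)|^2-|G(\sigma q)|^2\bigr),
\end{equation*}
comparing values at $q$ and at its conjugate point $\sigma q$; neither factor need be positive individually, only the product. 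The paper reduces each factor to $\im(u_1\overline{u_2})$ and $\im(v_1\overline{v_2}/z)$ for right and left eigenvectors of $\Phi(z)$ with the same eigenvalue, and proves these have the same sign (Lemma~\ref{lem:eigenvectors}) by an explicit construction: the factors of $\Phi(z)$ act as M\"obius maps preserving the cone generated by $1$ and $z^{-1}$, Brouwer's theorem yields fixed points in that cone, and orthogonality of eigenvectors for distinct eigenvalues transfers the cone condition to the other eigenspace. This structural input about the transfer matrix is the missing ingredient; you correctly flag space-likeness as the main new difficulty, but the route you sketch would not close it.

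The second gap concerns your local analysis at the inner ovals. You assert that $fQg\,\d z$ and $fQ\bar g\,\d z$ are \emph{singular} at the standard-divisor points carried by the compact ovals, and you propose to extract the cusp from a local expansion at those singularities. This is backwards: the only poles of $\omega^{\mathcal Z}$ and $\omega^{\vartheta}$ are simple poles at the four angles $p_0,p_\infty,q_0,q_\infty$; the standard-divisor points on $A_i$ are \emph{zeros} of the relevant column of $Q$ (hence zeros of the forms), as used in Proposition~\ref{prop:form_theta} to pin down the zero divisor via Abel's theorem. The collapse of the whole oval $A_i$ to the single point $P_i$ follows precisely from the \emph{regularity} of the forms on $A_i$ (the loops $\gamma_q$ for different $q\in A_i$ are homotopic away from the poles), and the light-like nature of the cusp follows from the degeneration $|\d\mathcal Z|^2-|\d\vartheta|^2\to 0$ on all of $\partial\mathcal R_0$, not from a pole at a distinguished point. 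Relatedly, for Part~2 the paper does not invoke proximity to a translation-invariant Gibbs state; it runs the saddle-point analysis directly in the gas or frozen phase, where the critical points sit on the ovals and the only contribution is the residue along a loop around the corresponding oval, with independence of the approach point again following from the absence of poles there. Your Gibbs-state heuristic points in the right direction but does not by itself produce the identity~\eqref{eq:gas_frozen_limt_intro}.
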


\begin{remark}
Part of the statement is that the right-hand side of~\eqref{eq:gas_frozen_limt_intro} depends only on the gas or frozen connected component containing~$(\xi,\eta)$, and not on the specific choice of~$(\xi_0,\eta_0)$.
\end{remark}

\begin{remark}
One can obtain a conformal structure on the liquid region by pulling back the conformal structure on the surface defined by the limit of $(\mathcal{T}_N, \mathcal{O}_N)$ (described in the theorem above). Parts 1) and 3) of the theorem imply that this conformal structure matches the conformal structure obtained by pulling back the conformal structure of $\mathcal{R}_0$ via the map $\Omega$, which is known to coincide with the Kenyon-Okounkov conformal structure.
\end{remark}

See Theorems~\ref{thm:main_asymptotic_2xell},~\ref{thm:maximal_surface_2xell} and Corollary~\ref{cor:frozen_gas_2xell} for the precise statements. The proof follows a similar strategy as the proofs of Theorem~\ref{thm:main_asymptotic_no_gas_intro} and Corollary~\ref{cor:1byl_intro}. We employ Theorem~\ref{thm:FG_gen_intro} and the formulas for the inverse Kasteleyn matrix from~\cite{Ber21}, and after detailed computations we obtain an exact formula for~$\cT_N$ and~$\cO_N$ in Theorem~\ref{prop:ct_co_finite_2xell}. The limits are then obtained from a steepest descent analysis using the technique developed in~\cite{BB23}. In fact, the \emph{action function} is the same as in~\cite{Ber21, BB23}, which allow us to re-use their arguments almost word by word. The proof of the third part of the theorem is provided in Section~\ref{sec:max_surf_2xell}. It follows the strategy developed in~\cite{CR20,BNR23, BNR24}, however, the arguments must be adapted to deal with the fact that the liquid region is multiply connected.
\begin{remark}\label{rem:alternative_expression}
We also provide an alternative expression for the~$1$-forms defining functions~$\mathcal Z$ and~$\vartheta$ in~\eqref{eq:z_def_intro}--\eqref{eq:v_def_intro} in terms of classical~$1$-forms on~$\mathcal R$. See Corollary~\ref{cor:origami_t-embedding_theta}. In addition to the spectral curve and the parameter~$a$, the expression shows that the limiting surface depends on the standard divisor from the spectral data mentioned above. This is in contrast to the limit shape, which is independent of the divisor. However, it was recently shown in~\cite{BN25} that the divisor is essential in the global fluctuations, see the discussion in Section~\ref{sec:outlook} below. 
\end{remark}

\begin{remark}
We expect that the location of the points~$P_i$ are important, see Section~\ref{sec:outlook} below for a discussion. The expression for the~$1$-forms defining~$\mathcal Z$ and~$\vartheta$ mentioned in Remark~\ref{rem:alternative_expression}, can be used to explicitly express the position of~$P_i$ in terms of theta functions. 
We simplify this expression further for a one-parameter family of~$(2\times2)$-periodic weights defined in~\eqref{eq:weights_2x2_intro}, see Theorem~\ref{thm:2x2_para} and Remark~\ref{rem:cusps_2x2}.
\end{remark}

\old{

\color{red}

\begin{theorem}\label{thm:main_asymptotic_2xell_intro}
Let~$\cT_N$ and~$\cO_N$ denote the perfect t-embedding and its associated origami map of the Aztec diamond of size~$2\ell N$ with~$(2\times\ell)$-periodic weights described above and~$(\xi,\eta)$ and~$(u,v)$ be as above.\\ 
1) If~$(\xi,\eta)$ is a point in the liquid region~$\mathcal F_R$, then
\begin{equation*}
(\cT_N(u,v)),\cO_N(u,v)))\to \left(\mathcal Z(\Omega(\xi,\eta)),\vartheta(\Omega(\xi,\eta))\right)
\end{equation*}
as~$N\to \infty$, where~$\Omega:\mathcal F_R\to \mathcal{R}_0$ 
is a diffeomorphism (properly defined in Proposition~\ref{prop:diffeomorphism_2xell}) known as the critical point map, and the convergence is uniform on compact subsets of~$\mathcal F_R$. \\
2) If~$(\xi,\eta)$ is in a gas or frozen region, and~$(\xi',\eta')\in \mathcal F_R$ is a sequence in the liquid region converging to a point~$(\xi_0,\eta_0)$ on the boundary of the liquid region and the gas or frozen component containing~$(\xi,\eta)$, then
\begin{equation}\label{eq:gas_frozen_limt_intro}
\lim_{N\to \infty}(\cT_N(\xi,\eta),\cO_N(\xi,\eta))=\lim_{(\xi',\eta')\to (\xi_0,\eta_0)} \left(\mathcal Z(\Omega(\xi',\eta')),\vartheta(\Omega(\xi',\eta'))\right).
\end{equation}
\end{theorem}

\begin{remark}
Note that part of the statement is that the right-hand side of~\eqref{eq:gas_frozen_limt_intro} depends only on the gas or frozen connected component containing~$(\xi,\eta)$, and not on the specific choice of~$(\xi_0,\eta_0)$.
\end{remark}

See Theorem~\ref{thm:main_asymptotic_2xell} and Corollary~\ref{cor:frozen_gas_2xell} for the precise statements. The proof follows a similar strategy as the proof of Theorem~\ref{thm:main_asymptotic_no_gas_intro}. We employ Theorem~\ref{thm:FG_gen_intro} and the formulas for the inverse Kasteleyn matrix from~\cite{Ber21}, and after some computations we obtain an exact formula for~$\cT_N$ and~$\cO_N$ in Theorem~\ref{prop:ct_co_finite_2xell}. The limits are then obtained from a steepest descent analysis using the technique developed in~\cite{BB23}. In fact, the \emph{action function} is the same as in~\cite{Ber21, BB23}, which allow us to re-use their arguments almost word by word.  
\begin{remark}\label{rem:alternative_expression}
We also provide an alternative expression for the~$1$-forms defining functions~$\mathcal Z$ and~$\vartheta$ in~\eqref{eq:z_def_intro}--\eqref{eq:v_def_intro} in terms of classical~$1$-forms on~$\mathcal R$. See Corollary~\ref{cor:origami_t-embedding_theta}. In addition to the spectral curve and the parameter~$a$, the expression shows that the limiting surface depends on the standard divisor from the spectral data mentioned above. This is in contrast to the limit shape, which is independent of the divisor. However, it was recently shown in~\cite{BN25} that the divisor is essential in the global fluctuations, see the discussion in Section~\ref{sec:outlook} below. 
\end{remark}

Recall that in our setup there are~$g=\ell-1$ gas components. 
Let us enumerate the gas components by~$i=1,\dots,\ell-1$. We denote the right hand side of~\eqref{eq:gas_frozen_limt_intro}  by~$P_i\in \RR^{2,2}$, if~$(\xi_0,\eta_0)$ is in the~$i$th gas region. We are now ready to describe the geometry of the scaling limit of the t-surface. See Theorem~\ref{thm:maximal_surface_2xell} in the text.

\begin{theorem}\label{thm:2byl_intro}
1) The t-surface~$(\cT_N, \cO_N)$ of the Aztec diamond of size~$2\ell N$ with~$(2\times\ell)$-periodic weights (as shown in Figure~\ref{fig:aztec_2_l}) converges to a space-like surface in~$\mathbb R^{2,2}$ with zero mean curvature and with the boundary given by a quadrilateral in~$\mathbb{C}\times\mathbb{R}$ with vertices at 
\[
(0,0), \quad
\left((a+\i)\sqrt{a^2+1},-(a^2+1)\right), \quad
\left(2a\sqrt{a^2+1} ,\,0\right), \quad
\left((a-\i)\sqrt{a^2+1},-(a^2+1)\right),
\]
and~$\ell-1$ cusps with apices~$P_1, \dots, P_{\ell-1}$;\\
2) In the scaling limit, each of the~$4$ frozen regions collapses to one of the boundary points, and each of the~$(\ell-1)$ gas regions collapses to one of the inner points~$P_i$ under the t-surface.
\end{theorem}

\begin{remark}
We expect that the location of the points~$P_i$ are important, see Section~\ref{sec:outlook} below for a discussion. The expression for the~$1$-forms defining~$\mathcal Z$ and~$\vartheta$ mentioned in Remark~\ref{rem:alternative_expression}, can be used to explicitly express the position of~$P_i$ in terms of theta functions. 
We simplify this expression further for a one-parameter family of~$(2\times2)$-periodic weights defined in~\eqref{eq:weights_2x2_intro}, see Theorem~\ref{thm:2x2_para} and Remark~\ref{rem:cusps_2x2}.
\end{remark}
\color{black}
}

We saw in Section~\ref{sec:1xell_intro} that for~$(1\times \ell)$-periodic weights, the limit of the t-surface is contained in~$\RR^{2,1}$. It is therefore natural to ask if the limiting surface from Theorem~\ref{thm:2byl_intro} can be embedded in a lower dimensional subspace of~$\RR^{2,2}$. We address this question in Section~\ref{sec:2x2} by studying a special case, known as the \emph{two-periodic Aztec diamond} in the literature, in more detail. The two-periodic Aztec diamond is probably the most studied dimer model exhibiting all three phases, see~\cite{Bai23a, Bai23b, BCJ18, BCJ20, CY14, CJ16, JM23, DK21, Rue22} for an incomplete list. This is a~$1$-parameter sub-family of the weights discussed in this section with~$(2\times 2)$-periodic weights, and can be defined in~$4$ different ways (see Figure~\ref{fig:weights_2_by_2}), namely
\begin{equation}\label{eq:weights_2x2_intro}
    \begin{cases}
        \text{`weights 1'}: & \alpha_1^{-1} = \beta_1^{-1} = \alpha_2 = \beta_2 = \alpha, \\
        \text{`weights 2'}: & \alpha_1^{-1} = \beta_1^{-1} = \alpha_2 = \beta_2 = \alpha^{-1}, \\
        \text{`weights 3'}: & \alpha_1^{-1} = \beta_1 = \alpha_2 = \beta_2^{-1} = \alpha, \\
        \text{`weights 4'}: & \alpha_1^{-1} = \beta_1 = \alpha_2 = \beta_2^{-1} = \alpha^{-1},
    \end{cases}
\end{equation}
for some parameter~$0<\alpha<1$. These weights differs from each other only by a shift. In fact, these four versions have the same limit shape, and the liquid region has~$1$ hole, corresponding to~$1$ gas region, and is surrounded by four frozen regions. It was recently shown, however, that their global height fluctuations are not the same~\cite{BN25}. Those fluctuations depend on a parameter~$t$ which is closely related to the standard divisor from the spectral data. We show that the limits of their t-surfaces obtained in Theorem~\ref{thm:2byl_intro} are different as well. In fact, these four examples already show that the limit of a t-surface may or may not be contained in a lower dimensional subspace of~$\RR^{2,2}$.

For ~$j=1,\dots,4$ let~$S_{\operatorname{Romb}}^{(j)}$ be the limiting surface from Theorem~\ref{thm:2byl_intro} defined by the `weights~$j$'.

\begin{theorem}\label{prop:2x2_surface_intro}
The surface~$S_{\operatorname{Romb}}^{(j)} \subset \RR^{2,1}$ if~$j=1,2$, and~$S_{\operatorname{Romb}}^{(j)}$ cannot be embedded into~$\RR^{2,1}$ by global shifts and rotations of the origami map if~$j=3,4$.
\end{theorem}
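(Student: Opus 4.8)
The plan is to reduce the whole statement to a question about the image of the limiting origami map~$\vartheta$ alone. By Theorem~\ref{thm:2byl_intro} the surface is $\{(\mathcal Z(q),\vartheta(q)):q\in\mathcal R_0\}\subset\CC\times\CC=\RR^{2,2}$, and a global shift and rotation of the origami map replaces~$\vartheta$ by~$e^{\i\phi}\vartheta+c$. Hence the surface can be put into~$\RR^{2,1}=\CC\times\RR$ by such a transformation if and only if the image~$\vartheta(\mathcal R_0)$ is contained in an affine line in~$\CC$. So the theorem becomes: for $j=1,2$ the image~$\vartheta(\mathcal R_0)$ lies in a line (in fact in~$\RR$, so no rotation is even needed), while for $j=3,4$ it lies in no affine line.

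For the positive direction ($j=1,2$) I would show the origami image is contained in~$\RR$ by proving that the origami $1$-form $\beta_\vartheta=\tfrac1{2\pi\i}f(z)Q(z,w)\bar g(z)\d z$ is invariant under the anti-holomorphic involution $\sigma(z,w)=(\bar z,\bar w)$ in the sense $\sigma^*\beta_\vartheta=\overline{\beta_\vartheta}$. Writing the half-period integral $\vartheta(q)=\tfrac1{2\pi\i}\int_{\gamma_q}\beta_\vartheta$ over the $\sigma$-symmetric contour and differentiating in~$q$, a short local computation shows that $\Im\vartheta$ is constant on~$\mathcal R_0$ precisely when this reality condition holds; since the boundary vertices have real origami coordinates~$0$ and~$-(a^2+1)$, the constant must be~$0$, forcing $\vartheta(\mathcal R_0)\subset\RR$ and hence $S^{(j)}_{\operatorname{Romb}}\subset\RR^{2,1}$. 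The $\sigma$-reality of~$\beta_\vartheta$ itself I would derive from the symmetric structure of the weights $j=1,2$ (where $\alpha_i=\beta_i$), which makes the standard divisor encoded in~$Q$ sit at a $\sigma$-fixed (real, or $2$-torsion) point of the genus-$1$ spectral curve.

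For the negative direction ($j=3,4$) I would instead exhibit three non-collinear points of the origami image, a property manifestly invariant under shifts and rotations. Two of them are boundary vertices of the quadrilateral, whose origami coordinates are the distinct real numbers~$0$ and~$-(a^2+1)$, hence both lie on the real axis. The third is the apex~$P_1$ of the unique cusp (recall $\ell=2$). Using the explicit theta-function expression for the cusp location from Theorem~\ref{thm:2x2_para} and Remark~\ref{rem:cusps_2x2}, I would compute the origami coordinate~$\vartheta_{P_1}$ and show that $\Im\vartheta_{P_1}\neq0$ for $j=3,4$. Two distinct points on the real axis together with one off-axis point are never collinear, so $\vartheta(\mathcal R_0)$ is contained in no affine line, and therefore $S^{(j)}_{\operatorname{Romb}}$ cannot be embedded into~$\RR^{2,1}$ by global shifts and rotations of the origami map.

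The main obstacle is the explicit evaluation that underlies both directions: one must track how the standard divisor (equivalently the parameter~$t$) moves on the genus-$1$ spectral curve as one passes between the four weight choices, and then evaluate the theta-function formula for~$\Im\vartheta_{P_1}$ sharply enough to decide its reality. The delicate point is to prove that for $j=3,4$ this imaginary part is \emph{genuinely} nonzero rather than merely generically nonzero, which requires a nonvanishing argument for the relevant theta quotient; the verification of $\sigma^*\beta_\vartheta=\overline{\beta_\vartheta}$ for $j=1,2$ is the mirror image of the same computation, where~$t$ lands on the special symmetric value and the imaginary part collapses identically to zero.
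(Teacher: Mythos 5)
Your reduction---the surface lands in $\RR^{2,1}$ after a global shift and rotation of the origami map if and only if $\vartheta(\mathcal R_0)$ lies in an affine line of $\CC$---is correct, but both halves are executed differently from the paper's proof (Proposition~\ref{prop:cusps_2x2}). For $j=1,2$ the paper runs no $\sigma$-symmetry argument on the origami form: it reads reality directly off the explicit formula of Theorem~\ref{thm:2x2_para}, where $\frac{1}{2\pi\i}\int_{\gamma_q}\omega_D$ and $\int_{\gamma_q}B^{-1}\omega_1$ are real for all $q\in\mathcal R_0$ and the only possibly non-real coefficient is $\e^{-2\pi\i(t_j-\frac14)}$, which equals $\pm1$ for $t_1=\frac14$, $t_2=\frac34$. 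Your proposed mechanism for this half---that the standard divisor sits at a $\sigma$-fixed point for $j=1,2$---is not the right dichotomy: the standard divisor is supported on the compact (real) ovals in all four cases; what varies is the parameter $t$, and the reality criterion is precisely the theta-gradient identity of Corollary~\ref{cor:origami_t-embedding_theta}, which here reduces to $t_j\in\{\frac14,\frac34\}$. For $j=3,4$ your three-non-collinear-points argument (the two distinct real boundary origami values $0$ and $-(a^2+1)$ together with the cusp apex, whose origami coordinate has imaginary part $\pm\frac{(a_j^2+1)|B|}{\pi}\frac{\d\log\theta}{\d z_1}\left(\frac14\right)$ by Theorem~\ref{thm:2x2_para}) is valid and more elementary than the paper's, which instead proves the stronger \emph{local} statement that the cusp is not contained in any proper subspace of $\RR^{2,2}$, via linear independence of $\omega_{D_q-D_p},\omega_{D_q+D_p},\omega_D,\omega_1$ restricted to $A_1$; your route buys simplicity, the paper's buys the finer geometric information recorded in Proposition~\ref{prop:cusps_2x2}. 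A minor point: the apex and the boundary vertices are excluded from $S_{\operatorname{Romb}}$ by definition, so you should pass to the closure of $\vartheta(\mathcal R_0)$---harmless, since containment in an affine line is a closed condition. Finally, both routes rest on $\frac{\d\log\theta}{\d z_1}\left(\frac14\right)\neq0$, which you correctly identify as the crux; the paper only asserts it numerically (Remark~\ref{rem:cusps_2x2}), though it can be proved from the alternating, rapidly decaying theta series for the purely imaginary period $B=\i|B|$, whose $n=\pm1$ terms dominate.
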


See Proposition~\ref{prop:cusps_2x2} for more details and Figure~\ref{fig:2by2} for a simulation illustrating this result.

\smallskip

In addition to showing that gas regions collapse to points under the limit of the t-surface, we want to understand the local behavior at the cusps. Going back to the general~$(2\times \ell)$-periodic setting discussed in this section, we have the following proposition, see Proposition~\ref{prop:cusp_light-like} for a precise statement.
\begin{proposition}\label{prop:cusp_intro}
The cusps are light-like.
\end{proposition}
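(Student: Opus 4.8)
The plan is to read off the light-like nature of each cusp from two facts already established: that $\Phi := (\mathcal Z,\vartheta)\colon \mathcal R_0\to\RR^{2,2}$ is a \emph{conformal} parameterization of the space-like maximal surface (Theorem~\ref{thm:2byl_intro}(3)), and that $\Phi$ collapses the $i$th gas region to a single apex $P_i$ (Theorem~\ref{thm:2byl_intro}(2) and the remark following it). Equip $\RR^{2,2}=\CC\times\CC$ with $\langle(Z,\Theta),(Z',\Theta')\rangle=\Re(Z\bar Z')-\Re(\Theta\bar\Theta')$, so that space-likeness is positivity of the induced form on $\mathcal R_0$. Let $B_i\subset\partial\mathcal R_0$ be the boundary oval that $\Omega$ associates to the $i$th gas region (Proposition~\ref{prop:diffeomorphism_2xell}). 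I would first make precise the statement ``the cusp is light-like'' as: the surface emanates from $P_i$ along \emph{null} tangent directions, equivalently the conformal factor of the induced (positive) metric degenerates as $q\to B_i$ and does so along a null direction.

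The core of the argument is short. Choose a collar of $B_i$ in $\mathcal R_0$ with real coordinates $(s,t)$, $s\ge 0$, so that $B_i=\{s=0\}$ and $t$ runs along the oval. Since $\Phi$ extends continuously to $B_i$ and is \emph{constant} there, equal to $P_i$, its tangential derivative vanishes, $\Phi_t|_{s=0}=0$. Conformality of $\Phi$ amounts to the two identities $\langle\Phi_s,\Phi_s\rangle=\langle\Phi_t,\Phi_t\rangle$ and $\langle\Phi_s,\Phi_t\rangle=0$; evaluating the first on $B_i$ yields $\langle\Phi_s,\Phi_s\rangle|_{s=0}=0$, so the normal derivative $\Phi_s|_{s=0}$ is a nonzero null vector. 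The first-order expansion $\Phi(s,t)=P_i+s\,\Phi_s(0,t)+o(s)$ then shows that the surface approaches $P_i$ in light-like directions, which is exactly the claimed light-like cusp. To ensure a genuine cusp rather than a higher-order degeneracy, I would verify $\Phi_s(0,t)\not\equiv 0$: up to the anti-holomorphic contribution of the second endpoint $\bar q$ of $\gamma_q$, the tangent $\Phi_\zeta$ is governed by the meromorphic $1$-forms $fQg\,\d z$ and $fQ\bar g\,\d z$ from \eqref{eq:z_def_intro}--\eqref{eq:v_def_intro}, and on the real oval $B_i$ the null relation above takes the concrete pointwise form $|fQg|=|fQ\bar g|$; it then suffices to check, from the explicit expressions \eqref{eq:intro_f_g_def_2xell} and the definition of $Q$ in \eqref{eq:def_Q_sec_2}, that these forms do not vanish identically along $B_i$.

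The step I expect to require the most care is the passage to the boundary oval itself: conformality and the relation $\Phi_t=0$ live in the open surface, and one must justify that they persist in the limit $s\to 0^+$—that is, that $\Phi$ is $C^1$ up to $B_i$ and the conformality identities extend by continuity—\emph{even though the induced metric degenerates there}. I would sidestep the boundary-regularity issue by working with the Weierstrass data directly on the compact spectral curve $\mathcal R$ rather than on the open half $\mathcal R_0$: the forms $fQg\,\d z$ and $fQ\bar g\,\d z$ are meromorphic on $\mathcal R$, and $B_i$ is a real oval lying in their smooth locus (away from poles), so $\Phi_s,\Phi_t$ extend real-analytically across $B_i$ and the identity $\langle\Phi_\zeta,\Phi_\zeta\rangle\equiv 0$—an identity between meromorphic objects—continues to $B_i$. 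This reduces the claim to the pointwise null identity $|fQg|=|fQ\bar g|$ on $B_i$, which is forced by the constancy of $\Phi$ on $B_i$ and can, as noted, be confirmed head-on from the explicit formulas; the same computation supplies the non-vanishing of $\Phi_s$ needed for a genuine cusp. With these ingredients assembled, the conclusion that each cusp is light-like is immediate.
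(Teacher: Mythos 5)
Your argument is correct, and it reaches the conclusion by a genuinely different route than the paper's. The paper's proof (Proposition~\ref{prop:cusp_light-like}) rests on the last assertion of Lemma~\ref{lem:parametrization}: the induced metric $|\d\mathcal Z|^2-|\d\vartheta|^2$ is computed explicitly from the Weierstrass data, factors (up to a positive constant) as $\bigl(|F(z)|^2-|F(\bar z)|^2\bigr)\bigl(|G(z)|^2-|G(\bar z)|^2\bigr)$, and each factor vanishes on the real locus of $\mathcal R$ by the conjugation symmetry; the null tangent at the apex is then read off from the expansion $\vartheta\circ\mathcal Z^{-1}(z)-\vartheta_i=\frac{\overline G(q')}{G(q')}(z-\mathcal Z_i)+O\bigl((z-\mathcal Z_i)^2\bigr)$ with $|\overline G(q')/G(q')|=1$. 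You instead deduce the degeneration abstractly: the parameterization is constant on the compact oval (denoted $A_i$ in the paper; your $B_i$ collides with the paper's notation for the $B$-cycles), so its tangential derivative vanishes there, and conformality --- the identity \eqref{eq:minimal_2xell}, which does extend real-analytically across the oval because the forms $fQg\,\d z$ and $fQ\bar g\,\d z$ are meromorphic on $\mathcal R$ with no poles on $A_i$ --- forces the normal derivative to be null. Your version is more economical and more general: it uses only that the limit is a conformal parameterization collapsing a boundary oval to a point, with no reference to the specific $F$ and $G$. The paper's version yields in addition the explicit null direction $\overline G(q')/G(q')$ as a function of $q'\in A_i$, which is needed later (e.g.\ in Proposition~\ref{prop:cusps_2x2}); your pointwise identity $|fQg|=|fQ\bar g|$ on the oval is equivalent to the paper's factorization (both reduce to $\det Q=0$ on the spectral curve), so the computation you sketch would recover the same data. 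The one loose end, common to both treatments, is the nonvanishing of the normal derivative along the oval, which you rightly flag: it is required to speak of a genuine cusp with well-defined null rays, and in the paper it is only implicit in the division by $G(q')$.
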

As seen in Theorem~\ref{prop:2x2_surface_intro}, the surfaces~$S_{\operatorname{Romb}}^{(1)}$ and~$S_{\operatorname{Romb}}^{(2)}$ are embedded in~$\RR^{2,1}$, so, trivially, their cusps are locally embedded in~$\RR^{2,1}$. The same theorem showed that~$S_{\operatorname{Romb}}^{(3)}$ and~$S_{\operatorname{Romb}}^{(4)}$ are not contained in~$\RR^{2,1}$. It leads to the questions if their cusps are locally in~$\RR^{2,1}$. 
We show that the answer is negative:
\begin{proposition}
The cusps in~$S_{\operatorname{Romb}}^{(3)}$ and~$S_{\operatorname{Romb}}^{(4)}$ are not locally contained in any lower dimensional subspace of~$\RR^{2,2}$.
\end{proposition}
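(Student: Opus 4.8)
The plan is to reduce the statement to a question about linear independence of two meromorphic $1$-forms on the spectral curve $\mathcal R$, which in the $(2\times2)$ case is the elliptic curve of Section~\ref{sec:2x2}. First I would note that, since the surface is $2$-dimensional, it suffices to rule out that a neighborhood of the cusp apex $P_i$ lies in an affine hyperplane of $\RR^{2,2}$ (any proper affine subspace is contained in some affine hyperplane). Writing a point of $\RR^{2,2}$ as $(Z,V)\in\mathbb C\times\mathbb C$, every affine hyperplane has the form $\{\re(\bar\lambda Z+\bar\mu V)=c\}$ for some $(\lambda,\mu)\in\mathbb C^2\setminus\{0\}$ and $c\in\RR$; so the surface lies in such a hyperplane near $P_i$ if and only if $\re(\bar\lambda\,\mathcal Z(q)+\bar\mu\,\vartheta(q))=c$ for all $q$ in an annular neighborhood $U\subset\mathcal R_0$ of the inner oval $\mathcal C_i$ that collapses to $P_i$.

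Next I would use that $\mathcal Z$ and $\vartheta$, defined in~\eqref{eq:z_def_intro}--\eqref{eq:v_def_intro} as abelian integrals of the meromorphic forms $\omega_{\mathcal Z}=f(z)Q(z,w)g(z)\,dz$ and $\omega_{\vartheta}=f(z)Q(z,w)\bar g(z)\,dz$, are holomorphic on $\mathcal R_0$. A holomorphic function with locally constant real part is locally constant, so the condition above forces $\bar\lambda\,\mathcal Z+\bar\mu\,\vartheta$ to be constant on $U$, and hence, by analytic continuation on the connected domain $\mathcal R_0$, on all of $\mathcal R_0$. Thus local containment near the cusp is equivalent to the global relation $\bar\lambda\,\omega_{\mathcal Z}+\bar\mu\,\omega_{\vartheta}=0$. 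Factoring out the common $f(z)Q(z,w)$ and inserting the explicit $g,\bar g$ from~\eqref{eq:intro_f_g_def_2xell} (which differ only by the sign of their first component and the prefactor $\sqrt{a\mp\i}$), this relation becomes the proportionality of the two scalar functions $z^{-1}(fQ)_1$ and $(fQ)_2$ on $\mathcal R$. Hence the proposition is equivalent to the assertion that, for weights~$3$ and~$4$, these two functions are \emph{not} proportional.

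Geometrically, this is the statement that the tangent cone at the light-like cusp is nondegenerate. Expanding $(\mathcal Z,\vartheta)$ to first order in the transverse coordinate as $q\to\mathcal C_i$ shows that near $P_i$ the surface is, to leading order, the cone over the closed curve $\theta\mapsto(\omega_{\mathcal Z},\omega_{\vartheta})(\theta)$ traced by the two forms along the oval: the null condition of Proposition~\ref{prop:cusp_intro} says each such direction is light-like, while non-proportionality of $z^{-1}(fQ)_1$ and $(fQ)_2$ says these null directions span all of $\RR^4$. I would then carry out the computation by substituting the spectral data for weights~$3,4$ determined in Section~\ref{sec:2x2} into the matrix $Q$ of~\eqref{eq:def_Q_sec_2}, and comparing the divisors of $z^{-1}(fQ)_1$ and $(fQ)_2$ on the elliptic curve; non-proportionality follows as soon as these divisors differ. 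The key point, and the contrast with weights~$1,2$ where $S_{\operatorname{Romb}}^{(j)}\subset\RR^{2,1}$, is that the shift of the fundamental domain moves the standard divisor (the parameter $t$ of Theorem~\ref{thm:2x2_para}), and for weights~$3,4$ this shift destroys the proportionality present for weights~$1,2$.

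The main obstacle is this last explicit divisor computation for $Q$ on the genus-one curve: one must track how the standard divisor, and hence the entries of $Q$, depend on the weights, which is most transparent through the theta-function expressions of Remark~\ref{rem:cusps_2x2}. A secondary technical point is to confirm that the first-order term in the transverse expansion does not vanish identically along $\mathcal C_i$, so that the surface genuinely forms a two-dimensional cone over a curve rather than degenerating to higher order; this follows from the fact that $\omega_{\mathcal Z}$ and $\omega_{\vartheta}$ do not both vanish on the oval, which can be read off from the same explicit data.
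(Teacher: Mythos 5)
Your overall strategy---reduce local containment near the cusp to containment in an affine hyperplane $\{\re(\bar\lambda Z+\bar\mu V)=c\}$, and then to a linear relation among $1$-forms on the elliptic spectral curve---is the right one and is close in spirit to the paper's. But there is a genuine error at the pivotal step: $\mathcal Z$ and $\vartheta$ are \emph{not} holomorphic on $\mathcal R_0$. Each is an integral over a path $\gamma_q$ running from $q$ to $\bar q$, hence of the form (holomorphic in $q$) plus (antiholomorphic in $q$); the paper only establishes that they are harmonic (Lemma~\ref{lem:parametrization}, equation~\eqref{eq:harmonic_2xell}), and indeed $\partial_{\bar z}\mathcal Z=-\tfrac{1}{2\pi\i}F(\bar z)G(\bar z)\neq0$. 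Consequently the implication ``$\re(\bar\lambda\mathcal Z+\bar\mu\vartheta)$ locally constant $\Rightarrow\ \bar\lambda\,\omega^{\mathcal Z}+\bar\mu\,\omega^{\vartheta}=0$'' fails. Writing out $\d\,\re(\bar\lambda\mathcal Z+\bar\mu\vartheta)=0$ and separating the $\d z$ and $\d\bar z$ components gives instead the \emph{real}-linear condition
\begin{equation*}
\bigl(\bar\lambda F+\mu\bar F\bigr)G+\bigl(\bar\mu F+\lambda\bar F\bigr)\bar G=0 ,
\end{equation*}
i.e.\ a real linear combination of the four forms $FG\,\d z$, $\bar FG\,\d z$, $F\bar G\,\d z$, $\bar F\bar G\,\d z$ must vanish---not a complex-linear combination of the two forms $\omega^{\mathcal Z}$, $\omega^{\vartheta}$. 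Your resulting criterion (non-proportionality of $z^{-1}(fQ)_1$ and $(fQ)_2$) is therefore too weak a test of containment: it is equally satisfied for weights $1$ and $2$, and yet for those weights the cusp \emph{is} contained in the hyperplane $\{\im V=0\}$ by Theorem~\ref{prop:2x2_surface_intro}. This shows the claimed equivalence cannot be correct.

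Repairing the argument leads back to the paper's proof: one must show that no nonzero real-linear combination of $\re\omega^{\mathcal Z}$, $\im\omega^{\mathcal Z}$, $\re\omega^{\vartheta}$, $\im\omega^{\vartheta}$ vanishes along $A_1$. This is where the parameter $t$ enters quantitatively rather than heuristically: by Theorem~\ref{thm:2x2_para}, these four restrictions are nonzero constants times $\omega_{D_q-D_p}$, $\omega_{D_q+D_p}$, $\omega_D$ and $\omega_1$, and the coefficient of $\omega_1$ in $\omega^{\vartheta}$ is nonreal precisely because $\e^{-2\pi\i(t_j-\frac14)}$ is purely imaginary for $t_j\in\{0,\tfrac12\}$ (weights $3$ and $4$); the linear independence of these four forms then rules out the relation. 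Your secondary point---that the first-order transverse expansion does not degenerate, so the surface is genuinely a cone over a curve near the apex---is correct and is handled the same way in the paper.
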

See Proposition~\ref{prop:cusps_2x2} in the text.



\subsubsection {Coulomb gauges for general graphs with outer face of degree four}
In this section, we provide formulas for Coulomb gauge functions in terms of the \emph{inverse Kasteleyn matrix}.
While primarily a technical result used to prove the statements in the previous sections,
it may be of independent interest in broader contexts. 

Let $\ G$ be a planar bipartite graph with a set of vertices $V=B\sqcup W$. By definition, the edge lengths of a t-embedding of~$\G$ induce the same probability measure as the original edge weights. Recall that two weight functions define the same probability measure if and only if they are gauge equivalent. Let~$K_\G$ be a Kasteleyn matrix corresponding to the original weight function with a choice of real signs. We can view a t-embedding as a pair of gauge functions~$\mathcal{F}^\bullet: B \to \mathbb{C}$ and $\mathcal{F}^\circ: W \to \mathbb{C}$ in the kernel and co-kernel of the Kasteleyn matrix~$K_\G$ such that 
\[d\cT(bw^*)=\cF^\tb(b)K_\G(b,w)\cF^\tw(w).\]
The condition that these functions have to lie in the kernel and co-kernel follows from the fact that the edges of the t-embedding around each face form a closed polygon. See Section~\ref{sec:def_t_emb} for precise definitions.

It is known~\cite[Theorem~2]{KLRR22} that a perfect t-embedding of a finite planar bipartite graph~$\G$ with the outer boundary of degree four and real-valued Kasteleyn weights~$K_{\G}$ always exists. This follows from the fact that t-embeddings of~$\G^*$ are preserved under elementary transformations of $\G$ and~$\G$ can be reduced to the 4-cycle graph by applying a sequence of elementary transformations, without modifying the 4 boundary vertices at intermediate stages. This gives a construction of a perfect t-embedding using a sequence of elementary transformations simplifying the graph to a 4-cycle. 

The following result provides an alternative proof of the existence of perfect t-embeddings without applying elementary transformations and gives exact formulas for Coulomb gauge functions in cases when the exact formulas of the inverse Kasteleyn matrix $K_{\G}^{-1}$ are known.

\begin{theorem}\label{thm:FG_gen_intro} Let~$\G$ be a finite planar bipartite graph with outer boundary of degree four. 
Let~$w_0, b_0, w_1, b_1$ be the boundary vertices of~$\G$ listed counterclockwise. Define
\begin{align}\label{eq:def_a_gen_intro} 
a_{\scriptscriptstyle \G}=\sqrt{-\frac{K_{\G}^{-1}(w_0, b_0)\cdot K_{\G}^{-1}(w_{1}, b_{1})}
{K_{\G}^{-1}(w_0, b_{1})\cdot K_{\G}^{-1}(w_{1}, b_0)}}, 
\end{align}
where $K_{\G}$ is a Kasteleyn matrix of $\G$. 
By applying a gauge transformation, we can assume that 
\[
 K_{\G}^{-1}(w_0, b_0)
= K_{\G}^{-1}(w_{1}, b_{1})=a_{\scriptscriptstyle \G}
\quad \text{ and } \quad
  K_{\G}^{-1}(w_0, b_{1})
 = - K_{\G}^{-1}(w_{1}, b_0)=1.
\] 
Then the Coulomb gauge functions 
\begin{align}\label{eq:F_gen}
           \mathcal{F}^\bullet(b) =-\sqrt{-a_{\scriptscriptstyle \G}-\i} \,  K_{\G}^{-1}(w_0, b) - 
    \sqrt{a_{\scriptscriptstyle \G}+\i} \,   K_{\G}^{-1}(w_{1}, b),
        \end{align}
    and 
    \begin{align}\label{eq:G_gen}
     \mathcal{F}^\circ(w) = -\sqrt{-a_{\scriptscriptstyle \G}+\i} \,  K_{\G}^{-1}(w, b_{0}) + 
    \sqrt{a_{\scriptscriptstyle \G}-\i} \,  K_{\G}^{-1}(w, b_{1}).
    \end{align}
 define a perfect t-embedding~$\mathcal{T}$ of the augmented dual~$\G^*$, such that the boundary polygon of~$\mathcal{T}(\G^*)$ is a rhombus with boundary points at~$\{0,\, (a+\i)\sqrt{a^2+1},\, 2a\sqrt{a^2+1}, \, (a-\i)\sqrt{a^2+1} \}$. And the corresponding boundary points of the origami map $\cO(\G^*)$ are points~$\{0,\, -(a^2+1),\,0, \,-(a^2+1)\}$.
\end{theorem}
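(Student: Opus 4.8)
The plan is to deduce everything from two ``source identities'' for the proposed gauge functions and then to read off the boundary data by evaluating them at the four marked vertices. First I would record that the quantity $a_{\scriptscriptstyle\G}$ of \eqref{eq:def_a_gen_intro} is invariant under the gauge action $K_{\G}^{-1}(w,b)\mapsto\lambda(w)\mu(b)K_{\G}^{-1}(w,b)$, and that the stated normalization $K_{\G}^{-1}(w_0,b_0)=K_{\G}^{-1}(w_1,b_1)=a$, $K_{\G}^{-1}(w_0,b_1)=-K_{\G}^{-1}(w_1,b_0)=1$ is attainable (three effective gauge parameters against three independent targets, the fourth being pinned by the invariant). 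Substituting \eqref{eq:F_gen}--\eqref{eq:G_gen} into $K_\G K_\G^{-1}=K_\G^{-1}K_\G=\mathrm{Id}$ then yields at once
\[ \sum_b \mathcal F^\bullet(b)K_\G(b,w) = -\sqrt{-a-\i}\,\delta_{w,w_0}-\sqrt{a+\i}\,\delta_{w,w_1}, \]
\[ \sum_w K_\G(b,w)\mathcal F^\circ(w) = -\sqrt{-a+\i}\,\delta_{b,b_0}+\sqrt{a-\i}\,\delta_{b,b_1}. \]
These say precisely that $\mathcal F^\bullet$ and $\mathcal F^\circ$ lie in the kernel and cokernel of $K_\G$ at every vertex except the boundary ones, i.e. they are Coulomb gauges whose only sources sit at $w_0,w_1$ and $b_0,b_1$.

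From these identities the closedness of $d\cT(bw^*)=\mathcal F^\bullet(b)K_\G(b,w)\mathcal F^\circ(w)$ around each interior face of $\G^*$ is immediate, since the sum over the face factors through one of the two vanishing sums; this is the statement that $\cT$ is a well-defined embedding of the augmented dual. Because $K_\G$ is real, the conjugated form $d\cO(bw^*)=\mathcal F^\bullet(b)K_\G(b,w)\overline{\mathcal F^\circ(w)}$ is closed around every interior face for the same reason (the realness lets the conjugate pass through the black-face sums), which is exactly the existence of the origami map $\cO$, equivalently the angle condition at every interior vertex. As $|d\cO|=|d\cT|$ edge by edge, $(\cT,\cO)$ is a genuine t-surface.

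To locate the boundary polygon I would evaluate the gauge functions at the four boundary vertices using the normalized block $\left(\begin{smallmatrix} a & 1\\ -1 & a\end{smallmatrix}\right)$, obtaining the universal values $\mathcal F^\circ(w_0)=(1-a\i)\sqrt{a-\i}$, $\mathcal F^\circ(w_1)=(a+\i)\sqrt{a-\i}$, $\mathcal F^\bullet(b_0)=(1+a\i)\sqrt{a+\i}$, $\mathcal F^\bullet(b_1)=-(a-\i)\sqrt{a+\i}$, all functions of $a$ alone. The outer boundary edge of $\cT(\G^*)$ at a boundary vertex $v$ is governed by $\overline{\mathcal F(v)}^2$ — the two square-root factors of $d\cT$ collapsing onto the single vertex $v$ — with a sign alternating between white and black vertices; substituting the four values produces the sides $\pm(a+\i)\sqrt{a^2+1}$ and $\pm(a-\i)\sqrt{a^2+1}$, which accumulate to the rhombus with vertices $0,\ (a+\i)\sqrt{a^2+1},\ 2a\sqrt{a^2+1},\ (a-\i)\sqrt{a^2+1}$. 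The folding doubles each tangent angle, so the corresponding origami edges alternate in sign with common modulus $a^2+1$; hence opposite corners have equal origami image and the boundary folds onto a segment, which after a global isometry is $\{0,-(a^2+1)\}$.

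Finally, perfectness has two ingredients: tangency of the boundary polygon to a circle, which is automatic since every rhombus admits an inscribed circle; and the requirement that the outer–inner edges lie on angle bisectors, which is equivalent to the origami map collapsing the boundary onto a segment — exactly what the previous step produced. I expect the main obstacle to be that step: proving that the outer boundary increments are genuinely controlled by $\overline{\mathcal F(v)}^2$, and hence that the boundary polygon and its origami fold depend only on $a_{\scriptscriptstyle\G}$ and not on the interior combinatorics of $\G$. The cleanest fully explicit instance is the four-cycle, where $K_\G$ is literally the inverse of the normalized block and the computation above is exact; the real work for general $\G$ is to show the outer edges are insensitive to the interior, the delicate points being the orientation of the four corner faces of the augmented dual and a consistent choice of branches for $\sqrt{\pm a\pm\i}$.
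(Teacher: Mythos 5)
Your skeleton matches the paper's: verify the Coulomb gauge condition from $K_\G K_\G^{-1}=\mathrm{Id}$, then read off the boundary behaviour at the four marked vertices (your four values $\cF^\circ(w_0)=(1-a\i)\sqrt{a-\i}$, $\cF^\circ(w_1)=(a+\i)\sqrt{a-\i}$, $\cF^\bullet(b_0)=(1+a\i)\sqrt{a+\i}$, $\cF^\bullet(b_1)=(\i-a)\sqrt{a+\i}$ agree with the paper's). But there is a genuine gap at the step you dismiss in one clause: closedness of $d\cT$ around interior faces gives only a well-defined \emph{t-realisation}, not ``a well-defined embedding of the augmented dual.'' The paper is explicit that a t-realisation need not be a proper embedding --- the Kasteleyn sign condition forces the angle condition only modulo $2\pi$. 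Properness is obtained in the paper from the cited result (Remark~\ref{rem:p_emb}, i.e.\ \cite[Theorem 4.1]{CLR2}): a t-realisation that satisfies the perfectness boundary conditions (b)--(c) is automatically a proper embedding and hence a perfect t-embedding. Your argument never supplies this input, and without it the conclusion ``$\cT$ is a perfect t-embedding'' does not follow from what you have written. Relatedly, you replace the bisector condition (c) by the claim that it is ``equivalent to the origami map collapsing the boundary onto a segment''; that is not the definition and is not how the paper proceeds --- the paper checks (c) directly by showing $\cF^\bullet(b_0)\cF^\circ(w_0)=(a^2+1)^{3/2}>0$ together with positivity of the corner Kasteleyn entry, so that $d\cT(b_0w_0^*)$ points along the bisector into the rhombus.

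Conversely, the step you single out as ``the main obstacle'' --- that the outer boundary increments depend only on $a_{\scriptscriptstyle\G}$ and not on the interior of $\G$ --- is immediate from the source identities you already wrote down, and your proposed mechanism for it is wrong. The boundary edge at the outer black face is
\begin{equation*}
\sum_{w\sim b_0} d\cT(b_0w^*)=\cF^\bullet(b_0)\sum_{w\sim b_0}K_\G(b_0,w)\cF^\circ(w)=-\sqrt{-a+\i}\;\cF^\bullet(b_0)=(a-\i)\sqrt{a^2+1},
\end{equation*}
with no reference to the interior combinatorics; the analogous computation at $b_1,w_0,w_1$ gives the other three sides, and the same computation with $\overline{\cF^\circ}$ gives the origami increments $\mp(a^2+1)$. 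Your claim that the edge is governed by $\overline{\cF(v)}^{\,2}$ is incorrect: at $b_0$ one has $\overline{\cF^\bullet(b_0)}^{\,2}=-(a^2+1)(a+\i)$, which is not a positive multiple of the actual edge $(a-\i)\sqrt{a^2+1}$ (the correct identity is that the edge equals $-\cF^\bullet(b_0)^2/\sqrt{a^2+1}$, with no conjugate). You land on the correct four-element set only because it is closed under conjugation, but the assignment of directions to vertices would come out conjugated. In short: the easy step you worried about is a two-line consequence of your own identities, while the hard step (properness of the embedding) is the one you skipped.
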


See Section~\ref{subsubsec:temb_aztec} and Corollary~\ref{cor:FG_gen} in there for more details.

\begin{remark}
The parameter~$a_{\scriptscriptstyle \G}$ introduced in~\eqref{eq:def_a_gen_intro} coinsides with the parameter~$a$ defined in~\eqref{eq:def_a_intro} (respectively, in~\eqref{eq:def_a_intro_2_ell}) in the setting of the Aztec diamond with $(1\times\ell)$-periodic weights (respectively, $(2\times\ell)$-periodic weights), as described in Section~\ref{sec:1xell_intro} (respectively, Section~\ref{sec:2xell_intro}).
\end{remark}

\begin{remark}
Let~$\G_n$ be a sequence of finite connected planar bipartite graphs with the outer boundary of degree four connected by a sequence of elementary transformations, without modifying the 4 boundary vertices at intermediate stages.
In Section~\ref{sec:shuffling} we show that $a_{\scriptscriptstyle \G}$ is invariant under such elementary transformations. Therefore, perfect t-embeddings of $\G_n$ given by Coulomb gauges described in the theorem above have the same boundary, as well as the corresponding origami maps.
\end{remark}


\old{

In this section, we focus on Aztec diamonds with multiple gas regions studied in~\cite{Ber21, FSG14}. More precisely, we are interested in the Aztec diamond of size~$2\ell N$ with $(2\times\ell)$-periodic weights~$\alpha_i, \beta_i \in\mathbb{R}_{>0}$, $i\in \{1,\ldots, \ell\}$, as shown in Figure~\ref{fig:aztec_2_l}, see Section~\ref{sec:per_aztec} for a precise definition. We futher assume that $\prod_{i=1}^{\ell} \alpha_i = \prod_{i=1}^{\ell} \beta_i.$ Note that these weights correspond to~$\ell-1$ gas regions and~$4$ frozen regions. 
\begin{figure}
 \begin{center}
\includegraphics[width=0.7 \textwidth]{aztec_6_2_l.pdf}
  \caption{An example of size~$6$ Aztec diamond with~$(2 \times 3)$-periodic weights. Kasteleyn weights on all green edges are~$-1$, on all dashed-dotted grey ones are~$1$, and on all other edges are as marked, where~$\alpha_i, \beta_i\in\mathbb{R}_{>0}$ for~$i=1,\ldots, 3$. The fundamental domain is shown in blue.}\label{fig:aztec_2_l}
 \end{center}
\end{figure}

We need to modify the parameter $a$ from the previous section such that in the setup of a reduced Aztec diamond with $(2\times\ell)$-periodic weights described above, the parameter~$a_{\scriptscriptstyle \G}$ introduced in~\eqref{eq:def_a_gen_intro} coincides with~$a$.
 Define
\begin{equation}\label{eq:def_a_intro_2_ell}
a=\sqrt{\frac{\alpha_1}{\beta_\ell}}.
\end{equation}

Similarly to the previous section,  we want to write limiting functions for the t-embedding and its origami map using Weierstrass parameterization. To do that, we need to modify functions~$f$ and~$g$. Let~$f(z)=f_{\alpha_1, \beta_\ell}(z)$ and~$g(z)=g_{\alpha_1, \beta_\ell}(z)$  be two functions defined by

\begin{equation}\label{eq:intro_f_g_def_2xell}
f(z)=\frac{\sqrt{a+\i}\left(\i-a\right)}{z-1}
\begin{pmatrix}
\i\sqrt{\alpha_1\beta_\ell} & 1
\end{pmatrix} 
\quad\text{ and }\quad
g(z)=\sqrt{a-\i}\left(\frac{\i}{\sqrt{\alpha_1\beta_\ell}}\frac{1}{z}
\begin{pmatrix}
 1 \\
 0
\end{pmatrix}
+
\begin{pmatrix}
  0 \\
  1
\end{pmatrix}
\right).
\end{equation}
Note that in contrast to~\eqref{eq:f_g_def_intro}, these functions are \emph{vector}-valued.

In our paper, we use a novel approach, recently introduced in~\cite{BB23}, for performing steepest descent analysis on Riemann surfaces, which allows one to get asymptotics for the correlation kernel. 
Following~\cite {BB23}, we use the spectral parameterization of the dimer model. 

Let~$P(z,w)$ be the characteristic polynomial of the dimer model with~$(2 \times \ell)$-periodic weights described above. Then the associated spectral curve is \[\mathcal{R}^\circ=\{(z,w)\in(\mathbb{C}^*)^2 \, | \, P(z,w)=0\}.\]
We denote its closure by~$\mathcal R$. 
It is known that $\operatorname{Re}[\mathcal{R}^\circ]=\{(z,w)\in(\mathbb{R}^*)^2 \, | \, P(z,w)=0\}$ devides~$\mathcal{R}$ into two parts~$\mathcal{R}_0$ and~$\overline{\mathcal{R}_0}$. Note that in the setup of the previous 
section \[\{z \, | \, \text{ there exists } w \text{ such that } (z,w)\in \mathcal{R}\}\] is simply the complex plane, and the lower half-plane~$\mathbb{H}^{-}$ there plays a role analogous to~$\mathcal{R}_0$ in this context. 

Similarly to~\eqref{eq:def_lim_T_O_intro} we first define limiting functions for the t-embedding and its origami map on the corresponding spectral curve. For~$q=(z,w) \in\mathcal{R}$ define
\begin{equation*}
\mathcal Z(q)=2a\sqrt{a^2+1}+\frac{1}{2\pi\i}\int_{\gamma_q}f(z)Q(z,w)g(z)\d z \quad \text{and} \quad \vartheta(q)=\frac{1}{2\pi\i}\int_{\gamma_q}f(z)Q(z,w)\bar g(z)\d z,
\end{equation*}
where~$Q$ is a matrix given by~\eqref{eq:def_Q_sec_2} and~$\gamma_q$ is a symmetric under conjugatio simple curve going from~$q$ to~$\bar q$ precisely described in Definition~\ref{def:curve}. 

\begin{remark}
Note that~$Q(z,w)$ depends not only on \emph{all the weights} in the~$(2 \times \ell)$-period but also on \emph{their ordering}, while~$f$ and~$g$ are solely determined just by weights~$\alpha_1$ and~$\beta_\ell$. Therefore, in contrast to the previous section, we expect {\mnote [do we?]} each permutation of weights~$\{\alpha_i\}_{i=1}^\ell$ or~$\{\beta_i\}_{i=1}^\ell$ to yield a distinct limiting t-surface for a generic choice of weights. 
\end{remark}

Recall that in the setup of this section, the liquid region has~$\ell-1$ holes and is surrounded by four frozen regions in the corners. The holes in the liquid region correspond to gas regions. As in the previous section, we use the notation~$\mathcal F_R$ for the liquid region. We also denote gas regions by~$\mathcal F^{\operatorname{gas}}_i$ for~$i \in \{1,\ldots,\ell-1\}$ and frozen regions by~$\mathcal F^{\operatorname{frozen}}_j$ for~$j \in \{1,\ldots,4\}$.

\begin{theorem}\label{thm:main_asymptotic_2xell_intro}
Let~$\cT_N$ and  $\cO_N$ denote the perfect t-embedding and its associated origami map of the Aztec diamond of size~$\ell N$ with $(2\times\ell)$-periodic weights described above. Let~$(\xi,\eta)$ be a point in the liquid region~$\mathcal F_R$ of the Aztec diamond. Then
\begin{equation*}
(\cT_N(\xi,\eta),\cO_N(\xi,\eta))\to \left(\mathcal Z(\Omega(\xi,\eta)),\vartheta(\Omega(\xi,\eta))\right)
\end{equation*}
as~$N\to \infty$, 
where~$\Omega:\mathcal F_R\to \mathcal{R}_0$ 
a diffeomorphism (properly defined in Proposition~\ref{prop:diffeomorphism_2xell}) known as a critical point map~\cite{}, and the convergence is uniform on compact subsets of~$\mathcal F_R$. 

Moreover, the following limits exist: 
\[\lim_{(\xi,\eta)\to \mathcal F^{\operatorname{gas}}_i}(\cT_N(\xi,\eta),\cO_N(\xi,\eta))
\quad \text{ and }\quad
\lim_{(\xi,\eta)\to \mathcal F^{\operatorname{frozen}}_j}(\cT_N(\xi,\eta),\cO_N(\xi,\eta))\] 
for all $i=1,\ldots, \ell-1$ and $j=1, \ldots, 4$.
\end{theorem}

\begin{remark}
Note that, in general, the scaling limit of the origami map is no longer one-dimensional. Moreover, we will show that shifting the same period on the square lattice can affect the limit by changing the dimension of the subspace in which it lies. We provide the proof of such dependence in the case of a one-parameter family of $(2\times2)$-periodic weights in Section~\ref{sec:2x2}.
\end{remark}

To describe the geometry of the scaling limit of the t-surface, we need to introduce one more notation. Denote the limiting points from the above theorem that correspond to gas regions by~$P_i$:
\[P_i=\lim_{(\xi,\eta)\to \mathcal F^{\operatorname{gas}}_i}(\cT_N(\xi,\eta),\cO_N(\xi,\eta)).\]

\begin{corollary}\label{cor:2byl_intro}
1) The t-surface~$(\cT_N, \cO_N)$ of the Aztec diamond of size~$\ell N$ with $(2\times\ell)$-periodic weights (as shown on Figure~\ref{fig:aztec_2_l}) converges to a space-like surface in~$\mathbb R^{2,2}$ with zero mean curvature and with the boundary given by a quadrilateral in~$\mathbb{C}\times\mathbb{R}$ with vertices at 
\[
(0,0), \quad
\left((a+\i)\sqrt{a^2+1},-(a^2+1)\right), \quad
\left(2a\sqrt{a^2+1} ,\,0\right), \quad
\left((a-\i)\sqrt{a^2+1},-(a^2+1)\right),
\]
and~$\ell-1$ cusps with apices~$P_1, \dots, P_{\ell-1}$;\\
2) In the scaling limit, each of the $4$ frozen regions collapses to one of the boundary points, and each of the~$(\ell-1)$ gas regions collapses to one of the inner points $P_i$ under the t-surface.
\end{corollary}

\begin{remark}
Note that the positions of the boundary points corresponding to the frozen regions depend solely on parameter~$a=\sqrt{\frac{\alpha_1}{\beta_\ell}}$, while the positions of apices, which correspond to gas regions, depend on all the weights~$\{\alpha_i\}_{i=1}^\ell$ and~$\{\beta_i\}_{i=1}^\ell$ and their ordering. The positions of the points~$P_i$ can be computed exactly using the results of Section~\ref{sec:gas}. While we do not carry out this computation in the general $(2\times\ell)$-periodic case, we provide it for a one-parameter family of $(2\times2)$-periodic weights in Section~\ref{sec:2x2}.
\end{remark}

\bigskip

\begin{corollary}\label{cor:cusp_intro}
{\mnote [Say smth about cusps... What do we say here?]}
\end{corollary}

\begin{remark}
 {\mnote [say smth more concrete about our $2\times2$ examples from Section 6]}
\end{remark}

\bigskip

{\mnote [Where should we add a comment about parametr~$t$? ]}
}


\subsubsection{Outlook}\label{sec:outlook}As discussed in the last paragraph preceding Section~\ref{sec:mainresults}, describing the scaling limit of height fluctuations in the presence of gas regions requires both pieces of data: (A) the conformal structure on the liquid region and (B) the shift~$e$ in the discrete Gaussian. Recall that in the simply connected liquid region setting, that is, when no gas regions are present, only part (A) is required. It was shown in~\cite{CLR2} that the conformal structure on the liquid region can be described in terms of the conformal parametrization of the limiting t-surface, assuming that the scaling limit of the t-surface is a maximal surface. We believe that the approach developed in~\cite{CLR2} can be generalized to the setting where gas regions are present. 
Towards that end, a natural question that arises is whether both pieces of data required to describe the limiting height fluctuations can be extracted from the limiting t-surface in this setup. 

In contrast to the simply connected liquid region setting considered in~\cite{CLR2}, where (A) is given by a parametrization of the limiting t-surface from the disc that is conformal and harmonic, the presence of gas regions requires a parametrization from a higher genus Riemann surface. In the case of~$(2\times\ell)$-periodic weights on the Aztec diamond, we obtain such a parametrization of the limiting surface from the corresponding spectral curve of the dimer model. More precisely, we obtain a parametrization from $\mathcal{R}_0$, which is isomorphic to a disc with $g$ holes. Note that the light-like cusps correspond to the holes, and in particular are not removable singularities.

We believe that, in general, for the dimer model with doubly periodic edge weights, if one has the limiting t-surface given by a maximal surface with light-like cusps, then the Kenyon-Okounkov conformal structure on the liquid region can be recovered from it. Moreover, we believe that the locations of apices of cusps encode the parameter~$e$ from part (B), which is essential for describing the global height fluctuations. However, the determination of this second piece of data is more subtle, and our work does not fully resolve this issue even in the Aztec diamond setting. While we can see how the parameter~$e = t$ explicitly enters the description of the positions of the cusp apices on the limiting maximal surface, it is still unclear how to recover~$e$ directly from the geometry of a maximal surface with cusps. This highlights one of the difficulties in extending the geometric understanding of t-surfaces to fully capture all data required for describing global height fluctuations.


\addtocontents{toc}{\protect\setcounter{tocdepth}{1}}
\subsection*{Organization.} 
The paper is organized as follows. 
 Section~\ref{sec:back} contains definitions and statements of relevant facts about both perfect t-embeddings and 
the spectral curve of the dimer model in the settings we consider. 
In Section~\ref{sec:p-embeddings} we present and prove formulas for Coulomb gauge functions in terms of the inverse  Kasteleyn matrix for general planar weighted bipartite graphs with outer face of degree four.
In Section~\ref{sec:frozen} we specialize this formula to the $(1 \times \ell)$-periodic case and take asymptotics, proving Theorem~\ref{thm:main_asymptotic_no_gas_intro} and Corollary~\ref{cor:1byl_intro}. 
 Section~\ref{sec:gas} is devoted to the $(2 \times \ell)$-periodic case. In this section, we
prove 
Theorem~\ref{thm:2byl_intro} and Proposition~\ref{prop:cusp_intro}; in particular we analyze the cusps which form in the interior of the t-surface. 
In Section~\ref{sec:2x2} we specialize the weights to what is known as the two-periodic Aztec diamond, 
and analyze how the cusp changes when the weights in the fundamental domain are simply shifted.
\addtocontents{toc}{\protect\setcounter{tocdepth}{2}}

\addtocontents{toc}{\protect\setcounter{tocdepth}{1}}
\subsection*{Acknowledgements}
The authors would like to thank Niklas Affolter, Mikhail Basok, Alexei Borodin, Dmitry Chelkak, and Vadim Gorin for valuable discussions.
TB was supported by the Knut and Alice Wallenberg Foundation grant KAW 2019.0523. MN was partially supported by the NSF grant No. DMS 2402237. MR was partially supported by a Simons Foundation Travel Support for Mathematicians MPS-TSM-00007877.
\addtocontents{toc}{\protect\setcounter{tocdepth}{2}}



\section{Background}\label{sec:back}
The goal of this section is to recall some definitions and state the known results that we will use.

\subsection{Perfect t-embeddings and origami maps}\label{sec:def_t_emb}
The goal of this section is to give a brief reminder of the definitions of t-embeddings, origami maps, and Coulomb gauges. We refer to~\cite{KLRR22, CLR1, CLR2} for more details.

Let~$\G$ be a planar, bipartite, finite graph, with $V(G) = B\cup W$. We call the sets $B$ and $W$ for black and white vertices, and the corresponding sets of the dual graph black and white faces.  Denote the positive weights on edges by~$\chi_e$ and by~$K_\R:\R^W\to\R^B$ a \emph{real-valued} Kasteleyn matrix, i.e., a matrix with entries~$K_\R(b,w)=\pm\chi_{(bw)}$ such that {the `$\pm$'} signs satisfy the Kasteleyn sign condition: the alternating product of Kasteleyn signs around each simple face of degree~$2d$ is equal to~$(-1)^{d+1}$. Recall that the probability measure on the set~$\mathcal{M}$ of dimer configurations of~$\G$ is given by
\begin{align}\label{def:P_dimer}\mathbb{P}[m]=\frac{1}{Z}\prod_{e\in m}\chi_e,\end{align}
where $Z=\sum_{m\in\mathcal{M}}\prod_{e\in m}\chi_e$ is the partition function. It is known~\cite{Ken97} that all local statistics of the dimer model can be written in terms of the inverse Kasteleyn matrix. 

Given edge weights $\chi$ on a bipartite graph, one can associate a face weight $X_{v^*}$ to each face of $\G$ by
\[X_{v^*}:=\prod_{s=1}^d\frac{\chi_{b_sw_s}}{\chi_{b_sw_{s+1}}},\]
where the face $v^*$ has degree $2d$ with vertices denoted by $w_1, b_1, \ldots , w_d, b_d$ in counterclockwise order. Recall that two weight functions~$\chi$ and~$\chi'$ on edges of~$\G$ are called gauge equivalent if there exists a positive function~$\mathcal{F}$ on the set of vertices such that~$\chi'_e=\mathcal{F}(b)\chi_e\mathcal{F}(w)$ for any edge~$e=(wb)$ of the graph. Note that gauge equivalent weight functions define the same probability measure on dimer configurations. 
Note also that two edge weight functions are gauge equivalent if and only if they correspond to the same face weights.

\begin{figure}
 \begin{center}
\includegraphics[width=0.9 \textwidth]{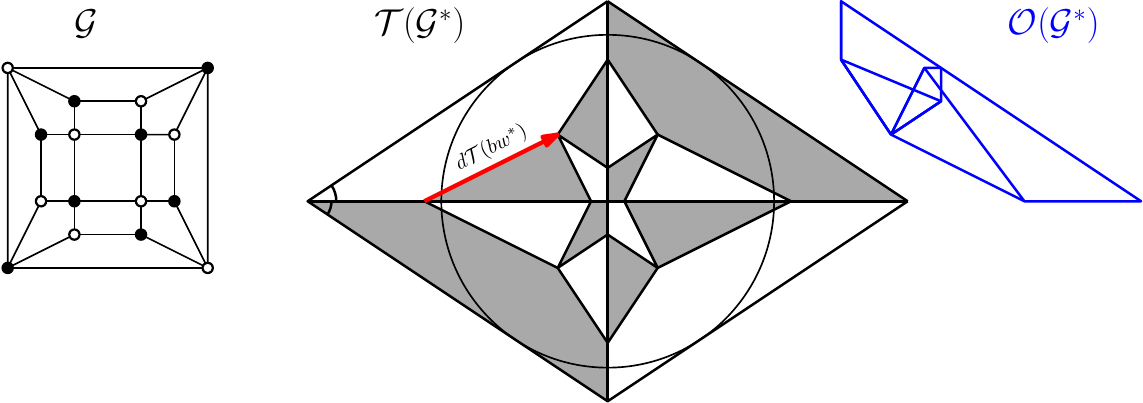}
  \caption{An example of a perfect t-embedding and its origami map of the augmented dual of a bipartite graph. Note that the inner faces of t-embedding correspond to the vertices of the bipartite graph.}\label{fig:t-emb_def}
 \end{center}
\end{figure}

We are interested in embeddings of the augmented dual graph. To obtain an augmented dual graph~$\G^*$ from $\G$ one should connect all boundary vertices of~$\G$ to infinity and take the dual graph. Denote by~$v_1, \ldots , v_{2n}$ the outer vertices of~$\G^*$ labeled counterclockwise. We also denote by~$v_{\operatorname{in},k}$ the unique inner vertex of~$\G^*$ that is adjacent to~$v_k$. 

\begin{definition}[Following~\cite{CLR1,CLR2}]\label{def:temb_origami}
Let~$(\G, \chi)$ be a weighted, finite, bipartite, planar graph. Then an embedding~$\T$ of the augmented dual graph $\G^*$ is called a perfect t-embedding if the following holds
\begin{itemize}
\item[(a)] $\T$ is a proper embedding of~$\G^*$ such that
 all edges are line segments, the edge weights of~$\G$ given by length of dual edges are gauge equivalent to initial weights~$\chi_e$, and around each inner vertex of~$\T(\G^*)$ the sum of angles adjacent to white faces is the same as the sum of angles adjacent to black ones and both of them are~$\pi$ (we call it the angle condition); 
 \item[(b)]  The boundary vertices $\cT(v_1), \ldots, \cT(v_{2n})$ form a tangental (not necessary convex) polygon~$P$;
\item[(c)] The points~$\cT(v_{\operatorname{in},k})$ lie on bisectors of corresponding angles of~$P$ and \[(\cT(v_k)\cT(v_{\operatorname{in},k}))\cap \operatorname{Int} (P) \neq 0,\]
where~$\operatorname{Int} (P)$ is the interior of the polygon~$P$.
 \end{itemize}
  \end{definition}
\begin{remark} To fix the scaling of the boundary polygon~$P$ in the above definition one can ask the radius of the circle inscribed into~$P$ to be equal to~$1$.
\end{remark}

Denote by~$\partial B$ and~$\partial W$ the sets of black and white boundary vertices of~$\G$, respectively. Following~\cite{KLRR22} and~\cite[Section 4.1]{CLR2} we call a pair of functions~$\mathcal{F}^\bullet: B\to \mathbb{C}$ and~$\mathcal{F}^\circ: W\to \mathbb{C}$ \emph{Coulomb gauge functions} if
\begin{equation}
\label{eq:Coulomb-def}
\begin{array}{ll}
[K_\R^\top \cF^\tb](w)=0 &\text{for all~$w\in W\smallsetminus\partial W$,}\\[2pt]
[K_\R\cF^\tw](b)=0 &\text{for all~$b\in B\smallsetminus\partial B$.}
\end{array}
\end{equation}
We also define a \emph{t-realisation}~${\cT=\cT_{(\cF^\tb,\,\cF^\tw)}}$ of the augmented dual graph~$\G^*$ together with the associated \emph{origami map}~${\cO=\cO_{(\cF^\tb,\,\cF^\tw)}}$ by setting
\begin{equation}
\label{eq:TO-def-via-F}
\begin{array}{rcl}
d\cT(bw^*)&\!:=\!& \cF^\tb(b)K_\R(b,w)\cF^\tw(w),\\[2pt]
d\cO(bw^*)&\!:=\!&\cF^\tb(b)K_\R(b,w)\overline{\cF^\tw(w)}\,.
\end{array}
\end{equation}

\begin{remark}
Condition~\eqref{eq:Coulomb-def} implies that both~$d\cT$ and~$d\cO$ are closed forms on edges, 
therefore~$\cT(\G^*)$ and~$\cO(\G^*)$ are well-defined (and both defined up to an additive constant).
Note that a t-realisation a priori does not have to be a proper embedding. Indeed, note that the Kasteleyn sign condition implies that the angle condition holds only up to modulo $2\pi$. See~\cite{KLRR22, CLR2} for more details.  In case when~$\cT(\G^*)$ gives a proper embedding of $\G^*$ we think about $d\cT(bw^*)$ as an oriented edge of the t-embedding that has a black face on its right, see Figure~\ref{fig:t-emb_def}.

\end{remark}

\begin{remark}\label{rem:p_emb}
Recall that due to~\cite[Theorem 4.1]{CLR2} (see also~\cite[Theorem 2.6]{BNR24}) a t-realisation~$\cT$ is a perfect t-embedding if it satisfies boundary conditions~(b)--(c). And therefore a proper embedding of $\G^*$. 
\end{remark}

\subsection{Doubly periodic Aztec diamond}\label{sec:per_aztec}
In this section, we recall the definition of the Aztec diamond and introduce some notation. Following~\cite{BB23} we draw the Aztec diamond on the tilted square lattice. Let us first introduce the coordinate system on the tilted square lattice. Denote by~$B$ the set of black vertices which is given by
\[B:= \{(2i, 2j+1): i, j \in \mathbb{Z}\}. \]
Similarly, the set~$W$ of white vertices is given by
\[W:= \{(2i+1, 2j): i, j \in \mathbb{Z}\}. \]
We also use the following notation 
\begin{align}\label{eq:def_bw_xy}
b_{x,y}:=b(2x,2y+1)\quad \text{ and } \quad w_{x,y}:=w(2x+1,2y+2).
\end{align}

Following~\cite{BB23} we divide the set of edges into four groups called south, west, east and north edges, given by
\begin{align*}
\text{south} = \{((2i, 2j + 1), (2i + 1, 2j)) : (i, j) \in \mathbb{Z}^2\},\\
\text{west} = \{((2i, 2j - 1), (2i + 1, 2j)) : (i, j) \in \mathbb{Z}^2\},\\
\text{east} = \{((2i - 1, 2j), (2i, 2j + 1)) : (i, j) \in \mathbb{Z}^2\},\\
\text{north} = \{((2i - 1, 2j), (2i, 2j - 1)) : (i, j) \in \mathbb{Z}^2\}.
\end{align*}
Then the Aztec diamond of size~$n$ is the subgraph $A_n = (B(A_n), W(A_n), E(A_n))$ of the tilted square lattice containing all vertices and edges in the square with corners $(0,0)$, $(2n,0)$, $(2n,2n)$ and~$(0,2n)$, including the vertices on the boundary. Note that the faces of the Aztec diamond can be naturally indexed by $(2i, 2j)$ and $(2i+1, 2j+1)$. See Figure~\ref{fig:aztec_reduction} for an example of the Aztec diamond of size~$4$. Let us fix real Kasteleyn signs on the edges of the Aztec diamond by assigning minus sign to all north edges of the Aztec diamond and plus sign for all others. 

Given a white vertex~$w(2i-1,2j)\in W(A_n)$, let 
$e_{\operatorname{south}}$, $e_{\operatorname{west}}$, $e_{\operatorname{east}}$ and $e_{\operatorname{north}}$ be south, west, east and north edges adjacent to~$w$. Following~\cite{BB23} we label the edge weights by
\begin{align}\label{def:weights}
\alpha_{j,i}:=\chi_{e_{\operatorname{south}}}, \quad
\beta_{j,i}:=\chi_{e_{\operatorname{east}}}, \quad
\gamma_{j,i}:=\chi_{e_{\operatorname{west}}}, \quad \text{ and } \quad
\delta_{j,i}:=\chi_{e_{\operatorname{north}}},
\end{align} see Figure~\ref{fig:aztec_reduction}. 
Note that due to a gauge transformation, we can assume, without loss of generality, that~$\delta_{j,i}:=1$ for all~$i, j$. Then, given our choice of Kasteleyn signs, the Kasteleyn matrix is given by
\begin{align}\label{eq:K_def}
K(b_{i, j} w_{i', j'})=
\begin{cases}
\alpha_{j'+1, i'+1}, \quad & \text{if } (i, j)=(i', j'+1),\\
\gamma_{j'+1, i'+1}, \quad & \text{if } (i, j)=(i', j'),\\
\beta_{j'+1, i'+1}, \quad & \text{if } (i, j)=(i'+1, j'+1),\\
-1, \quad & \text{if } (i, j)=(i'+1, j'),\\
0, &\text{otherwise}. 
\end{cases}
\end{align}
We are interested in~$k\times \ell$ doubly periodic weights, i.e. our edge weights are periodic in the vertical direction with the period~$k\in\mathbb{Z}_{>0}$, and periodic in the horizontal direction with the period~$\ell\in\mathbb{Z}_{>0}$. In other words, for all~$i, j$ we have
\begin{align*}
\alpha_{j+k,i+\ell}=\alpha_{j,i}, \quad
\beta_{j+k,i+\ell}=\beta_{j,i}, \quad
\gamma_{j+k,i+\ell}=\gamma_{j,i}, \quad 
\delta_{j+k,i+\ell}=\delta_{j,i},
\end{align*}
and
\begin{align*}
K(b_{\ell x + i, ky + j} w_{\ell x' + i', ky' + j'})=K(b_{i, j} w_{i', j'}).
\end{align*}

In this paper, we focus on the following two special cases of doubly periodic weights. 

\begin{definition}[$(1\times\ell)$-periodic weights]\label{def:1_l_weights}
Let $\{\alpha_i\}^\ell_{i=1}$, $\{\beta_i\}^\ell_{i=1}$ and $\{\gamma_i\}^\ell_{i=1}$ be three sets of positive real numbers satisfying~$\beta_i\neq \beta_j$,~$\alpha_i/\gamma_i\neq \alpha_j/\gamma_j$ if~$i\neq j$ and~$\beta_i<1<\alpha_i/\gamma_i$ for all~$i, j \in\{1,\ldots,\ell\}$. 
We define $(1\times\ell)$-periodic weights in the following way, for~$i=1,\dots,\ell$ set
\begin{equation*}
\alpha_{j,i}=\alpha_i, \quad \beta_{j,i}=\beta_i, \quad \gamma_{j,i}=\gamma_i \quad \text{and} \quad  \delta_{j,i} = 1.
\end{equation*}
\end{definition}

\begin{definition}[$(2\times\ell)$-periodic weights]\label{def:2_l_weights}
Let $\{\alpha_i\}^\ell_{i=1}$ and $\{\beta_i\}^\ell_{i=1}$ be two sets of positive real numbers satisfying
\[\prod_{m=1}^\ell \alpha_m=\prod_{m=1}^\ell \beta_\ell.\]
We define $(2\times\ell)$-periodic weights in the following way, for~$i=1,\dots,\ell$ and $j=1, 2$ set
\begin{equation*}
\alpha_{1,i} = \alpha_{2,i}^{-1} = \alpha_i, \quad \beta_{1,i} = \beta_{2,i}^{-1} = \beta_i \quad \text{ and }\quad \gamma_{j,i}=\delta_{j,i}=1.
\end{equation*}
\end{definition}

\begin{remark}
In fact, we will impose one additional assumption on the $(2\times\ell)$-periodic weights: we will require that
the genus of the spectral curve is maximal. See Section~\ref{sec:spectral_curve} below for further details.  
\end{remark}

In Section~\ref{sec:2x2}, we take $\ell=2$ and consider a special case of the $(2\times2)$-periodic weights
from Definition~\ref{def:2_l_weights}.

\old{
\begin{definition}[The two-periodic Aztec diamond]\label{def:weights_2x2}
We say that the model on the Aztec diamond defined from the weights in Definition~\ref{def:2_l_weights} with~$\ell=2$ and 
\begin{multline*}
\alpha_1^{-1} = \beta_1^{-1} = \alpha_2 = \beta_2 = \alpha, \quad \alpha_1^{-1} = \beta_1^{-1} = \alpha_2 = \beta_2 = \alpha^{-1}, \quad 
\alpha_1^{-1} = \beta_1 = \alpha_2 = \beta_2^{-1} = \alpha \\
 \text{or} \quad \alpha_1^{-1} = \beta_1 = \alpha_2 = \beta_2^{-1} = \alpha^{-1}, 
\end{multline*}
is the \emph{two-periodic Aztec diamond}.
\end{definition}

{\mnote 
\begin{definition}[The two-periodic weights on Aztec diamond]\label{def:weights_2x2}
Take~$\ell=2$ in Definition~\ref{def:2_l_weights}.  The $(2\times2)$-periodic weights satisfying one of the following conditions:
\[
\alpha_1^{-1} = \beta_1^{-1} = \alpha_2 = \beta_2 = \alpha, \quad 
\alpha_1^{-1} = \beta_1^{-1} = \alpha_2 = \beta_2 = \alpha^{-1}, 
\]
\[
\alpha_1^{-1} = \beta_1 = \alpha_2 = \beta_2^{-1} = \alpha \quad
 \text{or} \quad \alpha_1^{-1} = \beta_1 = \alpha_2 = \beta_2^{-1} = \alpha^{-1}, 
\]
are called the \emph{two-periodic weights} on the Aztec diamond.
\end{definition}
}
}

\begin{definition}[The two-periodic Aztec diamond]\label{def:weights_2x2}
Take~$\ell=2$ in Definition~\ref{def:2_l_weights}. The Aztec diamond with the $(2\times2)$-periodic weights satisfying one of the following conditions:
\[
\alpha_1^{-1} = \beta_1^{-1} = \alpha_2 = \beta_2 = \alpha, \quad 
\alpha_1^{-1} = \beta_1^{-1} = \alpha_2 = \beta_2 = \alpha^{-1}, 
\]
\[
\alpha_1^{-1} = \beta_1 = \alpha_2 = \beta_2^{-1} = \alpha \quad
 \text{or} \quad \alpha_1^{-1} = \beta_1 = \alpha_2 = \beta_2^{-1} = \alpha^{-1}, 
\]
is called the \emph{two-periodic Aztec diamond}.
\end{definition}

\subsection{Double integral formula for the inverse Kasteleyn matrix}\label{sec:inv_kast}
This section contains a known double contour integral formula for the inverse Kasteleyn matrix on the Aztec diamond in the setup of doubly periodic weights. 

Note that using a gauge transformation, we may assume, without loss of generality, that all weights~$\delta_{j,i} = 1$ for all~$i, j$. In the setup of~$k\times \ell$ doubly periodic weights let us introduce the following notation
\begin{equation}\label{eq:vert-prod}
\alpha_i^v=\prod_{j=1}^k \alpha_{j,i}, \quad \beta_i^v=\prod_{j=1}^k \beta_{j,i}
 \quad\text{ and }\quad 
 \gamma_i^v=\prod_{j=1}^k \gamma_{j,i}. 
\end{equation}
Following~\cite{BD19, Ber21, BB23}, we consider the Aztec diamond of size~$k\ell N$ with~$N\in\mathbb{Z}_{>0}$. To state the double contour integral formula result we also need to introduce a few more notation. The~$2\ell$-periodic symbols~$\phi_m$ are~$k\times k$ matrices given by
\begin{align}\label{def:symbols_1}
\phi_{2i-1}(z)=  
\begin{pmatrix}
\gamma_{1,i} & 0 & \cdots & 0 & \alpha_{k,i}z^{-1}\\
\alpha_{1,i} & \gamma_{2,i} & \cdots & 0 & 0\\
 \cdots \\
0 & 0 & \cdots & \alpha_{k-1,i} & \gamma_{k,i}
\end{pmatrix} ,
\end{align}

\begin{align}\label{def:symbols_2}
\quad \phi_{2i}(z)=    \frac{1}{1-\beta_i^{v}z^{-1}}
\begin{pmatrix}
1 & \prod_{j=2}^{k}\beta_{j,i}z^{-1} & \cdots & \beta_{k,i}z^{-1}\\
\beta_{1,i} & 1 & \cdots & \beta_{k,i}\beta_{1,i}z^{-1}\\
 \cdots \\
\prod_{j=1}^{k-1}\beta_{j,i} &\prod_{j=2}^{k-1}\beta_{j,i} & \cdots & 1
\end{pmatrix}.
\end{align}
The periodicity of weights implies~$\phi_m=\phi_{m+2\ell}$. Denote the product of the defined above symbols over one period by~$\Phi$ and over all $kN$ periods by $\phi$, i.e. we define the matrix-valued functions~$\Phi$ and $\phi$ by
 \begin{equation}\label{def:phi}
 \Phi(z)=\prod_{m=1}^{2\ell} \phi_{m}(z)\quad \text{and} \quad \phi(z)=\Phi(z)^{kN}.
\end{equation}

Finally, we need a notion of Wiener-Hopf factorization of a matrix-valued function~$\phi$. Assuming that the matrix-valued functions~$\phi$ and~$\phi^{-1}$ are both analytic in a neighborhood of the unit circle, we say that~$\phi$ admits a Wiener-Hopf factorization if
\begin{equation}\label{eq:wiener-hopf}
\phi(z)=\widetilde\phi_{-}(z)\widetilde\phi_{+}(z)=\phi_+(z)\phi_-(z),
\end{equation}
for~$z$ on the unit circle, where the factors~$\widetilde\phi_{+}, \widetilde\phi_{+}^{-1}, \phi_+, \phi_-^{-1}$ are analytic in the closed unit disc, and the factors~$\widetilde\phi_{-}, \widetilde\phi_{-}^{-1}, \phi_-, \phi_-^{-1}$ are analytic in the complement of the unit disc, with the behavior~$\widetilde\phi_{-}, \phi_-\sim z^{-\ell N} I$ as~$z\to\infty$. Such factorization exists if and only if~$\beta_i^v < 1 < \alpha_i^v/\gamma_i^v$ for all~$i=1,\dots,\ell$, see~\cite[Theorem 4.8]{BD19}.

We are interested in two special cases of weights where the symbols $\phi_m$ can be simplified in the following way. 

\begin{remark}\label{rmk: phi_1_l}
For $(1\times\ell)$-periodic weights as in Definition~\ref{def:1_l_weights} one has 
\begin{equation*}
\phi_{2i-1}(z)=\gamma_i+\alpha_i z^{-1}, \quad \phi_{2i}(z)=\frac{1}{1-\beta_i z^{-1}}.
\end{equation*}
\end{remark}

\begin{remark}\label{rmk: phi_2_l}
For $(2\times\ell)$-periodic weights as in Definition~\ref{def:2_l_weights} one has 
\begin{equation*}
\phi_{2i-1}(z)=
\begin{pmatrix}
1 & \alpha_i^{-1} z^{-1} \\
\alpha_i & 1
\end{pmatrix}
\quad \text{and} \quad 
\phi_{2i}(z)=\frac{1}{1-z^{-1}}
\begin{pmatrix}
1 & \beta_i^{-1}z^{-1} \\
\beta_i & 1
\end{pmatrix}.
\end{equation*}
\end{remark}

Recall the notation introduced in~\eqref{eq:def_bw_xy}, and~\eqref{eq:vert-prod}--\eqref{eq:wiener-hopf}. Combining~\cite[Theorem 3.1]{BD19} with~\cite[Theorem 2.9]{BB23} we obtain the following result. In the formula below, only one of the two factorizations in~\eqref{eq:wiener-hopf} is present. The other factorization is necessary for the full statement given in~\cite{BD19}, however, for our purposes, it will be irrelevant.
\begin{theorem}[\cite{BD19}]\label{thm:inverse_kasteleyn_general} 
Let~$K$ be a Kasteleyn matrix, defined by~\eqref{eq:K_def}, of the Aztec diamond of size~$k\ell N$ with~$k\times \ell$ doubly periodic weights satisfying the assumption~$\beta_i^v < 1 < \alpha_i^v/\gamma_i^v$ for all~$i=1,\dots,\ell$.
Then,
\begin{multline*}
  K^{-1}( w_{\ell x'+i',ky'+j'},  b_{\ell x+i,ky+j}) 
  =\\
  -\left(\frac{\mathbf{1} \{\ell x+i \geq \ell x'+i'+1\}}{2\pi\i}\int_\Gamma \prod_{m=2\ell x'+2i'+2}^{2\ell x+2i} \phi_m(z)z^{y'-y}\frac{\d z}{z}\right)_{j'+1,j+1}\\
  +\Bigg(\frac{1}{(2\pi\i)^2}\int_{\Gamma_s}\int_{\Gamma_l}\left(\prod_{m=1}^{2i'+1}\phi_m(z_1)\right)^{-1}\Phi(z_1)^{-x'}\widetilde \phi_-(z_1)\widetilde \phi_+(z_2)\Phi(z_2)^{x-kN} \\
  \times
  \prod_{m=1}^{2i}\phi_m(z_2)\frac{z_1^{y'}}{z_2^y}\frac{\d z_2\d z_1}{z_2(z_2-z_1)}\Bigg)_{j'+1,j+1} .
\end{multline*}
The contours are positively oriented and given by~$\Gamma_s = \{|z| = r_s\}$,~$\Gamma_l = \{|z| = r_l\}$ for~$r_s < 1 < r_l$, and~$\Gamma = \{|z| = 1\}$, and~$\phi(z)=\widetilde\phi_{-}(z)\widetilde\phi_{+}(z)$ is the Wiener-Hopf factorisation.
\end{theorem}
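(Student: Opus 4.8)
The statement is attributed to \cite{BD19} and packaged together with \cite[Theorem 2.9]{BB23}, so the cleanest route is to deduce it from those two results; but to make the argument self-contained I would instead verify directly that the right-hand side, call it $G(w,b)$, is a genuine two-sided inverse of $K$, i.e. that $\sum_{w'} K(b,w')\,G(w',b') = \delta_{b,b'}$ and $\sum_{b'} G(w,b')\,K(b',w') = \delta_{w,w'}$. Because the nonzero entries of $K$ in \eqref{eq:K_def} connect a black vertex only to its four neighbouring white vertices, each of these sums is a fixed finite linear combination of $G$ evaluated at shifted arguments, and the whole problem reduces to a contour-integral identity.

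First I would pass to the transform variable $z$ dual to the vertical period: the vertical $k$-periodicity means that the action of $K$ along the vertical direction is diagonalised by $z$, and a direct computation shows that in this variable the Kasteleyn difference operator is represented exactly by the block symbols $\phi_{2i-1}(z)$ and $\phi_{2i}(z)$ from \eqref{def:symbols_1}--\eqref{def:symbols_2}. Consequently, applying $K$ to $G$ amounts to inserting one extra symbol into the ordered products $\prod_{m=1}^{2i}\phi_m(z_2)$ and $(\prod_{m=1}^{2i'+1}\phi_m(z_1))^{-1}$, which produces a telescoping cancellation. On the single-integral (bulk) term the indicator $\mathbf{1}\{\ell x+i\ge \ell x'+i'+1\}$ jumps by one precisely when the two arguments coincide, and tracking this jump together with the stripped symbol yields the Kronecker delta. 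On the double-integral term the same insertion makes the $z_2$-integrand analytic inside $\Gamma_l$ apart from the simple pole at $z_2=z_1$ coming from $\tfrac{1}{z_2-z_1}$; the residue there is designed to cancel the leftover bulk contribution, and the remaining closed contour integrates to zero by Cauchy's theorem.

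The step that carries all the analytic weight is the Wiener--Hopf factorisation $\phi=\widetilde\phi_-\widetilde\phi_+$ together with the normalisation $\widetilde\phi_-\sim z^{-\ell N}I$ at infinity. This factorisation exists exactly under the hypothesis $\beta_i^v<1<\alpha_i^v/\gamma_i^v$ (\cite[Theorem 4.8]{BD19}), and its one-sided analyticity is what licenses the contour deformations above: deforming $\Gamma_l$ outward and $\Gamma_s$ inward must leave no contribution at $0$ or $\infty$, and this holds only because $\widetilde\phi_+,\widetilde\phi_+^{-1}$ extend holomorphically inside the disc while $\widetilde\phi_-,\widetilde\phi_-^{-1}$ extend outside with the prescribed $z^{-\ell N}$ decay. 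I expect this to be the main obstacle: one must verify that the factorisation has precisely the stated growth, and then confirm that every boundary term produced by the contour manipulations vanishes, so that the two terms combine to give exactly $\delta_{b,b'}$ with no residual error. The growth condition is ultimately what enforces the two terminating rows of the finite diamond of size $k\ell N$, so the verification is really a check that $G$ satisfies the correct boundary behaviour at both ends, information carried by the factors $\Phi(z)^{x-kN}$ and $\Phi(z)^{-x'}$ and by the normalisation of $\widetilde\phi_-$.
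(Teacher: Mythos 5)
The paper does not actually prove this statement: it is imported wholesale by combining \cite[Theorem 3.1]{BD19} --- where the kernel is obtained from an Eynard--Mehta-type computation for the associated determinantal point process, the Wiener--Hopf factorization arising from the inversion of a block Toeplitz Gram matrix --- with \cite[Theorem 2.9]{BB23}, which identifies that correlation kernel with the inverse Kasteleyn matrix. Your direct-verification strategy (checking $KG=GK=I$) is therefore a genuinely different route, one that has been carried out successfully for special cases such as the two-periodic Aztec diamond; it buys self-containedness at the price of redoing by hand the telescoping and boundary computations that the Eynard--Mehta route inherits from the combinatorics of the underlying non-intersecting path ensemble.

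As written, however, the sketch has a genuine gap at its central step. The symbols of \eqref{def:symbols_1}--\eqref{def:symbols_2} are \emph{not} a transform of the local Kasteleyn difference operator: $\phi_{2i}(z)$ carries the resolvent prefactor $(1-\beta_i^{v}z^{-1})^{-1}$, i.e.\ it is a transfer matrix summed over arbitrarily many east-type steps, a manifestly non-local object. Consequently the claim that ``applying $K$ amounts to inserting one extra symbol'' is false as stated: applying $K$ at a black vertex couples white vertices in two adjacent columns, whose associated ordered products differ by a factor $\phi_{2n+1}\phi_{2n+2}$ of \emph{two} symbols, and the neighbour sum also shifts the matrix indices $j,j'$. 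Before any telescoping occurs one must verify nontrivial algebraic identities of exactly the kind that appear in the proofs of Propositions~\ref{prop:ct_co_finite_1xell} and~\ref{prop:ct_co_finite_2xell}, and you do not supply them. Moreover, the boundary verification --- that the truncated neighbour sums on all four sides of the finite diamond still yield the Kronecker delta, using the one-sided analyticity and the $z^{-\ell N}$ growth of $\widetilde\phi_{\pm}$ together with the factors $\Phi(z_1)^{-x'}$ and $\Phi(z_2)^{x-kN}$ --- is precisely where the hypothesis $\beta_i^v<1<\alpha_i^v/\gamma_i^v$ and the finite size enter, and you only assert that it works. Until these two pieces are carried out, the proposal is a plausible programme rather than a proof.
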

To study the large~$N$ limit of the above double contour integral, it is necessary to understand the factors of the Wiener--Hopf factorization.  If~$k=1$, the factors of~$\phi=\prod\phi_i$ are scalar, and the Wiener--Hopf factorization is constructed by defining~$\widetilde \phi_-$ and~$\widetilde \phi_+$ as the product over the appropriate factors of~$\phi$, see Corollary~\ref{cor:inv_kast_1xell} below. In fact, it is well known that if~$k=1$, the model is a Schur process, and the integral formula above is a special case a of a classical result~\cite{BBCCR17, Joh03, OR03}. 

If~$k>1$, the model is no longer a Schur process and the factors in~$\phi=\prod \phi_i$ do not, in general, commute. The problem of obtaining an expression for the Wiener--Hopf factorization suitable for asymptotic analysis was solved in~\cite{BB23}, see also~\cite{BD22}. This problem was also resolved for the setting of Definition~\ref{def:2_l_weights} in~\cite{Ber21} using more elementary tools, as discussed in Proposition~\ref{prop:inv_kast_2xell} below. 

Before we specialize the double contour integral from Theorem~\ref{thm:inverse_kasteleyn_general} to the weights as in  Definitions~\ref{def:1_l_weights} and~\ref{def:2_l_weights}, we introduce the \emph{spectral curve}. Let~$P$ be the \emph{characteristic polynomial} given by 
\begin{equation}\label{eq:characteristic_polynomial}
P(z,w)=\prod_{i=1}^\ell (1-\beta_i^v z^{-1}) \det (\Phi(z)-wI).
\end{equation}
The associated spectral curve is defined by
\begin{equation}\label{eq:spectral_curve}
\mathcal{R}^\circ=\{(z,w)\in(\mathbb{C}^*)^2 \, | \, P(z,w)=0\},
\end{equation}
and we denote its closure by~$\mathcal R$, see Section~\ref{sec:spectral_curve}.

\begin{corollary}\label{cor:inv_kast_1xell}
Specializing the objects defined in this section to the setting of Definition~\ref{def:1_l_weights}, we have 
\begin{equation*}
\Phi(z)=\prod_{m=1}^\ell\frac{\gamma_m+\alpha_m z^{-1}}{1-\beta_m z^{-1}},
\end{equation*}
and since~$\Phi$ is scalar, the equation~$P(z,w)=0$ becomes~$w=\Phi(z)$. Moreover, the fact that~$\Phi$ is a scalar rational function means that the Wiener--Hopf factorization simply reads
\begin{equation*}
\widetilde \phi_+(z)=\prod_{m=1}^\ell (\gamma_mz+\alpha_m)^N \quad \text{and} \quad \widetilde \phi_-(z)=\prod_{m=1}^\ell(z-\beta_m)^{-N}.
\end{equation*}
The inverse Kasteleyn matrix given in Theorem~\ref{thm:inverse_kasteleyn_general} becomes
\begin{multline}\label{eq:inv_kast_1xell}
  K^{-1}(w_{\ell x'+i',y'},b_{\ell x+i,y})
  =-\frac{\mathbf{1} \{\ell x+i \geq \ell x'+i'+1\}}{2\pi\i}\int_\Gamma \left(\frac{\prod_{m=1}^{i'+1}(\gamma_m+\alpha_m z^{-1})}{\prod_{m=1}^{i'}(1-\beta_m z^{-1})}\right)^{-1} \\
  \times \prod_{m=1}^{i}\frac{\gamma_m+\alpha_m z^{-1}}{1-\beta_m z^{-1}}w^{x-x'}z^{y'-y}\frac{\d z}{z}
  +\frac{1}{(2\pi\i)^2}\int_{\Gamma_s}\int_{\Gamma_l}\left(\frac{\prod_{m=1}^{i'+1}(\gamma_m+\alpha_m z_1^{-1})}{\prod_{m=1}^{i'}(1-\beta_m z_1^{-1})}\right)^{-1} \\
  \times \frac{\prod_{m=1}^\ell(z_2-\beta_m)^{N}}{\prod_{m=1}^\ell(z_1-\beta_m)^{N}}
  \prod_{m=1}^{i}\frac{\gamma_m+\alpha_m z_2^{-1}}{1-\beta_m z_2^{-1}}\frac{w_2^{x}}{w_1^{x'}}\frac{z_1^{y'}}{z_2^y}\frac{\d z_2\d z_1}{z_2(z_2-z_1)}.
\end{multline}
Where~$\Gamma_s = \{|z_1| = r_s\}$ and~$\Gamma_l = \{|z_2| = r_l\}$ for~$r_s < 1 < r_l$. 
\end{corollary}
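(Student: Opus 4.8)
The plan is to prove the corollary by specializing, one at a time, each object entering Theorem~\ref{thm:inverse_kasteleyn_general} to the $(1\times\ell)$-periodic setting, the decisive simplification being that for $k=1$ every symbol $\phi_m(z)$ is a $1\times1$ matrix, so that all the symbols \emph{commute}. First I would compute $\Phi(z)$ straight from its definition \eqref{def:phi}: inserting the scalar symbols $\phi_{2i-1}(z)=\gamma_i+\alpha_i z^{-1}$ and $\phi_{2i}(z)=(1-\beta_i z^{-1})^{-1}$ from Remark~\ref{rmk: phi_1_l} and grouping consecutive pairs (legitimate since the factors commute) gives $\Phi(z)=\prod_{m=1}^{2\ell}\phi_m(z)=\prod_{i=1}^\ell\frac{\gamma_i+\alpha_i z^{-1}}{1-\beta_i z^{-1}}$. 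For the characteristic polynomial, $k=1$ forces $\beta_i^v=\beta_i$ and makes $\Phi$ scalar, so $\det(\Phi(z)-wI)=\Phi(z)-w$ and \eqref{eq:characteristic_polynomial} reduces to $P(z,w)=\prod_i(1-\beta_i z^{-1})(\Phi(z)-w)$; away from the zeros $z=\beta_i$ of the prefactor this vanishes precisely when $w=\Phi(z)$.

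Next I would verify the claimed Wiener--Hopf factorization. Rewriting $\frac{\gamma_m+\alpha_m z^{-1}}{1-\beta_m z^{-1}}=\frac{\gamma_m z+\alpha_m}{z-\beta_m}$, a direct computation gives $\widetilde\phi_-(z)\widetilde\phi_+(z)=\prod_m\bigl(\frac{\gamma_m z+\alpha_m}{z-\beta_m}\bigr)^N=\Phi(z)^N=\phi(z)$, so the product is correct. It then remains to check the analyticity and normalization requirements listed before \eqref{eq:wiener-hopf}: $\widetilde\phi_+$ and $\widetilde\phi_-^{-1}$ are polynomials, hence entire; the poles of $\widetilde\phi_+^{-1}$ sit at $z=-\alpha_m/\gamma_m$ with $|{-\alpha_m/\gamma_m}|=\alpha_m/\gamma_m>1$, so $\widetilde\phi_+^{-1}$ is analytic in the closed disc, while the poles of $\widetilde\phi_-$ sit at $z=\beta_m$ with $\beta_m<1$, so $\widetilde\phi_-$ is analytic in the complement of the disc; and $\widetilde\phi_-(z)\sim z^{-\ell N}$ as $z\to\infty$. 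The four inequalities used are exactly the standing hypotheses $\beta_i<1<\alpha_i/\gamma_i$ of Definition~\ref{def:1_l_weights}, and since a factorization with the stated normalization is unique, this is the one appearing in Theorem~\ref{thm:inverse_kasteleyn_general}.

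Finally I would substitute these into the double contour integral of Theorem~\ref{thm:inverse_kasteleyn_general} with $k=1$ (so $j=j'=0$, every matrix entry is a scalar, and $kN=N$). In the single-integral term I would rewrite the ordered product using commutativity and periodicity $\phi_{m+2\ell}=\phi_m$ as the ratio $\prod_{m=2\ell x'+2i'+2}^{2\ell x+2i}\phi_m=\frac{\prod_{m=1}^{2\ell x+2i}\phi_m}{\prod_{m=1}^{2\ell x'+2i'+1}\phi_m}=\Phi^{x-x'}\frac{\prod_{m=1}^{2i}\phi_m}{\prod_{m=1}^{2i'+1}\phi_m}$, identify the two partial products with $\prod_{m=1}^i\frac{\gamma_m+\alpha_m z^{-1}}{1-\beta_m z^{-1}}$ and $\frac{\prod_{m=1}^{i'+1}(\gamma_m+\alpha_m z^{-1})}{\prod_{m=1}^{i'}(1-\beta_m z^{-1})}$, and write $\Phi^{x-x'}=w^{x-x'}$ to recover the first line of \eqref{eq:inv_kast_1xell}. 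In the double-integral term the only nontrivial combination is $\widetilde\phi_-(z_1)\widetilde\phi_+(z_2)\Phi(z_2)^{x-N}$; factoring $\Phi(z_2)^{x-N}=w_2^x\,\Phi(z_2)^{-N}$ and using $\widetilde\phi_+(z_2)\Phi(z_2)^{-N}=\prod_m(z_2-\beta_m)^N$ collapses it to $\frac{\prod_m(z_2-\beta_m)^N}{\prod_m(z_1-\beta_m)^N}\,w_2^x$, which together with $\Phi(z_1)^{-x'}=w_1^{-x'}$ and the two partial products in $z_1,z_2$ gives the second line of \eqref{eq:inv_kast_1xell}.

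The argument is bookkeeping rather than conceptual, and the one place demanding care -- the step I expect to be the main obstacle -- is the reindexing of the ordered product $\prod_{m=2\ell x'+2i'+2}^{2\ell x+2i}\phi_m$, where one must correctly count how many full periods are absorbed into $\Phi^{x-x'}$ and which boundary factors (namely $\phi_{2i'+1}$ at the lower end and $\phi_{2i}$ at the upper end) survive. This step genuinely relies on $k=1$: the division that extracts $\Phi^{x-x'}$ is valid only because the scalar factors commute, and it is exactly this maneuver that fails in the $(2\times\ell)$ case treated later, where the noncommuting symbols force the more elaborate analysis of \cite{Ber21, BB23}.
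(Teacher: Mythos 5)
Your proposal is correct and follows the same route the paper takes: the paper states this corollary without a written proof, treating it as a direct specialization of Theorem~\ref{thm:inverse_kasteleyn_general} in which the scalar (hence commuting) symbols make the Wiener--Hopf factors the "product over the appropriate factors of $\phi$", exactly as you verify. Your computations — the product formula for $\Phi$, the analyticity and normalization checks for $\widetilde\phi_\pm$ using $\beta_i<1<\alpha_i/\gamma_i$, the reindexing $\prod_{m=2\ell x'+2i'+2}^{2\ell x+2i}\phi_m=\Phi^{x-x'}\prod_{m=1}^{2i}\phi_m\bigl(\prod_{m=1}^{2i'+1}\phi_m\bigr)^{-1}$, and the collapse $\widetilde\phi_+(z_2)\Phi(z_2)^{-N}=\prod_m(z_2-\beta_m)^N$ — are all the bookkeeping the paper leaves implicit, and they check out.
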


The assumption~$\beta_i^v < 1 < \alpha_i^v/\gamma_i^v$ does not hold for the weights in Definition~\ref{def:2_l_weights}. 
This means that the Wiener--Hopf factorization does not exists, and we cannot apply Theorem~\ref{thm:inverse_kasteleyn_general}. This problem, however, was resolved in~\cite{BD19} and, for the weights we consider here, in~\cite{Ber21}. The idea is to vary the weights in Definition~\ref{def:2_l_weights} so that the assumptions of Theorem~\ref{thm:inverse_kasteleyn_general} hold, and then take the limit to the weights we are interested in. Concretely, the weights~$\beta_i^{\pm 1}$ are multiplied with a parameter~$0<b<1$ and the weights~$\alpha_i^{\pm1}$ are multiplied with~$b^{-1}$, and the limit is obtained by taking~$b\to 1$. This specific choice is only for concreteness and another choice could have been made.

The Wiener--Hopf factorization of~$\phi=\Phi^{2N}$, in the sense of such limiting procedure, is given by
\begin{equation}\label{eq:wh_2xell}
\widetilde \phi_+(z) = (z-1)^{\ell N} \widetilde{C} \Phi(z)^N, \quad \text{and} \quad \widetilde \phi_-(z) = (z-1)^{-\ell N} \Phi(z)^N \widetilde{C}^{-1},
\end{equation}
where~$\widetilde C$ is an invertible constant matrix which can be computed, but the precise expression is irrelevant for our purposes. We stress that~$\widetilde \phi_+(z)$ and~$\widetilde \phi_-(z)$ here are meant as the limit of the Wiener--Hopf factorizations as~$b\to 1$, and not directly as defined in~\eqref{eq:wiener-hopf}. 

The limit of the double contour integral is more subtle than the limit of the Wiener--Hopf factorization. We need first to write the integral as an integral on the Riemann surface~$\mathcal R$, deform the contours slightly, and then take the limit discussed above. For details on this limiting procedure, we refer to the proof of~\cite[Theorem 1.1]{Ber21}, and the proof of~\cite[Lemma 2.10]{BN25} for notation closer to the ones used here.

In the process of deforming the contours of integration discussed above, it is convenient to introduce the matrix
\begin{equation}\label{eq:def_Q_sec_2}
Q(z,w)=\frac{\adj(wI-\Phi(z))}{\partial_w\det(wI-\Phi(z))}.
\end{equation}
Then, for~$(z,w)\in \mathcal R$,
\begin{equation*}
\Phi(z)Q(z,w)=Q(z,w)\Phi(z)=wQ(z,w)
\end{equation*}
and, from~\eqref{eq:wh_2xell},
\begin{equation*}
\widetilde \phi_+(z)Q(z,w) = (z-1)^{\ell N}w^N \widetilde{C}Q(z,w), \quad Q(z,w)\widetilde \phi_-(z) = (z-1)^{-\ell N} w^N Q(z,w)\widetilde{C}^{-1}.
\end{equation*}

\bigskip

We are led to the following expression for the inverse Kasteleyn matrix for the weights given in Definition~\ref{def:2_l_weights}.

\begin{proposition}[\cite{Ber21}]\label{prop:inv_kast_2xell}
Specializing the objects defined in this section to the setting of Definition~\ref{def:2_l_weights}, we have 
\begin{multline}\label{eq:inv_kast_2xell}
  K^{-1}( w_{\ell x'+i',ky'+j'},  b_{\ell x+i,ky+j}) 
  =\\
  -\left(\frac{\mathbf{1} \{\ell x+i \geq \ell x'+i'+1\}}{2\pi\i}\int_\Gamma \prod_{m=2\ell x'+2i'+2}^{2\ell x+2i} \phi_m(z)z^{y'-y}\frac{\d z}{z}\right)_{j'+1,j+1}\\
  +\Bigg(\frac{1}{(2\pi\i)^2}\int_{\tilde \Gamma_s}\int_{\tilde \Gamma_l}\left(\prod_{m=1}^{2i'+1}\phi_m(z_1)\right)^{-1}Q(z_1,w_1)Q(z_2,w_2)
  \prod_{m=1}^{2i}\phi_m(z_2) \\
  \frac{(z_2-1)^{\ell N}}{(z_1-1)^{\ell N}}\frac{w_2^{x-N}}{w_1^{x'-N}}\frac{z_1^{y'}}{z_2^y}\frac{\d z_2\d z_1}{z_2(z_2-z_1)}\Bigg)_{j'+1,j+1}.
\end{multline}
Here,~$\Gamma=\{z\in \CC:|z|=1\}$ is positively oriented. The curves~$\tilde \Gamma_l$ and~$\tilde \Gamma_s$ are simple closed curves in~$\mathcal R$ dividing~$\mathcal R$ into two parts, one part, which we call the interior of the curve, contains the points~$(0,1)$ and~$(1,\infty)$ and the other part, which we call the exterior of the curves, contains~$(\infty,1)$ and~$(1,0)$. In addition,~$\tilde \Gamma_s$ is contained in the interior of~$\tilde \Gamma_l$. The curves, projected to~$\CC$ by the map~$(z,w)\mapsto z$, are positively oriented.
\end{proposition}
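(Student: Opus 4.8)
The plan is to derive the stated formula from the general double contour integral of Theorem~\ref{thm:inverse_kasteleyn_general} by (i) a regularization that restores its hypotheses, (ii) lifting the $z$-integrals to the spectral curve $\mathcal{R}$ via the residue matrix $Q$, and (iii) passing to a limit. Concretely, for the weights of Definition~\ref{def:2_l_weights} one has $\alpha_i^v=\beta_i^v=\gamma_i^v=1$, so the Wiener--Hopf hypothesis $\beta_i^v<1<\alpha_i^v/\gamma_i^v$ fails and Theorem~\ref{thm:inverse_kasteleyn_general} does not apply directly. First I would regularize as described just before the proposition: multiply each $\beta_i^{\pm1}$ by a parameter $0<b<1$ and each $\alpha_i^{\pm1}$ by $b^{-1}$, so that $\beta_i^v=b^2<1<b^{-2}=\alpha_i^v/\gamma_i^v$ and the hypotheses hold. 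For these perturbed weights Theorem~\ref{thm:inverse_kasteleyn_general} (with $k=2$) gives $K^{-1}$ as a single integral over $\Gamma$ plus a double integral over circles $\Gamma_s,\Gamma_l$ whose integrand carries the matrix powers $\Phi(z_1)^{-x'}\widetilde\phi_-(z_1)\widetilde\phi_+(z_2)\Phi(z_2)^{x-2N}$.

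The second step converts the matrix powers of $\Phi$ into scalar powers on $\mathcal{R}$. The key observation is that $Q(z,w)$, defined in \eqref{eq:def_Q_sec_2}, is precisely the residue of the resolvent $(wI-\Phi(z))^{-1}=\adj(wI-\Phi(z))/\det(wI-\Phi(z))$ at an eigenvalue $w$ of $\Phi(z)$, and that $\Phi(z)Q(z,w)=Q(z,w)\Phi(z)=wQ(z,w)$ on $\mathcal{R}$. Expanding each $z_j$-circle integral as a sum of residues in the companion variable $w_j$ and deforming, one rewrites the double integral as an integral over curves on $\mathcal{R}$, with every $\Phi(z_j)$-power replaced by the corresponding scalar $w_j$-power times an inserted factor $Q(z_j,w_j)$. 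The contours are deformed to the $\tilde\Gamma_s\subset\tilde\Gamma_l$ of the statement, separating $\{(0,1),(1,\infty)\}$ from $\{(\infty,1),(1,0)\}$ and projecting to positively oriented loops in $z$.

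Third, I would substitute the (limiting) Wiener--Hopf factors \eqref{eq:wh_2xell}, using the intertwining relations $\widetilde\phi_+(z)Q(z,w)=(z-1)^{\ell N}w^N\widetilde C Q(z,w)$ and $Q(z,w)\widetilde\phi_-(z)=(z-1)^{-\ell N}w^N Q(z,w)\widetilde C^{-1}$ recorded above. Then $\Phi(z_1)^{-x'}\widetilde\phi_-(z_1)$ contributes $(z_1-1)^{-\ell N}w_1^{N-x'}Q(z_1,w_1)\widetilde C^{-1}$, while $\widetilde\phi_+(z_2)\Phi(z_2)^{x-2N}$ contributes $(z_2-1)^{\ell N}w_2^{x-N}\widetilde C Q(z_2,w_2)$. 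Crucially the constant matrices $\widetilde C^{-1}$ and $\widetilde C$ are now adjacent and cancel, producing the factor $Q(z_1,w_1)Q(z_2,w_2)$ together with the scalar weights $(z_2-1)^{\ell N}(z_1-1)^{-\ell N}w_2^{x-N}w_1^{N-x'}$ appearing in \eqref{eq:inv_kast_2xell}. The kernel $\frac{z_1^{y'}}{z_2^y}\frac{\d z_2\,\d z_1}{z_2(z_2-z_1)}$ and the prefactors $(\prod_{m=1}^{2i'+1}\phi_m(z_1))^{-1}$ and $\prod_{m=1}^{2i}\phi_m(z_2)$ are carried over unchanged.

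Finally, I would take $b\to1$, and \textbf{this limit is the main obstacle.} The factorization \eqref{eq:wh_2xell} is not a genuine Wiener--Hopf factorization but only the $b\to1$ limit of one: the factors $(z-1)^{\pm\ell N}$ encode the collision, as $b\to1$, of the moving poles $z=b^2$ (arising from the prefactors $\tfrac{1}{1-\beta_i^v z^{-1}}$ in $\phi_{2i}$) and of branch points with the point $z=1$. One must therefore position the curves $\tilde\Gamma_s,\tilde\Gamma_l$ on $\mathcal{R}$ so that none of these moving singularities cross them, establish uniform control of the integrand allowing interchange of limit and integration, and verify that the single-integral term likewise converges despite its pole approaching $z=1$. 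This delicate contour analysis is exactly what is carried out in \cite{Ber21} (see also the reformulation in \cite{BN25}), and I would follow it essentially verbatim; the algebraic reductions of the preceding three steps reduce the claim to that limiting argument.
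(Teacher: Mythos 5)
Your proposal is correct and follows essentially the same route the paper takes: the paper itself does not prove this proposition but presents exactly this sketch (regularize with $b$, apply Theorem~\ref{thm:inverse_kasteleyn_general}, lift to $\mathcal R$ via $Q$ and the intertwining relations so that $\widetilde C^{-1}\widetilde C$ cancels, then take $b\to1$) and defers the delicate contour/limit analysis to \cite{Ber21} and \cite{BN25}. Your algebra for the scalar factors $(z_2-1)^{\ell N}(z_1-1)^{-\ell N}w_2^{x-N}w_1^{N-x'}$ checks out, and you correctly identify the $b\to1$ contour positioning as the genuinely nontrivial step that must be imported from \cite{Ber21}.
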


\subsection{The spectral data}\label{sec:spectral_curve}
In this section, we describe the spectral curve for the weights given in Definition~\ref{def:2_l_weights}, as well as the divisors associated with the vertices of the fundamental domain. We also remind the reader of the theta functions, prime forms, and Fay's identity. 

Given~$(k\times \ell)$-periodic edge weights, we embed the fundamental domain, the smallest non-repeating subgraph of the Aztec diamond, into the torus and let~$K_{G_1}$, see Figure~\ref{fig:aztec_2_l}, be the \emph{magnetically altered Kasteleyn matrix} as introduced in~\cite{KOS06}.  In the same paper, the \emph{characteristic polynomial} and \emph{spectral curve} of a dimer model were defined as
\begin{equation*}
\det K_{G_1}(z,w), \quad \text{and} \quad \{(z,w)\in (\C^*)^2:\det K_{G_1}(z,w)=0\}.
\end{equation*}
The characteristic polynomial and spectral curve coincide with the ones defined in the previous section,~\eqref{eq:characteristic_polynomial} and~\eqref{eq:spectral_curve}, so we have 
\begin{equation*}
P(z,w)=\det K_{G_1}(z,w).
\end{equation*}
Note that the precise definition of $K_{G_1}$, and thus the characteristic polynomial, depends on the specific conventions used for indexing the vertices and selecting the dual paths winding horizontally and vertically around the torus (however, the spectral curve remains invariant under these choices),
and we refer to~\cite{Ber21, BB23} for the precise convention such that the equality holds. The spectral curve has a particularly simple structure, namely, it was proven in~\cite{KOS06} that it is a \emph{Harnack curve}. In particular, the Riemann surface~$\mathcal R$ is an \emph{M-curve}. One of the characterizations of a Harnack curve, is that the map~$(z,w)\mapsto (\log|z|,\log|w|)$ is, at most,~$2$-to-$1$. This means that~$\mathcal R^{\circ}$ is naturally described through its \emph{amoeba}. Since we are only interested in the weights given in Definition~\ref{def:2_l_weights}, it is also convenient to think about the compactification~$\mathcal R$ as a two-sheeted Riemann surface through the projection~$(z,w)\mapsto z$. We discuss this surface below, and refer to~\cite[Section 2.1]{Ber21} and~\cite[Section 3.2]{BB23} for details. The following discussion can be made in a general setting, however, for concreteness, we will restrict ourselves to the spectral curve defined from the weights in Definition~\ref{def:2_l_weights}.

We are going to work with the closure of the spectral curve $\mathcal R^{\circ}$. To that end, we define the \emph{angles}, or points at infinity, by 
\begin{equation*}
p_0=(0,1), \quad p_\infty=(\infty,1), \quad q_0=(1,0), \quad \text{and} \quad q_\infty=(1,\infty).
\end{equation*}
Then the closure~$\mathcal R$ of~$\mathcal R^{\circ}$ is the union~$\mathcal R^{\circ}\cup\{p_0,p_\infty,q_0,q_\infty\}$. In general, for~$(k\times \ell)$-periodic edge weights, there can be up to~$2(k+\ell)$ angels, and these angles corresponds (in a precise way) to the \emph{turning points}, the points in the boundary of the liquid region that touches the boundary of the Aztec diamond. 
The fact that there are only four in our setting is a consequence of the specific choice of edge weights we consider. 
The real part~$\re(\mathcal R)$ of~$\mathcal R$ consists of~$g+1$ topological circles called ovals, where~$g$ is the genus of the Riemann surface~$\mathcal R$. 
One of these ovals contains all the angles~$p_0,p_\infty,q_0,q_\infty$, we denote that oval by~$A_0$.
The remaining~$g$ ovals are referred to as \emph{compact ovals} and, following the notation in~\cite{BB23}, we denote them by~$A_j$ for~$j=1,\dots,g$.
The set~$A_0\backslash\{p_0,q_0,p_\infty,q_\infty\}$ consists of~$4$ connected components~$A_{0,j}$,~$j=1,\dots,4$, where~$A_{0,1}$ has~$p_\infty$ and~$q_\infty$ as boundary,~$A_{0,2}$ has~$q_\infty$ and~$p_0$ as boundary,~$A_{0,3}$ has~$p_0$ and~$q_0$ as boundary, and~$A_{0,4}$ has~$q_0$ and~$p_\infty$ as boundary. 

We orient the ovals as follows. The oval~$A_0$ is oriented from~$q_\infty$ to~$p_0$ to~$q_0$ to~$p_\infty$ to~$q_\infty$. We orient~$A_j$,~$j=1,\dots,g$, consistently with~$A_0$, that is, if a positively oriented loop around~$A_0$ is deformed to~$g$ loops along~$A_j$,~$j=1,\dots,g$, then the resulting loops are also positively oriented.

 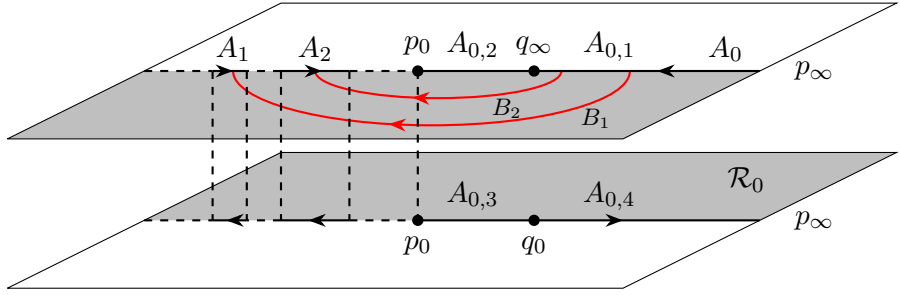
\begin{figure}[t]
 \begin{center}
 \begin{tikzpicture}[scale=.9]
   \tikzset{->--/.style={decoration={
  markings, mark=at position .3 with {\arrow{Stealth[length=2.3mm]}}},postaction={decorate}}}
   \tikzset{->-/.style={decoration={
  markings, mark=at position .6 with {\arrow{Stealth[length=2.3mm]}}},postaction={decorate}}}
   \fill[color=lightgray](-4,0)--(5,0)--(3,-1)--(-6,-1);
   \draw (-6,-1)--(3,-1)--(7,1)--(-2,1)--(-6,-1);
   \draw[red,thick,->-] (2.1,0) arc(0:-180:1.8cm and .4cm);
   \draw[red,thick,->-] (3.1,0) arc(0:-180:2.9cm and .8cm);   
   \draw (0,0) node[circle,fill,inner sep=1.5pt,label=above:$p_0$]{};
   \draw (5.8,0) node{$p_{\infty}$};
   \draw (1.7,0) node[circle,fill,inner sep=1.5pt,label=above:$q_{\infty}$]{};
   \draw (4.5,0) node[above]{$A_0$};
   \draw (-2.7,0) node[above]{$A_1$};
   \draw (-1.5,0) node[above]{$A_2$};
   \draw (2.8,0) node[above]{$A_{0,1}$};
   \draw (.8,0) node[above]{$A_{0,2}$};
   \draw (1.3,-.55) node{\footnotesize{$B_2$}};
   \draw (2.6,-.7) node{\footnotesize{$B_1$}};   
   \draw[thick,->--] (5,0)--(0,0);
   \draw[thick,->-] (-2,0)--(-1,0);
   \draw[thick,->-] (-3,0)--(-2.5,0);
   \draw [thick,dashed] (-1,0)--(0,0);
   \draw [thick,dashed] (-2,0)--(-2.5,0);
   \draw [thick,dashed] (-4,0)--(-3,0);
   \fill[color=lightgray]
   (-2,-1.2)--(7,-1.2)--(5,-2.2)--(-4,-2.2);
   \draw (-6,-3.2)--(3,-3.2)--(7,-1.2)--(-2,-1.2)--(-6,-3.2);
   \draw (0,-2.2) node[circle,fill,inner sep=1.5pt,label=below:$p_0$]{};
   \draw (5.8,-2.2) node{$p_{\infty}$};
   \draw (1.7,-2.2) node[circle,fill,inner sep=1.5pt,label=below:$q_0$]{};
   \draw (.8,-2.2) node[above]{$A_{0,3}$};
   \draw (2.8,-2.2) node[above]{$A_{0,4}$};
   \draw[thick,->-] (0,-2.2)--(5,-2.2);
   \draw[thick,->-] (-2.5,-2.2)--(-3,-2.2);
   \draw[thick,->-] (-1,-2.2)--(-2,-2.2);
   \draw [thick,dashed] (-1,-2.2)--(0,-2.2);
   \draw [thick,dashed] (-2,-2.2)--(-2.5,-2.2);
   \draw [thick,dashed] (-4,-2.2)--(-3,-2.2);
   \draw[thick,dashed] (0,0)--(0,-2.2);
   \draw[thick,dashed] (-1,0)--(-1,-2.2);
   \draw[thick,dashed] (-2,0)--(-2,-2.2);
   \draw[thick,dashed] (-2.5,0)--(-2.5,-2.2);
   \draw[thick,dashed] (-3,0)--(-3,-2.2);
   \draw (4.8,-1.6) node{$\mathcal R_0$};   
  \end{tikzpicture}
 \end{center}
  \caption{The Riemann surface represented in terms of two copies of the complex plane. The compact ovals (solid) and the cuts (dashed) are located along the negative part of the real line, and the non-compact oval is located along the positive part of the real line. The part~$\mathcal R_0$ is shaded in grey. The red curves are the part of the $B$-cycles contained in~$\mathcal R_0$.\label{fig:rs_sheets}}
\end{figure}

The fact that~$\mathcal R$ is an~$M$-curve, implies that~$\mathcal R\backslash \re(\mathcal R)$ consists of two connected components. We denote the one to the left of~$A_0$ by~$\mathcal R_0$. We have that~$\mathcal R=\mathcal R_0\cup \sigma\left(\mathcal R_0\right)\cup\re \mathcal R$ is a disjoint union, where~$\sigma(z,w)=(\bar z,\bar w)$. 

For concreteness, we may think about~$\mathcal R$ as two copies of the complex plane with all the cuts on the negative real line, and where~$A_0$ is the union of the two positive real lines. That is, if~$(z,w)\in \mathcal R$, then~$z\geq 0$ if and only if~$(z,w)\in A_0$, and if~$(z,w)\in \mathcal R$,~$z\in \RR$, and~$w\in \CC\backslash \RR$, then~$z<0$. See Figure~\ref{fig:rs_sheets}.

Recall that in our setting~$k=2$, and therefore, the genus is generically given by~$g=\ell-1$.
However, what may happen is that one of the compact ovals~$A_j$ is contracted to a point, and the genus drops by one. We will assume, for simplicity, that the genus is maximal, that is,~$g=\ell-1$. See~\cite[Section 2.1]{Ber21} for a detailed discussion of the spectral curve and its genus.

Given the Riemann surface~$\mathcal R$, we pick a \emph{canonical basis} of cycles~$A_j$ and~$B_j$,~$j=1,\dots,g$. As the notation suggests, we take the~$A$-cycles as the compact ovals. The~$B$-cycles are taken as simple closed pairwise disjoint curves invariant under the convolution~$\sigma(z,w)=(\bar z,\bar w)$ such that~$B_j$ intersect~$\re \mathcal R$ only at~$A_{0,1}$ and~$A_j$ oriented so that~$B_j\cap \mathcal R_0$ goes from~$A_{0,1}$ to~$A_j$. We remark that the projection of~$B_j$ to~$\CC$ is negatively oriented. 
By construction, the canonical basis of cycles has the intersection numbers~$A_i\circ A_j=B_i\circ B_j=0$ and~$A_i\circ B_j=\delta_{ij}$. 
We also consider the \emph{dual basis of holomorphic 1-forms}~$\omega_j$, for~$j=1,\dots, g$ and define the vector~$\vec \omega$ by~$\vec \omega=(\omega_1,\dots,\omega_g)$. 
Recall that the basis is dual to the canonical basis of cycles means that~$\int_{A_j}\omega_i=\delta_{ij}$. Finally, we define the~$g\times g$ \emph{period matrix}~$B$ entrywise by~$B_{ij}=\int_{B_i}\omega_j$. It is a general fact that the period matrix is symmetric and its imaginary part is positive definite. Moreover, that~$\mathcal R$ is an~$M$-curve implies that~$B$ is purely imaginary.

A central object in the theory of compact Riemann surfaces is the \emph{Abel map}. 
The Abel map~$u:\mathcal R \to J(\mathcal R)$ is defined by
\begin{equation*}
u(q)=\int_{p}^q\vec{\omega} \mod (\ZZ^g+B\ZZ^g),
\end{equation*}
for some fixed point~$p\in \mathcal R$, that we choose to take in $A_0$, and where~$J(\mathcal R)=\CC^g/(\ZZ^g+B\ZZ^g)$ is the \emph{Jacobi variety}. 
A \emph{divisor} is a formal sum~$D=\sum_{i=1}^n a_iq_i$ of a finite set of points $q_i\in \mathcal R$ with $a_i\in \ZZ$, and the Abel map of a divisor 
is defined by~$u(D)=\sum a_iu(q_i)$. The degree of the divisor is given by~$\deg(D)=\sum a_i$. Recall that, if~$D=(f)$ and~$D'=(\omega)$, the divisors constructed as the linear combination of the zeros and poles of a meromorphic function~$f$ and~$1$-form~$\omega$ on, respectively, then
\begin{equation*}
\deg(D)=0, \quad u(D)=0, \quad \text{and} \quad \deg(D')=2g-2, \quad u(D')=2\Delta,
\end{equation*}
where~$\Delta \in J(\mathcal R)$ is called the \emph{vector of Riemann constants}.

In addition to the holomorphic~$1$-forms, there is another family of~$1$-forms that will be important in our study. Given two points~$q_1,q_2\in \mathcal R$, we denote the unique differential of the third kind with zero integrals over the~$A$-cycles and with simple poles at~$q_1$ with residue~$1$ and at~$q_2$ with residue~$-1$ by~$\omega_{q_1-q_2}$. Note that~$\omega_{q_1-q_2}=-\omega_{q_2-q_1}$ and~$\omega_{q_1-q_2}+\omega_{q_2-q_3}=\omega_{q_1-q_3}$, if~$q_3\in \mathcal R$. For a divisor~$D=\sum_{i=1}^n(p_i-q_i)$, for some points~$p_i,q_i\in \mathcal R$ and some~$n\in \ZZ_{>0}$, we define
\begin{equation*}
\omega_D=\sum_{i=1}^n\omega_{p_i-q_i}.
\end{equation*}


The spectral data as introduced in~\cite{KOS06, KO06} consists of the spectral curve~$\mathcal R$ together with a \emph{standard divisor}. A divisor~$D$ is said to be a standard divisor if~$D=\sum_{i=1}^g q_i$ where~$q_i\in A_i$. For each vertex~$v$ in the fundamental domain there is an associated standard divisor~$D_v$. The divisor is defined from the common zeros of all entries of the row or column of~$\adj K_{G_1}$ associated with~$v$, except any possible zeros at the angles. That is, for each vertex~$v$, there are~$g$ points~$q_i=q_i(v)\in A_i$ such that the row or vector of~$\adj K_{G_1}$ associated with~$v$ vanishes at~$q_i$ and~$D_v=\sum_{i=1}^gq_i$.
We set \[e_v=\Delta-u(D_v).\] Being standard means that~$e_v\in \RR^g/\ZZ^g$. These divisors are naturally described through the \emph{discrete Abel map}~$\mathbf d$, introduce in~\cite[Section 3]{Foc15}, and a parameter~$t\in \RR^g/\ZZ^g$. Namely, for each white vertex~$w$ and each black vertex~$b$,
\begin{equation*}
e_w=t+\mathbf d(w), \quad \text{and} \quad e_b=-t-\mathbf d(b).
\end{equation*}
The discrete Abel map is defined on the vertices of the quad graph, that is, on the union of the vertices in the fundamental domain and its dual, and is defined through the image of the angles under the Abel map. We refer to~\cite[Section 3.2]{BCT22} for a definition and many properties and to~\cite[Section 5.4]{BB23} for a discussion closer to our setting. See also~\cite[Remark 50]{BCT22} for a discussion on the parameter~$t$.

In Section~\ref{sec:reformulation_form} below, we are interested in the points in~$\mathcal R$ where each column of the matrix~$Q$ given in~\eqref{eq:def_Q_sec_2} vanishes. The matrix~$Q$ is closely related to the matrix~$\adj K_{G_1}$, see~\cite[Lemma 4.25]{BB23}, and this connection implies that these zeros are captured by the standard divisors from the spectral data discussed above. The following lemma is a combination of~\cite[Lemmas 4.25 and 5.29]{BB23}.
\begin{lemma}[\cite{BB23}]\label{lem:divisor_Q}
The~$j$th column of the matrix $Q$ vanishes at~$g$ points $q_i=q_i(j)\in A_i$, for~$j=1,2$. Moreover, the divisor~$D_j=\sum_{i=1}^gq_i$, $j=1,2$,
is mapped by the Abel map~$u$ to \[u(D_j)=-e_{b_{0,j-1}}+\Delta,\] and~$e_{b_{0,j-1}}=-t-\mathbf{d}(b_{0,j-1})$, where~$t$,~$\mathbf{d}$ and~$e_{b_{0,j-1}}$ are as above, and~$b_{0,j-1}$ is the black vertex indexed according to~\eqref{eq:def_bw_xy}.
\end{lemma}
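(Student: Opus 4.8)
The plan is to obtain the statement by combining the two cited results \cite[Lemmas 4.25 and 5.29]{BB23} with the definition $e_v=\Delta-u(D_v)$ and the formula $e_b=-t-\mathbf d(b)$ for black vertices recorded above. The analytic content is carried by those two lemmas, so the task reduces to setting up the correspondence between the columns of $Q$ and the data of $\adj K_{G_1}$, and then chasing definitions.

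First I would make precise the link between $Q(z,w)=\adj(wI-\Phi(z))/\partial_w\det(wI-\Phi(z))$, which is built from the transfer-matrix symbol $\Phi$, and $\adj K_{G_1}(z,w)$, the adjugate of the magnetically altered Kasteleyn matrix. The two are related on $\mathcal R$ through the conjugation/gauge used in \cite{Ber21,BB23} that identifies $\det(wI-\Phi(z))$, up to the factor $\prod_i(1-\beta_i^v z^{-1})$, with $\det K_{G_1}(z,w)$; here $\beta_i^v=1$ for the weights of Definition~\ref{def:2_l_weights}, so this factor equals $(1-z^{-1})^\ell$ and vanishes only at $z=1$, i.e.\ at the angles $q_0,q_\infty$. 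By \cite[Lemma 4.25]{BB23}, under this identification the $j$th column of $Q$ equals, up to a prefactor that does not vanish away from the angles $p_0,p_\infty,q_0,q_\infty$, a row or column of $\adj K_{G_1}$. Since the standard divisor $D_v$ associated with a vertex $v$ is by definition the common zero set (excluding angles) of that row or column of $\adj K_{G_1}$, and since a standard divisor consists of exactly one point $q_i\in A_i$ on each compact oval $A_i$, $i=1,\dots,g$, this already yields the first assertion once the relevant vertex is identified.

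For that identification and for the Abel-map value I would invoke \cite[Lemma 5.29]{BB23}, which pins down the relevant vertex as the black vertex $b_{0,j-1}$ (in the indexing of \eqref{eq:def_bw_xy}) and evaluates $u$ on its standard divisor. Thus $D_j=D_{b_{0,j-1}}$, and combining with $e_v=\Delta-u(D_v)$ applied to $v=b_{0,j-1}$ gives $u(D_j)=\Delta-e_{b_{0,j-1}}$, which is exactly $u(D_j)=-e_{b_{0,j-1}}+\Delta$. The final equality $e_{b_{0,j-1}}=-t-\mathbf d(b_{0,j-1})$ is then immediate from $e_b=-t-\mathbf d(b)$ with $b=b_{0,j-1}$.

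The main obstacle I expect is bookkeeping rather than analysis: matching the $2\times2$ block indexing of $Q$ (arising from $k=2$) with the vertex indexing of \eqref{eq:def_bw_xy} so that the $j$th column truly corresponds to $b_{0,j-1}$, and verifying that the prefactor relating $Q$ to $\adj K_{G_1}$ vanishes only at the angles. The latter is precisely what guarantees that no spurious zeros are introduced on the compact ovals and that none of the genuine zeros is cancelled, so that the zero divisor of the $j$th column of $Q$ is exactly the standard divisor $D_{b_{0,j-1}}$, one point per oval.
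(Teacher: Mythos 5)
Your proposal is correct and follows essentially the same route as the paper, which gives no independent proof but simply states that the lemma is a combination of \cite[Lemmas 4.25 and 5.29]{BB23}; your reconstruction of how those two lemmas combine with the definitions $e_v=\Delta-u(D_v)$ and $e_b=-t-\mathbf d(b)$ is the intended argument. Your observation that for the weights of Definition~\ref{def:2_l_weights} one has $\beta_i^v=1$, so the relating prefactor vanishes only at the angles and introduces no spurious zeros on the compact ovals, is exactly the bookkeeping point that makes the identification of zero divisors go through.
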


We end this section by discussing the theta function, prime forms, and Fay's identity. We 
set a notation and recall a few standard properties of the objects, and refer to, e.g.,~\cite{BCT22, Fay73} for details and further discussions. 

The \emph{theta function}~$\theta:\CC^g\to \CC$ is a holomorphic function, defined by the convergent series
\begin{equation*}
\theta(z)=\theta(z;B)=\sum_{n\in \ZZ^g}\e^{\i\pi(n\cdot Bn+2n\cdot z)},
\end{equation*}
where~$\cdot$ is the inner product of vectors in~$\CC^g$ and~$B$ is the period matrix. The theta function~$\theta$ is periodic under translations by elements in~$\ZZ^g$ and quasi-periodic under translations in~$B\ZZ^g$. For any~$p\in \mathcal R$ and~$e\in \CC^g$, the function~$q\mapsto \theta(\int_{p}^q+e)$ is a well defined function on~$\widetilde{\mathcal R}$, the universal cover of~$\mathcal R$. If the function is not identically zero, it has a well defined zero divisor~$D_e$ on~$\mathcal R$, and~$u(D_e)=-e+\Delta$. Moreover, the equality~$u(D_e)=-e+\Delta$ uniquely determines the divisor~$D_e$.  

The \emph{prime form}~$E(\tilde q,\tilde p)$ is defined on the universal cover~$\widetilde{\mathcal R}\times \widetilde{\mathcal R}$, with the important property that~$E(\tilde q,\tilde p)=0$ if and only if~$\tilde p$ and~$\tilde q$ projects to the same point in~$\mathcal R$. In fact, if~${\sum_{i=1}^nq_i-\sum_{i=1}^n p_i}$ is the divisor of a meromorphic function~$f$ on~$\mathcal R$, then~$f(q)=c\prod_{i=1}^n\frac{E(\tilde q_i,q)}{E(\tilde p_i,q)}$, for some constant~$c$ and appropriate lifts~$\tilde q_i$ and~$\tilde p_i$ of~$q_i$ and~$p_i$. In local coordinates~$z_1=z(q_1)$ and~$z_2=z(q_2)$, the prime form behaves as
\begin{equation}\label{eq:prime_diagonal}
E(q_1,q_2)=\frac{z_2-z_1}{\sqrt{\d z_1}\sqrt{\d z_2}}\left(1+\Ordo((z_1-z_2)^2)\right)
\end{equation}
as~$q_1\to q_2$.

Both the theta function and the prime form can be used as building blocks of meromorphic functions and forms on the Riemann surface, in a similar spirit as linear terms~$az+b$ are building blocks for meromorphic functions on the Riemann sphere. There is a remarkable identity, known as Fay's identity, that involves both the theta functions and prime forms. The version that will be relevant for us is the following, see~\cite[Proposition 2.10]{Fay73}. For~$q, p_1, p_2\in \mathcal R$ and~$e\in \CC^g$, 
\begin{multline}\label{eq:fays_identity}
\frac{\theta\left(u(q)-u(p_1)+e\right)\theta\left(u(q)-u(p_2)-e\right)}{\theta(e)\theta\left(u(p_2)-u(p_1)+e\right)}\frac{E(p_1,p_2)}{E(q,p_1)E(q,p_2)} \\
=\omega_{p_2-p_1}(q)+\left(\nabla \log \theta\left(u(p_2)-u(p_1)+e\right)-\nabla \log \theta(e)\right)\vec{\omega}(q),
\end{multline}
where we use the notation
\begin{equation}\label{eq:nabla_notation}
\nabla \log \theta(e)\vec{\omega}=\sum_{i=1}^g\frac{\partial \log \theta}{\partial z_i}(e)\omega_i.
\end{equation}


\section{Perfect t-embeddings of weighted Aztec diamonds}\label{sec:p-embeddings}
In this section, we recall a definition of the reduced Aztec diamond and introduce Coulomb gauge functions that define perfect t-embeddings of the reduced Aztec for a general weighted case, not necessarily periodic. In contrast to~\cite{CR24, BNR23} and similarly to~\cite{BNR24} we don't use the shuffling algorithm to construct perfect t-embeddings. However, in Section~\ref{sec:shuffling}, we recall the shuffling algorithm for the generally weighted Aztec diamond and show that the parameter $a$ introduced in~\eqref{eq:def_a} that we use to define Coulomb gauge functions is invariant under shuffling process. We will use this invariant later in Sections~\ref{sec:frozen} and~\ref{sec:gas} to find a subsequential limit of perfect t-embeddings of reduced Aztec diamonds with a special choice of doubly-periodic weights when the shuffling algorithm is actually periodic.

We assume that the side length of the Aztec diamond is even, which will make some choices simpler and more symmetric. Recall coordinates introduced in Section~\ref{sec:per_aztec}. Let $\alpha_{j,i}, \beta_{j,i}, \delta_{j,i}$ and~$\gamma_{j,i}$ denote positive weights on south, east, north and west edges adjacent to the white vertex~$w(2i-1,2j)$ of~$A_n$, see Figure~\ref{fig:aztec_reduction}. In this section we don't use any periodicity of weights, moreover, the weights on edges with the same coordinates might be different for Aztec diamonds of different sizes, so $\alpha_{j,i}=\alpha_{j,i}(n)$ is a function of~$n$, as well as~$\beta_{j,i}, \delta_{j,i}$ and~$\gamma_{j,i}$.

\subsection{The reduced Aztec diamond}
\label{subsubsec:reduced_aztec} Recall that our choice of Kasteleyn signs are: all Kasteleyn signs on the north edges of the Aztec diamond are negative and all other signs are positive.

\begin{figure}
 \begin{center}
\includegraphics[width=1 \textwidth]{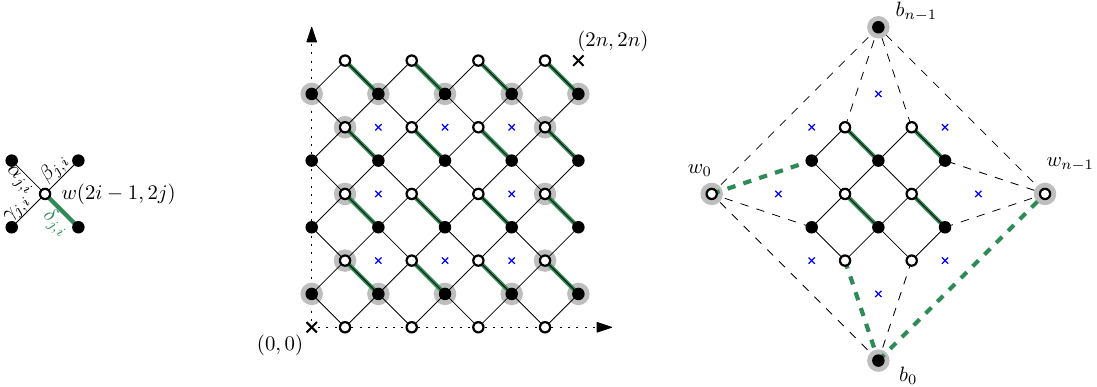}
  \caption{North edges marked in green solid. Green edges has a negative Kasteleyn sign. Left: weights on edges adjacent to a white vertex. Middle: Aztec diamond~$A_n$ of size~$n=4$. Right: Reduced Aztec diamond~$A'_4$.}\label{fig:aztec_reduction}
 \end{center}
\end{figure}

 Let $V_{\operatorname{gauge}}$ be the following set of vertices of the Aztec diamond $A_{n}$:
\begin{align*}
\begin{cases}
\text{black vertices } b(2i, 1)  \text{ and } b(2i, 2n-1) \text{ with } i \in \{1,\ldots, n\}, \\
\text{white vertices } w(1, 2i) \text{ and } w(2n-1, 2i) \text{ with } i \in \{2,\ldots, n-1\}.
\end{cases}
\end{align*}

Following~\cite{CR24, BNR23} let us define the \emph{reduced Aztec diamond}~$A'_{n}$  of size~$n$. Recall that one can contract vertices adjacent to degree two vertex and merge double edges without changing the dimer probability measure, see Figure~\ref{fig:elem}. To obtain the reduced Aztec diamond $A'_{n}$ from $A_{n}$ one should make the following sequence of moves:
\begin{itemize}
\item[$\bullet$] apply a gauge transform (if needed) to modify (only) weights on edges adjacent to vertices of the set~$V_{\operatorname{gauge}}$.
More precisely, multiply (Kasteleyn) weights on edges adjacent to the vertex 
\begin{align*}
&\begin{cases}
 b(2i, 2n-1) \text{ by } \prod_{j=1}^i\frac{\gamma_{n,j}}{\delta_{n,j}}\\
b(2i, 1) \text{ by } (-1)^{i}\prod_{j=1}^i\frac{\alpha_{0,j}}{\beta_{0,j}}
\end{cases},
 \text{ with } i \in \{1,\ldots, n\},\\
&\begin{cases}
 w(2n-1,2i) \text{ by } \prod_{j=2}^i\frac{\beta_{j-1,n}}{\delta_{j,n}}\\
w(1,2i) \text{ by } (-1)^{i-1}\prod_{j=2}^i\frac{\alpha_{j-1,1}}{\gamma_{j,1}}
\end{cases},
\text{ with } i \in \{2,\ldots, n-1\}.
\end{align*}

\item[$\bullet$] contract black vertices $(2i, 1)  \text{ and } (2i, 2n-1) \text{ with } i \in \{0,\ldots, n\}$ and denote the corresponding boundary vertices of $A'_n$ by~$b_0$ and~$b_{n-1}$;
\item[$\bullet$] contract white vertices $(1, 2i) \text{ and } (2n-1, 2i) \text{ with } i \in \{1,\ldots, n-1\}$ and denote the corresponding boundary vertices of $A'_n$ by~$w_0$ and~$w_{n-1}$; 
 \item[$\bullet$] merge pairwise all the $4(n-1)$ obtained pairs of parallel edges.
\end{itemize}

\begin{remark}
The only difference with reduction process described in~\cite{CR24, BNR23} is the first step: we need this additional step here, since contraction of a vertex of degree~$2$ is possible if and only if the two edge weights are equal to each other. Note also, that Kasteleyn signs on parallel edges are the same after applying such a gauge, we keep this sign on merged edges of $A'_n$ to obtain valid Kasteleyn signs on the reduces Aztec.
\end{remark}

Note that inner vertices of augmented dual $(A'_n)^*$ are in natural correspondence with faces of $A_{n}$ not adjacent to the boundary faces and therefore can be indexed in the same way. The boundary vertex of augmented dual $(A'_{n})^*$ adjacent to the dual vertex inexed by~$(2,2)$ we index by~$(1,1)$. Similarly we index the other three boundary vertices of the augmented dual~$(A'_{n})^*$ by~$(1,2n-1)$, $(2n-1,2n-1)$ and $(1,2n-1)$. 

Denote the Kasteleyn matrix of~$A_{n}$ obtained by the gauge transformation described in the reduction process by~$K_{\operatorname{gauge}}$, and let~$K_{\operatorname{reduced}}$ be the Kasteleyn matrix of~$A'_{n}$ obtain from initial Kasteleyn weights of $A_n$ by the reduction process. The following result is similar to~\cite[Lemma 3.2]{BNR24}.

\begin{lemma}\label{lem:reduced_K}
Let~$w_r, b_r \in V(A'_n)$ be white and black vertices of the reduced Aztec. We identify~$w_r$ with a white vertex $w$ of the original graph~$A_{n}$ in the following way: If~$w_r$ is in the interior, it is identified with a vertex of the original graph in the natural way, and if~${w_r}$ is a contracted vertex, we identify it with any vertex of the corresponding string of contracted vertices in the original Aztec diamond.  Similarly, we identify each black vertex~$b_r$ of the reduced Aztec with a black vertex~$b$ in the original graph. Then 
\[K^{-1}_{\operatorname{reduced}}({w_r}, {b_r}) = c_{w_r,w}c_{b_r,b}K^{-1}(w, b),\]
where $c_{w_r,w}$ and $c_{b_r,b}$ are given by
\[
c_{w_r,w}= \begin{cases}
 \prod_{j=2}^i\frac{\delta_{j,n}}{\beta_{j-1,n}} \quad &\text{ if } w_r=w_{n-1} \text{ and } w=w(2n-1,2i)  \text{ with } i \in \{2,\ldots, n-1\};\\
 (-1)^{i-1}\prod_{j=2}^i\frac{\gamma_{j,1}}{\alpha_{j-1,1}} \quad &\text{ if } w_r=w_{0} \text{ and } w=w(1,2i)  \text{ with } i \in \{2,\ldots, n-1\};\\
1 \quad &\text{otherwise},
\end{cases}
\]
and 
\[
c_{b_r,b}= \begin{cases}
 \prod_{j=1}^i\frac{\delta_{n,j}}{\gamma_{n,j}} \quad &\text{ if } b_r=b_{n-1} \text{ and } b=b(2i, 2n-1)   \text{ with } i \in \{1,\ldots, n\};\\
(-1)^{i}\prod_{j=1}^i\frac{\beta_{0,j}}{\alpha_{0,j}} \quad &\text{ if } b_r=b_{0} \text{ and } b=b(2i, 1)  \text{ with } i \in \{1,\ldots, n\};\\
1 \quad &\text{otherwise}.
\end{cases}
\]
\end{lemma}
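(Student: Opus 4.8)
The plan is to factor the reduction process into the two kinds of moves it is built from and to track the effect of each on the inverse Kasteleyn matrix. The first move is the gauge transformation that multiplies the weights around the vertices of $V_{\operatorname{gauge}}$; the second is the repeated contraction of a degree-two vertex together with the merging of the double edges it creates. The gauge step is elementary linear algebra, whereas the contraction step carries the real content of the lemma.

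For the gauge step, let $\mathcal{F}$ denote the vertex function implementing the prescribed gauge, so that $K_{\operatorname{gauge}}(b,w) = \mathcal{F}(b)K(b,w)\mathcal{F}(w)$; in matrix form $K_{\operatorname{gauge}} = D_B K D_W$ with $D_B = \operatorname{diag}(\mathcal{F}(b))$ and $D_W = \operatorname{diag}(\mathcal{F}(w))$. Inverting gives
\[
K_{\operatorname{gauge}}^{-1}(w,b) = \mathcal{F}(w)^{-1}\, K^{-1}(w,b)\, \mathcal{F}(b)^{-1}.
\]
Reading off $\mathcal{F}$ from the reduction recipe, its inverse at the boundary vertices that are about to be contracted is exactly the product appearing in the definition of $c_{w_r,w}$ and $c_{b_r,b}$ (the sign $(-1)^i$ coming from the sign in the gauge applied along the bottom and left boundaries), while $\mathcal{F}\equiv 1$ at every interior vertex. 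Thus it remains to show that the subsequent contractions and merges do not change the relevant entries, i.e.\ that $K_{\operatorname{reduced}}^{-1}(w_r,b_r) = K_{\operatorname{gauge}}^{-1}(w,b)$ for any choice of representatives $w,b$ in the contracted strings.

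The core is the following local statement. Suppose $w_*$ is a white vertex of degree two with black neighbours $b_1,b_2$, and let $G'$ be obtained by deleting $w_*$, merging $b_1,b_2$ into a single vertex $\hat b$, and summing the weights of any resulting double edges, so that $K_{G'}(\hat b,w) = K(b_1,w)+K(b_2,w)$ and $K_{G'}=K$ otherwise. The Kasteleyn sign condition applied to a face through $w_*$ forces $K(b_1,w_*) = -K(b_2,w_*)$, and after the gauge these two weights have equal modulus. Ordering the black vertices as $b_1,b_2,B'$ and the white as $w_*,W'$, the column of $K$ at $w_*$ is $(c,-c,0)^\top$, and writing $r_i=K(b_i,W')$, $M=K(B',W')$ one has
\[
K = \begin{pmatrix} c & r_1 \\ -c & r_2 \\ 0 & M \end{pmatrix}, \qquad K_{G'} = \begin{pmatrix} r_1+r_2 \\ M \end{pmatrix}.
\]
From $K^{-1}K=I$ read on the $w_*$ column one gets $K^{-1}(w,b_1)=K^{-1}(w,b_2)$ for every $w\in W'$, so the merged column is well defined and equals this common value. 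From $KK^{-1}=I$ one gets $MS=I_{B'}$ and $(r_1+r_2)S=0$ for the block $S=K^{-1}(W',B')$; since $K_{G'}$ is invertible these two equations characterise $S$ uniquely as the solution of $K_{G'}X=\begin{pmatrix}0 \\ I_{B'}\end{pmatrix}$, which is also the equation satisfied by $K_{G'}^{-1}(W',B')$. Hence $K_{G'}^{-1}(w,b)=K^{-1}(w,b)$ for the surviving vertices, and a parallel short computation (using the one-dimensionality of $\ker M$, which forces $K_{G'}^{-1}(W',\hat b)$ and $K^{-1}(W',b_1)$ to be proportional and to agree against $r_1+r_2$) gives $K_{G'}^{-1}(w,\hat b)=K^{-1}(w,b_1)=K^{-1}(w,b_2)$. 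The symmetric statement for contracting a degree-two black vertex follows by transposition.

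To finish, I would apply this local identity iteratively along each of the four boundary strings (using the mirror version for the white strings), noting that every contraction fixes all inverse-Kasteleyn entries between surviving vertices and assigns to each merged vertex the common value shared by the two vertices merged; consequently the value attached to the fully contracted vertices $b_0,b_{n-1},w_0,w_{n-1}$ is independent of the representative, which is exactly the freedom of choosing $w,b$ in the statement. Combining this with the gauge step yields $K_{\operatorname{reduced}}^{-1}(w_r,b_r)=c_{w_r,w}\,c_{b_r,b}\,K^{-1}(w,b)$. The main obstacle is the contraction identity itself: one must verify that the Kasteleyn sign condition really produces the opposite-sign relation $K(b_1,w_*)=-K(b_2,w_*)$ at \emph{every} step of each string contraction—so that the merged entry is consistently defined—and then track the iteration carefully enough to confirm that the interior entries are never disturbed and that the accumulated gauge factors coincide with the stated products.
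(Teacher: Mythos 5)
Your proposal is correct, and it reaches the same three intermediate facts as the paper's proof — the gauge step changes $K^{-1}$ by exactly the diagonal factors $c_{w_r,w}$, $c_{b_r,b}$; the entries $K^{-1}_{\operatorname{gauge}}(w,b)$ agree for all representatives $b$ of a contracted string (both arguments extract this from $K^{-1}K=\operatorname{Id}$ together with the equal-magnitude, opposite-sign structure produced by the gauge); and the resulting array is inverse to $K_{\operatorname{reduced}}$ — but it organizes the last step differently. The paper guesses the full candidate inverse $R(w_r,b_r):=K^{-1}_{\operatorname{gauge}}(w,b)$ and verifies $RK_{\operatorname{reduced}}=K_{\operatorname{reduced}}R=\operatorname{Id}$ in one shot, the key computation being a telescoping identity that sums over an entire contracted boundary string at once. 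You instead isolate a local lemma — contracting a single degree-two vertex and merging the two resulting parallel-edge families leaves $K^{-1}$ unchanged on surviving vertices and assigns the common value to the merged vertex — prove it by block-matrix manipulation of $KK^{-1}=\operatorname{Id}$ and $K^{-1}K=\operatorname{Id}$, and iterate along each boundary string. Your version is more modular and actually establishes a more general fact about the behavior of the inverse Kasteleyn matrix under the elementary contraction move on any planar bipartite graph, at the cost of having to check at every step of the iteration that the two edges at the next degree-two vertex still have equal magnitude and opposite sign (which holds, since the surviving edge weights are untouched by earlier contractions and the alternating sign $(-1)^i$ in the gauge guarantees it), and that each intermediate $K_{G'}$ is invertible (which follows since the elementary moves change the partition function by a positive factor). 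The paper's global verification sidesteps the intermediate graphs entirely. Both are complete; neither has a gap.
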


\begin{remark}
For example, in the above lemma, we identify the vertex~$w_0$ of~$A'_n$ with any of the~$n-1$ white vertices~$w(1,2i)$ with~$i \in \{1,\ldots, n-1\}$; and we identify the vertex~$b_0$ of~$A'_n$ with any of the~$n+1$ black vertices~$b(2i, 1)$ with~$i \in \{0,\ldots, n\}$, see Figure~\ref{fig:aztec_reduction}.
\end{remark}

\begin{proof} The proof mimics the proof of~\cite[Lemma 3.2]{BNR24}.
First, note that the definition of~$K_{\operatorname{gauge}}$ (described in the reduction process) implies that 
\[K^{-1}_{\operatorname{gauge}}({w}, {b}) = c_{w_r,w}c_{b_r,b}K^{-1}(w, b),\]
where we identify $c_{w_r,w}$ and $c_{b_r,b}$ by $w$ and $b$.

Next, we claim that for non-boundary vertex $b$ the values $K^{-1}_{\operatorname{gauge}}({w(1,2i)}, {b})$ coincide for all~$i \in \{1,\ldots, n-1\}$. Indeed, note that after a gauge transformation described in the reduction process, all pairs of new Kasteleyn weights adjacent to the boundary black vertices on the left boundary between two consecutive white vertices~$w(2i-2,1)$ and~$w(2i,1)$ with~$i \in \{2,\ldots, n-1\}$ have the same magnitudes and opposite signs. Then $K_{\operatorname{gauge}}K^{-1}_{\operatorname{gauge}}=\operatorname{Id}$ implies the claim. Similar statements hold for the three other boundaries of~$A_n$.
Now, identifying the vertices of the reduced Aztec diamond~$A'_n$ with the vertices of~$A_n$ as described in the lemma, let us define the matrix $R$ by
\[R(w_r, b_r):=K^{-1}_{\operatorname{gauge}}({w}, {b}).\]
To finish the proof it remains to show that 
\[R K_{\operatorname{reduced}}=K_{\operatorname{reduced}} R = \operatorname{Id},\]
and therefore $R(w_r, b_r)=K^{-1}_{\operatorname{reduced}}({w_r}, {b_r}).$ Indeed, we just need to check that 
\[
\begin{cases}
\sum\limits_{b\sim w'_r} K_{\operatorname{reduced}}(b, w'_r) R(w_r, b)=\delta_{w_r w'_r} \quad 
&\text{ for all white vertices } w_r,  w'_r \in A'_n;\\
\sum\limits_{w\sim b'_r} R(w, b_r) K_{\operatorname{reduced}}(b'_r, w) =\delta_{b_r b'_r} \quad 
&\text{ for all black vertices } b_r,  b'_r \in A'_n.\\
\end{cases}
\] 
For any non-boundary white vertex $w'_r$ of~$A'_n$ we have
\[
\sum_{b\in A'_n: \,b\sim w'_r} K_{\operatorname{reduced}}(b, w'_r) R(w_r, b)=
\sum_{b\in A_n: \,b\sim w'} K_{\operatorname{gauge}}(b, w') K_{\operatorname{gauge}}^{-1}(w, b)=
\delta_{w w'}=\delta_{w_r w'_r}.
\] 
Assume that $w'_r=w_0$ and $w_r$ is a non-boundary vertex, then 
\begin{align*}
\sum_{b\sim w_0} K_{\operatorname{reduced}}(b, w_0) R(w_r, b)= \quad\quad&\\
K_{\operatorname{gauge}}(b(0,1),w(1,2)) 
K^{-1}_{\operatorname{gauge}}(w, b(0,1))&\,+\\
 \sum_{k=1}^{n-1} \Big(K_{\operatorname{gauge}}(b(2, 2k-1), w(1,2k)) 
&K^{-1}_{\operatorname{gauge}}(w, b(2, 2k-1)) \\
&+ K_{\operatorname{gauge}}(b(2, 2k+1), w(1,2k)) 
K^{-1}_{\operatorname{gauge}}(w, b(2, 2k+1)) \Big)\\
+\, K_{\operatorname{gauge}}(b(0, 2n-1),&w(1, 2n-1)) 
K^{-1}_{\operatorname{gauge}}(w, b(0, 2n-1))\\
\end{align*}
Which can be written as
\begin{align*}
&\sum_{k=1}^{n-1}
 \left(
 \sum_{b\sim w(1, 2k)} 
 K_{\operatorname{gauge}}(b, w(1, 2k)) 
K^{-1}_{\operatorname{gauge}}(w, b)
 \right)\\
 & \quad\quad -\sum_{i=1}^{n-2}
 \left(
 \sum_{w\sim b(0, 2i+1)} 
 K^{-1}_{\operatorname{gauge}}(w, b(0, 2i+1)) 
 K_{\operatorname{gauge}}(b(0, 2i+1),w)
 \right)
 =0.
\end{align*}
Similarly, one can check that $\sum\limits_{b\sim w'_r} K_{\operatorname{reduced}}(b, w'_r) R(w_r, b)=\delta_{w_r w'_r} $ if $w_r$ is a boundary vertex.
\end{proof}

\subsection{Perfect t-embedding of the reduced Aztec diamond}
\label{subsubsec:temb_aztec}
In this section we show how to obtain perfect t-embeddings of weighted reduced Aztec diamonds by defining Coulomb gage functions. 

Using the notation introduced in the previous section define
\begin{align}\label{eq:def_a} 
a:=\sqrt{-\frac{K_{\operatorname{reduced}}^{-1}(w_0, b_0)\cdot K_{\operatorname{reduced}}^{-1}(w_{n-1}, b_{n-1})}
{K_{\operatorname{reduced}}^{-1}(w_0, b_{n-1})\cdot K_{\operatorname{reduced}}^{-1}(w_{n-1}, b_0)}}.
\end{align}
Note that $a\in\mathbb{R}_{>0}$ since exactly one of the Kasteleyn weights on the boundary edges is negative, by the reduction process introduced in the previous section.

Note that weight gauge transformations do not change the value~$a$. Note also that by applying a gauge transformations at boundary vertices we can make  
\begin{equation}\label{eqn:bdryvalsa}
\tilde K_{\operatorname{reduced}}^{-1}(w_0, b_0)
=\tilde K_{\operatorname{reduced}}^{-1}(w_{n-1}, b_{n-1})=a
\end{equation}
 and 
 \begin{equation}\label{eqn:bdryvals1}
 \tilde K_{\operatorname{reduced}}^{-1}(w_0, b_{n-1})
 = -\tilde K_{\operatorname{reduced}}^{-1}(w_{n-1}, b_0)=1,
\end{equation}
where~$\tilde K_{\operatorname{reduced}}$ is gauge equivalent to $K_{\operatorname{reduced}}$ and coincide with $K_{\operatorname{reduced}}$ on all edges not adjacent to the boundary vertices.

\begin{remark}\label{rem:gauge_tilde_kast}
One can choose a gauge transformation described above such that 
\[ \tilde K^{-1}_{\operatorname{reduced}}({w}, {b}) = a_{w}a_{b}K^{-1}_{\operatorname{reduced}}({w}, {b}),\]
with $a_{w}$ and $a_{b}$ given by
\[
a_{w}= \begin{cases}
 \frac{ 1 }{ K^{-1}_{\operatorname{reduced}}(w_0, b_{n-1}) } \quad &\text{ if } w=w_{0},\\
 a/ K^{-1}_{\operatorname{reduced}}(w_{n-1}, b_{n-1}) &\text{ if } w=w_{n-1},\\
1 \quad &\text{ otherwise};
\end{cases}
\quad
a_{b}= \begin{cases}
a \frac{ K^{-1}_{\operatorname{reduced}}(w_0, b_{n-1}) }{ K^{-1}_{\operatorname{reduced}}(w_0, b_{0}) } \quad\quad &\text{ if } b=b_{0},\\
1 \quad &\text{ otherwise}.
\end{cases}
\]
\end{remark}
\begin{proposition}\label{prop:FG}
Let~$\tilde K_{\operatorname{reduced}}(b,w)$ be real Kasteleyn weights on~$A'_n$ as described above. Define the Coulomb gauge functions on black and white vertices of~$A'_n$, respectively, by the formulas 
\begin{align}\label{eq:F}
           \mathcal{F}^\bullet(b) :=-\sqrt{-a-\i} \, \tilde K_{\operatorname{reduced}}^{-1}(w_0, b) - 
    \sqrt{a+\i} \,  \tilde K_{\operatorname{reduced}}^{-1}(w_{n-1}, b),
        \end{align}
    and 
    \begin{align}\label{eq:G}
     \mathcal{F}^\circ(w) := -\sqrt{-a+\i} \, \tilde K_{\operatorname{reduced}}^{-1}(w, b_{0}) + 
    \sqrt{a-\i} \, \tilde K_{\operatorname{reduced}}^{-1}(w, b_{n-1}).
    \end{align}
Then, these Coulomb gauge functions define a perfect t-embedding~$\mathcal{T}_n$ of~$A'_n$, such that the boundary polygon of~$\mathcal{T}_n((A'_n)^*)$ is a rhombus.
\end{proposition}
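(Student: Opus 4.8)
The plan is to read off the two Coulomb relations directly, invoke Remark~\ref{rem:p_emb} to reduce the statement to the boundary conditions (b)--(c) of Definition~\ref{def:temb_origami}, and then compute the four corners of the boundary polygon purely from the boundary data in~\eqref{eqn:bdryvalsa}--\eqref{eqn:bdryvals1}.

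First I would check that the functions in~\eqref{eq:F}--\eqref{eq:G} satisfy~\eqref{eq:Coulomb-def}. Each of $\mathcal{F}^\bullet,\mathcal{F}^\circ$ is a fixed linear combination of two rows (respectively columns) of $\tilde K_{\operatorname{reduced}}^{-1}$, so applying $\tilde K_{\operatorname{reduced}}^{\top}$ (respectively $\tilde K_{\operatorname{reduced}}$) and using $\tilde K_{\operatorname{reduced}}^{-1}\tilde K_{\operatorname{reduced}}=I$ yields $[\tilde K_{\operatorname{reduced}}^{\top}\mathcal{F}^\bullet](w)=-\sqrt{-a-\i}\,\delta_{w,w_0}-\sqrt{a+\i}\,\delta_{w,w_{n-1}}$ and $[\tilde K_{\operatorname{reduced}}\mathcal{F}^\circ](b)=-\sqrt{-a+\i}\,\delta_{b,b_0}+\sqrt{a-\i}\,\delta_{b,b_{n-1}}$, both of which vanish at every interior vertex. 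Hence $d\mathcal{T}$ and $d\mathcal{O}$ in~\eqref{eq:TO-def-via-F} are closed and define a t-realisation $\mathcal{T}_n$ of $(A'_n)^*$. I would record that the two right-hand sides above are the explicit boundary ``defects'' driving the rest of the argument.

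By Remark~\ref{rem:p_emb} it then suffices to verify the boundary conditions (b)--(c). The key point is that, for a boundary vertex $v$, the cycle of faces of $A'_n$ around $v$ is broken exactly at the outer face, so summing $d\mathcal{T}$ over the dual edges of all primal edges at $v$ telescopes to $\mathcal{F}^\bullet(v)\,[\tilde K_{\operatorname{reduced}}\mathcal{F}^\circ](v)$ (for $v$ black) or $\mathcal{F}^\circ(v)\,[\tilde K_{\operatorname{reduced}}^{\top}\mathcal{F}^\bullet](v)$ (for $v$ white), and this sum is precisely the edge of the boundary polygon joining the two outer corners flanking $v$. This is exactly what makes the corners depend only on boundary data, circumventing any use of elementary transformations. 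Substituting~\eqref{eqn:bdryvalsa}--\eqref{eqn:bdryvals1} and the defects above, with the principal branch of each square root (so that $\sqrt{-a-\i}=-\i\sqrt{a+\i}$ and $\sqrt{-a+\i}=\i\sqrt{a-\i}$), the four polygon edges come out equal to $\pm\sqrt{a^2+1}\,(a+\i)$ at the two white boundary vertices and $\pm\sqrt{a^2+1}\,(a-\i)$ at the two black ones. All four have modulus $a^2+1$, opposite edges are antiparallel, and the directions $a+\i$ and $a-\i$ are not parallel because $a>0$; hence the polygon is an equilateral parallelogram, i.e.\ a rhombus, with corners $\{0,\,(a+\i)\sqrt{a^2+1},\,2a\sqrt{a^2+1},\,(a-\i)\sqrt{a^2+1}\}$ up to an additive constant. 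A rhombus is tangential, which gives~(b).

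For~(c) I would compute, at each corner, the spoke $d\mathcal{T}$ of the boundary edge bounding the corresponding outer face; using the same factored boundary values, the products $\mathcal{F}^\bullet(b)\mathcal{F}^\circ(w)$ are $\pm(a^2+1)^{3/2}$ or $\pm\i\,(a^2+1)^{3/2}$, i.e.\ each spoke points along one of the two diagonals of the rhombus. Since a diagonal bisects the interior angles at the two corners it meets, and multiplying by the real weight $\tilde K_{\operatorname{reduced}}(b,w)$ only preserves or reverses this direction, the adjacent inner vertex lands on the correct bisector; the sign of the weight, fixed by the reduction process, sends the segment into $\operatorname{Int}(P)$. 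Together with~(b), Remark~\ref{rem:p_emb} then promotes $\mathcal{T}_n$ to a perfect t-embedding with rhombic boundary. I expect the main obstacle to be bookkeeping rather than ideas: one must pin down the principal branches of $\sqrt{\pm a\pm\i}$ so that the black- and white-vertex edges emerge proportional to $a-\i$ and $a+\i$ (and so do not collapse to a degenerate segment), and one must match the combinatorial orientation of the dual edges circling each boundary vertex with the telescoping sums in order to place the corners and check the interior/bisector condition with the correct signs.
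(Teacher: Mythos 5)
Your proposal is correct and follows essentially the same route as the paper's proof: verify the Coulomb relations by the $\tilde K^{-1}\tilde K=I$ argument, invoke Remark~\ref{rem:p_emb} to reduce to the boundary conditions, compute each boundary edge as $\mathcal F^\bullet(b)\sum_{w\sim b}\tilde K_{\operatorname{reduced}}(b,w)\mathcal F^\circ(w)$ (and its white counterpart) to exhibit the rhombus, and check the bisector condition via the sign of $\mathcal F^\bullet(b)\mathcal F^\circ(w)$ at the corners. The computed edge directions $\pm(a\mp\i)\sqrt{a^2+1}$ and corner products $\pm(a^2+1)^{3/2}$, $\pm\i(a^2+1)^{3/2}$ match the paper's.
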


\old{
\begin{proposition}\label{prop:FG}
Let~$\tilde K_{\operatorname{reduced}}(b,w)$ be real Kasteleyn weights on~$A'_n$ as described above. Define the Coulomb gauge functions on black and white vertices of~$A'_n$, respectively, by the formulas 
\begin{align}\label{eq:F}
           \mathcal{F}^\bullet(b) :=\sqrt{a-i} \, \tilde K_{\operatorname{reduced}}^{-1}(w_0, b) + 
    \sqrt{-a+i} \,  \tilde K_{\operatorname{reduced}}^{-1}(w_{n-1}, b),
        \end{align}
    and 
    \begin{align}\label{eq:G}
     \mathcal{F}^\circ(w) := \sqrt{a+i} \, \tilde K_{\operatorname{reduced}}^{-1}(w, b_{0}) - 
    \sqrt{-a-i} \, \tilde K_{\operatorname{reduced}}^{-1}(w, b_{n-1}).
    \end{align}
Then, these Coulomb gauge functions define a perfect t-embedding~$\mathcal{T}_n$ of~$A'_n$, such that the boundary polygon of~$\mathcal{T}_n((A'_n)^*)$ is a rhombus.
\end{proposition}
}

\begin{proof}
Clearly the functions~$ \mathcal{F}^\bullet$ and~$\mathcal{F}^\circ$ defined in the proposition satisfy~\eqref{eq:Coulomb-def} for $K_\R:=\tilde K_{\operatorname{reduced}}$, so~\eqref{eq:F}--\eqref{eq:G} define a pair of Coulomb gauge functions. Therefore, due to Remark~\ref{rem:p_emb}, we just need to check the boundary conditions.

Note that the boundary edge adjacent to the black face~$\cT(b_0)$ is given by
\[
\sum_{w\sim b_0} d\cT(b_0w^*)
=\cF^\tb(b_0) \sum_{w\sim b_0} \tilde K_{\operatorname{reduced}}(b_0,w)\cF^\tw(w)=-\sqrt{-a+\i} \,\cF^\tb(b_0),
\]
where 
\[
\cF^\tb(b_0)=-\sqrt{-a-\i} \, \tilde K_{\operatorname{reduced}}^{-1}(w_0, b_0)  
    -\sqrt{a+\i} \, \tilde K_{\operatorname{reduced}}^{-1}(w_{n-1}, b_0)=-a\,\sqrt{-a-\i} + \sqrt{a+\i}.
\]
So, $\sum_{w\sim b_0} d\cT(b_0w^*)=\sqrt{a^2+1}(a-\i)$.

Similarly, one can check that boundary edges of the perfect t-embedding adjacent to boundary faces~$\cT(b_{n-1})$,~$\cT(w_0)$, and ~$\cT(w_{n-1})$ are given by
\begin{align*}
\sqrt{a-\i} \,&(-\sqrt{-a-\i}  - a\sqrt{a+\i} )=\sqrt{a^2+1}(-a+\i),\\
-\sqrt{-a-\i} \, &(-a\, \sqrt{-a+\i}   + \sqrt{a-\i})=\sqrt{a^2+1}(a+\i),\\
-\sqrt{a+\i} \, &(\sqrt{-a+\i}  +a\, \sqrt{a-\i})=\sqrt{a^2+1}(-a-\i),
\end{align*}
respectively. Note that these edges form a rhombus with side length~$(a^2+1)$. 

It remains to check the bisector condition. The edge between the boundary faces~$\cT(b_0)$ and~$\cT(w_0)$ is given by
\[d\cT(b_0w_0^*)=\cF^\tb(b_0)\tilde K_{\operatorname{reduced}}(b_0,w_0)\cF^\tw(w_0).\]
Note that $\tilde K_{\operatorname{reduced}}(b_0,w_0)>0$, therefore $d\cT(b_0w_0^*) \in \mathbb{R}_{>0}\cF^\tb(b_0)\cF^\tw(w_0).$ We have
\[\cF^\tb(b_0)\cF^\tw(w_0)=(-a\,\sqrt{-a-\i} + \sqrt{a+\i}) (-a\,\sqrt{-a+\i}  + \sqrt{a-\i} )=(a^2+1)\sqrt{a^2+1}>0.\]
This implies that $d\cT(b_0w_0^*)$ lies on a bisector of the boundary rhombus pointing inside the rhombus. Similarly, one can check the condition for the three other bisectors. 
\end{proof}

\old{
\begin{remark}
A perfect t-embedding constructed as above is in fact orientation reversing. One may check this from the boundary behavior computed in the proof above.
\end{remark}}

\begin{remark}\label{rmk:bdry_points} Let $v_{\scriptscriptstyle i,j}$ be a boundary vertex of the augmented dual graph~$(A'_n)^*$ adjacent to boundary faces~$w_i$ and~$b_j$, so $i,j\in\{0, n-1\}$. Note that $d\cT$ defines a t-embedding up to translation, so that we can set $\cT(v_{\scriptscriptstyle 0,0})=0$. Then, due to the proof of the above proposition, we have
\[\cT(v_{\scriptscriptstyle 0,n-1})=(a+\i)\sqrt{a^2+1}, 
\quad \cT(v_{\scriptscriptstyle n-1,0})=(a-\i)\sqrt{a^2+1}\quad \text{ and } 
\quad \cT(v_{\scriptscriptstyle n-1,n-1})=2a\sqrt{a^2+1}.\]
Similarly we can set $\cO(v_{\scriptscriptstyle 0,0})=0.$ Recal that~$d\cO$ is given by~\eqref{eq:TO-def-via-F}, therefore
\[\sum_{w\sim b_0} d\cO(b_0w^*)
=\cF^\tb(b_0) \sum_{w\sim b_0} \tilde K_{\operatorname{reduced}}(b_0,w)\overline{\cF^\tw(w)}=-(a^2+1).\] Hence, $\cO(v_{\scriptscriptstyle n-1,0})=-(a^2+1)$.
Note that since the boundary of~$\cT((A'_n)^*)$ is a rhombus, and it is a perfect t-embedding, we have 
\[\cO(v_{\scriptscriptstyle 0,0})=\cO(v_{\scriptscriptstyle n-1,n-1})=0 
\quad \text{ and } \quad
\cO(v_{\scriptscriptstyle 0,n-1})=\cO(v_{\scriptscriptstyle n-1,0})=-(a^2+1).
\]
\end{remark}

\begin{corollary}\label{cor:FG_gen} The above proposition holds for any finite planar bipartite graph~$\G$ with outer boundary of degree four. More precisely, let~$w_0, b_0, w_1, b_1$ be the boundary vertices of~$\G$ listed counterclockwise. Similarly to~\eqref{eq:def_a} define
\begin{align}\label{eq:def_a_gen} 
a_{\scriptscriptstyle \G}=\sqrt{-\frac{K_{\G}^{-1}(w_0, b_0)\cdot K_{\G}^{-1}(w_{1}, b_{1})}
{K_{\G}^{-1}(w_0, b_{1})\cdot K_{\G}^{-1}(w_{1}, b_0)}}, 
\end{align}
where $K_{\G}$ is a Kasteleyn matrix of $\G$.
By applying (if needed) a gauge transformation described in Remark~\ref{rem:gauge_tilde_kast}, we can assume that 
\[
 K_{\G}^{-1}(w_0, b_0)
= K_{\G}^{-1}(w_{1}, b_{1})=a_{\scriptscriptstyle \G}
\quad \text{ and } \quad
  K_{\G}^{-1}(w_0, b_{1})
 = - K_{\G}^{-1}(w_{1}, b_0)=1.
\] 
Then due to Proposition~\ref{prop:FG}, the Coulomb gauge functions 
\begin{align}\label{eq:F_gen}
           \mathcal{F}^\bullet(b) =-\sqrt{-a_{\scriptscriptstyle \G}-\i} \,  K_{\G}^{-1}(w_0, b) - 
    \sqrt{a_{\scriptscriptstyle \G}+\i} \,   K_{\G}^{-1}(w_{1}, b),
        \end{align}
    and 
    \begin{align}\label{eq:G_gen}
     \mathcal{F}^\circ(w) = -\sqrt{-a_{\scriptscriptstyle \G}+\i} \,  K_{\G}^{-1}(w, b_{0}) + 
    \sqrt{a_{\scriptscriptstyle \G}-\i} \,  K_{\G}^{-1}(w, b_{1}),
    \end{align}
 define a perfect t-embedding~$\mathcal{T}$ of the augmented dual~$\G^*$, such that the boundary polygon of~$\mathcal{T}(\G^*)$ is a rhombus with boundary points at~$\{0,\, (a+\i)\sqrt{a^2+1},\, 2a\sqrt{a^2+1}, \, (a-\i)\sqrt{a^2+1} \}$. And the corresponding boundary points of the origami map $\cO(\G^*)$ at points~$\{0,\, -(a^2+1),\,0, \,-(a^2+1)\}$.
\end{corollary}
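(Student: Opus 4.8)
The plan is to observe that the proof of Proposition~\ref{prop:FG} never used any feature of the reduced Aztec diamond beyond three structural facts: that $\G$ is a finite planar bipartite graph whose outer face has degree four (with boundary vertices $w_0,b_0,w_1,b_1$ in counterclockwise order), that $K_\G K_\G^{-1}=K_\G^{-1}K_\G=\mathrm{Id}$, and that the four boundary entries of $K_\G^{-1}$ have been normalized as in \eqref{eqn:bdryvalsa}--\eqref{eqn:bdryvals1}. So the whole argument transcribes verbatim with $A'_n$ replaced by $\G$ and $b_{n-1},w_{n-1}$ replaced by $b_1,w_1$, and I would simply spell this out step by step.

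First I would check the Coulomb conditions \eqref{eq:Coulomb-def}. Since $\cF^\tb$ and $\cF^\tw$ in \eqref{eq:F_gen}--\eqref{eq:G_gen} are fixed linear combinations of the two boundary rows (resp.\ columns) of $K_\G^{-1}$, for any interior white vertex $w$ one has $[K_\G^\top\cF^\tb](w)=-\sqrt{-a_{\scriptscriptstyle \G}-\i}\,(K_\G^{-1}K_\G)(w_0,w)-\sqrt{a_{\scriptscriptstyle \G}+\i}\,(K_\G^{-1}K_\G)(w_1,w)$, a combination of Kronecker deltas supported on $\{w_0,w_1\}$, hence zero; the black case is identical using $K_\G K_\G^{-1}=\mathrm{Id}$. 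Thus \eqref{eq:F_gen}--\eqref{eq:G_gen} are Coulomb gauge functions. Next, the four boundary-edge sums $\sum_{w\sim b_0}d\cT(b_0w^*)$, etc., and the bisector positivity $\cF^\tb(b_0)\cF^\tw(w_0)=(a_{\scriptscriptstyle \G}^2+1)\sqrt{a_{\scriptscriptstyle \G}^2+1}>0$ are computed exactly as in the proof of Proposition~\ref{prop:FG}, using only $K_\G K_\G^{-1}=\mathrm{Id}$ to collapse $\sum_{w\sim b_0}K_\G(b_0,w)K_\G^{-1}(w,b)$ to $\delta_{b_0 b}$ together with the four normalized values. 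This yields a rhombus of side length $a_{\scriptscriptstyle \G}^2+1$ with four inward-pointing bisectors, so conditions (b)--(c) of Definition~\ref{def:temb_origami} hold, and Remark~\ref{rem:p_emb} (i.e.\ \cite[Theorem 4.1]{CLR2}) promotes the t-realisation to a perfect t-embedding. The explicit vertex locations of $\cT(\G^*)$ and $\cO(\G^*)$ are then read off from Remark~\ref{rmk:bdry_points}.

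It remains to justify the normalization and the realness of $a_{\scriptscriptstyle \G}$. The combination $K_\G^{-1}(w_0,b_0)K_\G^{-1}(w_1,b_1)/\bigl(K_\G^{-1}(w_0,b_1)K_\G^{-1}(w_1,b_0)\bigr)$ is gauge invariant, since each of $w_0,w_1,b_0,b_1$ occurs exactly once in the numerator and once in the denominator. Hence the gauge of Remark~\ref{rem:gauge_tilde_kast}, written in terms of these four entries, produces \eqref{eqn:bdryvalsa}--\eqref{eqn:bdryvals1} for any $\G$ whose four boundary entries are nonzero, and under that normalization the invariant equals $-a_{\scriptscriptstyle \G}^2$, matching \eqref{eq:def_a_gen}.

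The one genuinely new point — and the step I expect to be the main obstacle — is to show $a_{\scriptscriptstyle \G}\in\R_{>0}$, equivalently that the invariant above is a negative real. I would argue this in two steps. For the four-cycle $w_0 b_0 w_1 b_1$, a direct $2\times 2$ inversion gives $a_{\scriptscriptstyle \G}^2=\chi_{w_0b_0}\chi_{w_1b_1}/(\chi_{w_0b_1}\chi_{w_1b_0})$, where the overall minus sign in \eqref{eq:def_a_gen} is cancelled by the Kasteleyn sign condition on the degree-four outer face (which forces an odd number of the boundary edges to carry a negative sign); since the weights $\chi_e$ are positive, this is $>0$. For general $\G$, I would invoke that $\G$ reduces to this four-cycle by a sequence of elementary transformations fixing the four boundary vertices (as recalled after Theorem~\ref{thm:FG_gen_intro}), under which $a_{\scriptscriptstyle \G}$ is unchanged, which is the invariance established in Section~\ref{sec:shuffling}. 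Alternatively one could prove positivity intrinsically from the sign pattern of the boundary entries of $K_\G^{-1}$ for a planar bipartite graph, but the reduction route is cleaner and reuses machinery already present in the paper.
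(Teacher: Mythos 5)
Your proposal is correct and follows essentially the same route as the paper, whose entire proof of this corollary is the one-sentence observation that the argument for Proposition~\ref{prop:FG} uses nothing about the Aztec diamond beyond the degree-four outer boundary. Your additional discussion of the positivity of $a_{\scriptscriptstyle \G}$ via reduction to the four-cycle is exactly the mechanism the paper supplies elsewhere (the remark following \eqref{eq:def_a} and the shuffling-invariance proposition of Section~\ref{sec:shuffling}), so nothing is missing.
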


\begin{proof}
To see this, note that the proof of Proposition~\ref{prop:FG} does not use the structure of the Aztec diamond; it only uses that the outer boundary has degree four.
\end{proof}

\subsection{Shuffling algorithm}\label{sec:shuffling}
In this section, we show that the alternating product of the values of the inverse Kasteleyn matrix on the boundary edges of a reduced Aztec diamond are invariant under the shuffling algorithm. 

Let us first recall the shuffling algorithm for reduced Aztec diamond described in~\cite{CR24, BNR23}. Recall that there are three types of elementary transformations of bipartite graph that preserve the dimer probability measure, see Figure~\ref{fig:elem}. To get a reduced Aztec diamond~$A'_{n-1}$ from~$A'_{n}$ one should make three steps, see Figure~\ref{fig:shuffling}. 
First, one should split all inner (and not adjacent to the four boundary vertices) vertices of the reduced Aztec to degree three vertices by adding degree two vertices as shown on Figure~\ref{fig:shuffling} with weights one on all new edges. 
The second step is to apply the spider move at all obtained spider configurations that correspond to inner faces of $A'_{n}$ with both coordinates odd. Note that by the elementary transformations described above all edges of the spider configuration adjacent to a square should have weight one, which is true by the construction for all but maybe some of the $4(n-2)$ edges adjacent to the boundary. To fix this, one can apply a gauge transformation at the inner vertex adjacent to such an edge to make the weight on this edge equal to one.  
The last step to obtain~$A'_{n-1}$ is to merge all double edges. Note that all double edges obtained by the described process are adjacent to at least one of the four boundary vertices. Note that by iterating these steps one can simplify any reduced Aztec diamond to the smallest size reduced Aztec diamond~$A'_2$, which is simply a loop of degree~$4$.

\begin{remark}\label{rmk:rec} As shown in~\cite{KLRR22}, see also~\cite[Remark 2.14]{BNR23} and~\cite[Corollary 2.15]{BNR23}, 
t-surfaces 
are preserved under elementary transformations. 
This ensures the existence of a perfect t-embedding~$\T_n((A'_n)^*)$ for any weighted Aztec diamond~$A_n$ and, similar to~\cite{CR24, BNR23, BR25}, yields a recurrence formula of the form:
\begin{multline*}
\T_{n+1}(j,k)+\T_n(j,k)=\frac{1}{c_{j,k,n}+1}\Big(\T_{n+1}(j-1,k-1)+\T_{n+1}(j+1,k+1) \\
+c_{j,k,n}\left(\T_{n+1}(j-1,k+1))+\T_{n+1}(j+1,k-1)\right)\Big).
\end{multline*}
However, identifying coefficients~$c_{j,k,n}$ is a non-trivial task even in the case of general doubly periodic edge weights, making the analysis of the recurrence significantly more difficult. We do not rely on these formulas to study t-embeddings. 
 Instead, we use an approach similar to the one introduced in~\cite{BNR24}, expressing the t-embedding and its origami map via the inverse Kasteleyn matrix of the reduced Aztec, bypassing the shuffling algorithm, see Section~\ref{subsubsec:temb_aztec}.
\end{remark}

\begin{figure}
 \begin{center}
\includegraphics[width=0.17\textwidth]{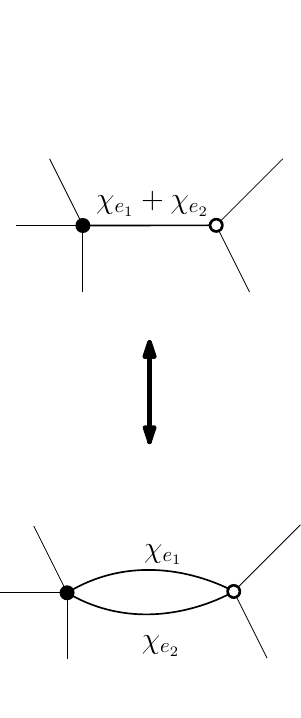}
$\quad\quad\quad\quad$
\includegraphics[width=0.17\textwidth]{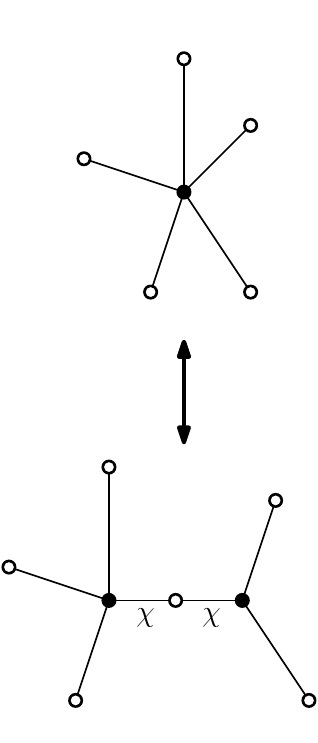}
$\quad\quad\quad\quad\quad$
\includegraphics[width=0.17\textwidth]{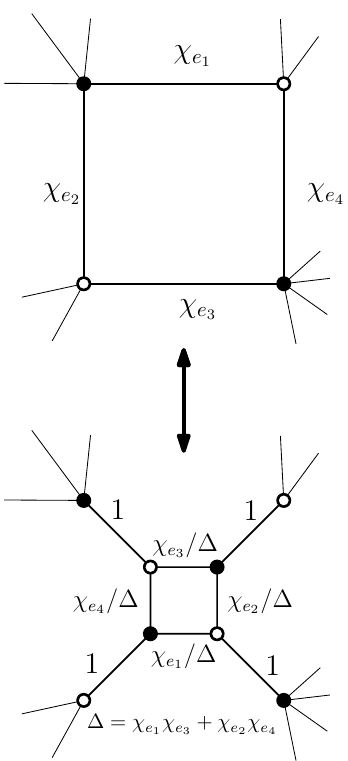}
 \end{center}
\caption{Elementary transformations of weighted planar bipartite graphs preserving the dimer probability measure:
(1) parallel edges with weights~$\chi_{e_1}, \chi_{e_2}$ can be replaced by a single edge with weight~$\chi_{e_1} + \chi_{e_2}$; 
(2) contracting a degree~$2$ vertex whose edges have equal weights; 
(3) the spider move, with weight transformation as shown.
}\label{fig:elem}
\end{figure}

\begin{figure}
 \begin{center}
\includegraphics[width=0.8 \textwidth]{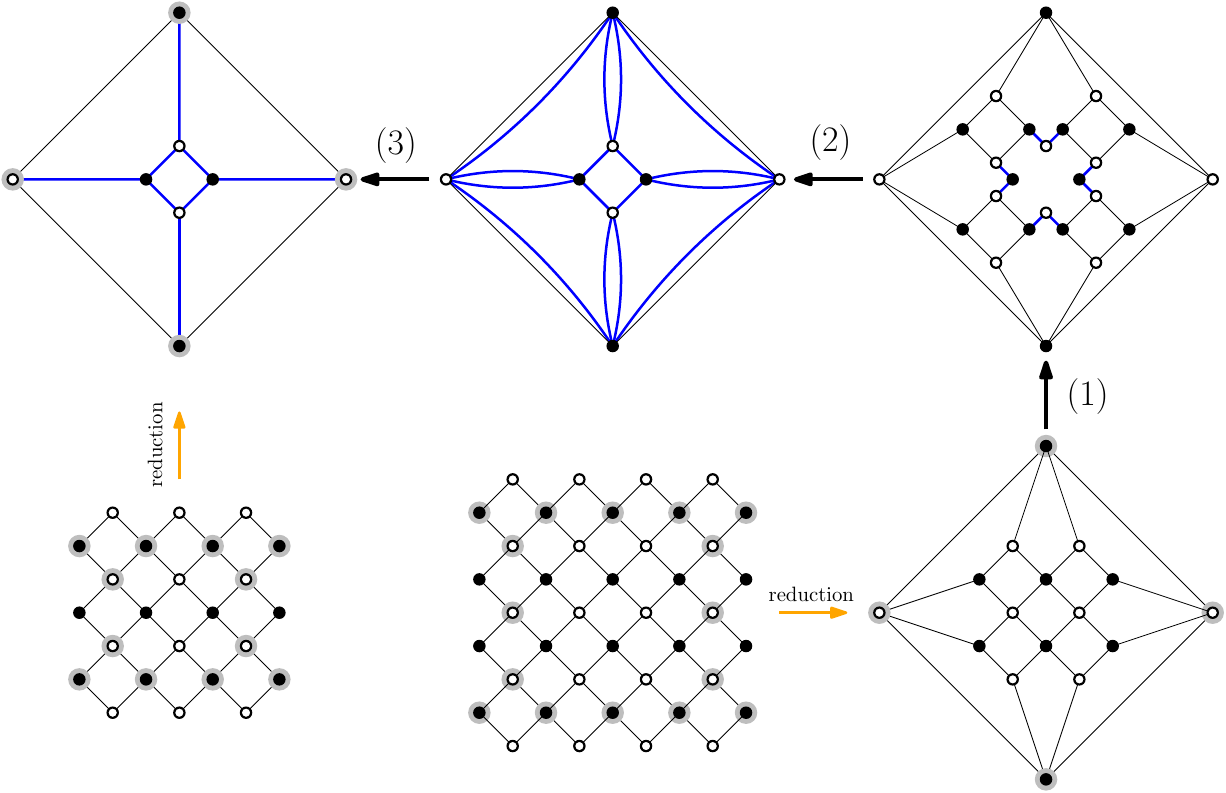}
  \caption{Shuffling algorithm for reduced Aztec diamond: (1) split vertices; (2)~make a gauge transformation (if needed) at marked vertices and apply spider moves; (3) merge double edges.}\label{fig:shuffling}
 \end{center}
\end{figure}

\begin{proposition}
Let~$w_0, b_0, w_{n-1}, b_{n-1}$ be the boundary vertices of the reduced Aztec diamond~$A'_n$. Then
\begin{align}\label{eq:inverse_bdry} 
K^{-1}|_{\partial_n}:=\frac{K_{\operatorname{reduced}}^{-1}(w_0, b_0)\cdot K_{\operatorname{reduced}}^{-1}(w_{n-1}, b_{n-1})}
{K_{\operatorname{reduced}}^{-1}(w_0, b_{n-1})\cdot K_{\operatorname{reduced}}^{-1}(w_{n-1}, b_0)} 
\end{align}
is invariant under the shuffling algorithm. More precisely, if the reduced Aztec diamond~$A'_{n-1}$ obtained from~$A'_n$ by the shuffling algorithm described above, then
\[K^{-1}|_{\partial_{n-1}}=K^{-1}|_{\partial_n}.\]
In particular, $K^{-1}|_{\partial_{n}}=K^{-1}|_{\partial_{2}}=-X_{v^*}$, where $X_{v^*}$ is the face weight of the only face of the reduced Aztec diamond~$A'_2$ obtained from~$A'_n$ by the shuffling algorithm.
\end{proposition}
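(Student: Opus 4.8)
The plan is to reduce the claimed invariance to the invariance of the rhombus parameter $a$ defined in \eqref{eq:def_a}, since by that very definition one has $K^{-1}|_{\partial_n}=-a^2$. Thus it suffices to show that $a$ is unchanged when $A'_n$ is replaced by $A'_{n-1}$ via the shuffling algorithm. The engine will be Proposition~\ref{prop:FG} (equivalently Corollary~\ref{cor:FG_gen}): the Coulomb gauge functions built from $\tilde K^{-1}_{\operatorname{reduced}}$ define a perfect t-embedding of $(A'_n)^*$ whose boundary polygon is the rhombus with vertices $0$, $(a+\i)\sqrt{a^2+1}$, $2a\sqrt{a^2+1}$, $(a-\i)\sqrt{a^2+1}$, as recorded in Remark~\ref{rmk:bdry_points}. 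The two diagonals of this rhombus join $0$ to $2a\sqrt{a^2+1}$ and $(a+\i)\sqrt{a^2+1}$ to $(a-\i)\sqrt{a^2+1}$, so their lengths are $2a\sqrt{a^2+1}$ and $2\sqrt{a^2+1}$ and their ratio is exactly $a$; hence $a$ is a similarity invariant of the boundary polygon.

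First I would observe that the shuffling algorithm of Section~\ref{sec:shuffling} presents $A'_{n-1}$ as the result of a sequence of elementary transformations (vertex splittings, spider moves at the interior odd faces, and merges of the resulting double edges) applied to $A'_n$, and that none of these moves deletes or creates the four boundary vertices $w_0,b_0,w_{n-1},b_{n-1}$: the spider moves act only on interior faces, and the merges combine parallel edges without altering the four boundary faces of the augmented dual. By Remark~\ref{rmk:rec} (following~\cite{KLRR22}, see also~\cite{BNR23}), perfect t-embeddings are preserved under elementary transformations, and the modification induced on the embedding is local to the faces involved in each move. Consequently the perfect t-embedding of $(A'_{n-1})^*$ obtained by transporting the one for $(A'_n)^*$ agrees with it on the four boundary faces, so the two embeddings share the same boundary rhombus, and in particular the same diagonal ratio $a$.

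It then remains to identify this diagonal ratio with the parameter $a'$ attached to $A'_{n-1}$ by \eqref{eq:def_a}. Invoking the rigidity of perfect t-embeddings for graphs with a degree-four outer face — uniqueness up to a similarity of the plane, as in~\cite{KLRR22,CLR2} — the embedding produced for $A'_{n-1}$ by Corollary~\ref{cor:FG_gen} has diagonal ratio $a'$, and by the previous paragraph this coincides with $a$. Hence $a'=a$, that is $K^{-1}|_{\partial_{n-1}}=K^{-1}|_{\partial_n}$. Iterating down to the terminal graph $A'_2$, a single $4$-cycle with one bounded face, gives $K^{-1}|_{\partial_n}=K^{-1}|_{\partial_2}$. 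For the $4$-cycle I would compute $K^{-1}$ from the $2\times2$ Kasteleyn matrix directly, obtaining
\[
K^{-1}|_{\partial_2}=\frac{K^{-1}(w_0,b_0)\,K^{-1}(w_{n-1},b_{n-1})}{K^{-1}(w_0,b_{n-1})\,K^{-1}(w_{n-1},b_0)}=\frac{K(b_0,w_0)\,K(b_{n-1},w_{n-1})}{K(b_0,w_{n-1})\,K(b_{n-1},w_0)},
\]
which equals the alternating product of the four Kasteleyn signs around the face times the face weight $X_{v^*}$; the Kasteleyn condition forces this sign product to be $(-1)^{d+1}=-1$ for $d=2$, yielding $K^{-1}|_{\partial_2}=-X_{v^*}$.

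The main obstacle I anticipate is making precise the sense in which the boundary rhombus is \emph{the same} after shuffling. This rests on two points to be handled carefully: that the edge-merge step near the boundary, which does change boundary edge weights even though it preserves the four boundary faces, still leaves the embedded boundary polygon fixed; and that perfect t-embeddings are rigid enough — unique up to similarity — for the diagonal ratio to be an honest function of the graph rather than of a particular embedding. If one prefers to avoid invoking this uniqueness statement, an alternative route is to verify directly that the ratio in \eqref{eq:inverse_bdry} is unchanged under each of the three elementary transformations, using the local urban-renewal formulas for how the inverse Kasteleyn matrix transforms; this is more computational but entirely self-contained.
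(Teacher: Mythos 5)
Your reduction of the statement to the invariance of $a$ is fine as bookkeeping ($K^{-1}|_{\partial_n}=-a^2$ by \eqref{eq:def_a}, and $a$ is the diagonal ratio of the boundary rhombus), and your base-case computation for $A'_2$ is correct. The route you then take, however, is genuinely different from the paper's and has a gap at its load-bearing step. You produce two perfect t-embeddings of $(A'_{n-1})^*$ — the one transported from $\mathcal T_n$ via elementary transformations (boundary rhombus with ratio $a_n$) and the one built by Corollary~\ref{cor:FG_gen} (boundary rhombus with ratio $a_{n-1}$) — and conclude $a_{n-1}=a_n$ by invoking uniqueness of perfect t-embeddings up to similarity. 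That uniqueness is not established anywhere in this paper, and \cite{KLRR22} (Theorem~2, cited in Section~1) only asserts \emph{existence} for graphs with outer face of degree four; rigidity of perfect t-embeddings is not available at the level of generality you need. Without it, nothing forces the two embeddings to have similar boundary quadrilaterals, and the argument does not close. (The surrounding logic is also delicate: Corollary~\ref{cor:seqT_n_gen}, which asserts exactly the ``same boundary rhombus'' conclusion you want, is in the paper a \emph{consequence} of this proposition, not an input to it.)

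The paper's actual proof is, in essence, the ``alternative route'' you sketch in your last paragraph but do not carry out, and it avoids t-embeddings entirely. It first notes that each factor in \eqref{eq:inverse_bdry} is gauge-invariant via $\mathbb P[wb]=\pm\chi_{(wb)}K^{-1}_{\operatorname{reduced}}(w,b)$, then controls each boundary entry of $K^{-1}$ through $|K^{-1}_{\operatorname{reduced}}(w,b)|=Z_{\G\smallsetminus wb}/Z_{\G}$: an elementary move not touching the edge $(wb)$ rescales $Z_{\G\smallsetminus wb}$ and $Z_{\G}$ by the same factor, and the only moves touching boundary edges are merges of double edges, for which $\mathbb P[e_1]/\chi_{e_1}=\mathbb P[e_2]/\chi_{e_2}=\mathbb P[e]/\chi_{e}$ shows the value $|K^{-1}(w,b)|$ on the boundary edge is unchanged. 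Each of the four factors in \eqref{eq:inverse_bdry} is therefore separately invariant — a stronger and entirely local statement. If you want to keep your geometric picture, you would either have to prove the rigidity statement you invoke, or (better) verify directly that the transported Coulomb gauge coincides with the one of Corollary~\ref{cor:FG_gen} by tracking boundary values of $K^{-1}$ through the moves — which lands you back at the computation above.
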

\begin{proof}
The fact that $K^{-1}|_{\partial_{2}}=-X_{v^*}$ simply follows from the definition of the face weight and the Kasteleyn sign condition for a face of degree four. 

Note that due to the Kasteleyn sign condition $K^{-1}|_{\partial_n}$ is negative. Recall that the edge probability can be written in terms of the inverse Kasteleyn matrix in the following way
\[\mathbb{P}[wb]=\pm\chi_{(wb)}\cdot K_{\operatorname{reduced}}^{-1}(w, b).\] Therefore~\eqref{eq:inverse_bdry} is invariant under gauge transformations. It remains to check that it is invariant under elementary transformations. 

Consider some edge~$(wb)$ of the graph. Let us check how $K_{\operatorname{reduced}}^{-1}(w, b)$ changes under elementary transformations of the graph. Note that 
\[|K_{\operatorname{reduced}}^{-1}(w, b)|=\frac{Z_{\G \smallsetminus wb}}{Z_\G},\]
where $Z_\G$ is the partition function of the graph~$\G$ and $Z_{\G \smallsetminus wb}$ is the partition function of the graph~$\G$ with the vertices $w$ and $b$ removed together with all their adjacent edges. Note that if the edge $(wb)$ was not involved in an elementary transformation $\G\mapsto\G'$ then 
\[\frac{Z_{\G \smallsetminus wb}}{Z_\G}=\frac{Z_{\G' \smallsetminus wb}}{Z_\G'}\] and therefore~$|K_{\operatorname{reduced}}^{-1}(w, b)|$ stays the same. Indeed, note that merging double edges / splitting an edge does not change the partition function of the graph; adding / contracting  a degree~$2$ vertex whose both adjacent edges have edge weight~$\chi$ corresponds to multiplying / dividing the partition function by~$\chi$; and finally applying a spider move corresponds to multiplying / dividing the partition function by~$\Delta$, where~$\Delta$ is as shown on Figure~\ref{fig:elem}.  Now note that the boundary edges in the shuffling algorithm for the reduced Aztec are only involved in merging double edges. To finish the proof, note that merging double edges $e_1$ and $e_2$ with weights~$\chi_{e_1}$ and~$\chi_{e_2}$ to an edge $e$ with weight~$\chi_{e}:=\chi_{e_1}+\chi_{e_1}$ satisfies
\[\frac{\mathbb{P}[e_1]}{\chi_{e_1}}=\frac{\mathbb{P}[e_2]}{\chi_{e_2}}=\frac{\mathbb{P}[e]}{\chi_e},\]
so it does not change the value~$|K_{\operatorname{reduced}}^{-1}(w,b)|$ on the boundary edge. Therefore~\eqref{eq:inverse_bdry} is invariant under the shuffling algorithm.
\end{proof}

\begin{remark}
Note that $\sqrt{-K^{-1}|_{\partial_n}}=a$, where $a$ is as defined in~\eqref{eq:def_a}. Therefore $a$ is also invariant under the shuffling algorithm. We will use it in the next two sections to show a subsequential convergence of perfect t-embeddings for a special doubly periodic case when the shuffling algorithm is actually periodic.
\end{remark}

\begin{corollary}\label{cor:seqT_n_gen}
Let~$\G_n$ be a sequence of finite connected planar bipartite graphs with the outer boundary of degree four connected by a sequence of elementary transformations, described in Figure~\ref{fig:elem}, without modifying the 4 boundary vertices at intermediate stages. Then the sequence of t-embeddings defined by the gauge functions given in Corollary~\ref{cor:FG_gen} gives a sequence of perfect t-embeddings~$\T_n$ of~$\G_n^*$ with the same boundary rhombus. 
\end{corollary}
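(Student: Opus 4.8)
The plan is to reduce the entire statement to the single scalar $a_{\scriptscriptstyle\G}$ and then combine two facts already established. On one hand, Corollary~\ref{cor:FG_gen} shows that the gauge functions \eqref{eq:F_gen}--\eqref{eq:G_gen} produce a perfect t-embedding whose boundary polygon is the rhombus with vertices $\{0,\,(a+\i)\sqrt{a^2+1},\,2a\sqrt{a^2+1},\,(a-\i)\sqrt{a^2+1}\}$, and this rhombus depends on the graph \emph{only} through the number $a_{\scriptscriptstyle\G}$. On the other hand, the preceding Proposition together with the Remark following it identify $a_{\scriptscriptstyle\G}=\sqrt{-K^{-1}|_{\partial}}$, where
\[
K^{-1}|_{\partial}=\frac{K_{\G}^{-1}(w_0,b_0)\,K_{\G}^{-1}(w_1,b_1)}{K_{\G}^{-1}(w_0,b_1)\,K_{\G}^{-1}(w_1,b_0)}.
\]
Hence it suffices to show that $K^{-1}|_{\partial}$ is unchanged when one passes from $\G_n$ to $\G_{n+1}$ by a single elementary transformation (Figure~\ref{fig:elem}) that leaves $w_0,b_0,w_1,b_1$ in place. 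Iterating then forces $a_{\scriptscriptstyle\G_n}\equiv a$ along the sequence, and by Corollary~\ref{cor:FG_gen} all the $\T_n$ inherit the same boundary rhombus.

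To prove the invariance I would argue exactly as in the proof of the preceding Proposition, working with each of the four values $|K_{\G}^{-1}(w_i,b_j)|=Z_{\G\smallsetminus w_ib_j}/Z_{\G}$, $i,j\in\{0,1\}$. Note first that, with the boundary vertices listed counterclockwise, the four pairs $(w_i,b_j)$ appearing in $K^{-1}|_{\partial}$ are precisely the four boundary edges of the outer face. Each elementary move multiplies $Z_{\G}$ by an explicit factor ($1$ for splitting/merging parallel edges, the common leg weight $\chi$ for contracting a degree-two vertex, and the spider factor $\Delta$ for urban renewal). When a move is supported in a region disjoint from the two vertices $w_i,b_j$, it can be performed verbatim in $\G\smallsetminus w_ib_j$, so $Z_{\G\smallsetminus w_ib_j}$ acquires the same factor and the ratio is preserved; accompanying signs are controlled via $\mathbb{P}[w_ib_j]=\pm\chi_{(w_ib_j)}K_{\G}^{-1}(w_i,b_j)$ and gauge invariance of edge probabilities. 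Moreover a merge of a boundary edge with a parallel copy is automatically covered: such a merge preserves $Z_{\G}$, and deleting the endpoints $w_i,b_j$ deletes all the parallel copies in both $\G$ and $\G'$, so $\G\smallsetminus w_ib_j=\G'\smallsetminus w_ib_j$ and the value is again unchanged.

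The place where I expect the real work to sit is ruling out the remaining moves, namely a spider move or a degree-two contraction that \emph{touches} a boundary edge. Such a move would rewire (or remove) the edges incident to $w_i$ or $b_j$, and then the deletion argument breaks: the move no longer corresponds to any move on $\G\smallsetminus w_ib_j$, so $Z_{\G}$ and $Z_{\G\smallsetminus w_ib_j}$ need not scale by the same factor. This is exactly where the hypothesis ``without modifying the four boundary vertices'' is used: I would carry out a short local analysis of how each of the three elementary moves acts on the outer face of degree four and show that any spider move or degree-two contraction involving a boundary edge necessarily alters the local structure (in particular the degree) of one of $w_0,b_0,w_1,b_1$, hence is excluded. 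The only admissible operations on the boundary are therefore merges, which are handled above. Combining the two cases, every boundary value, and thus $K^{-1}|_{\partial}$ and $a_{\scriptscriptstyle\G}$, is invariant along the sequence, and the conclusion follows from Corollary~\ref{cor:FG_gen}. The single subtlety to pin down rigorously is precisely this exclusion claim.
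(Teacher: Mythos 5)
Your argument is correct and follows essentially the same route as the paper: the Proposition preceding the corollary establishes the invariance of $K^{-1}|_{\partial}$ (hence of $a_{\scriptscriptstyle \G}$) via the identity $|K_{\G}^{-1}(w,b)|=Z_{\G\smallsetminus wb}/Z_{\G}$, treating moves disjoint from the boundary edges and merges of parallel boundary edges exactly as you do, and the corollary then follows by feeding the constant value of $a_{\scriptscriptstyle \G}$ into Corollary~\ref{cor:FG_gen}. The ``subtlety'' you flag is handled the same way there: the hypothesis that the four boundary vertices and the degree-four outer face persist is precisely what forces the only admissible operations on the four boundary edges to be merges (in the Aztec shuffling the paper simply observes this directly), so no further local analysis is needed.
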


\section{The Aztec diamond with multiple frozen regions}\label{sec:frozen}
\old{
\begin{figure}
 \begin{center}
\includegraphics[width=0.7 \textwidth]{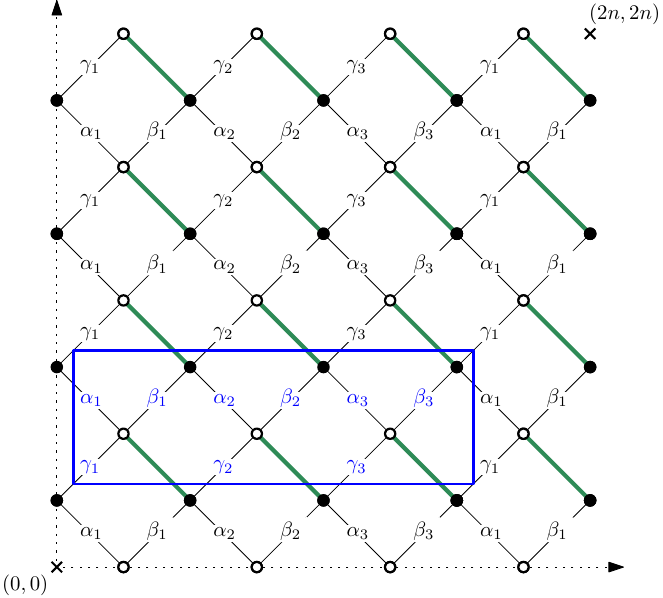}
  \caption{An example of size~$4$ Aztec diamond with~$1 \times 3$-periodic weights. Kasteleyn weights on all green edges are~$-1$, and on all other edges are as marked, where~$\alpha_i, \beta_i, \gamma_i \in\mathbb{R}_{>0}$ for~$i=1,\ldots, 3$.}\label{fig:aztec_1_l}
 \end{center}
\end{figure}
}
In the next two sections, we work with~$k\times\ell$-periodic weights, for~$k=1,2$, and Aztec diamonds of size~$k\ell N$ with~$N\in\mathbb{Z}_{>0}$, for some positive integer~$N$. In this section, we study the Aztec diamond with weights given in Definition~\ref{def:1_l_weights}, that is, we take~$k=1$ and set 
\begin{equation}\label{eq:1xell_weights}
\alpha_{j,i}=\alpha_i, \quad \beta_{j,i}=\beta_i, \quad \text{and} \quad \gamma_{j,i}=\gamma_i.
\end{equation}
For simplicity, we assume that~$\beta_i\neq \beta_j$ and~$\alpha_i/\gamma_i\neq \alpha_j/\gamma_j$ if~$i\neq j$ and that~$\beta_i<1<\alpha_i/\gamma_i$ for all~$i, j \in\{1,\ldots,\ell\}$. 
With this choice of weights, the limit shape features~$2(\ell+1)$ frozen regions. Our goal is to analyze the behavior of the liquid region under perfect t-embeddings in the presence of these frozen regions.

\subsection{Coulomb gauge functions}
In this section, we provide explicit formulas for the Coulomb gauge functions defined in Proposition~\ref{prop:FG}. Those formulas are then used to obtain explicit expressions for the perfect t-embeddings and their origami map. 

First, we provide explicit formulas for the inverse Kasteleyn matrix of the reduced Aztec diamond with one of the vertices being a boundary vertex. 
\begin{lemma}\label{lem:k_reduced_1xell}
Let~$w_0$,~$w_{\ell N-1}$,~$b_0$ and~$b_{\ell N-1}$ be the boundary vertices of the reduced Aztec diamond, and let~$w=w_{\ell x'+i,y'}$ and~$b=b_{\ell x+i,y}$ be vertices in the interior of the reduced Aztec diamond, that is, with~$0<\ell x'+i'<\ell N-1$,~$0\leq y'\leq \ell N-2$,~$1\leq \ell x+i\leq \ell N-1$ and~$0<y<\ell N-1$. Then
\begin{align}
 &K_{\operatorname{reduced}}^{-1}(w_0,b)=\frac{1}{\gamma_1}\prod_{m=1}^\ell\left(-\frac{\alpha_1}{\gamma_1}-\beta_m\right)^{-N}\\
  &\times\frac{1}{2\pi\i}\int_{\Gamma}
  \prod_{m=1}^\ell (\gamma_mz_2+\alpha_m)^N\prod_{m=1}^{i}\frac{\gamma_m+\alpha_m z_2^{-1}}{1-\beta_m z_2^{-1}}\frac{w_2^{x-N}}{z_2^y}\frac{\d z_2}{z_2\left(z_2+\frac{\alpha_1}{\gamma_1}\right)},\label{eq:K_reduced_1} \\
& K_{\operatorname{reduced}}^{-1}(w_{\ell N-1},b)=
\prod_{m=1}^\ell(\beta_\ell\gamma_m+\alpha_m)^{-N} \\
  &\times \frac{1}{2\pi\i}\int_{\Gamma} \prod_{m=1}^\ell (\gamma_mz_2+\alpha_m)^N\prod_{m=1}^{i}\frac{\gamma_m+\alpha_m z_2^{-1}}{1-\beta_m z_2^{-1}}\frac{w_2^{x-N}}{z_2^y}\frac{\d z_2}{z_2(z_2-\beta_\ell)}, \label{eq:K_reduced_2}\\
&  K_{\operatorname{reduced}}^{-1}(w,b_0)
  = -\prod_{m=1}^\ell (-\beta_m)^N \\
  &\times\frac{1}{2\pi\i}\int_{\Gamma}\left(\frac{\prod_{m=1}^{i'+1}(\gamma_m+\alpha_m z_1^{-1})}{\prod_{m=1}^{i'}(1-\beta_m z_1^{-1})}\right)^{-1}\prod_{m=1}^\ell(z_1-\beta_m)^{-N}\frac{z_1^{y'}}{w_1^{x'}}\frac{\d z_1}{z_1}, \label{eq:K_reduced_3} \\
&  K_{\operatorname{reduced}}^{-1}(w,b_{\ell N-1})
  =\frac{1}{2\pi\i}\int_{\Gamma}\left(\frac{\prod_{m=1}^{i'+1}(\gamma_m+\alpha_m z_1^{-1})}{\prod_{m=1}^{i'}(1-\beta_m z_1^{-1})}\right)^{-1}\prod_{m=1}^\ell(z_1-\beta_m)^{-N}\frac{z_1^{y'}}{w_1^{x'}}\d z_1, \label{eq:K_reduced_4}
\end{align}
where~$w_1=w(z_1)$,~$w_2=w(z_2)$ and
\begin{equation}\label{eq:def_w}
w=w(z)=\prod_{m=1}^\ell\frac{\gamma_m+\alpha_m z^{-1}}{1-\beta_m z^{-1}},
\end{equation}
and~$\Gamma$ is a simple closed curve with~$z=0$ and~$w=\infty$ in its interior and~$z=\infty$ and~$w=0$ in its exterior. Moreover,
\begin{align*}
 &K_{\operatorname{reduced}}^{-1}(w_0,b_{\ell N-1})
 =\frac{1}{\gamma_1}\prod_{m=1}^\ell\left(-\frac{\alpha_1}{\gamma_1}-\beta_m\right)^{-N},\\
 &K_{\operatorname{reduced}}^{-1}(w_{\ell N-1},b_{\ell N-1})
 =\prod_{m=1}^\ell(\beta_\ell\gamma_m+\alpha_m)^{-N}, \\
 &K_{\operatorname{reduced}}^{-1}(w_0,b_0)
 =\frac{1}{\alpha_1}\prod_{m=1}^{\ell}(-\beta_m)^N\prod_{m=1}^\ell\left(-\frac{\alpha_1}{\gamma_1}-\beta_m\right)^{-N}, \\
 &K_{\operatorname{reduced}}^{-1}(w_{\ell N-1},b_0)
 =-\frac{1}{\beta_\ell}\prod_{m=1}^{\ell}(-\beta_m)^N\prod_{m=1}^\ell(\beta_\ell\gamma_m+\alpha_m)^{-N}.
\end{align*}
\end{lemma}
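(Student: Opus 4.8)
The plan is to combine the gauge relation of Lemma~\ref{lem:reduced_K} with the explicit double contour integral of Corollary~\ref{cor:inv_kast_1xell}, and then to evaluate one of the two contour integrals by residues. A boundary vertex of the reduced diamond is a contracted string of boundary vertices of $A_{\ell N}$, so by Lemma~\ref{lem:reduced_K} we may represent $w_0$ by any white vertex $w_{0,y'}$ on the left boundary, with $K_{\operatorname{reduced}}^{-1}(w_0,b)=c_{w_0,w}\,K^{-1}(w_{0,y'},b)$. Specializing the gauge factor of Lemma~\ref{lem:reduced_K} to the $(1\times\ell)$-periodic weights~\eqref{eq:1xell_weights} gives the simple form $c_{w_0,w}=\bigl(-\gamma_1/\alpha_1\bigr)^{y'}$; the analogous factors for $w_{\ell N-1}$, $b_0$, $b_{\ell N-1}$ are computed the same way and are, respectively, a power of $\beta_\ell^{-1}$ and products of $\beta_j/\alpha_j$ and of $\gamma_j^{-1}$. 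I would record each of these first, together with the coordinates of the representatives: $w_0$ has $x'=i'=0$, while $w_{\ell N-1}$ has $i'=\ell-1$, which is what makes the various pole orders collapse below.

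The main step is the reduction of the double integral to a single one. For $K^{-1}(w_{0,y'},b)$ the $z_1$-part of the integrand reduces (since $x'=i'=0$) to $z_1^{y'}/\bigl[(\gamma_1+\alpha_1 z_1^{-1})\prod_m(z_1-\beta_m)^N(z_2-z_1)\bigr]$. Multiplying by $c_{w_0,w}$ turns the monomial $z_1^{y'}$ into $(-\gamma_1 z_1/\alpha_1)^{y'}$, and since the product $c_{w_0,w}K^{-1}(w_{0,y'},b)$ is independent of the representative $y'$ (this independence is exactly what is established in the proof of Lemma~\ref{lem:reduced_K}), the $z_1$-integral must localize at the pole $z_1=-\alpha_1/\gamma_1$ of $(\gamma_1+\alpha_1 z_1^{-1})^{-1}$, at which $(-\gamma_1 z_1/\alpha_1)^{y'}=1$. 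Carrying out this residue replaces $1/(z_2-z_1)$ by $1/(z_2+\alpha_1/\gamma_1)$, produces the prefactor $\prod_m(-\alpha_1/\gamma_1-\beta_m)^{-N}$ and the constant $1/\gamma_1$, and leaves precisely the single $z_2$-integral of~\eqref{eq:K_reduced_1}. The other three integral formulas follow identically, the collapsing variable being $z_1$ again for $w_{\ell N-1}$ (where the choice $i'=\ell-1$ cancels the poles at $\beta_1,\dots,\beta_{\ell-1}$, leaving a simple pole at $z_1=\beta_\ell$, the zero of $1-\beta_\ell z_1^{-1}$), and the $z_2$-variable for the two black boundary vertices $b_0,b_{\ell N-1}$, with localization at $z_2=0$ and at $z_2=\infty$ respectively (forced by $w_2^{x}$ together with the gauge factor, since $w_2\to\prod_m\alpha_m\,z_2^{-\ell}$ as $z_2\to0$ and $w_2\to\prod_m\gamma_m$ as $z_2\to\infty$).

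To make the residue evaluation rigorous I would deform the contour $\Gamma_s$ (which surrounds the poles $z_1=\beta_m$) outward; the integrand decays like $z_1^{\,y'-\ell N-1}$ at infinity since $y'\le \ell N-2$, so the integral over a large circle vanishes and $\int_{\Gamma_s}$ equals minus the sum of the residues it crosses, namely those at $z_1=-\alpha_1/\gamma_1$ and at the diagonal $z_1=z_2$. The main technical point — and the step I expect to require the most care — is to show that the diagonal residue $z_1=z_2$ cancels the explicit single-integral indicator term of Corollary~\ref{cor:inv_kast_1xell}: the two agree up to a deformation of the $z_2$-contour from $\Gamma_l$ to $\Gamma$, whose crossing of the pole at $z_2=-\alpha_1/\gamma_1$ regenerates exactly the $z_1=-\alpha_1/\gamma_1$ contribution. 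Tracking the radii $r_s<1<r_l$ against the location $\alpha_1/\gamma_1>1$ of the new pole is where the bookkeeping is delicate. Finally, the four corner values are obtained by applying the collapse in both variables simultaneously — equivalently, by substituting the relevant black-boundary representative into~\eqref{eq:K_reduced_1}--\eqref{eq:K_reduced_4} and evaluating the remaining single integral by its residue (at $z_2=0$ for $b_0$ and at $z_2=\infty$ for $b_{\ell N-1}$) — which reduces each to the stated explicit product.
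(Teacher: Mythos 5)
Your overall strategy is the same as the paper's: represent each boundary vertex of the reduced diamond by a vertex of $A_{\ell N}$ via Lemma~\ref{lem:reduced_K}, insert the double contour formula of Corollary~\ref{cor:inv_kast_1xell}, and collapse one of the two integrals by residues (outward in $z_1$ for $w_0$, picking up the diagonal pole, which cancels the indicator term, plus the pole at $z_1=-\alpha_1/\gamma_1$; analogously for the other three boundaries, with the corner values obtained by collapsing both variables). Your decay estimate $z_1^{y'-\ell N-1}$ justifying the absence of a residue at infinity, and your observation that $i'=\ell-1$ leaves only the simple pole at $z_1=\beta_\ell$, both match the paper.

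Two of your supporting claims are wrong, though neither is fatal to the outline. First, the deformation of the $z_2$-contour from $\Gamma_l$ to $\Gamma$ does \emph{not} cross $z_2=-\alpha_1/\gamma_1$ (that point has modulus $\alpha_1/\gamma_1>r_l$, and in fact the single integrand has no pole there once the indicator is on); the diagonal residue cancels the single-integral term outright, and the $z_1=-\alpha_1/\gamma_1$ contribution is picked up \emph{only} by the outward sweep of $\Gamma_s$. Taken literally, your ``regeneration'' step would count that residue twice. Second, $w(z)=\prod_m\frac{\gamma_m z+\alpha_m}{z-\beta_m}\to\prod_m(-\alpha_m/\beta_m)$ as $z\to 0$, not $\prod_m\alpha_m\,z^{-\ell}$; the poles $w=\infty$ sit at $z=\beta_m$ and are cancelled by $\prod_m(z_2-\beta_m)^N$, so the localization at $z_2=0$ for the $b_0$ case comes from the simple pole of $z_2^{-y-1}$ at $y=0$, not from any blow-up of $w_2^x$. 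Relatedly, the four cases do not ``follow identically'': the direction of the contour deformation (outward versus inward) changes from case to case and determines whether the diagonal residue appears, which must be matched against the presence or absence of the indicator single-integral term.
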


\begin{remark}
The pair~$(z,w)$ lies on the spectral curve~$P(z,w)=0$ where~$P$ is the characteristic polynomial, see Section~\ref{sec:inv_kast}. This will not be used here, but it motivates the notation. For~$k=1$, we have~$w=\Phi(z)$, however, this is not true if~$k>1$.
\end{remark}
\begin{proof}
To prove the statement, we compute the value of~$K^{-1}$, given in~\eqref{eq:inv_kast_1xell}, as one of the vertices is on the boundary of the Aztec diamond, and then relate it to~$K_{\operatorname{reduced}}^{-1}$ using Lemma~\ref{lem:reduced_K}. 

The integrand of~$K^{-1}$ has, in addition to the pole at~$z_1=z_2$, at most poles at~$z_i=0$,~$z_i=\infty$,~$w_i=0$, and~$w_i=\infty$, for~$i=1,2$. The order of the poles depends on the vertices, and, in particular, if one of the vertices is on the boundary of the Aztec diamond, some of the higher-order poles will become simple. This means that the residue theorem can be applied. More precisely, if~$w$ is on the left or right boundary, we may, and will, deform~$\Gamma_s$ through~$z_1=\infty$ and~$w_1=0$, and~$z_1=0$ and~$w_1=\infty$, respectively. If~$b$ is on the bottom or top boundary of the Aztec diamond, we deform~$\Gamma_l$ through~$z_2=0$ and~$w_2=\infty$, and~$z_2=\infty$ and~$w_2=0$, respectively.  

Let~$\ell x'+i'=0$,~$0\leq y'\leq \ell N-2$,~$1\leq \ell x+i\leq \ell N-1$ and~$0\leq y\leq \ell N-1$. Then, by~\eqref{eq:inv_kast_1xell}, 	
\begin{multline*}
  K^{-1}(w_{0,y'},b_{\ell x+i,y})
  =-\frac{1}{2\pi\i}\int_\Gamma (\gamma_1+\alpha_1 z^{-1})^{-1}\prod_{m=1}^{i}\frac{\gamma_m+\alpha_m z^{-1}}{1-\beta_m z^{-1}}w^xz^{y'-y}\frac{\d z}{z}\\
  +\frac{1}{(2\pi\i)^2}\int_{\Gamma_s}\int_{\Gamma_l}(\gamma_1+\alpha_1 z_1^{-1})^{-1}\frac{\prod_{m=1}^\ell(z_2-\beta_m)^{N}}{\prod_{m=1}^\ell(z_1-\beta_m)^{N}} 
  \prod_{m=1}^{i}\frac{\gamma_m+\alpha_m z_2^{-1}}{1-\beta_m z_2^{-1}}w_2^x\frac{z_1^{y'}}{z_2^y}\frac{\d z_2\d z_1}{z_2(z_2-z_1)}.
\end{multline*}
We contract the contour~$\Gamma_s$ through~$z_1=\infty$ and~$w_1=\infty$. In this deformation, we pick up a residue at~$z_1=z_2$, which cancels out the single integral, and a residue at~$z_1=-\alpha_1/\gamma_1$. Note that there is no residue at~$z_1=\infty$. We get
\begin{multline*}
  K^{-1}(w_{0,y'},b_{\ell x+i,y})
  =-\frac{1}{\gamma_1}\left(-\frac{\alpha_1}{\gamma_1}\right)^{y'+1}\prod_{m=1}^\ell\left(-\frac{\alpha_1}{\gamma_1}-\beta_m\right)^{-N}\\
  \times\frac{1}{2\pi\i}\int_{\Gamma_l}
  \prod_{m=1}^\ell (z_2-\beta_m)^N\prod_{m=1}^{i}\frac{\gamma_m+\alpha_m z_2^{-1}}{1-\beta_m z_2^{-1}}\frac{w_2^x}{z_2^y}\frac{\d z_2}{z_2\left(z_2+\frac{\alpha_1}{\gamma_1}\right)}.
\end{multline*}
The part of the prefactor depending on~$y'$ is equal to one over the constant in Lemma~\ref{lem:reduced_K}:
\begin{equation*}
-\left(-\frac{\alpha_1}{\gamma_1}\right)^{y'+1}=c_{w_0,w_{0,y'}}^{-1}.
\end{equation*}
Hence,~\eqref{eq:K_reduced_1} holds.

If~$\ell x'+i'=\ell N-1$,~$0\leq y'\leq \ell N-2$,~$1\leq \ell x+i\leq \ell N-1$ and~$0\leq y\leq \ell N-1$, we contract~$\Gamma_s$ to a point through~$z_1=0$ and~$w_1=\infty$. Then
\begin{multline*}
  K^{-1}(w_{\ell N-1,y'}, b_{\ell x+i,y}) 
  =\frac{1}{(2\pi\i)^2}\int_{\Gamma_s}\int_{\Gamma_l}\frac{1}{1-\beta_\ell z_1^{-1}}\frac{\prod_{m=1}^\ell(z_2\gamma_m+\alpha_m)^{N}}{\prod_{m=1}^\ell(z_1\gamma_m+\alpha_m)^{N}} \\
  \times
  \prod_{m=1}^{i}\frac{\gamma_m+\alpha_m z_2^{-1}}{1-\beta_m z_2^{-1}}w_2^{x-N}\frac{z_1^{y'}}{z_2^y}\frac{\d z_2\d z_1}{z_2(z_2-z_1)} 
  =c_{w_{\ell N-1},w_{\ell N-1,y'}}^{-1}\prod_{m=1}^\ell(\beta_\ell\gamma_m+\alpha_m)^{-N} \\
  \times \frac{1}{2\pi\i}\int_{\Gamma_l} 
  \prod_{m=1}^\ell (\gamma_mz_2+\alpha_m)^N\prod_{m=1}^{i}\frac{\gamma_m+\alpha_m z_2^{-1}}{1-\beta_m z_2^{-1}}\frac{w_2^{x-N}}{z_2^y}\frac{\d z_2}{z_2(z_2-\beta_\ell)},
\end{multline*}
which proves~\eqref{eq:K_reduced_2}.

Similarly, if~$0\leq \ell x'+i'\leq \ell N-1$,~$0\leq y'\leq \ell N-2$,~$1\leq \ell x+i\leq \ell N-1$ and~$0=y$, then
\begin{multline*}
  K^{-1}(w_{\ell x'+i',y'},b_{\ell x+i,0})
  = -c_{b_0,b_{0,\ell x+i}}^{-1}\prod_{m=1}^\ell (-\beta_m)^N \\
  \times\frac{1}{2\pi\i}\int_{\Gamma_s}\left(\frac{\prod_{m=1}^{i'+1}(\gamma_m+\alpha_m z_1^{-1})}{\prod_{m=1}^{i'}(1-\beta_m z_1^{-1})}\right)^{-1}\prod_{m=1}^\ell(z_1-\beta_m)^{-N}\frac{z_1^{y'}}{w_1^{x'}}\frac{\d z_1}{z_1},
\end{multline*}
and if~$y= \ell N-1$, then
\begin{multline*}
  K^{-1}(w_{\ell x'+i',y'},b_{\ell x+i,\ell N-1})
  =c_{b_{\ell N-1},b_{\ell N-1,\ell x+i}}^{-1} \\
  \times\frac{1}{2\pi\i}\int_{\Gamma_s}\left(\frac{\prod_{m=1}^{i'+1}(\gamma_m+\alpha_m z_1^{-1})}{\prod_{m=1}^{i'}(1-\beta_m z_1^{-1})}\right)^{-1}\prod_{m=1}^\ell(z_1-\beta_m)^{-N}\frac{z_1^{y'}}{w_1^{x'}}\d z_1.
\end{multline*}
In these final two computations, we have contracted~$\Gamma_l$ through~$z_2=0$ and~$w_2=\infty$, and~$z_2=\infty$ and~$w_2=0$, respectively. In addition, we have used that the single integral is zero by contracting~$\Gamma$ through~$z=0$ and~$z=\infty$, respectively. Hence, we have proven~\eqref{eq:K_reduced_3} and~\eqref{eq:K_reduced_4}.

The last equalities, follows by taking~$y=0$ and~$y=\ell N-1$ in~\eqref{eq:K_reduced_1} and~\eqref{eq:K_reduced_2}.
\end{proof}

With the formulas from the previous lemma, Proposition~\ref{prop:FG} provides an explicit expression for the gauge functions~$\mathcal F^\bullet$ and~$\mathcal F^\circ$. 
\begin{corollary}\label{cor:goulomb_gauge}
Let~$w=w_{\ell x'+i',y'}$ and~$b=b_{\ell x+i,y}$ be vertices in the reduced Aztec diamond with~$0<\ell x'+i'<\ell N-1$,~$0\leq y'\leq \ell N-2$,~$1\leq \ell x+i\leq \ell N-1$ and~$0<y<\ell N-1$. The Coulomb gauge functions~$\mathcal F^\bullet$ and~$\mathcal F^\circ$ defined in Proposition~\ref{prop:FG} are given by
\begin{equation*}
 \mathcal F^\bullet(b)=
  \frac{1}{2\pi\i}\int_{\Gamma}
  \prod_{m=1}^{i}\frac{\gamma_mz+\alpha_m}{z-\beta_m}\prod_{m=1}^\ell (z-\beta_m)^N\frac{w^x}{z^{y+1}}f(z)\d z,
\end{equation*}
and
\begin{equation}\label{eq:coulomb_function_white}
\mathcal F^\circ(w)=
\frac{1}{2\pi\i}\int_{\Gamma}\left(\frac{\prod_{m=1}^{i'+1}(\gamma_mz+\alpha_m)}{\prod_{m=1}^{i'}(z-\beta_m)}\right)^{-1}\prod_{m=1}^\ell(z-\beta_m)^{-N}\frac{z^{y'+1}}{w^{x'}}g(z)\d z,
\end{equation}
where 
\begin{equation}\label{eq:f_g_def_1xell}
f(z)=\sqrt{a+\i}\left(\frac{\i}{z+\frac{\alpha_1}{\gamma_1}}-\frac{a}{z-\beta_\ell}\right), \quad \text{and} \quad g(z)=\sqrt{a-\i}\left(\frac{\i a\frac{\alpha_1}{\gamma_1}}{z}+1\right)
\end{equation}
with~$a$ defined in~\eqref{eq:def_a} and given by
\begin{equation*}
a=\sqrt{\frac{\gamma_1\beta_\ell}{\alpha_1}},
\end{equation*}
and~$\Gamma$ is as in Lemma~\ref{lem:k_reduced_1xell}.
\end{corollary}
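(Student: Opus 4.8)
The plan is to feed the boundary values of $K_{\operatorname{reduced}}^{-1}$ computed in Lemma~\ref{lem:k_reduced_1xell} into the definition of the Coulomb gauge functions from Proposition~\ref{prop:FG}. First I would use Remark~\ref{rem:gauge_tilde_kast} to replace $\tilde K_{\operatorname{reduced}}^{-1}$ by $K_{\operatorname{reduced}}^{-1}$. Since $b$ (resp.\ $w$) is an interior vertex, the factor $a_b$ (resp.\ $a_w$) is trivial, so only the boundary gauge constants survive and
\[
\mathcal F^\bullet(b)=-\sqrt{-a-\i}\,\frac{K_{\operatorname{reduced}}^{-1}(w_0,b)}{K_{\operatorname{reduced}}^{-1}(w_0,b_{\ell N-1})}-a\sqrt{a+\i}\,\frac{K_{\operatorname{reduced}}^{-1}(w_{\ell N-1},b)}{K_{\operatorname{reduced}}^{-1}(w_{\ell N-1},b_{\ell N-1})},
\]
with the analogous expression for $\mathcal F^\circ$ in which the $b_0$-term carries $a_{b_0}=a\,K_{\operatorname{reduced}}^{-1}(w_0,b_{\ell N-1})/K_{\operatorname{reduced}}^{-1}(w_0,b_0)$ and the $b_{\ell N-1}$-term is untouched. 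Dividing the interior integrals \eqref{eq:K_reduced_1}--\eqref{eq:K_reduced_4} by the corner values listed at the end of the lemma cancels every $N$th-power prefactor.

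Next I would simplify the integrands. Using the definition \eqref{eq:def_w} of $w(z)$ together with $\tfrac{\gamma_m+\alpha_m z^{-1}}{1-\beta_m z^{-1}}=\tfrac{\gamma_m z+\alpha_m}{z-\beta_m}$ and the identity
\[
\prod_{m=1}^\ell(\gamma_m z+\alpha_m)^N\,w(z)^{x-N}=w(z)^x\prod_{m=1}^\ell(z-\beta_m)^N ,
\]
the two ratios entering $\mathcal F^\bullet$ become the same integral with common kernel $\prod_{m=1}^i\frac{\gamma_m z+\alpha_m}{z-\beta_m}\prod_{m=1}^\ell(z-\beta_m)^N\frac{w^x}{z^{y+1}}$, differing only by the simple factor $1/(z+\tfrac{\alpha_1}{\gamma_1})$ versus $1/(z-\beta_\ell)$. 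Summing the two terms then factors the kernel times $-\sqrt{-a-\i}/(z+\tfrac{\alpha_1}{\gamma_1})-a\sqrt{a+\i}/(z-\beta_\ell)$, which is exactly $f(z)$ once one invokes the branch identity $-\sqrt{-a-\i}=\i\sqrt{a+\i}$.

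For $\mathcal F^\circ$ the corner ratio collapses to $a\tfrac{\alpha_1}{\gamma_1}\prod_m(-\beta_m)^{-N}$, cancelling the $-\prod_m(-\beta_m)^N$ prefactor of \eqref{eq:K_reduced_3}, while the inverse product in \eqref{eq:K_reduced_3}--\eqref{eq:K_reduced_4} is rewritten through
\[
\left(\frac{\prod_{m=1}^{i'+1}(\gamma_m+\alpha_m z^{-1})}{\prod_{m=1}^{i'}(1-\beta_m z^{-1})}\right)^{-1}=z\left(\frac{\prod_{m=1}^{i'+1}(\gamma_m z+\alpha_m)}{\prod_{m=1}^{i'}(z-\beta_m)}\right)^{-1},
\]
which turns the two contributions into $z^{y'}$ and $z^{y'+1}$ terms against the common kernel of \eqref{eq:coulomb_function_white}. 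Collecting the constants $a\tfrac{\alpha_1}{\gamma_1}\sqrt{-a+\i}$ and $\sqrt{a-\i}$ and using the second branch identity $\sqrt{-a+\i}=\i\sqrt{a-\i}$ reproduces $g(z)$. Finally, substituting the four corner values into \eqref{eq:def_a} makes all $N$th powers cancel, leaving $-\gamma_1\beta_\ell/\alpha_1$ under the root, i.e.\ $a=\sqrt{\gamma_1\beta_\ell/\alpha_1}$.

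The computation is conceptually routine, and the main obstacle is purely one of bookkeeping: keeping track of the $(-\beta_m)^N$, $(\beta_\ell\gamma_m+\alpha_m)^N$ and $(-\tfrac{\alpha_1}{\gamma_1}-\beta_m)^{-N}$ factors and checking that they cancel cleanly against the corner values of Lemma~\ref{lem:k_reduced_1xell}. The one genuinely delicate point is fixing a single branch of the square root so that both $-\sqrt{-a-\i}=\i\sqrt{a+\i}$ and $\sqrt{-a+\i}=\i\sqrt{a-\i}$ hold at once; since $a>0$ this is satisfied by the principal branch, and one should verify the argument ranges to be safe.
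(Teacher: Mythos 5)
Your proposal is correct and follows essentially the same route as the paper's proof: substitute the boundary formulas of Lemma~\ref{lem:k_reduced_1xell} into Proposition~\ref{prop:FG} via the gauge constants of Remark~\ref{rem:gauge_tilde_kast}, observe that the $N$th-power prefactors cancel against the corner values, and read off $f$ and $g$ from the residual factors $(z+\tfrac{\alpha_1}{\gamma_1})^{-1}$ versus $(z-\beta_\ell)^{-1}$ and $z^{y'}$ versus $z^{y'+1}$, using $-\sqrt{-a-\i}=\i\sqrt{a+\i}$ and $\sqrt{-a+\i}=\i\sqrt{a-\i}$. The bookkeeping you spell out (including the computation of $a$ from \eqref{eq:def_a}) checks out; the paper's own proof is just a terser version of the same argument.
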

\begin{proof}
The formulas for~$\mathcal F^\bullet$ and~$\mathcal F^\circ$ follow from Proposition~\ref{prop:FG}. Indeed, by Proposition~\ref{prop:FG} and Remark~\ref{rem:gauge_tilde_kast},
\begin{equation*}
\mathcal F^\bullet(b)=\sqrt{a+\i}\left(\frac{\i K_{\operatorname{reduced}}^{-1}(w_0,b)}{K_{\operatorname{reduced}}^{-1}(w_0,b_{\ell N-1})}-\frac{a K_{\operatorname{reduced}}^{-1}(w_{\ell N-1},b)}{K_{\operatorname{reduced}}^{-1}(w_{\ell N-1},b_{\ell N-1})}\right)
\end{equation*}
and
\begin{equation*}
\mathcal F^\circ(b)=\sqrt{a-\i}\left(-\frac{\i aK_{\operatorname{reduced}}^{-1}(w,b_0)K_{\operatorname{reduced}}^{-1}(w_0,b_{\ell N-1})}{K_{\operatorname{reduced}}^{-1}(w_0,b_0)}+ K_{\operatorname{reduced}}^{-1}(w, b_{\ell N-1})\right).
\end{equation*}
Note that the gauges coming from Remark~\ref{rem:gauge_tilde_kast} cancels out the prefactors in~\eqref{eq:K_reduced_1}-\eqref{eq:K_reduced_4}. So, the difference between the expression~\eqref{eq:K_reduced_1} and~\eqref{eq:K_reduced_2}, after these gauges, is the factors~$(z_2+\frac{\alpha_1}{\gamma_1})^{-1}$ and~$(z_2-\beta_\ell)^{-1}$, which goes into the definition of~$f(z)$, leading to the first equality in~\eqref{eq:f_g_def_1xell}.
There is a similar difference between~\eqref{eq:K_reduced_3} and~\eqref{eq:K_reduced_4} which determines~$g(z)$ and leads to the second equality in~\eqref{eq:f_g_def_1xell}.

The expression for~$a$ follows from~\eqref{eq:def_a}.
\end{proof}

The expressions given for the Coulomb gauge functions in the previous corollary are not valid up to the boundary vertices. However, if we include~$c_{b_r,b}$ and~$c_{w_r,w}$, respectively, as defined in Lemma~\ref{lem:reduced_K}, as well as~$a_b$ and~$a_w$, as defined in Remark~\ref{rem:gauge_tilde_kast}, in the expressions, they are valid. As a consistency check, it is straightforward to see that, including those constants,
\begin{equation}\label{eq:coulomb_boundary_black}
\mathcal F^\bullet(b_0)=a\frac{\alpha_1}{\gamma_1}f(0)=\sqrt{a+\i}(a\i+1), \quad \mathcal F^\bullet(b_{\ell N-1})=\lim_{z\to\infty}zf(z)=\sqrt{a+\i}(\i-a),
\end{equation}
and
\begin{equation}\label{eq:coulomb_boundary_white}
\mathcal F^\circ(w_0)=g\left(-\frac{\alpha_1}{\gamma_1}\right)=\sqrt{a-\i}(-a\i+1), \quad \mathcal F^\circ(w_{\ell N-1})=ag(\beta_\ell)=\sqrt{a-\i}(\i+a),
\end{equation}
which agree with Proposition~\ref{prop:FG}.

\subsection{Perfect t-embeddings and their origami maps}
With the integral formulas for the Coulomb gauge functions at our disposal, we compute perfect t-embeddings and the origami maps of the reduced Aztec diamond. In this section, we compute the perfect t-embedding and its origami map as we travel along paths in the augmented dual~$(A_{\ell N}')^*$. For simplicity, we focus on points at the corner of the fundamental domain. This is not necessary but simplifies the computations and is enough for the study of their large~$N$ limits. See Figure~\ref{fig:1by3} for an example of the t-embedding and its origami map. 
\begin{proposition}\label{prop:ct_co_finite_1xell}
For non-boundary dual vertices~$(2\ell x,2y)$ in~$(A_{\ell N}')^*$ and two functions~$f$ and~$g$, set
\begin{equation*}
I_{f,g}(x,y)=
\frac{1}{(2\pi\i)^2}\int_{\Gamma_s}\int_{\Gamma_l}
\frac{\prod_{m=1}^\ell (z_2-\beta_m)^N}{\prod_{m=1}^\ell (z_1-\beta_m)^N}\frac{z_1^{y}}{z_2^{y}}\frac{w_2^x}{w_1^x}\frac{f(z_2)g(z_1)\d z_2\d z_1}{z_2-z_1},
\end{equation*}
where~$\Gamma_s = \{|z| = r_s\}$, and ~$\Gamma_l = \{|z| = r_l\}$ for some~$\beta_i<r_s < 1 < r_l<\alpha_i/\gamma_i$ for all $i$, both positively oriented.

Let~$(2\ell x,2y)$ and~$(2\ell x',2y')$ be non-boundary dual vertices in~$(A_{\ell N}')^*$, and let~$f$ and~$g$ be as in~\eqref{eq:f_g_def_1xell}, and~$\bar g(z)=\overline{g(\bar z)}$. Then 
\begin{equation*}
\cT(2\ell x,2y)-\cT(2\ell x',2y')=I_{f,g}(x,y)-I_{f,g}(x',y')
\end{equation*}
and
\begin{equation*}
\cO(2\ell x,2y)-\cO(2\ell x',2y')=I_{f,\bar g}(x,y)-I_{f,\bar g}(x',y').
\end{equation*}
\end{proposition}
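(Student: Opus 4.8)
The plan is to reduce the two identities to a single one and then to recognize $I_{f,g}$ as a discrete primitive of the closed form $d\cT$. First, the origami identity follows from the t-embedding identity by the substitution $g\mapsto\bar g$. Indeed, $d\cO(bw^*)=\mathcal F^\bullet(b)K(b,w)\overline{\mathcal F^\circ(w)}$, and the rational prefactors in the integral formula of Corollary~\ref{cor:goulomb_gauge} have real coefficients; conjugating that formula for $\mathcal F^\circ(w)$ and changing variables $z\mapsto\bar z$ on the conjugation-symmetric contour $\Gamma=\{|z|=1\}$ turns $\mathcal F^\circ$ into the same expression with $g$ replaced by $\bar g(z)=\overline{g(\bar z)}$. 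Hence it suffices to treat $\cT$, which I would do by proving that $(x,y)\mapsto\cT(2\ell x,2y)-I_{f,g}(x,y)$ is constant on the even--even sublattice.

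Because $\mathcal F^\bullet,\mathcal F^\circ$ satisfy the Coulomb conditions~\eqref{eq:Coulomb-def}, the form $d\cT$ is closed, so $\cT(2\ell x,2y)-\cT(2\ell x',2y')$ is the path-independent sum of the increments $d\cT(bw^*)=\mathcal F^\bullet(b)K(b,w)\mathcal F^\circ(w)$ along any dual path joining the two faces. By path-independence it is enough to match the two sides on nearest-neighbour moves of the even--even sublattice. Substituting the integral formulas of Corollary~\ref{cor:goulomb_gauge}, each increment becomes a double contour integral in $(z_1,z_2)$, and the product identity $\prod_{m=1}^\ell\frac{\gamma_mz+\alpha_m}{z-\beta_m}=w(z)$ from~\eqref{eq:def_w} is what allows the partial products $\prod_{m=1}^{i}$ to reassemble into the powers $w^x$ each time the path crosses a full period.

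The mechanism that makes $I_{f,g}$ the correct primitive is the kernel $\frac{1}{z_2-z_1}$. For a vertical step I would compute $I_{f,g}(x,y+1)-I_{f,g}(x,y)$ directly: the bracket $\frac{z_1^{y+1}}{z_2^{y+1}}-\frac{z_1^{y}}{z_2^{y}}=\frac{z_1^y}{z_2^y}\cdot\frac{z_1-z_2}{z_2}$ cancels the pole at $z_2=z_1$, so the double integral factorizes into single integrals over $\Gamma_l$ and $\Gamma_s$ which, after deforming both contours to $\Gamma$ and accounting for the Kasteleyn weights $K(b,w)$, reproduce the sum $\sum d\cT(bw^*)$ of the two edge increments of that step. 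For the horizontal step across a period the pole is again removed, now through the divided difference $\frac{w_2-w_1}{z_2-z_1}$, but the integral no longer factorizes; instead one telescopes across the intermediate faces, the partial products $\prod_{m=1}^{i}$ tracking the column index, and it is here that the geometric summation in $z_1/z_2$ does the real work. Telescoping the matched increments along a path from $(x',y')$ to $(x,y)$ then gives $\cT(2\ell x,2y)-\cT(2\ell x',2y')=I_{f,g}(x,y)-I_{f,g}(x',y')$, and the identical computation with $\bar g$ yields the origami statement.

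I expect the most delicate point to be the passage between the single contour $\Gamma=\{|z|=1\}$ in the Coulomb gauge functions and the separated contours $\Gamma_s,\Gamma_l$ with $\beta_i<r_s<1<r_l<\alpha_i/\gamma_i$ in $I_{f,g}$. The factorization and the geometric summation require $|z_1|<|z_2|$ (equivalently convergence of the series in $z_1/z_2$ generated by the increments), so one must carry out the contour deformations carefully, track any residue at $z_1=z_2$ picked up when pulling $\Gamma$ apart into $\Gamma_s$ and $\Gamma_l$, and verify that the resulting single integrals land on the correct vertices with the correct column indices so that only the two endpoint contributions survive the telescoping and no spurious boundary terms remain.
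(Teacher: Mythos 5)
Your proposal is correct and follows essentially the same route as the paper: reduce $\cO$ to $\cT$ by replacing $g$ with $\bar g$, integrate $d\cT$ along horizontal and vertical steps, and recognize the vertical increments as a geometric sum and the horizontal ones as a telescoping sum producing the $\frac{1}{z_2-z_1}$ kernel with the $w_2^x/w_1^x$ and $z_1^y/z_2^y$ factors; the contour-separation subtlety you flag is handled exactly as you anticipate, since the pole at $z_1=z_2$ is removable in each summed difference and only becomes genuine in the individual terms $I_{f,g}(x,y)$, where the contours $\Gamma_s,\Gamma_l$ are already separated by definition.
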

\begin{remark}
To obtain the expression for~$\cT$ and~$\cO$ in the proposition, we integrate~$d\cT$ and~$d\cO$ along a particular path in the Aztec diamond, as explained in the proof below. The choice of curve from~$(2\ell x',2y')$ to~$(2\ell x,2y)$ should, however, be irrelevant by the definition of t-embeddings, and the statement above confirms this.
\end{remark}
\begin{proof}
That the vertices are in the interior of~$(A_{\ell N}')^*$ means that~$1\leq \ell x, \ell x',y, y'\leq \ell N-1$. To compute the difference~$\cT(2\ell x,2y)-\cT(2\ell x',2y')$, we integrate~$d\cT$ along the horizontal path going from~$(2\ell x',2y')$ to~$(2\ell x,2y')$ and continue along the vertical path from~$(2\ell x,2y')$ to~$(2\ell x,2y)$. Both the horizontal and the vertical paths are taken with the white vertices to the right. We get 
\begin{multline}\label{eq:integration_1xell}
\cT(2\ell x,2y)-\cT(2\ell x',2y')=
-\sum_{n=\ell x'}^{\ell x-1}\left(d \cT(b_{n,y'} w_{n,y'-1}^*)+d \cT(b_{n+1,y'} w_{n,y'-1}^*)\right) \\
-\sum_{n=y'}^{y-1}\left(d \cT(b_{\ell x,n} w_{\ell x,n-1}^*)+d \cT(b_{\ell x,n} w_{\ell x,n}^*)\right).
\end{multline}
The first summation on the right hand side is the contribution from the horizontal part of the curve and the second summation is the contribution from the vertical part of the curve.

We begin by computing the terms of the second sum in~\eqref{eq:integration_1xell}. Each term in the second summation is given by
\begin{multline*}
d \cT(b_{\ell x,n} w_{\ell x,n-1}^*)+d \cT(b_{\ell x,n} w_{\ell x,n}^*)
=\mathcal F^\bullet(b_{\ell x,n})\left(\alpha_1\mathcal F^\circ(w_{\ell x,n-1})+\gamma_1\mathcal F^\circ(w_{\ell x,n})\right) \\
= \frac{1}{(2\pi\i)^2}\int_{\Gamma}\int_{\Gamma}
\frac{\prod_{m=1}^\ell (z_2-\beta_m)^N}{\prod_{m=1}^\ell (z_1-\beta_m)^N}\frac{w_2^x}{w_1^x}\frac{z_1^{n}}{z_2^{n+1}}f(z_2)g(z_1)\d z_2\d z_1.
\end{multline*}
Hence, by taking the summation inside the integral and using that the sum is a geometric sum, we get
\begin{multline*}
\sum_{n=y'}^{y-1}\left(d \cT(b_{\ell x,n} w_{\ell x,n-1}^*)+d \cT(b_{\ell x,n} w_{\ell x,n}^*)\right) \\
=\frac{1}{(2\pi\i)^2}\int_{\Gamma_s}\int_{\Gamma_l}\frac{\prod_{m=1}^\ell (z_2-\beta_m)^N}{\prod_{m=1}^\ell (z_1-\beta_m)^N}\frac{w_2^x}{w_1^x}\left(\frac{z_1^{y'}}{z_2^{y'}}-\frac{z_1^{y}}{z_2^{y}}\right)\frac{f(z_2)g(z_1)\d z_2\d z_1}{z_2-z_1},
\end{multline*}
which is the difference~$I_{f,g}(x,y')-I_{f,g}(x,y)$. Note that we deformed the contours to~$\Gamma_s$ and~$\Gamma_l$. 

Next, we continue with the first sum in~\eqref{eq:integration_1xell}. Recall the definition~\eqref{eq:def_w} of~$w_1$ and~$w_2$. One term of the sum is given by
\begin{multline*}
d \cT(b_{n,y'} w_{n,y'-1}^*)+d \cT(b_{n+1,y'} w_{n,y'-1}^*)=
\left(\alpha_{n+1}\mathcal F^\bullet(b_{n,y'})+\beta_{n+1}\mathcal F^\bullet(b_{n+1,y'})\right)\mathcal F^\circ(w_{n,y'-1}) \\
= \frac{1}{(2\pi\i)^2}\int_{\Gamma}\int_{\Gamma}\left(\alpha_{n+1}+\beta_{n+1}\frac{\gamma_{n+1} z_2+\alpha_{n+1}}{z_2-\beta_{n+1}}\right)\frac{1}{z_2}\frac{1}{\gamma_{n+1}z_1+\alpha_{n+1}} \\
\times \prod_{m=1}^n\frac{\gamma_mz_2+\alpha_m}{z_2-\beta_m}\left(\prod_{m=1}^n\frac{\gamma_mz_1+\alpha_m}{z_1-\beta_m}\right)^{-1} \frac{\prod_{m=1}^\ell (z_2-\beta_m)^N}{\prod_{m=1}^\ell (z_1-\beta_m)^N}\frac{z_1^{y'}}{z_2^{y'}}f(z_2)g(z_1)\d z_2 \d z_1.
\end{multline*}
Summing over~$n=\ell x',\dots,\ell x-1$ is in fact a telescopic sum. Indeed,
\begin{equation*}
\left(\alpha_{n+1}+\beta_{n+1}\frac{\gamma_{n+1} z_2+\alpha_{n+1}}{z_2-\beta_{n+1}}\right)\frac{1}{z_2}\frac{1}{\gamma_{n+1}z_1+\alpha_{n+1}}
=\frac{\alpha_{n+1}+\gamma_{n+1}\beta_{n+1}}{(\gamma_{n+1}z_1+\alpha_{n+1})(z_2-\beta_{n+1})}
\end{equation*}
and
\begin{equation*}
\frac{\gamma_{n+1}z_2+\alpha_{n+1}}{z_2-\beta_{n+1}}\frac{z_1-\beta_{n+1}}{\gamma_{n+1}z_1+\alpha_{n+1}}-1=\frac{(\alpha_{n+1}+\gamma_{n+1}\beta_{n+1})(z_1-z_2)}{(\gamma_{n+1}z_1+\alpha_{n+1})(z_2-\beta_{n+1})},
\end{equation*}
so,
\begin{multline*}
\sum_{n=\ell x'}^{\ell x-1}\left(d \cT(b_{n,y'} w_{n,y'-1}^*)+d \cT(b_{n+1,y'} w_{n,y'-1}^*)\right) \\
= -\frac{1}{(2\pi\i)^2}\int_{\Gamma_s}\int_{\Gamma_l}\left(\frac{w_2^x}{w_1^x}-\frac{w_2^{x'}}{w_1^{x'}}\right)
 \frac{\prod_{m=1}^\ell (z_2-\beta_m)^N}{\prod_{m=1}^\ell (z_1-\beta_m)^N}\frac{z_1^{y'}}{z_2^{y'}}f(z_2)g(z_1)\frac{\d z_2 \d z_1}{z_2-z_1},
\end{multline*}
which is equal to~$I_{f,g}(x',y')-I_{f,g}(x,y')$. Summing up the two sums in~\eqref{eq:integration_1xell} proves the formula for~$\cT$.

The expression for~$\cO$ follows from an identical computation, using that~$\overline{\mathcal F^\circ(w)}$ is given by~\eqref{eq:coulomb_function_white} with~$\bar g$ instead of~$g$. 
\end{proof}

\subsection{Asymptotic analysis}
In this section, we perform the steepest descent analysis of the pair~$(\cT,\cO)$ to obtain their large~$N$ limits. To obtain a sensible limit, we consider the macroscopic coordinates~$(\xi,\eta)$,
\begin{equation}\label{eq:global_coordinates_1xell}
\frac{x}{N}=\frac{1}{2}(\xi+1)+\Ordo(N^{-1}), \quad \text{and} \quad \frac{y}{N}=\frac{\ell}{2}(\eta+1)+\Ordo(N^{-1}),
\end{equation}
as~$N\to \infty$ where~$(\ell x+i,y)$ are the coordinates used in the previous section. With this scaling, the Aztec diamond converges to the square~$[-1,1]^2$. 

The explicit expression of the integral in Proposition~\ref{prop:ct_co_finite_1xell} motivates the following definition of the \emph{action function}. 
\begin{definition}\label{def:action_1xell}
For~$(\xi,\eta)\in [-1,1]^2$, the \emph{action function} is given by
\begin{equation*}
F(z;\xi,\eta)=\frac{1}{2}(1-\xi)\log w-\frac{\ell}{2}(1-\eta)\log z-\log\prod_{m=1}^\ell\left(\gamma_m+\alpha_m z^{-1}\right).
\end{equation*}
\end{definition}
This action function coincides with the action function defined in~\cite{BB23}. This means that the \emph{liquid region}, denoted by (following~\cite{BB23})~$\mathcal F_R$, and the frozen regions, can be characterized by the critical points of~$F$ (see~\cite{oko03, OR03} for the first instance of this technique). Let us remind the reader about this characterization. 

Recall the definition of~$w$ in~\eqref{eq:def_w}, and that~$w=0$ at~$z=-\frac{\alpha_i}{\gamma_i}$, and~$w=\infty$ at~$z=\beta_i$, for all~$i=1,\dots,\ell$. The points~$z=0$,~$z=\infty$,~$w=0$, and~$w=\infty$ divide the real line into~$2(\ell+1)$ parts. We denote these intervals by~$A_{0,m}$,~$m=1,\dots,2(\ell+1)$, where~$A_{0,1}=(\max_{i}\{\beta_i\},+\infty)$ and~$A_{0,m+1}$ is to the left of~$A_{0,m}$. The different regions of the Aztec diamond are defined in terms of the location of the critical points of the action function. Let~$F'$ denote the derivative of~$F$ with respect to~$z$.
\begin{definition}
A point~$(\xi,\eta)\in (-1,1)^2$ is in
\begin{itemize}
\item the \emph{liquid region} if~$F'(z;\xi,\eta)$ has a simple zero in the lower half plane~$\mathbb H^-=\{z\in \CC:\im z<0\}$, 
\item the \emph{frozen region} if~$F'(z;\xi,\eta)$ has only simple real zeros,
\item the \emph{arctic curve} if~$F'(z;\xi,\eta)$ has a double or triple zero. 
\end{itemize}
\end{definition}
One can show that if~$(\xi,\eta)\in (-1,1)^2$, then it is either in the liquid region, the frozen region, or the arctic curve. The liquid region is simply connected, with the arctic curve as its boundary. The previous definition defines a diffeomorphism from~$\mathcal F_R$ to~$\mathbb H^-$.
\begin{proposition}[\cite{BB23, BB24a}]\label{prop:diffeomorphism_1xell}
There is a diffeomorphism~$\Omega:\mathcal F_R\to \mathbb H^-$, such that for~$z\in \mathbb H^-$,
\begin{equation*}
F'(z;\xi,\eta)=0 \iff z=\Omega(\xi,\eta).
\end{equation*}
Moreover,~$\Omega(\xi,\eta)$ is a simple zero of~$F'(z;\xi,\eta)$.
\end{proposition}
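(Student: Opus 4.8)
The plan is to construct the inverse map explicitly and verify it is a diffeomorphism, exploiting the fact that $F'(z;\xi,\eta)$ is \emph{affine} in $(\xi,\eta)$. Writing $c_m=\alpha_m/\gamma_m$, a partial-fraction computation starting from Definition~\ref{def:action_1xell} and \eqref{eq:def_w} gives
\[
F'(z;\xi,\eta)=\tfrac{1-\xi}{2}P(z)-\tfrac{\ell(1-\eta)}{2}\tfrac1z-S(z),
\]
where $P(z)=\sum_m\big[\tfrac{1}{z+c_m}-\tfrac{1}{z-\beta_m}\big]$ and $S(z)=\sum_m\tfrac{1}{z+c_m}-\tfrac{\ell}{z}$. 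Since $P$, $S$ and $1/z$ depend only on $z$, for fixed $z$ the equation $F'(z;\xi,\eta)=0$ is a complex-linear, hence real-linear, equation in the two real unknowns $u:=\tfrac{1-\xi}{2}$ and $v:=\tfrac{\ell(1-\eta)}{2}$.

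First I would define a candidate inverse $\Psi\colon\mathbb H^-\to\RR^2$ by solving this $2\times2$ real linear system. It is uniquely solvable exactly when the determinant $D(z)$ of the coefficient matrix is nonzero, and a direct computation identifies
\[
D(z)=\frac{\ell}{4|z|^2}\,\Im\!\big(zP(z)\big)=\frac{\Im z}{4|z|^2}\sum_{m=1}^\ell\Big(\frac{c_m}{|z+c_m|^2}+\frac{\beta_m}{|z-\beta_m|^2}\Big).
\]
All weights are positive, so the sum is strictly positive; since $\Im z<0$ on $\mathbb H^-$, we get $D(z)<0$ throughout $\mathbb H^-$. Hence $\Psi$ is well defined and real-analytic on $\mathbb H^-$. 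Next I would show $\Psi$ is a local diffeomorphism and tie its Jacobian to the simplicity of zeros. Applying the implicit function theorem to $G(z;\xi,\eta):=F'(z;\xi,\eta)$ and using that $G$ is holomorphic in $z$ — so $\partial_z G$ acts as multiplication by $F''(z)$, with real Jacobian $|F''(z)|^2$ — one finds $\det D\Psi(z)=|F''(z)|^2/D(z)$. Because $D(z)<0$, this has constant sign and vanishes only where $F''(z)=0$, i.e.\ at non-simple zeros. Thus wherever the associated critical point is simple, $\Psi$ is an (orientation-reversing) local diffeomorphism, and its image point $(\xi,\eta)=\Psi(z)$ has $z$ as a simple zero of $F'$ in $\mathbb H^-$; after checking the bounds $0<u<1$ and $0<v<\ell$, equivalently $(\xi,\eta)\in(-1,1)^2$, such a point lies in $\mathcal F_R$ by the definition of the liquid region.

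It remains to upgrade this to a global diffeomorphism $\Psi\colon\mathbb H^-\to\mathcal F_R$ inverse to $\Omega$. For $(\xi,\eta)\in\mathcal F_R$ the definition supplies at least one simple zero $z\in\mathbb H^-$, and then $\Psi(z)=(\xi,\eta)$; the substance of the claim is that this zero is \emph{unique}. I would obtain this by showing $\Psi$ is a proper local diffeomorphism onto the simply connected region $\mathcal F_R$: properness (that $\Psi(z)$ tends to the arctic curve as $z\to\RR\cup\{\infty\}$), together with the covering-space theorem and connectedness of $\mathbb H^-$, forces $\Psi$ to be a bijection, so that $\Omega=\Psi^{-1}$ is a diffeomorphism and $\Omega(\xi,\eta)$ is the unique (simple) zero.

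The main obstacle is precisely this global step: one must rule out double zeros in the open half-plane (so that $\det D\Psi\neq0$ on all of $\mathbb H^-$) and control the boundary behaviour to establish properness. Both are part of the limit-shape analysis of exactly this action function carried out in~\cite{BB23,BB24a}, whose trichotomy — every $(\xi,\eta)\in(-1,1)^2$ is liquid, frozen, or on the arctic curve, with the liquid zero simple — I would invoke. The explicit linear-algebra computation above then serves to identify their critical-point map with our $\Psi$, giving the stated smoothness of $\Omega$ and $\Omega^{-1}$ together with the simplicity of the zero $z=\Omega(\xi,\eta)$.
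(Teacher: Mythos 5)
The paper does not prove this proposition at all: it is stated with the attribution \cite{BB23, BB24a} and no proof environment follows, so the ``paper's own proof'' is simply a citation of the limit-shape analysis in those references. Your proposal therefore goes further than the paper does, and the explicit computations you supply are correct: the partial-fraction form of $F'$ is right, $F'$ is indeed affine in $(u,v)=\bigl(\tfrac{1-\xi}{2},\tfrac{\ell(1-\eta)}{2}\bigr)$ with coefficient functions $P(z)$ and $1/z$, and the determinant of the resulting real $2\times 2$ system does reduce to $\Im(zP(z))/|z|^2$ up to a positive constant, which your identity
\[
\Im\bigl(zP(z)\bigr)=\Im z\sum_{m=1}^{\ell}\Bigl(\frac{c_m}{|z+c_m|^2}+\frac{\beta_m}{|z-\beta_m|^2}\Bigr)
\]
correctly shows is strictly negative on $\mathbb H^-$ under the positivity assumptions on the weights. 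This cleanly establishes that each $z\in\mathbb H^-$ determines a \emph{unique} candidate $(\xi,\eta)$, i.e.\ injectivity of $\Omega$ and smoothness of the inverse, and the relation $\det D\Psi=|F''(z)|^2/D(z)$ correctly ties degeneracy of the Jacobian to non-simple zeros. What your argument does not establish on its own — and what you candidly defer back to \cite{BB23,BB24a} — is everything global: that the solution $(\xi,\eta)=\Psi(z)$ actually lands in $(-1,1)^2$ for every $z\in\mathbb H^-$, that $F''$ has no zeros on $\mathbb H^-$ over that range (so the zero is always simple), that $\Psi$ is proper, and the trichotomy guaranteeing each liquid point has exactly one zero in $\mathbb H^-$. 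Since those are precisely the contents of the cited works, your proposal is best read as a correct reduction of the proposition to the results the paper itself invokes, plus a genuinely useful explicit identification of the inverse map that the paper leaves implicit. No step you wrote down is wrong; just be aware that the covering-space/properness paragraph is where all the real difficulty is hiding, and it is not made easier by the linear-algebra reformulation.
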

\begin{remark}
The diffeomorphism~$\Omega$ defines a conformal structure on the liquid region, which coincides with the Kenyon--Okounkov conformal structure, see \cite[Appendix A]{BB23}. 
\end{remark}
The frozen region consists of~$2(\ell+1)$ simply connected components, and there is a natural bijection between the frozen components and the intervals~$A_{0,m}$. Indeed, if~$(\xi,\eta)$ is in the frozen region, exactly one of the intervals~$A_{0,m}$ contains two or three simple zeros of~$F'(z;\xi,\eta)$.

Before presenting our limiting statement, we define the limiting objects. Since the action function coincides with the one in~\cite{BB23}, we follow the notation therein and make the following definition.
\begin{definition}\label{def:curve_1xell}
For~$\zeta\in \mathbb H^-=\{\zeta\in \CC:\im \zeta<0\}$, we let~$\gamma_\zeta$ be a simple symmetric, with respect to conjugation, curve going from~$\zeta$ to~$\bar\zeta$ crossing the real line in the interval~$A_{0,1}$. For~$\zeta\in \RR$, we extend the definition by continuity.
\end{definition}
\begin{remark}
The curve in the previous definition is not uniquely defined, however, the curve will be used in integrals that do not depend on the precise choice. 
\end{remark}
\old{{\mnote
The boundary of the liquid region is known as the \emph{arctic curve}. The arctic curve hits the boundary of the Aztec diamond at exactly~$2(\ell+1)$ places. These points corresponds to taking~$\Omega$ to~$z=0$,~$z=\infty$,~$w=0$ or~$w=\infty$. More precisely, the arctic curve touches the boundary at one point on the top boundary of the Aztec diamond. That point corresponds taking~$\Omega$ to~$z=\infty$, it touches the boundary at~$\ell$ points at the left boundary which corresponds to taking~$\Omega$ to the~$\ell$ points where~$w=0$, it touches the lower boundary at~$1$ point corresponding to taking~$\Omega$ to~$z=0$, and it touches the right boundary at~$\ell$ points where~$\Omega$ goes to~$w=\infty$. See Figure~\ref{} \textcolor{red}{include a figure}.
}}
\begin{definition}\label{def:limit_surface_1xell}
For~$\zeta\in \mathbb H^-=\{\zeta\in \CC:\im \zeta<0\}$, let~$\gamma_\zeta$ be as in Definition~\ref{def:curve_1xell}. The functions~$\mathcal Z,\vartheta:\mathbb H^-\to \CC$ are defined by
\begin{equation*}
\mathcal Z(\zeta)=2a\sqrt{a^2+1}+\frac{1}{2\pi\i}\int_{\gamma_\zeta}f(z)g(z)\d z, \quad \text{and} \quad \vartheta(\zeta)=\frac{1}{2\pi\i}\int_{\gamma_\zeta}f(z)\bar g(z)\d z,
\end{equation*}
where~$f$ and~$g$ are given in Corollary~\ref{cor:goulomb_gauge} and~$\bar g(z)=\overline{g(\bar z)}$. 
\end{definition} 

\old{{\mnote
\begin{remark} Note that~$Z(\zeta)$ and~$\vartheta(\zeta)$ depend on the choice of the fundamental domain. 
More precisely, we chose the fundamental domain to be as shown on Figure~\ref{fig:aztec_1_l}. Denote~$Z_{1,\ell}(\zeta):=Z(\zeta)$ and~$\vartheta_{1,\ell}(\zeta):=\vartheta(\zeta)$ to indicate that~$Z(\zeta)$ and~$\vartheta(\zeta)$ depends only on~$\frac{\alpha_1}{\gamma_1}$ and~$\beta_\ell$.
Indeed, recall that~$a=\sqrt{\frac{\gamma_1\beta_\ell}{\alpha_1}}$ and functions~$f$ and~$g$ are given by 
\begin{equation*}
f(z)=\sqrt{a+\i}(-a+\i)\frac{z+\i\sqrt{\frac{\alpha_1\beta_\ell}{\gamma_1}}}{(z+\frac{\alpha_1}{\gamma_1})(z-\beta_\ell)}, \quad \text{and} \quad
g(z)=\sqrt{a-\i}\frac{z+\i\sqrt{\frac{\alpha_1\beta_\ell}{\gamma_1}}}{z}.
\end{equation*}
Note that all zeros and singularities of the functions~$f$ and~$g$ are identified by the set~$\{ -\frac{\alpha_1}{\gamma_1}, 0, \beta_\ell \}$.

It is known that the limit shape of the dimer model is independent of any permutation of the weights~$\beta_j$ and any permutation of pairs~$(\alpha_j, \gamma_j)$ for~$j \in \{1, \ldots, \ell \}$. So we get a family of pairs~$(Z_{i,j}, \vartheta_{i,j})$ for the same periodic dimer model, where any two pairs~$(Z_{i,j}, \vartheta_{i,j})$ and~$(Z_{\tilde i,\tilde j}, \vartheta_{\tilde i,\tilde j})$ are connected by a unique conformal map of the upper half plane sending 
\[ -\frac{\alpha_i}{\gamma_i} \mapsto -\frac{\alpha_{\tilde i}}{\gamma_{\tilde i}},
\qquad 0 \mapsto 0,
\qquad \beta_j \mapsto \beta_{\tilde j}.\]
 \end{remark}

[Actually, there is a problem this map does NOT send~$\i\sqrt{\frac{\alpha_i\beta_j}{\gamma_i}}$ to~$\i\sqrt{\frac{\alpha_{\tilde i}\beta_{\tilde j}}{\gamma_{\tilde i}}}$...]

}}

The perfect t-embedding and its origami map are determined from the Coulomb gauge functions given in Corollary~\ref{cor:goulomb_gauge} up to an additive constant. To fix this constant, we use the face indexed by~$(1,1)$ in the augmented dual~$(A_{\ell N}')^*$ as the base point in the integration and set the constant to~$0$, that is, we set~$\cT(1,1)=0$ and~$\cO(1,1)=0$. Recall, the face~$(1,1)$ in the augmented dual of the reduced Aztec diamond is the lower left outer face.

The double contour integrals in Proposition~\ref{prop:ct_co_finite_1xell} are similar to the expressions for the double contour integrals of the inverse Kasteleyn matrix, or rather, the density function. In particular, since the action function is the same, the deformations of the curves in the steepest descent analysis can be reused from e.g.,~\cite{BB23} while the main contribution will be slightly different. 

We are ready to state and prove our main theorem of this section.
\begin{theorem}\label{thm:main_asymptotic_1xell}
Let~$(\xi,\eta)\in \mathcal F_R\subset(-1,1)^2$ and assume~$x$ and~$y$ are as in~\eqref{eq:global_coordinates_1xell}. Furthermore, let~$i=0,1\dots,\ell-1$, and~$\eps=0,1$. Then
\begin{equation*}
\left(\cT(2(\ell x+i)+\eps,2 y+\eps),\cO(2(\ell x+i)+\eps,2 y+\eps)\right)\to \left(\mathcal Z(\Omega(\xi,\eta)),\vartheta(\Omega(\xi,\eta))\right)
\end{equation*}
as~$N\to \infty$. Moreover, the convergence is uniform on compact subsets of~$\mathcal F_R$. 
\end{theorem}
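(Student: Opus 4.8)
The plan is to combine the exact formula of Proposition~\ref{prop:ct_co_finite_1xell} with a saddle point analysis of the double contour integral $I_{f,g}$. By that proposition, both $\cT$ and $\cO$ are, up to an additive constant fixed by the base point $(1,1)$, given by $I_{f,g}(x,y)$ and $I_{f,\bar g}(x,y)$ respectively, so it suffices to compute $\lim_{N\to\infty}I_{f,g}(x,y)$ and identify the constant. First I would rewrite the $N$-dependent part of the integrand of $I_{f,g}$ as $e^{N(S(z_2)-S(z_1))}$, where, after substituting \eqref{eq:global_coordinates_1xell},
\[
S(z)=\tfrac12(\xi+1)\log w(z)-\tfrac{\ell}{2}(\eta+1)\log z+\log\prod_{m=1}^\ell(z-\beta_m).
\]
A direct computation shows $S(z)=-F(z;\xi,\eta)$ with $F$ the action function of Definition~\ref{def:action_1xell}; in particular the critical points of $S$ coincide with those of $F$, so in the liquid region $\mathcal F_R$ the saddle points are $\Omega=\Omega(\xi,\eta)\in\mathbb H^-$ and its conjugate $\bar\Omega\in\mathbb H^+$, both simple by Proposition~\ref{prop:diffeomorphism_1xell}. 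Since $S$ agrees with the exponent governing the inverse Kasteleyn asymptotics in~\cite{BB23}, the global topology of the steepest descent/ascent contours and their deformations is identical to the one established there.

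The core of the argument is then to transport the contour deformations of~\cite{BB23} to $I_{f,g}$, the only change being the non-exponential prefactor $f(z_2)g(z_1)/(z_2-z_1)$. I would deform $\Gamma_l$ (the $z_2$-contour) and $\Gamma_s$ (the $z_1$-contour) to steepest descent paths through $\Omega$ and $\bar\Omega$. In doing so the two contours must be pushed across one another along the arc joining the conjugate saddles, and the crossing produces a residue at $z_2=z_1$ equal to $f(z_1)g(z_1)$. Collecting this residue yields the single integral $\frac{1}{2\pi\i}\int_{\gamma_\Omega}f(z)g(z)\,dz$, where $\gamma_\Omega$ is the curve from $\Omega$ to $\bar\Omega$ of Definition~\ref{def:curve_1xell}; the remaining double integral over the separated steepest descent contours is of lower order by the usual Laplace estimates. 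This is precisely the $\Omega$-dependent part of $\mathcal Z(\Omega)$. The additive constant $2a\sqrt{a^2+1}$ is then pinned down by letting $(\xi,\eta)$ approach the corner frozen region associated with the interval $A_{0,1}=(\max_i\beta_i,\infty)$: there $\Omega$ tends to a real point of $A_{0,1}$, the loop $\gamma_\Omega$ contracts so that the residue integral vanishes, and $\cT$ must converge to the boundary vertex $\cT(v_{\scriptscriptstyle n-1,n-1})=2a\sqrt{a^2+1}$ computed in Remark~\ref{rmk:bdry_points}. The origami map is handled identically: replacing $g$ by $\bar g$ changes the residue to $f(z_1)\bar g(z_1)$ and produces $\vartheta(\Omega)$.

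Two bookkeeping points must be settled. First, the theorem concerns the general face $(2(\ell x+i)+\eps,2y+\eps)$ rather than the corner faces $(2\ell x,2y)$ treated in Proposition~\ref{prop:ct_co_finite_1xell}; integrating $d\cT$ along the $O(1)$-length path joining these two faces changes the value by a quantity carrying the same exponential weight but summed over a bounded number of edges, and hence the limit is insensitive to $i$ and $\eps$. Second, uniformity on compact subsets of $\mathcal F_R$ follows because $\Omega(\xi,\eta)$ and the associated steepest descent contours depend continuously on $(\xi,\eta)$ and stay a fixed distance from the real axis and from the poles of $f$ and $g$, so that all the Laplace estimates can be made uniform.

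I expect the main obstacle to be the precise residue bookkeeping: one must verify that, with the contour topology inherited from~\cite{BB23}, the crossing picks up exactly the arc $\gamma_\Omega$ with the correct orientation and the correct crossing interval $A_{0,1}$, and that the deformations avoid the real poles of $f$ (at $-\alpha_1/\gamma_1$ and $\beta_\ell$) and of $g$ (at $0$), all of which lie to the left of $A_{0,1}$. Getting this matching exactly right --- including the sign and the interaction with the base-point normalization that yields the constant $2a\sqrt{a^2+1}$ --- is the delicate part; the remaining steepest descent estimates are a routine adaptation of those in~\cite{BB23}.
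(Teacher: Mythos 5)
Your treatment of the main term is the paper's argument: the identification $S=-F$ with the action function of Definition~\ref{def:action_1xell}, the reuse of the contour topology from~\cite{BB23}, the residue at $z_2=z_1$ along $\gamma_{\Omega(\xi,\eta)}$ producing $\frac{1}{2\pi\i}\int_{\gamma_\Omega}fg\,\d z$, the $O(N^{-1})$ contribution of the $O(1)$-length path to the general face $(2(\ell x+i)+\eps,2y+\eps)$ (no $(z_2-z_1)^{-1}$ factor, hence no residue), and the uniformity argument are all as in the paper. The one place you genuinely diverge is the additive constant, and that step has a gap.

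You fix the constant by letting $(\xi,\eta)$ approach the frozen corner associated with $A_{0,1}$, arguing that $\gamma_\Omega$ contracts, the residue integral vanishes, and ``$\cT$ must converge to the boundary vertex $2a\sqrt{a^2+1}$.'' But the statement that liquid-region faces approaching that tangency point have t-embedding images converging to the finite-$N$ boundary vertex is precisely the collapse-of-frozen-regions / boundary-continuity statement that in the paper is a \emph{consequence} of this theorem (Corollary~\ref{cor:frozen_1xell} and Proposition~\ref{prop:boundary_1xell}), not an input. At finite $N$ you only know the exact value $\cT(v_{\scriptscriptstyle n-1,n-1})=2a\sqrt{a^2+1}$ at the outer dual vertex; there is no a priori interchange of the limits $N\to\infty$ and $(\xi,\eta)\to\partial\mathcal F_R$ that transfers this to interior faces. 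The paper avoids this by computing the constant directly: $\cT(2\ell x,2y)-\cT(2\ell,2)=I_{f,g}(x,y)-I_{f,g}(1,1)$, and $I_{f,g}(1,1)$ is evaluated by a separate contour deformation whose residue term is $\frac{1}{2\pi\i}\int_{\gamma_{z_0}}fg\,\d z$ with $z_0\in(-\alpha_1/\gamma_1,0)$; this loop encircles the poles at $\beta_\ell$ and $0$ (but not $-\alpha_1/\gamma_1$) and the residues sum to $-2a\sqrt{a^2+1}$, giving the constant. A related omission: connecting $\cT(2\ell,2)$ to the normalization point $\cT(1,1)=0$ forces you through the boundary edge $d\cT(b_0w_0^*)$, for which the integral formulas of Corollary~\ref{cor:goulomb_gauge} do not apply; the paper handles it separately via~\eqref{eq:coulomb_boundary_black}--\eqref{eq:coulomb_boundary_white} and shows $\tilde K_{\operatorname{reduced}}(b_0,w_0)$ is exponentially small. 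Your normalization shortcut would sidestep both issues if it were justified, but as stated it is circular; either supply the direct computation of $I_{f,g}(1,1)$, or prove independently that the image of $\zeta\mapsto\frac{1}{2\pi\i}\int_{\gamma_\zeta}fg\,\d z$ is the open rhombus translated by $-2a\sqrt{a^2+1}$, so that containment of the limit in the closed finite-$N$ rhombus forces the constant.
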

\begin{proof}
The formulas of~$\cT$ and~$\cO$ given in Proposition~\ref{prop:ct_co_finite_1xell} differ by the functions~$g$ and~$\bar g$. This difference will barely play any role in the asymptotic analysis. We will therefore take the limit of~$\cT$ and then in the end explain the differences needed to take the limit of~$\cO$. 

We divide~$\cT$ into three parts:
\begin{multline}\label{eq:t-embedding_divided_1xell}
\cT(2(\ell x+i)+\eps,2y+\eps)=\left(\cT(2(\ell x+i)+\eps,2 y+\eps)-\cT(2\ell x,2y)\right)\\+\left(\cT(2\ell x,2y)-\cT(2\ell,2)\right)+\left(\cT(2\ell,2)-\cT(1,1)\right).
\end{multline}
We have used that we set~$\cT(0,0)=0$. We will see that the first and last terms on the right hand side are of order~$N^{-1}$. We start by computing the asymptotics of the middle term using Proposition~\ref{prop:ct_co_finite_1xell}.

Since the action function in Definition~\ref{def:action_1xell} is the same as the action function in~\cite{BB23}, we may rely on the steepest descent analysis given therein. In particular, the curves of steepest descent and ascent are the same. Since the integrals are slightly different, we still outline the asymptotic analysis.

Let~$(\ell x,y)$ be such that~$(\xi,\eta)\in \mathcal F_R$, where~$(\xi,\eta)$ is given in~\eqref{eq:global_coordinates_1xell}. The function~$z\mapsto \Re F(z,\xi,\eta)$ is a harmonic function on~$\CC$ with singularities at~$z=0$,~$z=\infty$,~$w=0$ and~$w=\infty$. By Definition~\ref{def:action_1xell}, see also Equations (4.3)-(4.6) in~\cite{BB23}, 
\begin{equation}\label{eq:action_angles_1xell}
\Re F(z;\xi,\eta)\to -\infty, \quad \text{as } z\to 0,\infty, \quad \text{and} \quad \Re F(z;\xi,\eta)\to +\infty, \quad \text{as } w\to 0,\infty.
\end{equation}

Let~$\Omega(\xi,\eta)$ be as in Definition~\ref{def:action_1xell}, we define
\begin{equation*}
\mathcal D_+=\mathcal D_+(\xi,\eta)=\{z\in \CC: \Re F(z;\xi,\eta)>\Re F(\Omega(\xi,\eta);\xi,\eta)\},
\end{equation*}
and let~$\mathcal D_-=(\overline{\mathcal D_+})^c$. Since~$\Re F$ is harmonic, any connected component of~$\mathcal D_+$ must contain a point where~$w\to 0$ or~$w\to \infty$ and any connected component of~$\mathcal D_-$ must contain a point~$z=0$ or~$z=\infty$. The level lines of~$\Re F$ divide a neighborhood of~$\Omega(\xi,\eta)$ into four regions. Two of these regions lie in connected components of~$\mathcal D_-$, one containing the point~$z=0$ and one containing the point~$z=\infty$. We denote these connected component by~$\mathcal D_{-,0}$ and~$\mathcal D_{-,\infty}$, respectively. Similarly, two of the regions lie in connected components of~$\mathcal D_+$, one containing a point~$w=0$ and one containing a point~$w=\infty$, and we denote these connected components by~$\mathcal D_{+,0}$ and~$\mathcal D_{+,\infty}$. 

The definition of the action function~$F$ is made so we can write~$I_{f,g}$ in terms of it. Indeed, 
\begin{equation}\label{eq:double_integral_1xell}
I_{f,g}(x,y)=
\frac{1}{(2\pi\i)^2}\int_{\Gamma_s}\int_{\Gamma_l}\e^{N\left(F(z_1;\xi,\eta)-F(z_2;\xi,\eta)+\Ordo(N^{-1})\right)}
\frac{f(z_2)g(z_1)\d z_2\d z_1}{z_2-z_1},
\end{equation}
where the error term is zero if~$z_1=z_2$. Recall that~$\Gamma_s$ and~$\Gamma_l$ are closed contours containing~$z=0$ and~$w=\infty$ in their interiors and~$z=\infty$ and~$w=0$ in their exterior, and~$\Gamma_s$ lies in the interior of~$\Gamma_l$. We deform~$\Gamma_l$ to a simple closed curve~$\gamma_+$ contained in~$\mathcal D_{+,0}\cup\mathcal D_{+,\infty}\cup\{\Omega(\xi,\eta),\overline{\Omega(\xi,\eta)}\}$, and~$\Gamma_s$ to a simple closed curve~$\gamma_-$ contained in~$\mathcal D_{-,0}\cup\mathcal D_{-,\infty}\cup\{\Omega(\xi,\eta),\overline{\Omega(\xi,\eta)}\}$. The only contribution from this deformation is the residue at~$z_2=z_1$ along a curve~$\gamma_{\xi,\eta}$. The curve~$\gamma_{\xi,\eta}$ is equal to~$\gamma_{\Omega(\xi,\eta)}$ as in Definition~\ref{def:curve_1xell}. Hence,
\begin{multline}\label{eq:limit_main_part_1xell}
I_{f,g}(x,y)=
\frac{1}{2\pi\i}\int_{\gamma_{\xi,\eta}}f(z_1)g(z_1)\d z_1 \\
+\frac{1}{(2\pi\i)^2}\int_{\gamma_-}\int_{\gamma_+}\e^{N\left(F(z_1;\xi,\eta)-F(z_2;\xi,\eta)+\Ordo(N^{-1})\right)}
\frac{f(z_2)g(z_1)\d z_2\d z_1}{z_2-z_1}.
\end{multline}
The first term on the right hand side is the main contribution of the left hand side as~$N\to \infty$. The second term is of order~$N^{-1/2}$ as~$N\to \infty$, by standard arguments, see, for instance,~\cite{oko03}.

We continue with asymptotic analysis of~$I_{f,g}(1,1)$. The integral is given by
\begin{equation}\label{eq:integral_11}
I_{f,g}(1,1)=
\frac{1}{(2\pi\i)^2}\int_{\Gamma_s}\int_{\Gamma_l}
\frac{\prod_{m=1}^\ell (z_2-\beta_m)^N}{\prod_{m=1}^\ell (z_1-\beta_m)^N}\frac{z_1}{z_2}\frac{w_2}{w_1}\frac{f(z_2)g(z_1)\d z_2\d z_1}{z_2-z_1}.
\end{equation}
Note that if~$z_1<z_2<\beta_m$ for all~$m=1,\dots,\ell$ then
\begin{equation}\label{eq:inequality_1xell}
\prod_{m=1}^\ell|z_1-\beta_m|>\prod_{m=1}^\ell|z_2-\beta_m|.
\end{equation}
We define~$\gamma_+$ and~$\gamma_-$ as symmetric simple closed curves intersecting the real line in the interval~$A_{0,1}$ and at~$z_+$ and~$z_-$, respectively, where~$-\frac{\alpha_m}{\gamma_m}<z_-<z_+<0$ for all~$m$, and so that~\eqref{eq:inequality_1xell} holds for all~$z_1\in \gamma_-$ and~$z_2\in \gamma_+$. We deform~$\Gamma_l$ to~$\gamma_+$ and~$\Gamma_s$ to~$\gamma_-$ in the integral~\eqref{eq:integral_11}. The resulting double contour integral is of order~$\e^{-cN}$ for some constant~$c>0$, and the main contribution comes from the residue at~$z_2=z_1$ along a curve~$\gamma_{z_0}$ from Definition~\ref{def:curve_1xell} where~$z_-<z_0<z_+$. That is, 
\begin{equation*}
I_{f,g}(1,1)=\frac{1}{2\pi\i}\int_{\gamma_{z_0}}f(z)g(z)\d z + \Ordo\left(\e^{-cN}\right),
\end{equation*}
as $N\to \infty$. We compute the single contour integral by the residue theorem and get
\begin{equation*}
\frac{1}{2\pi\i}\int_{\gamma_{z_0}}f(z)g(z)\d z=-2a\sqrt{a^2+1}.
\end{equation*}

Proposition~\ref{prop:ct_co_finite_1xell} together with the calculations above yield 
\begin{align*}
\cT(2\ell x,2y)-\cT(2\ell,2)&=\frac{1}{2\pi\i}\int_{\gamma_{\xi,\eta}}f(z)g(z)\d z+2a\sqrt{a^2+1}+\Ordo\left(N^{-\frac{1}{2}}\right) \\
&=\mathcal Z(\Omega(\xi,\eta))+\Ordo\left(N^{-\frac{1}{2}}\right).
\end{align*}
What remains is to show that the first and last terms in~\eqref{eq:t-embedding_divided_1xell} are of order~$N^{-1}$. 

The first term in~\eqref{eq:t-embedding_divided_1xell} consists of terms of the form~$d \cT(b_{m,n} w_{m,n-1}^*)$ or~$d \cT(b_{m,n} w_{m,n}^*)$, with~$\ell x\leq m\leq \ell x+i+\eps$ and~$2 y\leq n\leq 2 y+j+\eps$. In particular, there are a finite number of terms and each term is similar to the double contour integral~\eqref{eq:double_integral_1xell}. The main difference is that there is no factor~$(z_2-z_1)^{-1}$. This means that there is no contribution from the deformation of the curves~$\Gamma_l$ and~$\Gamma_s$ to the curves of steepest ascent and descent~$\gamma_+$ and~$\gamma_-$, and the double contour integral after this deformation, as in~\eqref{eq:limit_main_part_1xell}, is of order~$N^{-1}$ instead of~$N^{-\frac{1}{2}}$.

The third term in~\eqref{eq:t-embedding_divided_1xell} is exponentially small. Similarly to the first term, there are a finite number of terms that can be analyzed in a manner analogous to the treatment of~$I_{f,g}(1,1)$, with the main difference that there is no factor~$(z_2-z_1)^{-1}$, which implies that no contribution arises from the deformation of the integration contours. Consequently, each such term is of order~$\e^{-cN}$ for some~$c>0$. However, the term~$d \cT(b_0 w_0^*)$ is a boundary term, so the integral formula does not apply, and it requires separate treatment. 

By~\eqref{eq:coulomb_boundary_black} and~\eqref{eq:coulomb_boundary_white},
\begin{equation*}
d \cT(b_0 w_0^*)=\mathcal F^\bullet(b_0)\tilde K_{\operatorname{reduced}}(b_0,w_0)\mathcal F^\circ(w_0)
=(a^2+1)^\frac{3}{2}\tilde K_{\operatorname{reduced}}(b_0,w_0),
\end{equation*}
and by~\eqref{eq:K_def}, Remark~\ref{rem:gauge_tilde_kast} and Lemmas~\ref{lem:reduced_K} and~\ref{lem:k_reduced_1xell},
\begin{equation*}
\tilde K_{\operatorname{reduced}}(b_0,w_0)=\frac{\alpha_1 \gamma_1+\beta_1}{a\alpha_1^2}\prod_{m=1}^\ell \left(\frac{1}{1+\frac{\alpha_1}{\gamma_1\beta_m}}\right)^N, 
\end{equation*}
which is exponentially small as~$N\to \infty$. 

This proves the limiting behavior of~$\cT$.

To obtain the limit of~$\cO$, we reuse the analysis done above for~$\cT$, replacing~$g$ with~$\bar g$. The argument goes through verbatim, with the only difference in the constant term, namely
\begin{equation*}
\frac{1}{2\pi\i}\int_{\gamma_{z_0}}f(z)\bar g(z)\d z=0.
\end{equation*}

It is clear that the constants in the error terms above can be made uniform on compact subsets, which concludes the proof of the statement.
\end{proof}

The proof of Theorem~\ref{thm:main_asymptotic_1xell} is naturally extended to the frozen regions. In fact, the limits of the~$2(\ell+1)$ frozen regions collapses to the~$4$ points in~$\RR^{2,1}$. 

Let~$P_i=(\mathcal Z_i,\vartheta_i)\in \CC\times \RR$,~$i=1,\dots,4$, be the boundary vertices of~$\left(\cT\left((A_{\ell N}')^*\right),\cO\left((A_{\ell N}')^*\right)\right)$ given in Remark~\ref{rmk:bdry_points}. That is, let
\begin{align*}
&P_1=\left(2a\sqrt{a^2+1},0\right), \\
&P_2=\left((a-\i)\sqrt{a^2+1},-(a^2+1)\right), \\
&P_3=\left(0,0\right), \\
&P_4=\left((a+\i)\sqrt{a^2+1},-(a^2+1)\right).
\end{align*}

\begin{corollary}\label{cor:frozen_1xell}
Set~$A_1=(\beta_\ell,+\infty)$,~$A_2=(0,\beta_\ell)$,~$A_3=(-\frac{\alpha_1}{\gamma_1},0)$ and~$A_4=(-\infty,-\frac{\alpha_1}{\gamma_1})$. Let~$(\xi,\eta)\in (-1,1)^2$ be in the frozen component corresponding to~$A_{0,m}$ for some~$m=1,\dots,2(\ell+1)$, and let~$x$ and~$y$ be related to~$(\xi,\eta)$ by~\eqref{eq:global_coordinates_1xell}. If~$A_{0,m}\subset A_j$, for some~$j=1,\dots,4$, then
\begin{equation*}
\left(\cT(2(\ell x+i)+\eps,2(2 y+j')+\eps),\cO(2(\ell x+i)+\eps,2(2 y+j')+\eps)\right)\to P_j,
\end{equation*}
as~$N\to \infty$.
\end{corollary}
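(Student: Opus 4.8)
The plan is to re-run the steepest descent analysis from the proof of Theorem~\ref{thm:main_asymptotic_1xell} almost verbatim, tracking how it degenerates when $(\xi,\eta)$ lies in a frozen component instead of in $\mathcal F_R$. By Proposition~\ref{prop:ct_co_finite_1xell}, with the base point fixed at the face $(1,1)$, we have $\cT(2\ell x,2y)=I_{f,g}(x,y)-I_{f,g}(1,1)+\Ordo(N^{-1})$ and the same with $g$ replaced by $\bar g$ for $\cO$. The finite index shifts by $i$, $j'$ and $\eps$ only move the evaluation point within one fundamental cell and contribute $\Ordo(N^{-1})$, exactly as in the theorem, and the boundary term $d\cT(b_0w_0^*)$ is exponentially small by the same estimate used there. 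Hence it suffices to compute the large-$N$ limit of $I_{f,g}(x,y)$ and $I_{f,\bar g}(x,y)$ when the critical points of the action function $F(\cdot\,;\xi,\eta)$ are all real and confined to a single interval $A_{0,m}$.

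First I would perform the contour deformation as in the liquid case, pushing $\Gamma_l$ into $\mathcal D_+$ and $\Gamma_s$ into $\mathcal D_-$. The only change is topological: since $\Omega(\xi,\eta)$ has now moved onto the real axis, the residue at $z_2=z_1$ is collected along a closed, conjugation-symmetric loop $\gamma^{(m)}$ rather than along the open arc $\gamma_{\xi,\eta}$ of Definition~\ref{def:curve_1xell}, while the remaining double integral is exponentially small by the estimate of~\cite{BB23}. Consequently $I_{f,g}(x,y)\to \tfrac1{2\pi\i}\oint_{\gamma^{(m)}}f(z)g(z)\,\d z$, a sum of residues of $fg$ at the poles enclosed by $\gamma^{(m)}$, and likewise for $f\bar g$.

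The crucial structural point is that $f$ and $g$ from~\eqref{eq:f_g_def_1xell}, and hence $fg$ and $f\bar g$, are meromorphic with poles only at the three points $-\alpha_1/\gamma_1$, $0$ and $\beta_\ell$ (with controlled behavior at $\infty$); these are exactly the points separating $A_1,\dots,A_4$. Therefore $\tfrac1{2\pi\i}\oint_{\gamma^{(m)}}fg$ is unchanged as the crossing interval $A_{0,m}$ varies \emph{within} a fixed $A_j$ (no pole is crossed), so the limit is constant on each $A_j$ and jumps by the relevant residue each time $A_{0,m}$ passes one of $-\alpha_1/\gamma_1,0,\beta_\ell$. Normalizing against the reference computation $I_{f,g}(1,1)=-2a\sqrt{a^2+1}=\operatorname{Res}_{z=0}(fg)+\operatorname{Res}_{z=\beta_\ell}(fg)$, which places the corner $(1,1)$ in a frozen component of type $A_3$, one finds that the loop for a component in $A_j$ encloses exactly the poles lying strictly between $A_j$ and $A_1=(\beta_\ell,\infty)$: none for $A_1$, $\{\beta_\ell\}$ for $A_2$, $\{\beta_\ell,0\}$ for $A_3$, and $\{\beta_\ell,0,-\alpha_1/\gamma_1\}$ for $A_4$. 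The residues $\operatorname{Res}_{z=\beta_\ell}(fg)=-\sqrt{a^2+1}(a+\i)$ and $\operatorname{Res}_{z=0}(fg)=\sqrt{a^2+1}(\i-a)$ then give, via $\cT\to I_{f,g}(x,y)+2a\sqrt{a^2+1}$, the values $2a\sqrt{a^2+1}$, $(a-\i)\sqrt{a^2+1}$, $0$ and $(a+\i)\sqrt{a^2+1}$ on $A_1,\dots,A_4$, matching the first coordinates of $P_1,\dots,P_4$. The analogous computation with $\bar g$, using $\operatorname{Res}_{z=\beta_\ell}(f\bar g)=-(a^2+1)$ and $\operatorname{Res}_{z=0}(f\bar g)=a^2+1$ (and $I_{f,\bar g}(1,1)=0$), produces the $\vartheta$-coordinates $0,-(a^2+1),0,-(a^2+1)$, which are the second coordinates of $P_1,\dots,P_4$.

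The main obstacle is the rigorous justification of the closed-loop residue in the frozen case: after deforming $\Gamma_l$ and $\Gamma_s$ to steepest descent/ascent contours through the real critical points one must verify that the two contours separate except for a well-defined enclosed region, and that this region encircles exactly the asserted set of poles with the correct orientation. This is the one place the argument genuinely differs from the liquid case and requires a careful description of the level sets of $\Re F$ and the components $\mathcal D_\pm$ when all saddles are real; since the global topology of $\Re F$ is already controlled in~\cite{BB23} and the three poles are fixed, the remaining bookkeeping reduces to the residue matching above. A convenient cross-check that also fixes the orientation is continuity: as $(\xi,\eta)\to$ the arctic curve from within $\mathcal F_R$, the arc $\gamma_{\Omega(\xi,\eta)}$ degenerates onto $\gamma^{(m)}$, so the frozen limit coincides with $\lim(\mathcal Z(\Omega(\xi,\eta)),\vartheta(\Omega(\xi,\eta)))$, which must agree with the residue values just computed.
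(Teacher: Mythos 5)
Your proposal is correct and follows essentially the same route as the paper: the steepest descent analysis of $I_{f,g}(x,y)$ degenerates in the frozen phase so that the residue at $z_2=z_1$ is collected along the closed loop $\gamma_{z_m}$ of Definition~\ref{def:curve_1xell}, and the limit is then evaluated by residue calculus at the three poles $\beta_\ell$, $0$, $-\alpha_1/\gamma_1$ of $fg$ and $f\bar g$, with the same enumeration of enclosed poles per $A_j$ and the same constant $2a\sqrt{a^2+1}$ from $I_{f,g}(1,1)$. The paper handles the one obstacle you flag (justifying the closed-loop residue when all saddles are real) by citing the frozen-phase analysis of \cite[Section~6.2.3]{BB23}, exactly as you anticipate.
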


\begin{proof}
That~$(\xi,\eta)$ is in the frozen region only changes the steepest descent analysis of~$I_{f,g}(x,y)$ in the proof of Theorem~\ref{thm:main_asymptotic_1xell}. The argument is similar and details can be found in~\cite[Section 6.2.3]{BB23}. The contribution comes again from the residue at~$z_2=z_1$ along a curve~$\gamma_{\xi,\eta}$, which can be taken to be the curve~$\gamma_{z_m}$ given in Definition~\ref{def:curve_1xell} for any~$z_m\in A_{0,m}$. Hence, 
\begin{equation*}
\cT(2(\ell x+i)+\eps,2y+\eps)\to\frac{1}{2\pi\i}\int_{\gamma_{\xi,\eta}} f(z)g(z)\d z+2a\sqrt{a^2+1}
\end{equation*}
as~$N\to \infty$. A similar limit holds for~$\cO$, with~$g$ replaced by~$\bar g$ and without the constant term. 

Let~$\mathcal C_p$ be a small circle centered at~$p\in \CC$ and oriented in positive directions, then
\begin{equation*}
\frac{1}{2\pi\i}\int_{\mathcal C_p} f(z)g(z)\d z=
\begin{cases}
(-a-\i)\sqrt{a^2+1}, & p=\beta_\ell, \\
(-a+\i)\sqrt{a^2+1}, & p=0, \\
(a+\i)\sqrt{a^2+1}, & p=-\frac{\alpha_1}{\beta_1},
\end{cases}
\end{equation*}
and
\begin{equation*}
\frac{1}{2\pi\i}\int_{\mathcal C_p} f(z)\bar g(z)\d z=
\begin{cases}
-(a^2+1), & p=\beta_\ell, \\
a^2+1, & p=0, \\
-(a^2+1), & p=-\frac{\alpha_1}{\beta_1},
\end{cases}
\end{equation*}
We have used that~$\sqrt{a+\i}\overline{\sqrt{a-\i}}=a+\i$. 

Since~$\gamma_{\xi,\eta}$ is positively oriented curve encircling~$\beta_\ell$ if~$j=2$,~$\beta_\ell$ and~$0$ if~$j=3$,~$\beta_\ell$,~$0$, and~$-\frac{\alpha_1}{\gamma_1}$ if~$j=4$, and non of the singularities of the integrand if~$j=1$, the above proves the statement.
\end{proof}

\subsection{Maximal surface in the Minkowski space~$\RR^{2,1}$}\label{sec:max_surf_1_by_ell}
In this section, we show that the limiting result in the previous section implies the convergence of~$(\cT,\cO)$ to a space-like maximal surface in the Minkowski space~$\RR^{2,1}$. We follow the arguments developed in~\cite{BNR23, BNR24} closely.

Firstly, we note that the limit of the origami map is real-valued. Moreover, the limits of~$(\cT,\cO)$ in the frozen regions, as discussed in Corollary~\ref{cor:frozen_1xell}, are the boundary points of~$(\mathcal Z(\zeta),\vartheta(\zeta))$ as~$\zeta$ tends to the real line, as expected.
\begin{proposition}\label{prop:boundary_1xell}
Let~$\mathcal Z$ and~$\vartheta$ be given in Definition~\ref{def:limit_surface_1xell} and~$A_j$,~$j=1,\dots,4$ be defined in Corollary~\ref{cor:frozen_1xell}. The function~$\zeta\mapsto \vartheta(\zeta)$ is real-valued, and  
\begin{equation*}
\lim_{\zeta \to A_j} (\mathcal Z(\zeta),\vartheta(\zeta))=P_j.
\end{equation*}
\end{proposition}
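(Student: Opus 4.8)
The statement splits into two independent assertions -- that $\vartheta$ is real-valued on $\mathbb{H}^-$, and that the four boundary limits are $P_1,\dots,P_4$ -- and I would treat them in this order.

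For the reality of $\vartheta$, the plan is to combine the conjugation symmetry of the contour with a pointwise identity between the two candidate integrands. Writing $\bar f(z)=\overline{f(\bar z)}$ as is done for $g$, a change of variables $z\mapsto\bar z$ -- legitimate because $\gamma_\zeta$ is symmetric under conjugation and oriented from $\zeta$ to $\bar\zeta$, so that a symmetric parametrization satisfies $\overline{z(t)}=z(1-t)$ -- produces
\begin{equation*}
\overline{\vartheta(\zeta)}=\frac{1}{2\pi\i}\int_{\gamma_\zeta}\bar f(z)\,g(z)\,\d z .
\end{equation*}
It therefore suffices to establish the pointwise identity $f(z)\bar g(z)=\bar f(z)\,g(z)$. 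Using the explicit $f,g$ from Corollary~\ref{cor:goulomb_gauge} and the defining relation $a^2\tfrac{\alpha_1}{\gamma_1}=\beta_\ell$ (equivalent to $a=\sqrt{\gamma_1\beta_\ell/\alpha_1}$), I would expand $\bar f g-f\bar g$ in partial fractions; after substituting $\beta_\ell=a^2\tfrac{\alpha_1}{\gamma_1}$ the four resulting terms recombine into $\tfrac1z-\tfrac1z=0$, so the difference vanishes identically. Granting this, the display gives $\overline{\vartheta(\zeta)}=\vartheta(\zeta)$.

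For the boundary values, the plan is to pass to the limit $\zeta\to A_j$ (from $\mathbb{H}^-$) directly inside the contour integrals defining $\mathcal Z$ and $\vartheta$. As $\zeta\to x_0$ in the open interval $A_j$, the conjugate endpoint $\bar\zeta$ tends to the same point, so the two endpoints of $\gamma_\zeta$ pinch together while the curve keeps crossing the real axis in $A_{0,1}=(\beta_\ell,\infty)$. By homotopy invariance of the integral of the meromorphic integrand -- whose only poles sit at $z=-\tfrac{\alpha_1}{\gamma_1},0,\beta_\ell$ -- the limiting contour is a positively oriented closed loop enclosing exactly the poles lying in $(x_0,\infty)$: none for $j=1$, $\{\beta_\ell\}$ for $j=2$, $\{\beta_\ell,0\}$ for $j=3$, and $\{\beta_\ell,0,-\tfrac{\alpha_1}{\gamma_1}\}$ for $j=4$. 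Hence
\begin{equation*}
\lim_{\zeta\to A_j}\frac{1}{2\pi\i}\int_{\gamma_\zeta}f g\,\d z=\sum_{p}\operatorname{Res}_{z=p}\bigl(f(z)g(z)\bigr),
\end{equation*}
and similarly with $f\bar g$ in place of $fg$, the sums running over the enclosed poles. The residues are precisely those already evaluated in the proof of Corollary~\ref{cor:frozen_1xell}; inserting them, adding the constant $2a\sqrt{a^2+1}$ to $\mathcal Z$, and tracking the successive contributions $(-a\mp\i)\sqrt{a^2+1}$ and $\mp(a^2+1)$ telescopes to $P_1=(2a\sqrt{a^2+1},0)$, then $P_2,P_3,P_4$ in turn, matching the listed vertices.

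The genuinely routine steps are the partial-fraction identity and the residue evaluations (the latter recorded already in Corollary~\ref{cor:frozen_1xell}). The main point needing care is the pinching limit of the contour: I must verify that as $\zeta\to x_0\in A_j$ the family $\gamma_\zeta$ can be chosen to vary continuously so that the open arc degenerates to a closed loop around the correct set of poles, and that no pole collides with the pinch point $x_0$ so long as $x_0$ remains in the \emph{open} interval $A_j$ -- the interval endpoints, where $x_0$ would meet a pole, being exactly the excluded cases. Once the correct limiting homotopy class is identified, convergence of the integrals follows from continuity of the integrand away from its poles, and the two parts together give $\lim_{\zeta\to A_j}(\mathcal Z(\zeta),\vartheta(\zeta))=P_j$ with $\vartheta$ real throughout.
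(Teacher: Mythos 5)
Your proposal is correct and takes essentially the same route as the paper: the reality of $\vartheta$ is the observation that $f(z)\bar g(z)$ is a rational function with real coefficients (your pointwise identity $f\bar g=\bar f g$ is exactly equivalent to this) combined with the conjugation symmetry of $\gamma_\zeta$, and the boundary limits follow from pinching $\gamma_\zeta$ into a positively oriented closed loop and summing precisely the residues already recorded in the proof of Corollary~\ref{cor:frozen_1xell}.
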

\begin{proof}
That~$\vartheta$ is real-valued follows from the observation that~$f(z)\bar g(z)$ is a rational function with real coefficients. 

The limit follows from the residue theorem. See the proof of Corollary~\ref{cor:frozen_1xell} for details.
\end{proof}

Let~$C_{\operatorname{Romb}}$ be the closed curve in~$\CC\times \RR$ consisting of the union of the line segments connecting~$P_i$ with~$P_{i+1}$,~$i=1,\dots,4$, where we take~$P_5=P_1$ and let~$\operatorname{Romb}$ be the closed subset of~$\CC$ bounded by the projection of~$C_{\operatorname{Romb}}$ to~$\CC$. Let~$S_{\operatorname{Romb}}\subset \RR^{2,1}$ be the unique space-like surface with zero mean curvature and with boundary consisting of~$C_{\operatorname{Romb}}$. 

Lemmas 5.2 and 5.4 of~\cite{BNR24} can now be applied (since their proof does not rely on the exact formulas of~$(\mathcal Z,\vartheta)$, as mentioned in Remark 5.3 therein) leading to the following statement\footnote{Here the lower half plane~$\mathbb H^-$ take the role of the upper half plane.}.
\begin{proposition}[\cite{BNR24}]
The function~$\mathcal Z:\mathbb H^-\to {\operatorname{Romb}}^\circ$ is an orientation reversing diffeomorphism and the map
\begin{equation*}
\mathbb H^-\ni \zeta\mapsto (\mathcal Z(\zeta),\vartheta(\zeta))
\end{equation*}
is a conformal and harmonic parametrization of the surface~$S_{\operatorname{Romb}}$.
\end{proposition}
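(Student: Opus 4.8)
The plan is to verify the hypotheses of \cite[Lemmas 5.2 and 5.4]{BNR24}, which, as noted in Remark~5.3 there, use only structural features of the pair $(\mathcal Z,\vartheta)$ and not their closed forms, and then to quote those lemmas. Concretely, I would establish four things: that $(\mathcal Z,\vartheta)$ is harmonic and presented in Weierstrass form; that it is conformal; that it is a space-like immersion (with $\mathcal Z$ orientation reversing); and that $\mathcal Z$ restricts to a homeomorphism of the boundary onto $\partial\operatorname{Romb}$. First I would differentiate the contour integrals of Definition~\ref{def:limit_surface_1xell}. Since $fg$ and $f\bar g$ are holomorphic off $\{-\alpha_1/\gamma_1,0,\beta_\ell\}$ and $\gamma_\zeta$ may be chosen to meet $\RR$ inside $A_{0,1}=(\beta_\ell,\infty)$, the integrals depend on $\zeta$ only through the endpoints $\zeta,\bar\zeta$, so $\partial_\zeta\mathcal Z=-\tfrac{1}{2\pi\i}(fg)(\zeta)$ and $\partial_{\bar\zeta}\mathcal Z=\tfrac{1}{2\pi\i}(fg)(\bar\zeta)$, with the analogous identities for $\vartheta$ (replacing $g$ by $\bar g$). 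As $\partial_\zeta\mathcal Z$ is holomorphic and $\partial_{\bar\zeta}\mathcal Z$ antiholomorphic, $\partial_\zeta\partial_{\bar\zeta}\mathcal Z=0$; thus $\mathcal Z$ and $\vartheta$ are harmonic and are in Weierstrass form.

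Next I would check conformality and the space-like property. Writing $X=(\Re\mathcal Z,\Im\mathcal Z,\vartheta)$ for the $\RR^{2,1}$ metric of signature $(+,+,-)$, conformality is the vanishing of $\langle X_\zeta,X_\zeta\rangle=\partial_\zeta\mathcal Z\,\partial_\zeta\bar{\mathcal Z}-(\partial_\zeta\vartheta)^2$. Substituting the derivatives above, this equals $\tfrac{1}{(2\pi\i)^2}\,f\bar g\,(\bar f g-f\bar g)(\zeta)$, which is zero because $f\bar g$ is a rational function with real coefficients, i.e.\ $\bar f g=f\bar g$ — equivalently because $\vartheta$ is real-valued (Proposition~\ref{prop:boundary_1xell}). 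For the space-like property I would compute the conformal factor $\langle X_\zeta,X_{\bar\zeta}\rangle$; setting $s=\sqrt{\alpha_1\beta_\ell/\gamma_1}$ and using $\beta_\ell=as$, $\alpha_1/\gamma_1=s/a$, one finds it equals a positive multiple of $\bigl(|\zeta+\i s|^2-|\zeta-\i s|^2\bigr)^2$, which is strictly positive on $\mathbb H^-$ since there $\zeta$ is strictly closer to $-\i s$ than to $\i s$. Hence $\zeta\mapsto(\mathcal Z,\vartheta)$ is a conformal, harmonic, space-like immersion, so its image has zero mean curvature. The same computation gives the Jacobian of $\mathcal Z$ a negative sign on $\mathbb H^-$ (the $|\partial_{\bar\zeta}\mathcal Z|^2$ term dominates), so $\mathcal Z$ is orientation reversing.

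It then remains to see that $\mathcal Z:\mathbb H^-\to\operatorname{Romb}^\circ$ is a diffeomorphism and to identify the image with $S_{\operatorname{Romb}}$. By Proposition~\ref{prop:boundary_1xell}, together with the residue computations underlying Corollary~\ref{cor:frozen_1xell}, $\mathcal Z$ extends continuously to $\overline{\mathbb H^-}$ and carries the four intervals $A_1,\dots,A_4$ onto the four edges of $\partial\operatorname{Romb}$, traversing the rhombus monotonically; thus the boundary map is an orientation-reversing homeomorphism onto $\partial\operatorname{Romb}$. Since $\operatorname{Romb}$ is convex and $\mathcal Z$ is harmonic with homeomorphic boundary values, the Rad\'o--Kneser--Choquet theorem yields that $\mathcal Z$ is a diffeomorphism onto $\operatorname{Romb}^\circ$, orientation reversing by the previous paragraph; this is exactly \cite[Lemma 5.2]{BNR24}. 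Finally the image is a space-like maximal surface spanning $C_{\operatorname{Romb}}$, so by uniqueness of such a surface \cite[Lemma 5.4]{BNR24} it coincides with $S_{\operatorname{Romb}}$.

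The local differential-geometric steps (harmonic, conformal, space-like, orientation reversing) are immediate from the endpoint differentiation once $\vartheta$ is known to be real, so the main obstacle is the global univalence of $\mathcal Z$. Because $\partial_{\bar\zeta}\mathcal Z\not\equiv 0$ the map is genuinely harmonic, not holomorphic, and the argument principle does not apply directly; one must instead control the boundary traversal explicitly and invoke a univalence theorem for planar harmonic maps. This is precisely where the convexity of the limiting rhombus is essential, and it is the reason the boundary analysis recorded in Proposition~\ref{prop:boundary_1xell} enters as a necessary input rather than a cosmetic observation.
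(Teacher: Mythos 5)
Your overall route is the same as the paper's: the paper's proof of this proposition consists precisely of checking the hypotheses of \cite[Lemmas 5.2 and 5.4]{BNR24} (harmonicity, conformality, the space-like property, and the boundary behavior supplied by Proposition~\ref{prop:boundary_1xell}) and then quoting those lemmas, and your endpoint differentiation of the contour integrals in Definition~\ref{def:limit_surface_1xell}, the identity $\bar f g = f\bar g$ coming from the real-coefficient rationality of $f\bar g$, and the factorization of the conformal factor as a positive multiple of $\bigl(|\zeta+\i s|^2-|\zeta-\i s|^2\bigr)^2=(4s\Im\zeta)^2$ are all correct and are exactly the computations those lemmas rest on.

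There is, however, one concrete error in your boundary analysis, and it sits at the step you yourself identify as the crux. You assert that $\mathcal Z$ "carries the four intervals $A_1,\dots,A_4$ onto the four edges of $\partial\operatorname{Romb}$," citing Proposition~\ref{prop:boundary_1xell} — but that proposition says the opposite: $(\mathcal Z,\vartheta)$ tends to the single vertex $P_j$ as $\zeta\to A_j$, i.e.\ each interval \emph{collapses to a vertex}. The four open edges of the rhombus are not images of the $A_j$ at all; they are swept out as $\zeta$ approaches the four separating points $\beta_\ell$, $0$, $-\alpha_1/\gamma_1$, $\infty$ (the poles and zeros of $f$ and $g$), where the limit of $\mathcal Z$ depends affinely on the angle of approach — this is the fan behavior recorded in Remark~\ref{rmk:frozen_sectors} and made explicit in the proof of the analogous Theorem~\ref{thm:maximal_surface_2xell}. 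Consequently the boundary map is not a homeomorphism of $\partial\mathbb H^-$ onto $\partial\operatorname{Romb}$; it is only a monotone degree-one correspondence obtained by tracking the limits along a curve hugging the boundary. Once this is corrected, your argument survives: one either applies the Rad\'o--Kneser--Choquet/Choquet theorem in its version for monotone (not necessarily injective) boundary data onto a convex curve, or, as the paper does in the $(2\times\ell)$ case, the argument principle for harmonic functions with boundary winding number one. Without the correction, the premise of the univalence step is false, so as written the proof of global injectivity does not go through.
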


The previous proposition tells us that~$S_{\operatorname{Romb}}$ is the graph~$\{\left(z,\vartheta\circ\mathcal Z^{-1}(z)\right)\in \RR^{2,1}: z\in {\operatorname{Romb}}\}$. Theorem~\ref{thm:main_asymptotic_1xell} now implies the following asymptotic result. See~\cite[Corollary 5.5]{BNR23} and~\cite[Corollary 5.13]{BNR24} for details of the proof.
\begin{corollary}
The origami maps converge
\begin{equation*}
\cO(z)\to \vartheta\circ\mathcal Z^{-1}(z)
\end{equation*}
uniformly on compact subsets of~${\operatorname{Romb}}$ as~$N\to \infty$.
\end{corollary}

We end this section with a few remarks discussing the fact that multiple frozen regions are mapped to the same point in the limit. 

\begin{remark}\label{rmk:frozen_sectors}
Corollary~\ref{cor:frozen_1xell} and Proposition~\ref{prop:boundary_1xell} show that multiple frozen regions are mapped to the same vertex in~$S_{\operatorname{Romb}}$. However, as~$(\xi,\eta)$ approaches a frozen region, this corresponds to approaching the associated vertex along a ray within a specific sector. More precisely, the rays in~$\operatorname{Romb}$ terminating at~$\mathcal Z_i$,~$i=1,\dots,4$, are naturally parametrized by the set~$A_i$, with the angle of each ray determined by the argument of~$f(z')g(z')$, for some~$z' \in A_j$ (see the end of Section~\ref{sec:max_surf_2xell} for further details). In particular, if~$(\xi,\eta) \in \mathcal F_R$ tends to the frozen region associated with~$A_{0,m} \subset A_j$, that is,~$\Omega(\xi,\eta) \to z' \in A_{0,m}$ for some~$m=1,\dots,2(\ell+1)$, then
\begin{equation*}
\mathcal Z(\Omega(\xi,\eta))-\mathcal Z_i=f(z')g(z')\left(\Omega(\xi,\eta)-\overline{\Omega(\xi,\eta)}\right)+\Ordo\left((\Omega(\xi,\eta)-z')^2\right).
\end{equation*}   
In other words, to leading order,~$\mathcal Z(\Omega(\xi,\eta))$ approaches~$\mathcal Z_i$ along a ray determined by the argument of~$f(z')g(z')$ at~$z' \in A_{0,m}$.
\end{remark}

\begin{remark}
Let $\sigma, \widetilde\sigma: \{1, \ldots, \ell\} \to \{1, \ldots, \ell\}$  be two permutations. Consider two sets of edge weights,~$\mathfrak{A}=\{\alpha_i,\beta_i,\gamma_i\}_{i=1}^\ell$ and~$\mathfrak{A}'=\{\alpha_i',\beta_i',\gamma_i'\}_{i=1}^\ell$, 
such that 
\[\alpha_i'=\alpha_{\sigma(i)}, \quad \beta_i'=\beta_{\widetilde\sigma(i)} \quad \text{ and } \quad \gamma_i'=\gamma_{\sigma(i)}.\] 
That is,~$\mathfrak{A}'$ is obtained from~$\mathfrak{A}$ by permuting the weights~$\beta_i$ among themselves and 
simultaneously permuting the pairs~$(\alpha_i, \gamma_i)$ among themselves.
We denote the objects associated with~$\mathfrak{A}'$ by~$\mathcal Z'$,~$\vartheta'$, and so on, while those for~$\mathfrak{A}$ retain the original notation. 
Note that these permutations do not affect the limit shape; in particular,~$\mathcal F_R=\mathcal F_R'$. 
The maximal surfaces, however, differ if~$a \neq a'$. Recall that~$a$ is determined by values of~$\frac{\alpha_1}{\gamma_1}$ and~$\beta_\ell$. Therefore, for a generic choice of weights~$a$ differ from~$a'$ whenever 
$(1,\ell) \neq (\sigma(1), \widetilde{\sigma}(\ell))$.
Specifically, if $(1,\ell) \neq (\sigma(1), \widetilde{\sigma}(\ell))$, the sets~$A_j$ (defined in Corollary~\ref{cor:frozen_1xell}) satisfy~$A_j \neq A_j'$ for some~$j = 1,\dots,4$, while~$A_{0,m}=A_{0,m}$, for~$m=1,\dots,2(\ell+1)$. Proposition~\ref{prop:boundary_1xell} therefore implies that the frozen regions mapping to a given boundary point for~$\mathfrak{A}$ will generally differ from those for~$\mathfrak{A}'$.
Nevertheless, the conformal structure defined by the composition of the map from~$\mathcal F_R$ to~$\operatorname{Romb}$ with~$\mathcal Z^{-1}$, for~$\mathfrak{A}$, and analogously for~$\mathfrak{A}'$, remains invariant. Indeed, the composition is given by~$\Omega$ and~$\Omega'$, respectively, and we have~$\Omega=\Omega'$.
\end{remark}

\begin{remark}
The maximal surface~$S_{\operatorname{Romb}}$ depends only on the parameter~$a$. More generally, for any sequence of graphs~$\mathcal G_n$ as in Corollary~\ref{cor:seqT_n_gen}, where the pair~$(\mathcal T_n, \mathcal O_n)$ converges to a space-like maximal surface, the limiting surface depends solely on the parameter~$a$. In the specific family of graphs studied in this section, the parametrizations, that naturally appears from our analysis, of the surface corresponding to a fixed value of~$a$ are related by a scaling. More precisely, let~$\mathcal Z^{(a_0)}$ and~$\vartheta^{(a_0)}$ be defined as in Definition~\ref{def:limit_surface_1xell} for~$\ell=1$,~$\beta_\ell=1$, and~$a=a_0$. If~$\mathcal Z$ and~$\vartheta$ are defined as in the same definition for any~$1\times \ell$-periodic edge weights considered in this section with parameter~$a=a_0$, then
\begin{equation*}
\left(\mathcal Z(\beta_\ell\zeta),\vartheta(\beta_\ell \zeta)\right)=\left(\mathcal Z^{(a_0)}(\zeta),\vartheta^{(a_0)}(\zeta)\right),
\end{equation*}
for all~$\zeta \in \mathbb{H}^-$.
\end{remark}

\section{The Aztec diamond with multiple gas regions}\label{sec:gas}
The goal of this section is to study the scaling limit of perfect t-embeddings together with their origami maps in the setup of Aztec diamonds with multiple gas regions. We will do this in the model with~$(2 \times \ell)$-periodic edge weights given in Definition~\ref{def:2_l_weights}. This model was previously studied in~\cite{Ber21, FSG14}. 
Recall, for all~$i,j$ we have
\begin{equation}\label{eq:weights_2xell}
\alpha_{j+2,i+\ell}=\alpha_{j,i}, \quad
\beta_{j+2,i+\ell}=\beta_{j,i}, \quad \text{ and } \quad
\gamma_{j+2,i+\ell}=\gamma_{j,i},
\end{equation}
and we set
\begin{equation}\label{eq:weights_torsion}
\alpha_{1,i} = \alpha_{2,i}^{-1} = \alpha_i, \quad \beta_{1,i} = \beta_{2,i}^{-1} = \beta_i, \quad \gamma_{j,i}=1, \quad j=1,2, \quad i=1,\dots,\ell,
\end{equation}
for some~$\alpha_i,\beta_i>0$ (note that our~$\alpha_i$ and~$\beta_i$ are inverted compared to the notation in~\cite{Ber21}) satisfying the additional condition
\begin{equation}\label{eq:weights_condition_2xell}
\prod_{m=1}^\ell \alpha_m=\prod_{m=1}^\ell \beta_m.
\end{equation}
See Figure~\ref{fig:aztec_2_l} for the case~$\ell=3$. As discussed in Section~\ref{sec:spectral_curve}, we will, for simplicity, assume that the genus~$g$ of the spectral curve~$\mathcal R$ is~$g=(\ell-1)$, which is true generically. We also consider the Aztec diamond of size~$2\ell N$ with~$N\in\mathbb{Z}_{>0}$.


The main steps follow those in Section~\ref{sec:frozen}, however, the formulas are substantially more involved in this setting, and the analysis will therefore require more care.

\subsection{Preliminary computations and notation}
We begin by recording a few identities that will be of use throughout Section~\ref{sec:gas}. 

We recall first some notations and definitions that were introduced in Sections~\ref{sec:inv_kast} and~\ref{sec:spectral_curve}. Recall, that~$\Phi=\prod_{m=1}^{2\ell}\phi_m$ with
\begin{equation}\label{eq:transition_matrix_2xell}
\phi_{2i-1}(z)=
\begin{pmatrix}
1 & \alpha_i^{-1}z^{-1} \\
\alpha_i & 1
\end{pmatrix}
\quad \text{and} \quad
\phi_{2i}(z)=\frac{1}{1-z^{-1}}
\begin{pmatrix}
1 & \beta_i^{-1}z^{-1} \\
\beta_i & 1
\end{pmatrix}
\end{equation}
for~$i=1,\dots\ell$, and
\begin{equation}\label{eq:def_Q}
Q(z,w)=\frac{\adj(wI-\Phi(z))}{\partial_w\det(wI-\Phi(z))}.
\end{equation}
Moreover, we write~$q=(z,w)\in \mathcal R$, where~$\mathcal R$ is the Riemann surface defined in Section~\ref{sec:spectral_curve}, and~$p_0=(0,1)$,~$p_\infty=(\infty,1)$,~$q_0=(1,0)$ and~$q_\infty=(1,\infty)$. To keep the notation lighter, we write
\begin{equation}\label{eq:def_d}
d=\prod_{m=1}^\ell (1+\beta_{m-1}^{-1}\alpha_m)(1+\alpha_m^{-1}\beta_m).
\end{equation}
Below, we denote a column vector with entries~$u_1$ and~$u_2$ and a row vector with entries~$v_1$ and~$v_2$, by
\begin{equation*}
\begin{pmatrix}
u_1 \\
u_2
\end{pmatrix}
\quad \text{and} \quad 
\begin{pmatrix}
v_1 & v_2
\end{pmatrix},
\end{equation*}
and we emphasize that, for instance,~$\begin{pmatrix}
\alpha_1 & -1
\end{pmatrix}
$ should not be confused with~$(\alpha_1-1)$.
\begin{lemma}
The following identities hold:
\begin{align}
&\lim_{(z,w)\to q_0}\frac{(z-1)^\ell}{w}=\lim_{(z,w)\to q_\infty}(z-1)^\ell w=d, \label{eq:w_at_1} \\
& Q(q_0)=\frac{1}{1+\beta_\ell^{-1}\alpha_1}
\begin{pmatrix}
\beta_\ell^{-1}\\
-1
\end{pmatrix}
\begin{pmatrix}
\alpha_1 & -1	
\end{pmatrix}, \label{eq:q_10} \\
& Q(q_\infty)=\frac{1}{1+\beta_\ell^{-1}\alpha_1}
\begin{pmatrix}
1 \\
\alpha_1
\end{pmatrix}
\begin{pmatrix}
1 & \beta_\ell^{-1}	
\end{pmatrix}, \label{eq:q_1infty} \\
&\Phi(0)=(-1)^\ell
\begin{pmatrix}
 1 & \star\\
 0 & 1
\end{pmatrix},
\quad 
(\phi_{2i-1}\phi_{2i})(0)=-
\begin{pmatrix}
 \alpha_i^{-1}\beta_i & \star\\
 0 & \alpha_i\beta_i^{-1}
\end{pmatrix},\label{eq:phi_0} \\
& \Phi(\infty)=
\begin{pmatrix}
 1 & 0\\
 \star & 1
\end{pmatrix},
\quad 
(\phi_{2i-1}\phi_{2i})(\infty)=
\begin{pmatrix}
 1 & 0\\
 \star & 1
\end{pmatrix}, \label{eq:phi_infty}
\end{align}
where~$\star$ can be computed (it is different in the different matrices), but is irrelevant for our purposes. Moreover, for all~$z\in \CC^*$,
\begin{equation}\label{eq:Q_sum}
\sum_{w:(z,w)\in \mathcal R}Q(z,w)=I.
\end{equation}
\end{lemma}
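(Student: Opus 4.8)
The plan is to use that $\Phi(z)$ is a $2\times2$ matrix. For $z\in\CC^*$ the characteristic polynomial factors as $P(z,w)=\prod_i(1-\beta_i^vz^{-1})\det(wI-\Phi(z))$, with a scalar prefactor independent of $w$; hence, whenever that prefactor is nonzero, the points of $\mathcal R$ over $z$ are exactly the two roots $w_1=w_1(z)$ and $w_2=w_2(z)$ of the quadratic $w\mapsto\det(wI-\Phi(z))=w^2-(\operatorname{tr}\Phi(z))\,w+\det\Phi(z)$, i.e.\ the eigenvalues of $\Phi(z)$. Thus \eqref{eq:Q_sum} reduces to the purely algebraic identity $Q(z,w_1)+Q(z,w_2)=I$, a symmetric function of the two eigenvalues.

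First I would record the two elementary $2\times2$ facts $\adj(wI-\Phi)=(w-\operatorname{tr}\Phi)I+\Phi$ (using $\adj M=(\operatorname{tr}M)I-M$) and $\partial_w\det(wI-\Phi)=2w-\operatorname{tr}\Phi$. Feeding these into \eqref{eq:def_Q} yields the closed form
\begin{equation*}
Q(z,w)=\frac{(w-\operatorname{tr}\Phi)I+\Phi}{2w-\operatorname{tr}\Phi}.
\end{equation*}
Writing $s=\operatorname{tr}\Phi=w_1+w_2$ and noting $2w_1-s=w_1-w_2=-(2w_2-s)$, one gets $Q(z,w_1)=(\Phi-w_2I)/(w_1-w_2)$ and $Q(z,w_2)=(w_1I-\Phi)/(w_1-w_2)$, which add up to $(w_1-w_2)I/(w_1-w_2)=I$. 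The same conclusion can be read off more conceptually: $Q(z,w_j)$ is the residue at $w=w_j$ of the resolvent $(wI-\Phi(z))^{-1}=\adj(wI-\Phi)/\det(wI-\Phi)$, and since this rational matrix function has no polynomial part and decays like $w^{-1}I$ at infinity, the sum of its residues over the finite poles equals $\lim_{w\to\infty}w(wI-\Phi)^{-1}=\lim_{w\to\infty}(I-\Phi/w)^{-1}=I$.

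The only delicate point — and I expect it to be the main, if minor, obstacle — is the set of $z$ where $w_1(z)=w_2(z)$, namely the branch points of $\mathcal R$, together with $z=1$, where $\Phi$ has a pole and one eigenvalue runs off to the angle $q_\infty$. There the denominator $2w-\operatorname{tr}\Phi$ vanishes and the individual terms are singular, so the displayed computation does not apply verbatim. I would remove this by analyticity: being a sum over all points of $\mathcal R$ above $z$, the left-hand side of \eqref{eq:Q_sum} is invariant under the sheet interchange $w_1\leftrightarrow w_2$, hence a single-valued rational matrix function of $z\in\CC^*$; since it equals $I$ on the open dense set where the eigenvalues are distinct and finite, it equals $I$ everywhere by the identity theorem. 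As a consistency check, the explicit boundary values \eqref{eq:q_10}--\eqref{eq:q_1infty} satisfy $Q(q_0)+Q(q_\infty)=I$, which is exactly \eqref{eq:Q_sum} extended to $z=1$.
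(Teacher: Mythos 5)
Your argument establishes only one of the six identities in the lemma, namely \eqref{eq:Q_sum}. For that identity the proof is correct and, in fact, more self-contained than the paper's, which simply cites \cite[Lemma 6.1]{BB23}: the $2\times2$ computation $Q(z,w_j)=(\Phi-w_{3-j}I)/(w_j-w_{3-j})$ and the residue-at-infinity reading of the resolvent are both valid, and your handling of the branch points via single-valuedness plus the identity theorem is a reasonable way to deal with the degenerate fibers. But the lemma also asserts \eqref{eq:w_at_1}, \eqref{eq:q_10}, \eqref{eq:q_1infty}, \eqref{eq:phi_0} and \eqref{eq:phi_infty}, and these are not consequences of \eqref{eq:Q_sum}; you invoke \eqref{eq:q_10}--\eqref{eq:q_1infty} only as a ``consistency check,'' i.e.\ you assume rather than prove them. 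That is the gap: five of the six claims, which constitute the bulk of the paper's proof, are missing.

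To close it you need the explicit local computations that the paper carries out. For \eqref{eq:phi_0} and \eqref{eq:phi_infty} one multiplies out
\[
\phi_{2m-1}(z)\phi_{2m}(z)=\frac{1}{1-z^{-1}}
\begin{pmatrix}
1+\alpha_m^{-1}\beta_m z^{-1} & (\alpha_m^{-1}+\beta_m^{-1})z^{-1} \\
\alpha_m+\beta_m & 1+\alpha_m\beta_m^{-1}z^{-1}
\end{pmatrix},
\]
evaluates at $z=0$ and $z=\infty$, and takes the product over $m$ using the constraint $\prod\alpha_m=\prod\beta_m$. For \eqref{eq:q_10} one shows that $(1-z^{-1})^\ell\Phi(z)$ has a rank-one limit at $z=1$, proportional to $\binom{1}{\alpha_1}\begin{pmatrix}1 & \beta_\ell^{-1}\end{pmatrix}$, computes its trace, and substitutes into $Q=(1-z^{-1})^\ell\adj(wI-\Phi)/\bigl((1-z^{-1})^\ell(2w-\Tr\Phi)\bigr)$ at $(z,w)=(1,0)$; then \eqref{eq:q_1infty} does follow from \eqref{eq:q_10} together with your \eqref{eq:Q_sum}. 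Finally \eqref{eq:w_at_1} uses $w_1w_2=\det\Phi=1$, so that $(1-z^{-1})^\ell(w+w^{-1})=(1-z^{-1})^\ell\Tr\Phi(z)$ on $\mathcal R$, whose value at $z=1$ is exactly the constant $d$ of \eqref{eq:def_d}. None of these steps is difficult, but none is present in your write-up.
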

\begin{proof}
The identity~\eqref{eq:Q_sum}, follows from a linear algebra argument and can be found in~\cite[Lemma 6.1]{BB23}.

For~$m=1,\dots,\ell$, we compute
\begin{equation*}
\phi_{2m-1}(z)\phi_{2m}(z)=\frac{1}{1-z^{-1}}
\begin{pmatrix}
1+\alpha_m^{-1}\beta_m z^{-1} & (\alpha_m^{-1}+\beta_m^{-1})z^{-1} \\
\alpha_m+\beta_m & 1+\alpha_m\beta_m^{-1}z^{-1}
\end{pmatrix}.
\end{equation*}
Taking~$z=0$ and~$z=\infty$, proves the second identities in~\eqref{eq:phi_0} and~\eqref{eq:phi_infty}. Taking the product over~$i=1,\dots,\ell$, yields the first identities. Recall condition~\eqref{eq:weights_condition_2xell}.

For the final three identities, we use the following straightforward computation. For~$m=1,\dots,\ell$,
\begin{equation*}
\phi_{2m-1}(1)=
\begin{pmatrix}
1 \\
\alpha_m
\end{pmatrix}
\begin{pmatrix}
1 & \alpha_m^{-1}
\end{pmatrix}
\quad \text{and} \quad 
(1-z^{-1})\phi_{2m}(z)|_{z=1}=
\begin{pmatrix}
1 \\
\beta_m
\end{pmatrix}
\begin{pmatrix}
1 & \beta_m^{-1}
\end{pmatrix}, 
\end{equation*}
so
\begin{equation}\label{eq:phi_1}
  \lim_{z\to 1}  (1-z^{-1})^\ell \Phi(z) =
  \prod_{m=1}^\ell \left(1+\alpha_m^{-1}\beta_m \right)
  \prod_{m=1}^{\ell-1} \left(1+\alpha_{m+1}\beta_m^{-1} \right)
  \begin{pmatrix}
    1 \\
    \alpha_1
  \end{pmatrix}
  \begin{pmatrix}
       1  & \beta_{\ell}^{-1}
  \end{pmatrix}, 
\end{equation}
and
\begin{equation}\label{eq:phi_1_trace}
\Tr\left((1-z^{-1})^\ell \Phi(z)|_{z=1}\right)=\prod_{m=1}^\ell(1+\alpha_m^{-1}\beta_m)(1+\alpha_{m+1}\beta_m^{-1}),
\end{equation}
where~$\alpha_{\ell+1}=\alpha_1$. By~\eqref{eq:def_Q},
\begin{equation*}
Q(z,w)=\frac{(1-z^{-1})^\ell\adj(wI-\Phi(z))}{(1-z^{-1})^\ell(2w-\Tr \Phi(z))},
\end{equation*}
and substituting~$(z,w)=(1,0)$ using~\eqref{eq:phi_1} and~\eqref{eq:phi_1_trace} yield~\eqref{eq:q_10}. Moreover,~\eqref{eq:q_1infty} now follows directly from~\eqref{eq:q_10} and~\eqref{eq:Q_sum}.

Finally, let~$(z,w)=(z,w_1(z))$ and~$(z,w)=(z,w_2(z))$ be local coordinates in a neighborhood of~$q_\infty$ and~$q_0$, respectively, defined by the map~$\mathcal R\ni (z,w)\mapsto z$. Then,~$w_1(1)=\infty$, and~$w_2(1)=0$. Since~$w_1$ and~$w_2$ are eigenvalues of~$\Phi(z)$ and~$\det \Phi(z)=1$ for all~$z$,~$w_1(z)w_2(z)=1$. Hence 
\begin{equation*}
(1-z^{-1})^\ell (w_1(z)+w_1(z)^{-1})=(1-z^{-1})^\ell \Tr \Phi(z).
\end{equation*}
Taking~$(z,w)\to q_0=(1,0)$ in the above equality yields
\begin{equation*}
\lim_{(z,w)\to q_0}(z-1)^\ell w^{-1}=\left.(1-z^{-1})^\ell \Tr \Phi(z)\right|_{z=1}=\prod_{m=1}^\ell (1+\beta_{m-1}^{-1}\alpha_m)(1+\alpha_m^{-1}\beta_m),
\end{equation*}
proving the limit of the right most side of~\eqref{eq:w_at_1}. The second equality follows from the equality~$w_1w_2=1$. 
\end{proof}

\subsection{Coulomb gauge functions}\label{sec:gauge_functions_2xell}
In this section, we obtain a contour integral formula for the coulomb gauge functions. To achieve this, we evaluate the inverse Kasteleyn matrix~\eqref{eq:inv_kast_2xell} on the boundary, and then follow the approach outlined in Section~\ref{sec:p-embeddings}.

\begin{lemma}
\label{lem:bdry_bulk}
Let~$0\leq \ell x'+i'\leq \ell N-1$,~$0\leq 2y'+j'\leq 2\ell N-2$,~$1\leq \ell x+i\leq 2\ell N-1$ and~$0\leq 2y+j\leq 2\ell N-1$, with~$i,i'=0,\dots,\ell-1$ and~$j,j'=0,1$, and set~$w=w_{\ell x'+i',2y'+j}$ and~$b=b_{\ell x+i,2y+j}$. Then 	
\begin{align}
& K^{-1}(w_{0,2y'+j'},b)
  =
  -d^{-N}\left(
\begin{pmatrix}
1 \\
-\alpha_1
\end{pmatrix}
\begin{pmatrix}
1 & -\alpha_1^{-1}
\end{pmatrix}
\frac{1}{2\pi\i}\int_{\tilde \Gamma_l}Q(z_2,w_2)
  \prod_{m=1}^{2i}\phi_m(z_2)\right)_{j'+1,j+1}\\
  &\times\frac{(z_2-1)^{\ell N}w_2^{x-N}\d z_2}{z_2^{y+1}(z_2-1)}, \label{eq:inv_kast_boundary_down} \\
&
  K^{-1}(w_{2\ell N-1,2y'+j'},b)=
  d^{-N}\left(\begin{pmatrix}
1 \\
\beta_\ell
\end{pmatrix}
\begin{pmatrix}
1 & \beta_\ell^{-1}
\end{pmatrix}
\frac{1}{2\pi\i}\int_{\tilde \Gamma_l}Q(z_2,w_2)
  \prod_{m=1}^{2i}\phi_m(z_2)\right)_{j'+1,j+1} \\
  &\times\frac{(z_2-1)^{\ell N}w_2^{x-N}\d z_2}{z_2^{y+1}(z_2-1)}, \label{eq:inv_kast_boundary_upp}\\
&
  K^{-1}(w,b_{\ell x+i,0})=
  (-1)^{\ell x+i}\prod_{m=1}^i\alpha_m^{-1}\beta_m\\
 &\times\frac{1}{2\pi\i}\int_{\tilde \Gamma_s}\left(\left(\prod_{m=1}^{2i'+1}\phi_m(z_1)\right)^{-1}Q(z_1,w_1)\right)_{j'+1,1}
  \frac{1}{(z_1-1)^{\ell N}}\frac{z_1^{y'}}{w_1^{x'-N}}\frac{\d z_1}{(-z_1)}, \label{eq:inv_kast_boundary_left}\\
&
  K^{-1}(w,b_{\ell x+i,2\ell N-1})
  = \frac{1}{2\pi\i}\int_{\tilde \Gamma_s}\left(\left(\prod_{m=1}^{2i'+1}\phi_m(z_1)\right)^{-1}Q(z_1,w_1)\right)_{j'+1,2}
  \frac{z_1^{y'}\d z_1}{(z_1-1)^{\ell N}w_1^{x'-N}}. \label{eq:inv_kast_boundary_right}
\end{align}
where~$\tilde \Gamma_s$ and~$\tilde \Gamma_l$ are given in~\eqref{eq:inv_kast_2xell}.
\end{lemma}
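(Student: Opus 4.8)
The plan is to specialize the double contour integral \eqref{eq:inv_kast_2xell} of Proposition~\ref{prop:inv_kast_2xell} to the four situations in which one of the two vertices is on the boundary, following exactly the template of the proof of Lemma~\ref{lem:k_reduced_1xell} in the $(1\times\ell)$ case. In each case one of the exponents $x',x,y',y$ is extremal, which lets me evaluate one of the two contour integrals by residues, localizing it at an angle of $\mathcal R$ and leaving a single integral over the remaining variable. For the white boundary vertices $w_{0,\cdot}$ and $w_{2\ell N-1,\cdot}$ I would keep the $z_2$-integral over $\tilde\Gamma_l$ and collapse the $z_1$-contour $\tilde\Gamma_s$; for the black boundary vertices $b_{\cdot,0}$ and $b_{\cdot,2\ell N-1}$ I would instead evaluate the $z_2$-integral over $\tilde\Gamma_l$ and keep $\tilde\Gamma_s$. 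This matches the shape of the four claimed formulas \eqref{eq:inv_kast_boundary_down}--\eqref{eq:inv_kast_boundary_right}.

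Consider the left white boundary, so $\ell x'+i'=0$, $\prod_{m=1}^{2i'+1}\phi_m=\phi_1$, and the $w_1$-exponent equals $N$. I would deform $\tilde\Gamma_s$ outward, past $\tilde\Gamma_l$, and collapse it onto the angle $q_0=(1,0)$; this is allowed because the integrand decays at $p_\infty$, and the only singularities met are the diagonal $z_1=z_2$ and a simple pole at $z_1=1$ on the sheet through $q_0$. The key input is \eqref{eq:w_at_1}: near $q_0$ one has $(z_1-1)^{\ell N}\sim d^N w_1^N$, so the combination $w_1^{N}/(z_1-1)^{\ell N}$ appearing in \eqref{eq:inv_kast_2xell} tends to the finite constant $d^{-N}$. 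Since $(\phi_1(z_1))^{-1}=\tfrac{z_1}{z_1-1}\bigl(\begin{smallmatrix}1&-\alpha_1^{-1}z_1^{-1}\\-\alpha_1&1\end{smallmatrix}\bigr)$ has a simple pole at $z_1=1$ with residue $\bigl(\begin{smallmatrix}1\\-\alpha_1\end{smallmatrix}\bigr)\bigl(\begin{smallmatrix}1&-\alpha_1^{-1}\end{smallmatrix}\bigr)$, and since $\bigl(\begin{smallmatrix}1&-\alpha_1^{-1}\end{smallmatrix}\bigr)Q(q_0)=\bigl(\begin{smallmatrix}1&-\alpha_1^{-1}\end{smallmatrix}\bigr)$ by \eqref{eq:q_10}, the factor $Q(q_0)$ disappears and I obtain \eqref{eq:inv_kast_boundary_down}. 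The right boundary is identical after rewriting $(\prod_{m=1}^{2\ell-1}\phi_m)^{-1}Q=w_1^{-1}\phi_{2\ell}Q$ using $\Phi Q=wQ$: then $w_1^{-N}/(z_1-1)^{\ell N}\to d^{-N}$ at $q_\infty$ by \eqref{eq:w_at_1}, the pole of $\phi_{2\ell}$ at $z_1=1$ supplies $\bigl(\begin{smallmatrix}1\\\beta_\ell\end{smallmatrix}\bigr)\bigl(\begin{smallmatrix}1&\beta_\ell^{-1}\end{smallmatrix}\bigr)$, and $\bigl(\begin{smallmatrix}1&\beta_\ell^{-1}\end{smallmatrix}\bigr)Q(q_\infty)=\bigl(\begin{smallmatrix}1&\beta_\ell^{-1}\end{smallmatrix}\bigr)$ from \eqref{eq:q_1infty} gives \eqref{eq:inv_kast_boundary_upp}.

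For the black boundary vertices the roles of the two variables are exchanged. With $y=0$ (bottom) the factor $z_2^{-(y+1)}=z_2^{-1}$ produces a simple pole of the $z_2$-integrand at $p_0$ (where $z_2\to0$); collapsing $\tilde\Gamma_l$ there, and using the value of $\prod_{m=1}^{2i}\phi_m$ at $z_2=0$ from \eqref{eq:phi_0}, selects the first column and the prefactor $(-1)^{\ell x+i}\prod_{m=1}^i\alpha_m^{-1}\beta_m$ of \eqref{eq:inv_kast_boundary_left}, with the evaluation $z_2=0$ turning $1/(z_2-z_1)$ into $-1/z_1$. The case $y=\ell N-1$ (top) is the same at the angle $p_\infty$ using \eqref{eq:phi_infty}, selecting the second column as in \eqref{eq:inv_kast_boundary_right}. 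In every case the deformation also crosses the diagonal $z_1=z_2$; the residue there is computed using $\sum_w Q(z,w)=I$ \eqref{eq:Q_sum} together with the idempotency $Q(z,w)^2=Q(z,w)$ (which follows from \eqref{eq:Q_sum} and $\Phi Q=wQ$, since the two spectral projectors are complementary and annihilate each other), and it reconstructs and cancels the explicit single integral in \eqref{eq:inv_kast_2xell}. This is why none of the four formulas retains a single-integral term. Finally the reduced Kasteleyn entries, if needed later, are obtained through the gauge constants of Lemma~\ref{lem:reduced_K}.

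The main obstacle is the residue bookkeeping on the genus-$(\ell-1)$ surface $\mathcal R$. One must verify that $\tilde\Gamma_s$ (respectively $\tilde\Gamma_l$) can be collapsed onto the intended angle without being obstructed by the high-order pole sitting at the \emph{opposite} angle—for instance $q_\infty$ in the left-boundary case, where $w_1^N/(z_1-1)^{\ell N}$ blows up—that the diagonal residue cancels the single integral with the correct sign, and that the terminal boundary pieces of the deformed contour contribute nothing. These are precisely the contour deformations already carried out in \cite{Ber21} and \cite{BB23}, so I would invoke them and supply only the new residue evaluations at the angles, which are the elementary identities \eqref{eq:w_at_1}--\eqref{eq:phi_infty}.
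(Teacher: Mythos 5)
Your proposal is correct and follows essentially the same route as the paper: specialize \eqref{eq:inv_kast_2xell}, collapse one of the two contours onto the relevant angles of~$\mathcal R$, and evaluate the residues via \eqref{eq:w_at_1}--\eqref{eq:Q_sum}; the rank-one residues of~$\phi_1^{-1}$ and~$\phi_{2\ell}$ at~$z=1$ and the eigenvector identities from \eqref{eq:q_10}--\eqref{eq:q_1infty} are exactly the computations the paper performs. The only imprecisions are the blanket claim that the diagonal is crossed in every case --- for the right and top boundaries the moved contour heads toward angles on its own side of the other contour, so no diagonal residue arises and the single integral is killed by the indicator or handled separately --- and the residue at the branch point~$p_0$, where~$Q$ itself is singular, so one must first replace~$w_2^{x-N}Q(z_2,w_2)$ by~$\Phi(z_2)^{x-N}Q(z_2,w_2)$ and sum over the two sheets using \eqref{eq:Q_sum} before applying the residue theorem on~$\CC$, as the paper does.
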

\begin{proof}
Recall that the set of vertices of the Aztec diamond is the union of the white vertices~$w_{\ell x'+i',2y'+j'}$, with~$0\leq \ell x'+i'\leq 2\ell N-1$,~$i'=0,\dots,\ell-1$,~$-1\leq 2y'+j'\leq 2\ell N-1$,~$j'=0,1$, and the black vertices~$b_{\ell x+i,2y+j}$ with~$0\leq \ell x+i\leq 2\ell N$,~$i=0,\dots,\ell-1$, and~$0\leq 2y+j\leq 2\ell N-1$, with~$j=0,1$. 

The identities are simplifications of the double contour integral expression of~$K^{-1}$ given in~\eqref{eq:inv_kast_2xell}, as in each case one of the vertices lies on the boundary of the Aztec diamond. For one of the integrals, the one corresponding to the vertex on the boundary, the integration contour will encircle only simple poles of the integrand, and the formulas are derived by calculating their residues. This is similar to the proof of Lemma~\ref{lem:k_reduced_1xell}.

Let us start to consider the left and right boundary. Let~$\ell x'+i'=0$,~$0\leq 2y'+j'\leq 2\ell N-2$,~$1\leq \ell x+i\leq 2\ell N-1$ and~$0\leq 2y+j\leq 2\ell N-1$ and set~$b=b_{\ell x+i,2y+j}$. Then~\eqref{eq:inv_kast_2xell} becomes
\begin{multline*}
  K^{-1}(w_{0,2y'+j'},b)
  =
  -\left(\frac{1}{2\pi\i}\int_\Gamma \prod_{m=2}^{2\ell x+2i} \phi_m(z)z^{y'-y}\frac{\d z}{z}\right)_{j'+1,j+1}\\
  +\left(\frac{1}{(2\pi\i)^2}\int_{\tilde \Gamma_s}\int_{\tilde \Gamma_l}\phi_1(z_1)^{-1}Q(z_1,w_1)Q(z_2,w_2)
  \prod_{m=1}^{2i}\phi_m(z_2)\right)_{j'+1,j+1} \\
  \times\frac{(z_2-1)^{\ell N}}{(z_1-1)^{\ell N}}\frac{w_2^{x-N}}{w_1^{-N}}\frac{z_1^{y'}}{z_2^y}\frac{\d z_2\d z_1}{z_2(z_2-z_1)}.
\end{multline*}
We contract the contour~$\tilde \Gamma_s$ through~$(z_1,w_1)=p_\infty$ and~$(z_1,w_1)=q_0$. In this deformation, we pick up a residue at~$z_1=z_2$, which cancels out the single integral, and a residue at~$(z_1,w_1)=q_0$ coming from~$\phi_1(z)^{-1}$. Note that there is no residue at~$(z_1,w_1)=p_\infty$, since~$y'<\ell N$. We get
\begin{multline*}
  K^{-1}(w_{0,2y'+j'},b)
  =
  -d^{-N}\left(
\begin{pmatrix}
1 \\
-\alpha_1
\end{pmatrix}
\begin{pmatrix}
1 & -\alpha_1^{-1}
\end{pmatrix}
\frac{1}{2\pi\i}\int_{\tilde \Gamma_l}Q(z_2,w_2)
  \prod_{m=1}^{2i}\phi_m(z_2)\right)_{j'+1,j+1}\\
  \times\frac{(z_2-1)^{\ell N}w_2^{x-N}}{z_1^{y}}\frac{\d z_2}{z_2(z_2-1)},
\end{multline*}
where we used~\eqref{eq:w_at_1} and
\begin{equation*}
\lim_{(z_1,w_1)\to q_0}(z_1-1)\phi_1(z_1)^{-1}Q(z_1,w_1)=
\begin{pmatrix}
1 \\
-\alpha_1
\end{pmatrix}
\begin{pmatrix}
1 & -\alpha_1^{-1}
\end{pmatrix},
\end{equation*}
which follows from~\eqref{eq:q_10} and~\eqref{eq:transition_matrix_2xell}.

Let us consider the same setting but with~$\ell x'+i'=2\ell N-1$ instead of~$\ell x'+i'=0$. We then contract~$\tilde \Gamma_s$ through~$p_0$ and~$q_\infty$. There is a non-zero residue only at~$(z_1,w_1)=q_\infty$ coming from the matrix~$\phi_{2\ell}$. We get
\begin{multline*}
  K^{-1}(w_{2\ell N-1,2y'+j'},b)= 
  d^N\left(\begin{pmatrix}
1 \\
\beta_\ell
\end{pmatrix}
\begin{pmatrix}
1 & \beta_\ell^{-1}
\end{pmatrix}
\frac{1}{2\pi\i}\int_{\tilde \Gamma_l}Q(z_2,w_2)
  \prod_{m=1}^{2i}\phi_m(z_2)\right)_{j'+1,j+1} \\
  \times\frac{(z_2-1)^{\ell N}w_2^{x-N}}{z_2^y}\frac{\d z_2}{z_2(z_2-1)},
\end{multline*}
where we used~\eqref{eq:w_at_1} and
\begin{equation*}
\lim_{(z_1,w_1)\to q_\infty}(z_1-1)\phi_{2\ell}(z_1)Q(z_1,w_1)=
\begin{pmatrix}
1 \\
\beta_\ell
\end{pmatrix}
\begin{pmatrix}
1 & \beta_\ell^{-1}
\end{pmatrix},
\end{equation*}
which follows from~\eqref{eq:q_1infty} and~\eqref{eq:transition_matrix_2xell}.

We continue to consider the bottom and top boundaries. Let~$0\leq \ell x'+i'\leq 2\ell N-1$,~$0\leq 2y'+j'\leq 2\ell N-2$,~$1\leq \ell x+i\leq 2\ell N-1$ and~$2y+j=0$ and set~$w=w_{\ell x'+i',2y'+j'}$. Then, by contracting~$\tilde \Gamma_l$ through~$p_0$ and~$q_\infty$, we pick up a residue at~$(z_2,w_2)=(z_1,w_1)$ and at~$(z_2,w_2)=p_0$, and get
\begin{multline*}
  K^{-1}(w,b_{\ell x+i,0})=
  \frac{\mathbf{1} \{\ell x+i \leq \ell x'+i'\}}{2\pi\i}\int_{\tilde \Gamma_s} \left(\left(\prod_{m=1}^{2i'+1}\phi_m(z)\right)^{-1}Q(z,w)
  \prod_{m=1}^{2i}\phi_m(z)z^{y'}w^{x-x'}\right)_{j'+1,1}\\
  +\frac{1}{2\pi\i}\int_{\tilde \Gamma_s}\left(\left(\prod_{m=1}^{2i'+1}\phi_m(z_1)\right)^{-1}Q(z_1,w_1)\Phi(0)^{x-N}
  \prod_{m=1}^{2i}\phi_m(0)\right)_{j'+1,1}
  \frac{(-1)^{\ell N}}{(z_1-1)^{\ell N}}\frac{z_1^{y'}}{w_1^{x'-N}}\frac{\d z_1}{(-z_1)}.
\end{multline*}
The residue at~$(z_2,w_2)=p_0$ computed above may be computed as follows. We write the integral over a small loop around~$p_0$ as the sum of two integrals on~$\CC$ both over the same loop around~$z_2=0$. In these integrals we first use the fact that~$Q(z_2,w_2)w_2=Q(z_2,w_2)\Phi(z_2)$, and then~\eqref{eq:Q_sum}, to obtain a single (honest) contour integral on~$\CC$, integrating a meromorphic function on~$\CC$. The residue is then computed by the classical residue theorem on the complex plane. The integral on the first line above, is zero. We get from~\eqref{eq:phi_0}, that
\begin{equation*}
\Phi(0)^{x-N}\prod_{m=1}^{2i}\phi_m(0)=(-1)^{\ell x+i-\ell N}
\begin{pmatrix}
 \prod_{m=1}^i\alpha_m^{-1}\beta_m & \star\\
 0 & \prod_{m=1}^i\alpha_m\beta_m^{-1}
\end{pmatrix}.
\end{equation*}
Since this is an upper triangular matrix we obtain~\eqref{eq:inv_kast_boundary_left}.

Finally, if we take~$2y+j=2\ell N-1$ instead of~$2y+j=0$, and deform~$\tilde \Gamma_l$ through~$(z_2,w_2)=q_0$ and~$(z_2,w_2)=p_\infty$, we get
\begin{multline*}
  K^{-1}(w,b_{\ell x+i,2\ell N-1})\\
  = \frac{1}{2\pi\i}\int_{\tilde \Gamma_s}\left(\left(\prod_{m=1}^{2i'+1}\phi_m(z_1)\right)^{-1}Q(z_1,w_1)\Phi(\infty)^{x-N}
  \prod_{m=1}^{2i}\phi_m(\infty)\right)_{j'+1,2}
  \frac{z_1^{y'}}{(z_1-1)^{\ell N}w_1^{x'-N}}\d z_1
\end{multline*}
where, by~\eqref{eq:phi_infty}, 
\begin{equation*}
\Phi(\infty)^{x-N}\prod_{m=1}^{2i}\phi_m(\infty)=
\begin{pmatrix}
1 & 0 \\
\star & 1
\end{pmatrix},
\end{equation*}
which leads to~\eqref{eq:inv_kast_boundary_right}.
\end{proof}

From the expressions above, we use Lemma~\ref{lem:reduced_K} to get the corresponding expressions for~$K_{\operatorname{reduced}}^{-1}$. The constants defined in that lemma, specified to our weights, are given by
\begin{equation}\label{eq:reduced_gauges_2xell_white}
c_{w_0,w_{0,2y'+j'}}=
\begin{cases}
-\alpha_1^{-1}, & j'=1, \\
1, & j'=0,
\end{cases}
\quad 
c_{w_{2\ell N-1},w_{2\ell N-1,2y'+j'}}=
\begin{cases}
\beta_\ell^{-1}, & j'=1, \\
1, & j'=0,
\end{cases}
\end{equation}
and
\begin{equation}\label{eq:reduced_gauges_2xell_black}
c_{b_0,b_{\ell x+i,0}}=(-1)^{\ell x+i}\prod_{m=1}^i\alpha_m\beta_m^{-1}, \quad c_{b_{2\ell N-1},b_{\ell x+i,2\ell N-1}}=1.
\end{equation}

Next, to derive the constant~$a$ defined by~\eqref{eq:def_a}, we compute the values of~$K_{\operatorname{reduced}}^{-1}$ on the boundary points.
\begin{lemma}\label{lem:bdry_bdry}
For the boundary points, we have
\begin{align*}
K_{\operatorname{reduced}}^{-1}(w_0,b_0) & = d^{-N},\\
K_{\operatorname{reduced}}^{-1}(w_0,b_{2\ell N-1}) & = \alpha_1^{-1}d^{-N}, \\
K_{\operatorname{reduced}}^{-1}(w_{2\ell N-1},b_0) & = -d^{-N}, \\
K_{\operatorname{reduced}}^{-1}(w_{2\ell N-1},b_{2\ell N-1}) & =\beta_\ell^{-1}d^{-N},
\end{align*}  
where~$d$ is given in~\eqref{eq:def_d}, and, hence, the parameter~$a$ defined in~\eqref{eq:def_a}, is given by 
\begin{equation*}
a=\sqrt{\frac{\alpha_1}{\beta_\ell}}.
\end{equation*}
\end{lemma}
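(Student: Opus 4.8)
The plan is to obtain the four corner values as degenerate cases of the single–contour–integral formulas of Lemma~\ref{lem:bdry_bulk}, and then transfer them to $K_{\operatorname{reduced}}^{-1}$ via Lemma~\ref{lem:reduced_K}. Each quantity $K_{\operatorname{reduced}}^{-1}(w_\bullet,b_\bullet)$, with $w_\bullet\in\{w_0,w_{2\ell N-1}\}$ and $b_\bullet\in\{b_0,b_{2\ell N-1}\}$, is the value of $K^{-1}$ at a pair of vertices \emph{both} lying on the boundary of the Aztec diamond. I would therefore take the appropriate formula among \eqref{eq:inv_kast_boundary_down}--\eqref{eq:inv_kast_boundary_right}, in which one vertex already sits on a side, and specialise the surviving bulk vertex so that it too lands on the opposite side. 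Since the outcome is a single scalar whose representative–independence is guaranteed by Lemma~\ref{lem:reduced_K}, I would evaluate everything at the most convenient representatives (e.g.\ $i=x=y=0$ and $j=j'=x'=y'=0$).

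The mechanism is as follows. In \eqref{eq:inv_kast_boundary_down}--\eqref{eq:inv_kast_boundary_right} one of the four angle residues has already been extracted (at $q_0$ for the left side, $q_\infty$ for the right, $p_0$ for the bottom, $p_\infty$ for the top); this is the origin of the rank–one prefactors and of the powers $d^{\mp N}$ appearing there. Specialising the remaining bulk vertex to the opposite side forces a contraction of the surviving contour through the complementary angle, so that the four corners correspond precisely to the four angle–pairs $(q_0,p_0)$, $(q_0,p_\infty)$, $(q_\infty,p_0)$ and $(q_\infty,p_\infty)$. The residues at $p_0$ and $p_\infty$ I would compute exactly as in the derivations of \eqref{eq:inv_kast_boundary_left} and \eqref{eq:inv_kast_boundary_right}: writing the loop around the angle as a sum of two loops in the $z$–plane, using $Q(z,w)w=Q(z,w)\Phi(z)$ together with the identity \eqref{eq:Q_sum} to collapse the matrix integrand into a single scalar contour integral on $\CC$, and then reading off the residue from the triangular normal forms $\Phi(0)$ and $\Phi(\infty)$ of \eqref{eq:phi_0}--\eqref{eq:phi_infty}, with the limit \eqref{eq:w_at_1} controlling the powers of $(z-1)$ and $w$.

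After each residue is computed, I would multiply by the reduction constants $c_{w_\bullet,w}$ and $c_{b_\bullet,b}$ from \eqref{eq:reduced_gauges_2xell_white}--\eqref{eq:reduced_gauges_2xell_black}, checking along the way that the apparent dependence on $j'$, $x'$, $y'$ cancels, as it must. The surviving factors $d^{\mp N}$, $\alpha_1^{-1}$, $\beta_\ell^{-1}$ and the sign then yield the four claimed values. The parameter $a$ follows by substitution into \eqref{eq:def_a}:
\[
a^2=-\frac{K_{\operatorname{reduced}}^{-1}(w_0,b_0)\,K_{\operatorname{reduced}}^{-1}(w_{2\ell N-1},b_{2\ell N-1})}{K_{\operatorname{reduced}}^{-1}(w_0,b_{2\ell N-1})\,K_{\operatorname{reduced}}^{-1}(w_{2\ell N-1},b_0)}=-\frac{d^{-N}\cdot\beta_\ell^{-1}d^{-N}}{\alpha_1^{-1}d^{-N}\cdot(-d^{-N})}=\frac{\alpha_1}{\beta_\ell},
\]
so that $a=\sqrt{\alpha_1/\beta_\ell}$. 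Note that the $N$–dependent factors $d^{\mp N}$ cancel completely, consistent with the shuffling–invariance of $a$ established earlier.

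The only conceptually delicate step is the residue extraction at the angles $p_0,p_\infty$, where the integrand is matrix–valued and the naive pointwise value of $Q$ is not directly available; the sheet–summation identity \eqref{eq:Q_sum} is exactly what makes this tractable, and I would import that argument verbatim from the proof of Lemma~\ref{lem:bdry_bulk}. Beyond this, the work is bookkeeping: tracking the signs $(-1)^{\ell N}$ and $(-1)^{\ell x+i}$, confirming that the irrelevant triangular entries (the $\star$'s in \eqref{eq:phi_0}--\eqref{eq:phi_infty}) drop out because only one matrix entry is selected, and verifying that the representative–dependent constants collapse everything to the four clean monomials in $d$, $\alpha_1$ and $\beta_\ell$.
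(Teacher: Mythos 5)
Your proposal is correct and follows essentially the same route as the paper: the authors likewise evaluate the single-contour formulas \eqref{eq:inv_kast_boundary_down}--\eqref{eq:inv_kast_boundary_right} with the remaining vertex pushed to the opposite boundary, extract the residue at the complementary angle exactly as in the proof of Lemma~\ref{lem:bdry_bulk} (via $Q(z,w)w=Q(z,w)\Phi(z)$, the identity \eqref{eq:Q_sum}, and the triangular forms \eqref{eq:phi_0}--\eqref{eq:phi_infty}), and then cancel the reduction constants from \eqref{eq:reduced_gauges_2xell_white}--\eqref{eq:reduced_gauges_2xell_black} before substituting into \eqref{eq:def_a}. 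Your identification of the four corners with the four angle pairs and your closing computation of $a$ match the paper's argument.
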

\begin{proof}
From Lemma~\ref{lem:reduced_K} and~\eqref{eq:reduced_gauges_2xell_white} and~\eqref{eq:reduced_gauges_2xell_black} we have the relation between~$K^{-1}$ and~$K_{\operatorname{reduced}}^{-1}$. The lemma then follows by evaluating the expressions in Lemma~\ref{lem:bdry_bulk} on the boundaries.

To derive~$K_{\operatorname{reduced}}^{-1}(w_0,b_0)$ we may use~\eqref{eq:inv_kast_boundary_down} and evaluate the integral in an almost identical way as we did to derive~\eqref{eq:inv_kast_boundary_left}, or use~\eqref{eq:inv_kast_boundary_left} and follow the derivation of~\eqref{eq:inv_kast_boundary_down}. In any case, we obtain 
\begin{multline*}
    K_{\operatorname{reduced}}^{-1}(w_0,b_0) 
    = c_{w_0,w_{0,2y'+j'}}c_{b_0,b_{2x+i,0}}K^{-1}(w_{0,2y'+j'},b_{\ell x+i,0}) \\
	= c_{w_0,w_{0,2y'+j'}}c_{b_0,b_{\ell x+i,0}}
	d^{-N}(-1)^{\ell x+i}\prod_{m=1}^i\alpha_m^{-1}\beta_m
	\begin{pmatrix}
	1 \\
	-\alpha_1
	\end{pmatrix}_{j+1}.
\end{multline*}
Together with~\eqref{eq:reduced_gauges_2xell_white} and~\eqref{eq:reduced_gauges_2xell_black}, this gives us the first equality in the lemma. 

Similarly, we derive~$K_{\operatorname{reduced}}^{-1}(w_{2\ell N-1},b_0)$ by evaluating one of~\eqref{eq:inv_kast_boundary_down} and~\eqref{eq:inv_kast_boundary_right}, by following the argument used in the proof of Lemma~\ref{lem:bdry_bulk} of the other.
\begin{equation*}
K_{\operatorname{reduced}}^{-1}(w_{2\ell N-1},b_0)=c_{w_0,w_{0,2y'+j'}}c_{b_{2\ell N-1},b_{2x+i,2\ell N-1}}K^{-1}(w_{0,2y'+j'},b_{\ell x+i,2\ell N-1})=\alpha_1^{-1}d^{-N}
\end{equation*}

The final two equalities follow in a similar way, using the equalities~\eqref{eq:inv_kast_boundary_upp} and~\eqref{eq:inv_kast_boundary_left} and the equalities~\eqref{eq:inv_kast_boundary_upp} and~\eqref{eq:inv_kast_boundary_right}, respectively.
\end{proof}
The parameter~$a$ obtained in the previous lemma matches what we expected from shuffling; however, we omit details on this point.

We continue by deriving explicit formulas of the gauge functions~$\mathcal F^\bullet$ and~$\mathcal F^\circ$ by applying Lemmas~\ref{lem:bdry_bulk} and~\ref{lem:bdry_bdry} to the formulas provided in Proposition~\ref{prop:FG}. 
\begin{corollary}\label{cor:gauge_2xell}
Let~$w=w_{\ell x'+i',2y'+j'}$ and~$b=b_{\ell x+i,2y+j}$ be non-boundary vertices in the reduced Aztec diamond. 
The Coulomb gauge functions~$\mathcal F^\bullet$ and~$\mathcal F^\circ$ defined in Proposition~\ref{prop:FG} are given by
\begin{equation*}
 \mathcal F^\bullet(b)=
\frac{1}{2\pi\i}\int_{\tilde \Gamma}\left(f(z)Q(z,w)
  \prod_{m=1}^{2i}\phi_m(z)\right)_{j+1}(z-1)^{\ell N}\frac{w^{x-N}}{z^{y}}\frac{\d z}{z},
\end{equation*}
and
\begin{equation}\label{eq:coulomb_function_white_2xell}
\mathcal F^\circ(w)
=\frac{1}{2\pi\i}\int_{\tilde \Gamma}\left(\left(\prod_{m=1}^{2i'+1}\phi_m(z)\right)^{-1}Q(z,w)
  g(z)
  \right)_{j'+1} \frac{z^{y'}}{(z-1)^{\ell N}w^{x'-N}}\d z,
\end{equation}
where 
\begin{equation}\label{eq:f_def_2xell}
f(z)=\sqrt{a+\i}\left(-\frac{\i\alpha_1}{z-1}
\begin{pmatrix}
1 & -\alpha_1^{-1}
\end{pmatrix}
-\frac{\beta_\ell a}{z-1}
\begin{pmatrix}
1 & \beta_\ell^{-1}
\end{pmatrix}
\right)
=\frac{\sqrt{a+\i}\left(\i-a\right)}{z-1}
\begin{pmatrix}
\i\sqrt{\alpha_1\beta_\ell} & 1
\end{pmatrix}
\end{equation}
and
\begin{equation}\label{eq:g_def_2xell}
g(z)=\sqrt{a-\i}\left(\frac{\i}{\sqrt{\alpha_1\beta_\ell}}\frac{1}{z}
\begin{pmatrix}
 1 \\
 0
\end{pmatrix}
+
\begin{pmatrix}
  0 \\
  1
\end{pmatrix}
\right)
\end{equation}
and~$\tilde \Gamma=\tilde \Gamma_s$ is as in Lemma~\ref{lem:bdry_bulk}.
\end{corollary}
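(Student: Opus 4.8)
\emph{Proof proposal.} The plan is to mirror the derivation of Corollary~\ref{cor:goulomb_gauge} in the $(1\times\ell)$ setting, now feeding in the boundary evaluations of the inverse Kasteleyn matrix supplied by Lemmas~\ref{lem:bdry_bulk} and~\ref{lem:bdry_bdry}. First I would rewrite the gauge functions of Proposition~\ref{prop:FG} in terms of the un-normalized $K_{\operatorname{reduced}}^{-1}$. Using Remark~\ref{rem:gauge_tilde_kast} together with the branch choices $-\sqrt{-a-\i}=\i\sqrt{a+\i}$ and $-\sqrt{-a+\i}=-\i\sqrt{a-\i}$ (exactly as in the proof of Corollary~\ref{cor:goulomb_gauge}), and taking $n-1=2\ell N-1$, one gets
\begin{align*}
\mathcal F^\bullet(b)&=\sqrt{a+\i}\left(\frac{\i\, K_{\operatorname{reduced}}^{-1}(w_0,b)}{K_{\operatorname{reduced}}^{-1}(w_0,b_{2\ell N-1})}-\frac{a\, K_{\operatorname{reduced}}^{-1}(w_{2\ell N-1},b)}{K_{\operatorname{reduced}}^{-1}(w_{2\ell N-1},b_{2\ell N-1})}\right),\\
\mathcal F^\circ(w)&=\sqrt{a-\i}\left(-\frac{\i a\, K_{\operatorname{reduced}}^{-1}(w,b_0)\,K_{\operatorname{reduced}}^{-1}(w_0,b_{2\ell N-1})}{K_{\operatorname{reduced}}^{-1}(w_0,b_0)}+K_{\operatorname{reduced}}^{-1}(w, b_{2\ell N-1})\right).
\end{align*}

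Next I would substitute the explicit forms. Lemma~\ref{lem:bdry_bdry} gives the four boundary-to-boundary values, so that $K^{-1}_{\operatorname{reduced}}(w_0, b_{2\ell N-1})/K^{-1}_{\operatorname{reduced}}(w_0, b_0)=\alpha_1^{-1}$ and the two normalizing denominators equal $\alpha_1^{-1}d^{-N}$ and $\beta_\ell^{-1}d^{-N}$; in particular this recovers $a=\sqrt{\alpha_1/\beta_\ell}$. For the boundary-to-interior entries I would pass from $K^{-1}$ to $K^{-1}_{\operatorname{reduced}}$ via Lemma~\ref{lem:reduced_K}, plugging in the reduction constants~\eqref{eq:reduced_gauges_2xell_white}--\eqref{eq:reduced_gauges_2xell_black}. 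The crucial algebraic point, which I would write out most carefully, is that these constants exactly cancel the prefactors of~\eqref{eq:inv_kast_boundary_down}--\eqref{eq:inv_kast_boundary_right} and render the result independent of the representative chosen in each contracted string: in~\eqref{eq:inv_kast_boundary_down} and~\eqref{eq:inv_kast_boundary_upp} the scalar $c_{w_0,\cdot}$ (resp.\ $c_{w_{2\ell N-1},\cdot}$) multiplies the $(j'+1)$-th entry of the column $\begin{pmatrix}1 & -\alpha_1\end{pmatrix}^{\!\top}$ (resp.\ $\begin{pmatrix}1 & \beta_\ell\end{pmatrix}^{\!\top}$) to give $1$ for both $j'=0,1$, collapsing the outer products to the fixed row vectors $\begin{pmatrix}1 & -\alpha_1^{-1}\end{pmatrix}$ and $\begin{pmatrix}1 & \beta_\ell^{-1}\end{pmatrix}$; and in~\eqref{eq:inv_kast_boundary_left} the constant $c_{b_0,\cdot}$ cancels the factor $(-1)^{\ell x+i}\prod_{m=1}^i\alpha_m^{-1}\beta_m$.

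Finally I would assemble the pieces. After the cancellations, $\mathcal F^\bullet$ becomes a single integral whose integrand carries the row vector $\sqrt{a+\i}\bigl(-\tfrac{\i\alpha_1}{z-1}\begin{pmatrix}1 & -\alpha_1^{-1}\end{pmatrix}-\tfrac{a\beta_\ell}{z-1}\begin{pmatrix}1 & \beta_\ell^{-1}\end{pmatrix}\bigr)$, which is the left-hand expression of~\eqref{eq:f_def_2xell}; its simplification to $\tfrac{\sqrt{a+\i}(\i-a)}{z-1}\begin{pmatrix}\i\sqrt{\alpha_1\beta_\ell} & 1\end{pmatrix}$ uses only $a=\sqrt{\alpha_1/\beta_\ell}$, whence $a\beta_\ell=\sqrt{\alpha_1\beta_\ell}$ and $a\sqrt{\alpha_1\beta_\ell}=\alpha_1$. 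Likewise, for $\mathcal F^\circ$ the coefficient $-\i a\alpha_1^{-1}\cdot\tfrac{1}{-z}$ coming from~\eqref{eq:inv_kast_boundary_left} equals $\tfrac{\i}{\sqrt{\alpha_1\beta_\ell}}\tfrac1z$ (again $a\alpha_1^{-1}=(\alpha_1\beta_\ell)^{-1/2}$), so the first-column contribution pairs with $\begin{pmatrix}1 & 0\end{pmatrix}^{\!\top}$ and the $K^{-1}(w,b_{2\ell N-1})$ term with $\begin{pmatrix}0 & 1\end{pmatrix}^{\!\top}$, reproducing $g(z)$ of~\eqref{eq:g_def_2xell}. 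The extra factor $\tfrac{1}{z-1}$ present in the source formulas is absorbed into $f(z)$, matching the target $\tfrac{(z-1)^{\ell N}}{z^{y}}\tfrac{\d z}{z}$ in the statement, and the single-integral contours $\tilde\Gamma_s,\tilde\Gamma_l$ of Lemma~\ref{lem:bdry_bulk} may be taken to be the common curve $\tilde\Gamma$ since the residues have already been extracted and the remaining integrands have no poles between them.

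The argument carries no conceptual content beyond Corollary~\ref{cor:goulomb_gauge}; the only genuinely new feature is the $2\times2$ matrix structure. Accordingly, the main obstacle is purely bookkeeping: verifying the two cancellations in the second step (the representative-independence of the reduction constants) and keeping the row/column conventions straight so that the outer products collapse correctly into $f$ and $g$.
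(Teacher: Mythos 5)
Your proposal is correct and follows essentially the same route as the paper: both start from Proposition~\ref{prop:FG} and Remark~\ref{rem:gauge_tilde_kast}, pass to the full Aztec diamond via the reduction constants of Lemma~\ref{lem:reduced_K}, substitute the boundary evaluations of Lemmas~\ref{lem:bdry_bulk} and~\ref{lem:bdry_bdry}, and observe the same cancellations (the collapse of the rank-one column factors against $c_{w_0,\cdot}$, $c_{w_{2\ell N-1},\cdot}$ and of the sign/product prefactor against $c_{b_0,\cdot}$) before assembling $f$ and $g$ with the branch identities $\sqrt{-a-\i}=-\i\sqrt{a+\i}$, $\sqrt{-a+\i}=\i\sqrt{a-\i}$. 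The algebraic checks you flag as the main bookkeeping are exactly the ones the paper carries out.
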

\begin{proof}
To derive the Coulomb gauge functions, the starting point is Proposition~\ref{prop:FG} together with Remark~\ref{rem:gauge_tilde_kast} and Lemma~\ref{lem:reduced_K}. Let~$b_{\text{topp}}$,~$b_{\text{bottom}}$ be black vertices in the Aztec diamond corresponding to the black vertices~$b_0$ and~$b_{2\ell N-1}$ in the reduced Aztec diamond. We also let~$w_{\text{left}}$ and~$w_{\text{right}}$ be vertices corresponding to~$w_0$ and~$w_{2\ell N-1}$ in the reduced Aztec diamond. We get
\begin{equation*} 
\mathcal F^\bullet(b)=-\frac{\sqrt{-a-\i}c_{w_0,w_{\text{left}}}}{K_{\operatorname{reduced}}^{-1}(w_0,b_{2\ell N-1})}K^{-1}(w_{\text{left}},b)
-\frac{a\sqrt{a+\i}c_{w_{2\ell N-1},w_{\text{right}}}}{K_{\operatorname{reduced}}^{-1}(w_{2\ell N-1},b_{2\ell N-1})}K^{-1}(w_{\text{right}},b)
\end{equation*}
and
\begin{multline*}
\mathcal F^\circ(w)=-a\sqrt{-a+\i}c_{b_0,b_{\text{bottom}}}\frac{K_{\operatorname{reduced}}^{-1}(w_0,b_{2\ell N-1})}{K_{\operatorname{reduced}}^{-1}(w_0,b_0)}K^{-1}(w,b_{\text{bottom}}) \\
+\sqrt{a-\i} c_{b_{2\ell N-1},b_{\text{topp}}}K^{-1}(w,b_{\text{topp}}).
\end{multline*}

The expressions in the statement now follows from Lemmas~\ref{lem:bdry_bulk} and~\ref{lem:bdry_bdry} and equations~\eqref{eq:reduced_gauges_2xell_white} and~\eqref{eq:reduced_gauges_2xell_black}. Indeed,
\begin{multline*}
\mathcal F^\bullet(b)=
\left(\left(\alpha_1\sqrt{-a-\i}
\begin{pmatrix}
1 & -\alpha_1^{-1}
\end{pmatrix}
-\beta_\ell a\sqrt{a+\i}
\begin{pmatrix}
1 & \beta_\ell^{-1}
\end{pmatrix}
\right)
\frac{1}{2\pi\i}\int_{\tilde \Gamma_l}Q(z,w)
  \prod_{m=1}^{2i}\phi_m(z)\right. \\
  \left.\times\frac{(z-1)^{\ell N}w^{x-N}\d z}{z^{y+1}(z-1)}\right)_{j+1},
\end{multline*}
and with~$f$ as in the statement, the expression of~$\mathcal F^\bullet$ follows (recall that~$\sqrt{-a-\i}=-\i\sqrt{a+\i}$). Similarly,
\begin{multline*}
\mathcal F^\circ(w)=
\frac{1}{2\pi\i}\int_{\tilde \Gamma_s}\left(\left(\left(\prod_{m=1}^{2i'+1}\phi_m(z)\right)^{-1}Q(z,w)\right)_{j'+1,1}\alpha_1^{-1}a\sqrt{-a+\i}\right. \\
+\left.\left(\left(\prod_{m=1}^{2i'+1}\phi_m(z)\right)^{-1}Q(z,w)\right)_{j'+1,2}z\sqrt{a-i}\right)  
  \frac{z^{y'}}{(z-1)^{\ell N}w^{x'-N}}\frac{\d z}{z} \\
  =\frac{1}{2\pi\i}\int_{\tilde \Gamma_s}\left(\left(\prod_{m=1}^{2i'+1}\phi_m(z)\right)^{-1}Q(z,w)
  \begin{pmatrix}
  a\alpha_1^{-1}\sqrt{-a+\i} \\
  z\sqrt{a-\i}
\end{pmatrix}    
  \right)_{j'+1} \frac{z^{y'}}{(z-1)^{\ell N}w^{x'-N}}\frac{\d z}{z},
\end{multline*} 
and with~$\sqrt{-a+\i}=\i\sqrt{a-\i}$ this proves the statement.
\end{proof}

\subsection{Perfect t-embeddings and their origami maps}
In this section, we obtain exact integral formulas for the change of~$\cT$ and~$\cO$ along dual paths in the interior of the Aztec diamond. This is done by summing up~\eqref{eq:TO-def-via-F} along those paths and using the gauge functions~$\mathcal F^\circ$ and~$\mathcal F^\bullet$ from the previous section.  

\begin{proposition}\label{prop:ct_co_finite_2xell}
For a non-boundary vertex~$(2\ell x,4y)$ in~$(A_{2\ell N}')^*$ and two functions~$f$ and~$g$, set
\begin{equation*}
I_{f,g}(x,y)=
\frac{1}{(2\pi\i)^2}\int_{\tilde \Gamma_s}\int_{\tilde \Gamma_l}f(z_2)Q(z_2,w_2)Q(z_1,w_1) g(z_1) 
\frac{(z_2-1)^{\ell N}}{(z_1-1)^{\ell N}}\frac{w_2^{x-N}}{w_1^{x-N}}\frac{z_1^y}{z_2^y}\frac{\d z_2\d z_1}{z_2-z_1},
\end{equation*}
where~$\tilde \Gamma_s$ and~$\tilde \Gamma_l$ are as in Lemma~\ref{lem:bdry_bulk}.

Let~$(2\ell x,4y)$ and~$(2\ell x',4y')$ be non-boundary vertices in~$(A_{2\ell N}')^*$, and let~$f$ and~$g$ be as in~\eqref{eq:f_def_2xell} and~\eqref{eq:g_def_2xell}, and~$\bar g(z)=\overline{g(\bar z)}$. Then 
\begin{equation*}
\cT(2\ell x,4y)-\cT(2\ell x',4y')=I_{f,g}(x,y)-I_{f,g}(x',y')
\end{equation*}
and
\begin{equation*}
\cO(2\ell x,4y)-\cO(2\ell x',4y')=I_{f,\bar g}(x,y)-I_{f,\bar g}(x',y').
\end{equation*}
\end{proposition}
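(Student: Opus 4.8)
The plan is to mirror the proof of Proposition~\ref{prop:ct_co_finite_1xell} in the matrix-valued setting, integrating the one-form $d\cT(bw^*)=\mathcal F^\bullet(b)K(b,w)\mathcal F^\circ(w)$ (and $d\cO$, with $\mathcal F^\circ$ replaced by its conjugate) along a fixed dual path from $(2\ell x',4y')$ to $(2\ell x,4y)$. I would take the path to consist of a horizontal segment at height $4y'$ running from $(2\ell x',4y')$ to $(2\ell x,4y')$, followed by a vertical segment at horizontal position $2\ell x$ from $(2\ell x,4y')$ to $(2\ell x,4y)$, keeping the white faces on the right so that the orientation convention of Figure~\ref{fig:t-emb_def} applies. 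Writing $\cT(2\ell x,4y)-\cT(2\ell x',4y')$ as the signed sum of $d\cT$ over the edges crossed and substituting the integral formulas for $\mathcal F^\bullet$ and $\mathcal F^\circ$ from Corollary~\ref{cor:gauge_2xell}, each contribution becomes a double contour integral over $\tilde\Gamma_s\times\tilde\Gamma_l$ whose integrand pairs the row vector $f(z_2)Q(z_2,w_2)\prod_m\phi_m(z_2)$, a Kasteleyn weight, and the column vector $\bigl(\prod_m\phi_m(z_1)\bigr)^{-1}Q(z_1,w_1)g(z_1)$, decorated by monomials in $z_1,z_2,w_1,w_2$.

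For the vertical segment, each step contributes the edges emanating from a black vertex in the column $\ell x$; after summing the within-period contributions, the only remaining dependence on the period index $n$ is the monomial $z_1^{n}/z_2^{n+1}$ (times a fixed matrix factor). Summing the geometric series over $n=y',\dots,y-1$ produces the factor $1/(z_2-z_1)$ together with the difference $z_1^{y'}z_2^{-y'}-z_1^{y}z_2^{-y}$, after which the contours deform to $\tilde\Gamma_s$ and $\tilde\Gamma_l$. This identifies the vertical contribution with $I_{f,g}(x,y')-I_{f,g}(x,y)$, exactly as in the scalar case.

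The horizontal segment is the substantive part. Each column crossing replaces $\prod_{m=1}^{2i}\phi_m$ by $\prod_{m=1}^{2(i+1)}\phi_m$ in the black factor and $\bigl(\prod_{m=1}^{2i'+1}\phi_m\bigr)^{-1}$ by $\bigl(\prod_{m=1}^{2(i'+1)+1}\phi_m\bigr)^{-1}$ in the white factor, while contributing Kasteleyn weights from the crossed edges. I would show that, once these weights are inserted, the summand telescopes: the key is a matrix identity stating that the weighted combination of $\phi_{2i+1}\phi_{2i+2}(z_2)$ and $(\phi_{2i+1}\phi_{2i+2}(z_1))^{-1}$ equals a difference of consecutive terms times the factor $(z_1-z_2)$, the non-commutative analogue of the two scalar identities used for~\eqref{eq:integration_1xell}. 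The passage across a full period, where $x$ increments, is handled by the spectral identity $\Phi(z)Q(z,w)=Q(z,w)\Phi(z)=wQ(z,w)$ from Section~\ref{sec:inv_kast}, which converts the extra copy of $\Phi$ into the required power of $w$ so that the $w^{x-N}/w_1^{x-N}$ dependence closes up. The telescoping collapses the sum to its endpoints, yielding $I_{f,g}(x',y')-I_{f,g}(x,y')$. The main obstacle is precisely this non-commutative telescoping: unlike the $(1\times\ell)$ case the factors are genuine $2\times2$ matrices sandwiched between the vectors $f,g$ and the projectors $Q$, so the cancellation must be verified at the level of the full integrand rather than scalar by scalar; I expect it to follow from the explicit form~\eqref{eq:transition_matrix_2xell} of $\phi_{2i-1},\phi_{2i}$ together with $Q\Phi=wQ$ and the rank-one structure of $Q$ at the relevant points.

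Adding the vertical contribution $I_{f,g}(x,y')-I_{f,g}(x,y)$ to the horizontal contribution $I_{f,g}(x',y')-I_{f,g}(x,y')$ and accounting for the overall sign gives
\[
\cT(2\ell x,4y)-\cT(2\ell x',4y')=I_{f,g}(x,y)-I_{f,g}(x',y').
\]
Finally, the statement for $\cO$ follows from the same computation with $\mathcal F^\circ$ replaced by $\overline{\mathcal F^\circ}$: since $Q$ and the $\phi_m$ have real coefficients and $(z,w)\in\mathcal R$ if and only if $(\bar z,\bar w)\in\mathcal R$, conjugating~\eqref{eq:coulomb_function_white_2xell} and changing variables $z\mapsto\bar z$ over the symmetric contour shows that $\overline{\mathcal F^\circ(w)}$ is given by the same integral with $g$ replaced by $\bar g(z)=\overline{g(\bar z)}$. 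The argument then goes through verbatim and produces $I_{f,\bar g}$ in place of $I_{f,g}$, completing the proof.
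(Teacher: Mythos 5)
Your proposal follows essentially the same route as the paper's proof: the same L-shaped dual path, the geometric summation for the vertical segment, a non-commutative telescoping for the horizontal segment closed up by $Q\Phi=wQ$, and the replacement $g\mapsto\bar g$ for the origami map. The one step you leave as "expected to follow," the telescoping identity for the horizontal sum, is exactly what the paper verifies by two explicit $2\times2$ computations with the symbols $\phi_{2n+1},\phi_{2n+2}$ after inserting the matrix with a single nonzero $(1,2)$ entry to extract the relevant component, so your plan is sound as stated.
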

\begin{proof}
We integrate~$d\cT$ along a path in the dual graph~$(A_{2\ell N}')^*$ that goes horizontally from~$(2\ell x',4y')$ to~$(2\ell x,4y')$ and continue along a vertical path from~$(2\ell x,4y')$ to~$(2\ell x,4y)$. Both the horizontal and the vertical paths are taken with the white vertices to the right. We get
\begin{multline}\label{eq:integration_2xell}
\cT(2\ell x,4y)-\cT(2\ell x',4y')=
-\sum_{n=\ell x'}^{\ell x-1}\left(d \cT(b_{n,2y'} w_{n,2y'-1}^*)+d \cT(b_{n+1,2y'} w_{n,2y'-1}^*)\right) \\
-\sum_{n=2y'}^{2y-1}\left(d \cT(b_{\ell x,n} w_{\ell x,n-1}^*)+d \cT(b_{\ell x,n} w_{\ell x,n}^*)\right).
\end{multline}
(We remind the reader that~$b w^*$ denotes the edge dual to $b w$, directed with white on the left.) Let us first compute the second sum of the right hand side of~\eqref{eq:integration_2xell}. We set~$n=2m+j$ and write the summation as~$\sum_{n=2y'}^{2y-1}=\sum_{m=y'}^{y-1}\sum_{j=0}^1$. From the formulas in Corollary~\ref{cor:gauge_2xell}, we note that for~$j=0,1$,
\begin{equation*}
\alpha_1^{2j-1}\mathcal F^\circ(w_{\ell x,2m+j-1})+\mathcal F^\circ(w_{\ell x,2m+j})
=\frac{1}{2\pi\i}\int_{\tilde \Gamma}\left(Q(z_1,w_1)g(z_1)
  \right)_{j+1} \frac{z_1^{m}}{(z_1-1)^{\ell N}w_1^{x-N}}\d z_1.
\end{equation*}
So, from~\eqref{eq:TO-def-via-F}, we have
\begin{multline}\label{eq:t_vertical}
\sum_{j=0}^1\left(d \cT(b_{\ell x,2m+j} w_{\ell x,2m+j-1}^*)+d \cT(b_{\ell x,2m+j} w_{\ell x,2m+j}^*)\right)\\
=\sum_{j=0}^1\mathcal F^\bullet(b_{\ell x,2m+j})\left(\alpha_1^{2j-1}\mathcal F^\circ(w_{\ell x,2m+j-1})+\mathcal F^\circ(w_{\ell x,2m+j})\right) \\
=\frac{1}{(2\pi\i)^2}\int_{\tilde \Gamma}\int_{\tilde \Gamma}f(z_2)Q(z_2,w_2)
Q(z_1,w_1) g(z_1)
\frac{(z_2-1)^{\ell N}}{(z_1-1)^{\ell N}}\frac{w_2^{x-N}}{w_1^{x-N}}\frac{z_1^m}{z_2^m}\frac{\d z_2}{z_2}\d z_1.
\end{multline}
The final form of the second term in~\eqref{eq:integration_2xell} follows by summing~\eqref{eq:t_vertical} over~$m$ using that the sum is a geometric sum:
\begin{multline*}
\sum_{n=2y'}^{2y-1}\left(d \cT(b_{\ell x,n} w_{\ell x,n-1}^*)+d \cT(b_{\ell x,n} w_{\ell x,n}^*)\right) 
=\frac{1}{(2\pi\i)^2}\int_{\tilde \Gamma_s}\int_{\tilde \Gamma_l}f(z_2)Q(z_2,w_2)Q(z_1,w_1) g(z_1) \\
\times\frac{(z_2-1)^{\ell N}}{(z_1-1)^{\ell N}}\frac{w_2^{x-N}}{w_1^{x-N}}\left(\frac{z_1^{y'}}{z_2^{y'}}-\frac{z_1^y}{z_2^y}\right)\frac{\d z_2\d z_1}{z_2-z_1}
=I_{f,g}(x,y')-I_{f,g}(x,y).
\end{multline*}

Next, we continue with the first sum in~\eqref{eq:integration_2xell}. The expressions in Corollary~\ref{cor:gauge_2xell}, together with~\eqref{eq:TO-def-via-F}, give us that the terms of the first summation are given by
\begin{multline*}
\left(d \cT(b_{n,2y'} w_{n,2y'-1}^*)+d \cT(b_{n+1,2y'} w_{n,2y'-1}^*)\right) \\
=\left(\alpha_{n+1}^{-1}\mathcal F^\bullet(b_{n,2y'})+\beta_{n+1}^{-1}\mathcal F^\bullet(b_{n+1,2y'})\right)\mathcal F^\circ(w_{n,2y'-1}) \\
= \frac{1}{(2\pi\i)^2}\int_{\tilde\Gamma}\int_{\tilde \Gamma}\left(f(z_2)Q(z_2,w_2)\left(\alpha_{n+1}^{-1}\prod_{m=1}^{2n}\phi_m(z_2)+\beta_{n+1}^{-1}\prod_{m=1}^{2(n+1)}\phi_m(z_2)\right)\right)_1 \\
  \times\left(\left(\prod_{m=1}^{2n+1}\phi_m(z_1)\right)^{-1}Q(z_1,w_1) g(z_1)\right)_2
  \frac{(z_2-1)^{\ell N}}{(z_1-1)^{\ell N}}\frac{w_1^N}{w_2^N}\frac{z_1^{y'}}{z_2^{y'}}\frac{\d z_2}{z_2}\frac{\d z_1}{z_1},
\end{multline*}
where we used that~$Q(z,w)w=\Phi(z)Q(z,w)=Q(z,w)\Phi(z)$ and~$\Phi=\prod_{m=1}^{2\ell}\phi_m$. In the integrand above, we remove the indices by inserting the matrix 
$
\begin{pmatrix}
0 & 1 \\
0 & 0
\end{pmatrix}
$ in the middle. We then note, from simple algebraic manipulations, that
\begin{equation*}
\left(\alpha_{n+1}^{-1}I+\beta_{n+1}^{-1}\phi_{2n+1}(z_2)\phi_{2n+2}(z_2)\right)
\begin{pmatrix}
0 & 1 \\
0 & 0
\end{pmatrix}
=
\frac{\alpha_{n+1}^{-1}+\beta_{n+1}^{-1}}{1-z_2^{-1}}\phi_{2n+1}(z_2)
\begin{pmatrix}
0 & 1 \\
0 & 0
\end{pmatrix},
\end{equation*}
and
\begin{equation*}
\phi_{2n+2}(z_2)\phi_{2n+2}(z_1)^{-1}-\phi_{2n+1}(z_2)^{-1}\phi_{2n+1}(z_1)
=\frac{z_1-z_2}{z_1z_2}\frac{\alpha_{n+1}^{-1}+\beta_{n+1}^{-1}}{1-z_2^{-1}}
\begin{pmatrix}
0 & 1 \\
0 & 0
\end{pmatrix}.
\end{equation*}
This shows that the first sum in~\eqref{eq:integration_2xell} is a telescopic sum. Indeed, 
\begin{multline*}
\left(d \cT(b_{n,2y'} w_{n,2y'-1}^*)+d \cT(b_{n+1,2y'} w_{n,2y'-1}^*)\right) \\
= \frac{1}{(2\pi\i)^2}\int_{\tilde\Gamma_s}\int_{\tilde \Gamma_l}f(z_2)Q(z_2,w_2)\left(\prod_{m=1}^{2(n+1)}\phi_m(z_2)\left(\prod_{m=1}^{2(n+1)}\phi_m(z_1)\right)^{-1}\right. \\
\left.-\prod_{m=1}^{2n}\phi_m(z_2)\left(\prod_{m=1}^{2n}\phi_m(z_1)\right)^{-1}\right) 
 Q(z_1,w_1) g(z_1)
  \frac{(z_2-1)^{\ell N}}{(z_1-1)^{\ell N}}\frac{w_1^N}{w_2^N}\frac{z_1^{y'}}{z_2^{y'}}\frac{\d z_2\d z_1}{z_1-z_2},
\end{multline*}
and summing the equality over~$n=\ell x',\dots,\ell x-1$ yield
\begin{multline*}
\sum_{n=\ell x'}^{\ell x-1}\left(d \cT(b_{n,2y'} w_{n,2y'-1}^*)+d \cT(b_{n+1,2y'} w_{n,2y'-1}^*)\right)
= \frac{1}{(2\pi\i)^2}\int_{\tilde\Gamma_s}\int_{\tilde \Gamma_l}f(z_2)Q(z_2,w_2) \\
\times\left(\frac{w_2^x}{w_1^x}-\frac{w_2^{x'}}{w_1^{x'}}\right) 
 Q(z_1,w_1) g(z_1)
  \frac{(z_2-1)^{\ell N}}{(z_1-1)^{\ell N+1}}\frac{w_1^N}{w_2^N}\frac{z_1^{y'}}{z_2^{y'}}\frac{\d z_2\d z_1}{z_1-z_2} 
  =I_{f,g}(x',y')-I_{f,g}(x,y').
\end{multline*}

Summing up the first and second terms in~\eqref{eq:integration_2xell} proves the formula for~$\cT$.

The expression for~$\cO$ follows from an identical computation, using that~$\overline{\mathcal F^\circ(w)}$ is given by~\eqref{eq:coulomb_function_white_2xell} with~$\bar g$ instead of~$g$. 
\end{proof}

\begin{remark}
The previous statement can naturally be extended to faces~$(2(\ell x+i),4y)$ since the telescopic sum used in the proof naturally extends to those points. This generality, however, is not necessary to obtain the limiting objects, since~$|d \cT(e^*)|$ and~$|d \cO(e^*)|$ tend to zero as the size of the Aztec diamond tends to infinity.
\end{remark}

\subsection{Asymptotic analysis}
In this section, we are taking the large size limit of~$(\cT,\cO)$. In this limit, the expression we obtain is naturally given as a contour integral on the surface~$\mathcal R$. 

The terms in Proposition~\ref{prop:ct_co_finite_2xell} are double contour integrals and we will use a steepest descent analysis to obtain their leading order asymptotics. This motivates the definition of the \emph{action function}. We recall the convention used in~\cite{BB23} for macroscopic rescaled coordinates on the Aztec diamond, specialized to~$k = 2$. We define the macroscopic coordinates~$(\xi,\eta)$ so that
\begin{equation}\label{eq:global_coordinates_2xell}
\frac{x}{N}=\xi+1+\Ordo(N^{-1}), \quad \text{and} \quad \frac{y}{N}=\frac{\ell}{2}(\eta + 1)+\Ordo(N^{-1}),
\end{equation}
as~$N\to \infty$, and~$(\ell x+i,2y+j)$ are the coordinates from the previous section. In these new coordinates, the Aztec diamond is contained in~$[-1,1]^2$.
\begin{definition}\label{def:action_2xell}
Let~$(\xi, \eta) \in [-1,1]^2$ denote macroscopic points in the Aztec diamond. The \emph{action function} is given, for~$q = (z, w) \in \mathcal{R}$, by
\begin{equation*}
F(q;\xi,\eta)=(1-\xi)\log w-\frac{\ell}{2}(1-\eta) \log z-\log\left(1-z^{-1})^\ell w\right).
\end{equation*}
\end{definition}
The action function (times~$N$) is the logarithm of~$(z-1)^{-\ell N} \frac{z^y}{w^{x-N}}$; this is the oscillating and blowing up part of the integrand in the formulas in Proposition~\ref{prop:ct_co_finite_2xell} as well as in Corollary~\ref{cor:gauge_2xell}. The action function is written in a form to make the following lemma transparent.
\begin{lemma}\label{lem:action_match}
The action function in Definition~\ref{def:action_2xell} agrees with the action function in Definition 4.2 of~\cite{BB23}, specialized to the weights~\eqref{eq:weights_2xell} and~\eqref{eq:weights_torsion}.
\end{lemma}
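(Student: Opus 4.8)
The plan is to verify the identity by a direct computation, exploiting the fact that both action functions are, after multiplication by $N$, nothing but the logarithm of the exponentially growing and oscillating factor of one and the same integrand — the inverse Kasteleyn matrix of Proposition~\ref{prop:inv_kast_2xell}, which is taken from~\cite{Ber21, BB23}. First I would recall from~\cite[Definition 4.2]{BB23} the precise form of their action function in the general $(k\times\ell)$ setting, together with their convention for the macroscopic coordinates, and specialize both to $k=2$ and to the weights~\eqref{eq:weights_2xell}--\eqref{eq:weights_torsion}. The only genuinely model-dependent term is the one coming from the symbols $\phi_m$ (equivalently, from the Wiener--Hopf factorization and the characteristic polynomial), so the bulk of the work is to simplify this term under our weight choice.

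The key simplification uses that, for the weights~\eqref{eq:weights_torsion}, the vertical products~\eqref{eq:vert-prod} degenerate: $\beta_i^v=\beta_{1,i}\beta_{2,i}=1$ for every $i$, whence $\prod_{i=1}^\ell(1-\beta_i^v z^{-1})=(1-z^{-1})^\ell$. Combined with $\det\Phi(z)=1$, the characteristic polynomial~\eqref{eq:characteristic_polynomial} factors as
\begin{equation*}
P(z,w)=(1-z^{-1})^\ell\bigl(w^2-\Tr\Phi(z)\,w+1\bigr),
\end{equation*}
and on $\mathcal R$ the eigenvalue $w$ of $\Phi(z)$ carries exactly the exponential rate $(z-1)^{\ell N}w^N$ recorded after~\eqref{eq:def_Q_sec_2} via $\widetilde\phi_+(z)Q(z,w)=(z-1)^{\ell N}w^N\widetilde C Q(z,w)$. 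Substituting these into~\cite[Definition 4.2]{BB23} collapses the symbol term to $\log\!\bigl((1-z^{-1})^\ell w\bigr)$, up to an additive $\log z^\ell$ that is absorbed into the coefficient of $\log z$. It then remains to match the coefficients of $\log w$ and $\log z$ using the coordinate dictionary~\eqref{eq:global_coordinates_2xell}, namely $N\xi=x-N+\Ordo(1)$ and $N\tfrac{\ell}{2}(1-\eta)=N\ell-y+\Ordo(1)$.

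As a consistency check built into the statement, expanding Definition~\ref{def:action_2xell} and inserting these relations gives
\begin{equation*}
N\,F(q;\xi,\eta)=-(x-N)\log w+y\log z-\ell N\log(z-1)
=\log\!\Bigl((z-1)^{-\ell N}\tfrac{z^y}{w^{x-N}}\Bigr),
\end{equation*}
which is precisely the dominant monomial appearing in the integrands of Proposition~\ref{prop:inv_kast_2xell} and Proposition~\ref{prop:ct_co_finite_2xell}; the same monomial is, by construction, the one governing the steepest descent analysis in~\cite{BB23}. The main obstacle is purely bookkeeping: reconciling the two papers' conventions for the macroscopic coordinates (possible factors of $2$, shifts of the origin, and the normalization of $\log z$ versus $\log(z-1)$ in the determinantal term), and confirming that the branch and orientation choices for $w$ and for the cuts of $\mathcal R$ agree. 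Once these conventions are aligned, the equality of the two action functions is immediate from the computation above.
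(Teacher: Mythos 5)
Your proposal is correct and follows essentially the same route as the paper, which gives no formal proof but relies on exactly your "consistency check": the sentence preceding the lemma observes that $N\,F(q;\xi,\eta)=\log\bigl((z-1)^{-\ell N}z^{y}/w^{x-N}\bigr)$ is the dominant factor of the integrand inherited from \cite{BB23}, so the two action functions agree by construction once the coordinate dictionary~\eqref{eq:global_coordinates_2xell} is applied. Your additional observations (that $\beta_i^v=1$ forces $\prod_i(1-\beta_i^v z^{-1})=(1-z^{-1})^{\ell}$, that $\det\Phi=1$, and that the Wiener--Hopf rate is $(z-1)^{\ell N}w^{N}$) are all correct and simply make explicit the specialization the paper leaves implicit.
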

The previous lemma allows us to re-use the results and the steepest descent analysis developed in~\cite{Ber21, BB23} to obtain the limits of~$\cT$ and~$\cO$. In addition, the liquid, gas, and frozen phases, are defined through this action function, more precisely, through the zeros of the differential~$\d F$, see~\cite[Definition 4.7]{BB23}. 
\begin{definition}
A point~$(\xi,\eta)\in (-1,1)^2$ is in
\begin{itemize}
\item the \emph{liquid phase} if~$\d F(q;\xi,\eta)$ has a simple zero in~$\mathcal R_0$, 
\item the \emph{frozen phase} if~$\d F(q;\xi,\eta)$ has two simple zeros in~$A_0$,
\item the \emph{gas phase} if~$\d F(q;\xi,\eta)$ has four simple zeros in~$A_i$, for some~$i=1,\dots,g$,
\item the \emph{arctic curve} if~$\d F(q;\xi,\eta)$ has a double or triple zero. 
\end{itemize}
\end{definition}
All points~$(\xi,\eta)\in (-1,1)^2$ are either in the liquid, frozen, or gas phase or in the arctic curve. The gas phase consists of~$g$ connected components. These can be characterized by saying that~$(\xi,\eta)$ is in the~$i$th gas region if~$\d F$ has four zeros in~$A_i$. Similarly, the frozen components can be characterized by saying that~$(\xi,\eta)$ is in the~$j$th frozen region if~$\d F$ has at least two zeros in~$A_{0,j}$.

Following~\cite{BB23}, we denote the liquid region by~$\mathcal F_R$. The zero in the liquid region defines a diffeomorphism from~$\mathcal F_R$ to~$\mathcal R_0$.
\begin{proposition}[\cite{Ber21, BB23, BB24a}]\label{prop:diffeomorphism_2xell}
There is a diffeomorphism~$\Omega:\mathcal F_R\to \mathcal R_0$, such that for~$q\in \mathcal R_0$,
\begin{equation*}
\d F(q;\xi,\eta)=0 \iff q=\Omega(\xi,\eta).
\end{equation*}
Moreover,~$\Omega(\xi,\eta)$ is a simple zero of~$\d F(q;\xi,\eta)$.
\end{proposition}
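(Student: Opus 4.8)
The plan is to reduce the statement to the critical-point analysis of the action function carried out in~\cite{Ber21, BB23, BB24a}, which applies verbatim because of Lemma~\ref{lem:action_match}. Concretely, differentiating the expression in Definition~\ref{def:action_2xell} gives
\[
\d F(q;\xi,\eta)=-\xi\,\frac{\d w}{w}-\frac{\ell}{2}(1-\eta)\,\frac{\d z}{z}-\ell\,\frac{\d z}{z(z-1)},
\]
a meromorphic differential on~$\mathcal R$ whose dependence on~$(\xi,\eta)$ is affine. Using that~$\det\Phi\equiv 1$ (so~$w$ vanishes and blows up only over~$z=1$) together with the local expansions~\eqref{eq:w_at_1}, \eqref{eq:phi_0}, \eqref{eq:phi_infty}, one checks that~$\d F$ is a third-kind differential with simple poles supported on the four angles~$p_0,p_\infty,q_0,q_\infty$, and with real residues since~$\xi,\eta\in\RR$. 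In particular~$\d F$ is compatible with the anti-holomorphic involution~$\sigma$, and the Riemann--Roch count gives that it has~$2g-2+4=2\ell$ zeros, counted with multiplicity.

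First I would define the map. For~$(\xi,\eta)\in\mathcal F_R$ the liquid phase is, by definition, the locus where~$\d F(\cdot;\xi,\eta)$ has a simple zero in~$\mathcal R_0$; I set~$\Omega(\xi,\eta)$ to be that zero, which is simple by construction. The point needing justification is \emph{uniqueness}: applying~$\sigma$ produces a second simple zero in~$\sigma(\mathcal R_0)$, and the~$M$-curve structure of~$\mathcal R$ forces the remaining~$2g$ zeros of the real differential~$\d F$ to lie on~$\re(\mathcal R)=A_0\cup A_1\cup\dots\cup A_g$. Thus no further zero can lie in~$\mathcal R_0$, so~$\Omega$ is well defined. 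This localization of the real zeros is precisely the content extracted from the analysis of~\cite{BB23} (see also~\cite{Ber21, BB24a}), and I would simply invoke it.

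Next I would construct the inverse and upgrade~$\Omega$ to a diffeomorphism. Fixing~$q=(z_0,w_0)\in\mathcal R_0$ and using~$z$ as a local coordinate on the relevant sheet, the equation~$\d F(q;\xi,\eta)=0$ reads
\[
\xi\,L(z_0)+\tfrac{\ell}{2}(1-\eta)\,z_0^{-1}=-\frac{\ell}{z_0(z_0-1)},\qquad L:=\frac{\partial_z w}{w},
\]
a single~$\CC$-linear equation in the two real unknowns~$\xi$ and~$\tfrac{\ell}{2}(1-\eta)$. Splitting into real and imaginary parts yields a~$2\times 2$ real system whose determinant equals~$\im\!\big(\overline{L(z_0)}\,z_0^{-1}\big)$, i.e.\ it is nonzero exactly when~$\frac{\d z}{z}$ and~$\frac{\d w}{w}$ are~$\RR$-linearly independent at~$q$. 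For~$q\in\mathcal R_0$ this holds (it is the infinitesimal version of the non-degeneracy already used in~\cite{BB23, BB24a}), so the system has a unique smooth solution~$(\xi,\eta)=\Psi(q)$; this is the inverse of~$\Omega$ and shows~$\Psi$ is smooth with image~$\mathcal F_R$. Smoothness of~$\Omega$ itself then follows from the implicit function theorem, using that~$\Omega(\xi,\eta)$ is a \emph{simple} zero of~$\d F$, so that~$\d F$ has nonvanishing derivative in~$q$ there.

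The main obstacle is the global, uniform control underlying the two non-degeneracy inputs above: that exactly one simple zero sits in~$\mathcal R_0$ for every interior~$(\xi,\eta)$ (no real zero migrates into~$\mathcal R_0$, and no collision of zeros occurs away from the arctic curve), and that the Jacobian determinant~$\im\!\big(\overline{L}\,z^{-1}\big)$ stays nonzero throughout~$\mathcal R_0$. Both rely on the Harnack/$M$-curve geometry of the spectral curve and the detailed steepest-descent picture of~\cite{Ber21, BB23}; rather than reprove them, the plan is to cite those results, which is legitimate precisely because Lemma~\ref{lem:action_match} identifies our~$F$ with theirs.
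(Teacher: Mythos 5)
Your proposal is correct and matches the paper's treatment: the paper gives no independent proof of this proposition, instead using Lemma~\ref{lem:action_match} to identify its action function with that of~\cite{BB23} and citing~\cite{Ber21, BB23, BB24a} for the result, which is exactly the reduction you make. Your additional sketch (the pole/zero count of the third-kind differential $\d F$, the localization of the remaining $2g$ zeros on the real ovals, and the inversion via the $2\times2$ real linear system with determinant $\im\bigl(\overline{L(z_0)}\,z_0^{-1}\bigr)$) is a faithful outline of the argument in those references and contains no errors.
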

\begin{remark}
The diffeomorphism~$\Omega$ defines a conformal structure on the liquid region, which coincides with the Kenyon--Okounkov conformal structure, see \cite[Appendix A]{BB23}. 
\end{remark}
Following~\cite{BB23}, we need the following definition.
\begin{definition}\label{def:curve}
For~$q\in \overline{\mathcal R_0}$, we define the oriented curve~$\gamma_q$ as a symmetric under conjugation simple curve going from~$q$ to~$\bar q$, and that intersect~$\partial \mathcal R_0$ only at~$A_{0,1}$ and at~$q$ if~$q=\bar q$.
\end{definition}
\begin{remark}
The curve in the previous definition is not unique. However, any integral over~$\gamma_q$ we will consider is independent of the choice of the curve.  
\end{remark}

Before presenting the main result of this section, we define the limiting objects.
\begin{definition}\label{def:limit_surface_2xell}
For~$q=(z,w)\in \mathcal R_0$, we let~$\gamma_q$ be as in Definition~\ref{def:curve}. We define
\begin{equation*}
\mathcal Z(q)=2a\sqrt{a^2+1}+\frac{1}{2\pi\i}\int_{\gamma_q}f(z)Q(z,w)g(z)\d z, \quad \text{and} \quad \vartheta(q)=\frac{1}{2\pi\i}\int_{\gamma_q}f(z)Q(z,w)\bar g(z)\d z,
\end{equation*}
where~$f$ and~$g$ are defined in Corollary~\ref{cor:gauge_2xell},~$\bar g(z)=\overline{g(\bar z)}$, and~$Q$ is given in~\eqref{eq:def_Q}.
\end{definition} 
The compositions of~$\mathcal Z$ and~$\vartheta$ with the diffeomorphism~$\Omega$, give us the limit of~$\cT$ and~$\cO$. Recall that~$\cT$ and~$\cO$ are defined up to an additive constant. As in Section~\ref{sec:frozen}, we fix this constant by setting~$\cT(1,1)=0$ and~$\cO(1,1)=0$. Recall that the face~$(1,1)$ in the augmented dual of the reduced Aztec diamond~$(A_{2\ell N}')^*$ is the lower left outer face.

\begin{theorem}\label{thm:main_asymptotic_2xell}
Let~$\cT$,~$\cO$ be given in Proposition~\ref{prop:ct_co_finite_2xell},~$\Omega$ in Proposition~\ref{prop:diffeomorphism_2xell}, and let~$\mathcal Z$,~$\vartheta$ be given in Definition~\ref{def:limit_surface_2xell}. Let~$(\xi,\eta)\in \mathcal F_R\subset(-1,1)^2$ and assume~$x$ and~$y$ are as in~\eqref{eq:global_coordinates_1xell}, and let~$i=0,1\dots,\ell-1$, $j = 0,1$, and~$\eps=0,1$. Then
\begin{equation*}
(\cT((2(\ell x+i)+\eps,2(2y+j)+\eps)),\cO((2(\ell x+i)+\eps,2(2y+j)+\eps)))\to \left(\mathcal Z(\Omega(\xi,\eta)),\vartheta(\Omega(\xi,\eta))\right)
\end{equation*}
as~$N\to \infty$. Moreover, the convergence is uniform on compact subsets of~$\mathcal F_R$. 
\end{theorem}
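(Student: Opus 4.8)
The plan is to mirror the proof of Theorem~\ref{thm:main_asymptotic_1xell}, taking the exact double contour integral formulas from Proposition~\ref{prop:ct_co_finite_2xell} as the starting point and then running a steepest descent analysis. Since Lemma~\ref{lem:action_match} identifies the action function of Definition~\ref{def:action_2xell} with the one in~\cite{BB23}, the geometry of the steepest descent — the curves of steepest ascent and descent through the saddle $\Omega(\xi,\eta)\in\mathcal R_0$ and its conjugate, together with the decomposition of $\mathcal R$ into the regions where $\Re F$ exceeds or falls below its saddle value — can be imported essentially verbatim from~\cite{Ber21,BB23}. Writing $P_\star=(2(\ell x+i)+\eps,\,2(2y+j)+\eps)$ for the target face and using the normalization $\cT(1,1)=0$, I would split
\begin{equation*}
\cT(P_\star)=\bigl(\cT(P_\star)-\cT(2\ell x,4y)\bigr)+\bigl(\cT(2\ell x,4y)-\cT(2\ell,4)\bigr)+\bigl(\cT(2\ell,4)-\cT(1,1)\bigr),
\end{equation*}
analyze the middle term $I_{f,g}(x,y)-I_{f,g}(1,1)$ via the saddle, and show the two outer terms are of lower order.

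For the middle term I would deform $\tilde\Gamma_l$ to a curve $\gamma_+$ in the region where $\Re F$ is large and $\tilde\Gamma_s$ to a curve $\gamma_-$ where $\Re F$ is small, both passing through $\Omega(\xi,\eta)$ and $\overline{\Omega(\xi,\eta)}$. The only contribution collected in this deformation is the residue along the diagonal $z_1=z_2$, and here the matrix structure enters in a clean and essential way. The matrix $Q(z,w)$ from~\eqref{eq:def_Q} is the spectral (Riesz) projector of $\Phi(z)$ onto the eigenline with eigenvalue $w$, so $Q(z,w)^2=Q(z,w)$ while $Q(z,w)Q(z,w')=0$ on the two distinct sheets $w\ne w'$ over the same $z$ (consistent with~\eqref{eq:Q_sum}). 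Consequently the apparent pole of $(z_2-z_1)^{-1}$ at points where $z_1=z_2$ but $q_1\ne q_2$ is annihilated by $Q(z_2,w_2)Q(z_1,w_1)=0$, so only the genuine diagonal $q_1=q_2$ contributes, producing the single integral
\begin{equation*}
\frac{1}{2\pi\i}\int_{\gamma_{\Omega(\xi,\eta)}}f(z)Q(z,w)g(z)\,\d z,
\end{equation*}
which is exactly the $q$-dependent part of $\mathcal Z(\Omega(\xi,\eta))$ in Definition~\ref{def:limit_surface_2xell}. The leftover double integral over $\gamma_+\times\gamma_-$ is $\Ordo(N^{-1/2})$ by the standard Laplace estimate, using that $\Omega(\xi,\eta)$ is a \emph{simple} zero of $\d F$ (Proposition~\ref{prop:diffeomorphism_2xell}).

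It then remains to evaluate the base-point constant and bound the correction terms. For $I_{f,g}(1,1)$ the point $(x,y)=(1,1)$ lies in a frozen corner, so the same deformation collapses the double integral to an exponentially small remainder plus a residue along a curve $\gamma_{z_0}$ evaluable by the residue theorem on $\mathcal R$, giving $I_{f,g}(1,1)\to -2a\sqrt{a^2+1}$ and $I_{f,\bar g}(1,1)\to 0$, exactly as in the $1\times\ell$ case; adding back the constant $2a\sqrt{a^2+1}$ then yields $\mathcal Z(\Omega(\xi,\eta))$ for $\cT$ and $\vartheta(\Omega(\xi,\eta))$ for $\cO$. The first correction term is a finite sum of expressions $d\cT(bw^*)$, each a double contour integral \emph{lacking} the factor $(z_2-z_1)^{-1}$; hence no residue is collected and each is $\Ordo(N^{-1})$. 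The third term is likewise a finite exponentially small sum after deforming to the steepest descent contours, with the sole boundary contribution $d\cT(b_0w_0^*)$ treated directly from the boundary values in Lemma~\ref{lem:bdry_bdry}, which are exponentially small. The origami map $\cO$ is handled identically with $g$ replaced by $\bar g$.

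The main obstacle, relative to the scalar $1\times\ell$ analysis, is carrying out every contour deformation and residue computation on the genus-$g$ surface $\mathcal R$ rather than on $\CC$: one must verify that the steepest descent contours of~\cite{BB23}, constructed there for the correlation kernel, remain admissible for the present integrands near the angles $p_0,p_\infty,q_0,q_\infty$, and that the matrix factors $f\,Q$ and $Q\,g$ introduce no spurious poles along the deformation. The projector identities above render the diagonal residue transparent, so once the contour geometry is secured the remaining estimates are routine, and uniformity on compact subsets of $\mathcal F_R$ follows from the uniformity of the saddle estimates, the simplicity of the zero $\Omega(\xi,\eta)$, and compactness.
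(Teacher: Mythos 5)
Your proposal is correct and follows essentially the same route as the paper: the same three-term decomposition anchored at the face $(1,1)$, the same import of the steepest-descent geometry from~\cite{Ber21,BB23} via Lemma~\ref{lem:action_match}, the same diagonal-residue computation producing $\frac{1}{2\pi\i}\int_{\gamma_{\Omega(\xi,\eta)}}fQg\,\d z$ with the off-diagonal contributions killed by $Q(z,w_1)Q(z,w_2)=0$ for $w_1\neq w_2$ (which the paper derives from $w_1QQ=Q\Phi Q=w_2QQ$ and you justify equivalently via the projector property), and the same handling of the base-point constant, the $\Ordo(N^{-1})$ and exponentially small correction terms, and the separate boundary term $d\cT(b_0w_0^*)$ via Lemma~\ref{lem:bdry_bdry}.
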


\begin{proof}
Let us begin by taking the limit of~$\cT$. We write~$\cT$ as the sum of three terms, where the second one is computed using Proposition~\ref{prop:ct_co_finite_2xell} and will give us the leading term. We write
\begin{multline}\label{eq:t-embedding_divided_2xell}
\cT(2(\ell x+i)+\eps,2(2y+j)+\eps)=\left(\cT(2(\ell x+i)+\eps,2(2y+j)+\eps)-\cT(2\ell x,4y)\right)\\
+\left(\cT(2\ell x,4y)-\cT(2\ell,4)\right)+\left(\cT(2\ell,4)-\cT(1,1)\right).
\end{multline}
We will see that the first and last terms on the right hand side are of order~$N^{-1}$. We start by computing the asymptotics of the middle term and~$I_{f,g}(x,y)$.

Since the action function in Definition~\ref{def:action_2xell} coincides with the one in~\cite{BB23}, Lemma~\ref{lem:action_match}, we can re-use the steepest descent analysis provided therein (see also~\cite{Ber21} where the steepest descent analysis was done for the weights studied in this section). In fact, the deformation of the curves~$\tilde \Gamma_s$ and~$\tilde\Gamma_l$ to the curves of steep ascent~$\gamma_+$ and descent~$\gamma_-$ as performed in~\cite[Section 6.2.1]{BB23} is valid here as well (in~\cite{BB23} the curves~$\gamma_+$ and~$\gamma_-$ are denoted by~$\gamma_l$ and~$\gamma_s$).

The curves~$\gamma_+$ and~$\gamma_-$ are simple closed curves in~$\mathcal R$ intersecting each other only at~$\Omega(\xi,\eta)$ and~$\overline{\Omega(\xi,\eta)}$ and~$\gamma_+$ intersect~$\partial \mathcal R_0$ in a neighborhood of~$q_0$ and~$q_\infty$, and~$\gamma_-$ in a neighborhood of~$p_0$ and~$p_\infty$. If~$q\in \gamma_+\backslash \{\Omega(\xi,\eta)\}$ and~$q'\in \gamma_-\backslash \{\Omega(\xi,\eta)\}$, then 
\begin{equation}\label{eq:action_inequality}
\re F(q;\xi,\eta)>\re F(\Omega(\xi,\eta);\xi,\eta)>\re F(q';\xi,\eta),
\end{equation} 
and the inequalities are uniform on compact subsets of~$\gamma_\pm \backslash\{\Omega(\xi,\eta)\}$. Moreover, deforming~$\tilde \Gamma_l$ to~$\gamma_+$ and~$\tilde \Gamma_s$ to~$\gamma_-$ in~$I_{f,g}(x,y)$ only picks up a residue at~$(z_1,w_1)=(z_2,w_2)$ and this happens along the curve~$\gamma_{\Omega(\xi,\eta)}$ from Definition~\ref{def:curve}. To prove this, we can copy the argument given in the proof of Theorem~\ref{thm:main_asymptotic_1xell} and we will therefore omit any details. We will only point out three differences: The first difference is that~$\mathcal R$ takes the role of~$\CC$. The second difference is that, topologically, it is harder to see how the curves should be deformed. One solution, as explained in~\cite{BB23}, is to visualize the curves on the associated amoeba. The third difference is that we need to make sure that there is no residue at~$z_1=z_2$ if~$w_1\neq w_2$. This follows from the equality
\begin{equation*}
w_1Q(z,w_1)Q(z,w_2)=Q(z,w_1)\Phi(z)Q(z,w_2)=w_2Q(z,w_1)Q(z,w_2),
\end{equation*}
which shows that~$Q(z,w_1)Q(z,w_2)=0$ if~$w_1\neq w_2$.

We deform~$\tilde \Gamma_l$ to~$\gamma_+$ and~$\tilde \Gamma_s$ to~$\gamma_-$ in~$I_{f,g}$ and get that
\begin{multline}\label{eq:integral_deformation}
I_{f,g}(x,y)=\frac{1}{2\pi\i}\int_{\gamma_{\Omega(\xi,\eta)}}f(z)Q(z,w)g(z)\d z \\ 
+\frac{1}{(2\pi\i)^2}\int_{\gamma_-}\int_{\gamma_+}\e^{N\left(F(q_1;\xi,\eta)-F(q_2;\xi,\eta)+\Ordo(N^{-1})\right)}f(z_2)Q(z_2,w_2)
 Q(z_1,w_1) g(z_1) 
\frac{\d z_2\d z_1}{z_2-z_1}.
\end{multline}
The inequalities~\eqref{eq:action_inequality} and standard arguments show that the double contour integral above is of order~$N^{-1/2}$, that is,
\begin{equation*}
I_{f,g}(x,y)=\frac{1}{2\pi\i}\int_{\gamma_{\Omega(\xi,\eta)}}f(z)Q(z,w)g(z)\d z+\Ordo\left(N^{-1/2}\right),
\end{equation*}
as~$N\to \infty$.

We continue with the term~$I_{f,g}(1,1)$. From Proposition~\ref{prop:ct_co_finite_2xell}, we have
\begin{equation}\label{eq:integral_11_2xell}
I_{f,g}(1,1)=
\frac{1}{(2\pi\i)^2}\int_{\tilde \Gamma_s}\int_{\tilde \Gamma_l}f(z_2)Q(z_2,w_2)Q(z_1,w_1) g(z_1) 
\frac{(z_2-1)^{\ell N}}{(z_1-1)^{\ell N}}\frac{w_1^N}{w_2^N}\frac{w_2}{w_1}\frac{z_1}{z_2}\frac{\d z_2\d z_1}{z_2-z_1}.
\end{equation}
The function~$w_1(z_1-1)^{-\ell}$ has a pole of order~$2\ell$ at~$q_\infty$ and a zero of order~$2\ell$ at~$p_\infty$. We may pick~$p_+,p_-\in A_{0,3}$ with~$p_+$ close to~$p_0$ and~$p_-$ close to~$q_0$, and define the simple closed curves~$\gamma_+=\gamma_{p_+}$ and~$\gamma_-=\gamma_{p_-}$ (see Definition~\ref{def:curve}) so that 
\begin{equation}\label{eq:inequality_2xell}
\frac{|w_2|}{|z_2-1|}>\frac{|w_1|}{|z_1-1|}
\end{equation}
if~$(z_1,w_1)\in \gamma_-$ and~$(z_2,w_2)\in \gamma_+$. Moreover, we may pick~$p\in A_{0,3}$ so that the curve~$\gamma_p$ is in between~$\gamma_+$ and~$\gamma_-$ as well as in between~$\tilde \Gamma_l$ and~$\tilde \Gamma_s$. Then, deforming~$\tilde \Gamma_l$ to~$\gamma_+$ and~$\tilde \Gamma_s$ to~$\gamma_-$ in the integral~\eqref{eq:integral_11_2xell}, we pick up the residue at~$(z_1,w_1)=(z_2,w_2)$ along the curve~$\gamma_p$. Hence, by~\eqref{eq:inequality_2xell},
\begin{equation*}
I_{f,g}(1,1)=\frac{1}{2\pi\i}\int_{\gamma_p}f(z)Q(z,w)g(z)\d z + \Ordo\left(\e^{-cN}\right),
\end{equation*}
for some~$c>0$. The integral is computed by the residue theorem (see the proof of Lemma~\ref{lem:residue_omega} for details):
\begin{equation*}
\frac{1}{2\pi\i}\int_{\gamma_p}f(z)Q(z,w)g(z)\d z=-2a\sqrt{a^2+1}.
\end{equation*}

The above computation shows that
\begin{equation*}
\cT(2\ell x,4y)-\cT(2\ell,4)=\mathcal Z\left(\Omega(\xi,\eta)\right)+\Ordo\left(N^{-1/2}\right).
\end{equation*}
As explained in the proof of Theorem~\ref{thm:main_asymptotic_1xell}, the first term in~\eqref{eq:t-embedding_divided_2xell} is of order~$N^{-1}$ and this follows from an analysis similar to what we did for the term~$I_{f,g}(x,y)$. Similarly, the third term in~\eqref{eq:t-embedding_divided_2xell} is exponentially small, which follows from an analysis similar to what we did for~$I_{f,g}(1,1)$. Similarly to the proof of Theorem~\ref{thm:main_asymptotic_1xell}, we need, however, to consider the term~$d \cT(b_0 w_0^*)$ separately, since it does not admit an integral formula.

By Proposition~\ref{prop:FG},
\begin{equation*}
d \cT(b_0 w_0^*)=\mathcal F^\bullet(b_0)\tilde K_{\operatorname{reduced}}(b_0,w_0)\mathcal F^\circ(w_0)
=(a^2+1)^\frac{3}{2}\tilde K_{\operatorname{reduced}}(b_0,w_0),
\end{equation*}
and by~\eqref{eq:K_def}, Remark~\ref{rem:gauge_tilde_kast} and Lemmas~\ref{lem:reduced_K} and~\ref{lem:bdry_bdry},
\begin{equation*}
\tilde K_{\operatorname{reduced}}(b_0,w_0)=\frac{1}{a}K_{\operatorname{reduced}}(b_0,w_0)K_{\operatorname{reduced}}^{-1}(w_0,b_0)=\Ordo\left(d^{-N}\right),
\end{equation*}
as~$N\to \infty$, where~$d>1$ is given in~\eqref{eq:def_d}. 

This proves the limiting behavior of~$\cT$.

The limit of~$\cO$ follows by an almost identical argument. The only difference is that we use~$\bar g$ instead of~$g$. We need the integral
\begin{equation*}
\frac{1}{2\pi\i}\int_{\gamma_p}f(z)Q(z,w)\bar g(z)\d z=0.
\end{equation*}

It is clear that the constants in the error terms above can be made uniform on compact subsets, which concludes the proof of the statement.
\end{proof}

Theorem~\ref{thm:main_asymptotic_2xell} naturally extends to the frozen and gas regions. Each frozen and gas region collapses to a point.

The function~$q\in \mathcal R_0\mapsto (\mathcal Z(q),\vartheta(q))$ is continuous up to the boundary. In the limit as~$q$ tends to the boundary, the curve in Definition~\ref{def:curve} becomes a loop and since the integrands in the definition of~$\mathcal Z$ and~$\vartheta$ do not have any poles on~$A_m$,~$m=1,\dots,g$, or~$A_{0,j}$,~$j=1,\dots,4$, the limit only depends on the limiting connected component. We set
\begin{equation}\label{eq:boundary_points_2xell}
P_m=\lim_{q\in \mathcal R_0\to A_m}(\mathcal Z(q),\vartheta(q)),
\quad \text{and} \quad
P_{0,j}=\lim_{q\in \mathcal R_0\to A_{0,j}}(\mathcal Z(q),\vartheta(q)),
\end{equation}
for~$m=1,\dots,g$ and~$j=1,\dots,4$ (see the discussion in Section~\ref{sec:max_surf_2xell} for explicit expressions of these points). We have the following corollary of the proof of Theorem~\ref{thm:main_asymptotic_2xell}.
\begin{corollary}\label{cor:frozen_gas_2xell}
Let~$(\xi,\eta)\in (-1,1)^2$ and~$x$,~$y$, be related by~\eqref{eq:global_coordinates_2xell}. If~$(\xi,\eta)$ is in the gas region corresponding to~$A_m$, for some~$m=1,\dots,g$, then  
\begin{equation*}
(\cT((2(\ell x+i)+\eps,2(2y+j)+\eps)),\cO((2(\ell x+i)+\eps,2(2y+j)+\eps)))\to P_m,
\end{equation*}
as~$N\to \infty$. If~$(\xi,\eta)$ is in the frozen region corresponding to~$A_{0,j}$, for some~$j=1,\dots,4$, then  
\begin{equation*}
(\cT((2(\ell x+i)+\eps,2(2y+j)+\eps)),\cO((2(\ell x+i)+\eps,2(2y+j)+\eps)))\to P_{0,j},
\end{equation*}
as~$N\to \infty$.
\end{corollary}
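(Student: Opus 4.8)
The plan is to follow the proof of Theorem~\ref{thm:main_asymptotic_2xell} essentially verbatim, changing only the steepest descent analysis of the leading term to account for the new location of the saddle points. As in \eqref{eq:t-embedding_divided_2xell}, I would decompose $\cT$ at the point $(2(\ell x+i)+\eps,2(2y+j)+\eps)$ into the three differences relative to $\cT(2\ell x,4y)$, $\cT(2\ell,4)$ and $\cT(1,1)=0$. The arguments bounding the first term by $\Ordo(N^{-1})$ and the third term by an exponentially small quantity, including the separate treatment of the boundary edge $d\cT(b_0w_0^*)=\Ordo(d^{-N})$ via Lemma~\ref{lem:bdry_bdry}, do not use the phase of $(\xi,\eta)$ and carry over unchanged. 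Likewise $I_{f,g}(1,1)\to -2a\sqrt{a^2+1}$ is independent of $(\xi,\eta)$. Hence the only quantity whose asymptotics must be redone is $I_{f,g}(x,y)$, and by Lemma~\ref{lem:action_match} the action function is exactly the one of~\cite{BB23}, so the relevant contour deformations may be imported.

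Next I would run the steepest descent analysis of $I_{f,g}(x,y)$ for $(\xi,\eta)$ in a gas or frozen region. Now $\d F(\,\cdot\,;\xi,\eta)$ no longer has its distinguished zero in $\mathcal R_0$: in the gas region associated with $A_m$ it has four simple zeros on the compact oval $A_m$, and in the frozen region associated with $A_{0,j}$ it has two simple zeros on $A_{0,j}$. Following~\cite[Section~6.2.3]{BB23} and~\cite{Ber21}, exactly as in the frozen analysis of Corollary~\ref{cor:frozen_1xell}, the curves $\tilde\Gamma_l$ and $\tilde\Gamma_s$ can still be deformed to curves of steep ascent $\gamma_+$ and descent $\gamma_-$; the residual double contour integral over $\gamma_+\times\gamma_-$ is subleading, and the only leading contribution is again the residue at $z_1=z_2$, collected along a conjugation-symmetric curve $\gamma_{\xi,\eta}$. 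The new point is that, because the saddles now sit on a real oval, $\gamma_{\xi,\eta}$ may be homotoped, within the region where the integrand is holomorphic, to the curve $\gamma_q$ of Definition~\ref{def:curve} for an arbitrary point $q$ of the relevant oval ($A_m$ in the gas case, $A_{0,j}$ in the frozen case).

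Finally I would identify the limit with the claimed points. As recorded just before \eqref{eq:boundary_points_2xell}, the integrands $f(z)Q(z,w)g(z)$ and $f(z)Q(z,w)\bar g(z)$ have no poles on any $A_m$ or $A_{0,j}$; therefore $\frac{1}{2\pi\i}\int_{\gamma_q}f(z)Q(z,w)g(z)\,\d z$ is locally constant as $q$ varies over a fixed oval and coincides with the boundary value of $\mathcal Z$ there, and similarly for $\vartheta$ with $\bar g$. Combining this with the previous contributions gives
\[
\cT(2\ell x,4y)-\cT(2\ell,4)\to \mathcal Z(q), \qquad \cO(2\ell x,4y)-\cO(2\ell,4)\to \vartheta(q),
\]
for $q$ any point of $A_m$ (resp. $A_{0,j}$). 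Together with the negligible first and third terms and the relation $\cT(2\ell,4)\to0$, this shows that the pair $(\cT,\cO)$ converges to $(\mathcal Z(q),\vartheta(q))$, which by \eqref{eq:boundary_points_2xell} equals $P_m$ in the gas case and $P_{0,j}$ in the frozen case; the error bounds are uniform on compact subsets of each component.

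The main obstacle is the justification of the contour deformation and of the homotopy of $\gamma_{\xi,\eta}$ to a loop around the relevant oval: this is topologically delicate because it takes place on the genus-$g$ surface $\mathcal R$ rather than on $\CC$, and is most transparent on the associated amoeba. I expect, however, that this step requires no new work: since the action function matches that of~\cite{BB23} by Lemma~\ref{lem:action_match}, the deformations and the attendant inequalities on $\re F$ can be quoted directly from~\cite[Section~6.2.3]{BB23} and~\cite{Ber21}, exactly as they were used in the frozen case of Corollary~\ref{cor:frozen_1xell}.
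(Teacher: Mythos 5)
Your proposal is correct and follows essentially the same route as the paper: the only step that changes relative to Theorem~\ref{thm:main_asymptotic_2xell} is the steepest descent analysis of $I_{f,g}(x,y)$, which is imported from~\cite[Sections 6.2.2 and 6.2.3]{BB23} via Lemma~\ref{lem:action_match}, with the residue now collected along a curve $\gamma_{q_{\xi,\eta}}$ for $q_{\xi,\eta}$ in the relevant oval $A_m$ or $A_{0,j}$, and the limit identified with $P_m$ or $P_{0,j}$ via~\eqref{eq:boundary_points_2xell}. Your additional observation that the integrand has no poles on the ovals, so the limit is independent of the choice of point on the oval, is exactly the justification the paper relies on implicitly.
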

\begin{proof}
The only essential difference to the proof of Theorem~\ref{thm:main_asymptotic_2xell}, is in the analysis of~$I_{f,g}(x,y)$. The fact that~$(\xi,\eta)$ is in the frozen or gas phase means that we can follow the proof in~\cite[Sections 6.2.2 or 6.2.3 ]{BB23} instead. The argument is similar to the one in Theorem~\ref{thm:main_asymptotic_2xell}, and we leave out the details. Again, the contribution comes from the residue at~$(z_1,w_1)=(z_2,w_2)$ along a curve~$\gamma_{q_{\xi,\eta}}$, where~$q_{\xi,\eta}\in A_m$ if~$(\xi,\eta)$ is in the gas region corresponding to~$A_m$, and~$q_{\xi,\eta}\in A_{0,j}$ if~$(\xi,\eta)$ is in the frozen region corresponding to~$A_{0,j}$. We get 
\begin{equation*}
I_{f,g}(x,y)=\frac{1}{2\pi\i}\int_{\gamma_{q_{\xi,\eta}}}f(z)Q(z,w)g(z)\d z+\Ordo\left(\e^{-cN}\right),
\end{equation*}
as~$N\to \infty$, for some~$c>0$. This leads us to the limit in the statement. 
\end{proof}
\begin{remark}
The error terms in Corollary~\ref{cor:frozen_gas_2xell} are exponentially small, while the error term in Theorem~\ref{thm:main_asymptotic_2xell} is of order~$N^{-1/2}$.
\end{remark}

\subsection{A reformulation of the 1-forms}\label{sec:reformulation_form}
In this section we express the~$1$-forms defining~$\mathcal Z$ and~$\vartheta$ in terms of forms~$\omega_D$ and~$\vec{\omega}$ (see Section~\ref{sec:spectral_curve} for their definitions).

It is convenient to divide~$f$ and~$g$ into two parts. Let
\begin{equation}\label{eq:f_p}
f_{q_0}(z)=-\frac{\i\alpha_1\sqrt{a+\i}}{z-1}
\begin{pmatrix}
1 & -\alpha_1^{-1}
\end{pmatrix}, \quad 
f_{q_\infty}(z)=
-\frac{\sqrt{\alpha_1\beta_\ell}\sqrt{a+\i}}{z-1}
\begin{pmatrix}
1 & \beta_\ell^{-1}
\end{pmatrix}
\end{equation}
and
\begin{equation}\label{eq:g_p}
g_{p_0}(z)=\frac{\i\sqrt{a-\i}}{\sqrt{\alpha_1\beta_\ell}}\frac{1}{z}
\begin{pmatrix}
1 \\
0
\end{pmatrix},
\quad
g_{p_\infty}(z)=\sqrt{a-\i}
\begin{pmatrix}
  0 \\
  1
\end{pmatrix}.
\end{equation}
Then
\begin{equation*}
f(z)=f_{q_0}(z)+f_{q_\infty}(z), \quad \text{and} \quad g(z)=g_{p_0}(z)+g_{p_\infty}(z).
\end{equation*}
For~$q'\in \{q_0,q_\infty\}$ and~$p'\in \{p_0,p_\infty\}$ we define the~$1$-forms
\begin{equation*}
\omega_{q',p'}^{\mathcal Z}(q)=\frac{1}{2\pi\i}f_{q'}(z)Q(z,w)g_{p'}(z)\d z, \quad \text{and} \quad \omega^{\vartheta}_{q',p'}(q)=\frac{1}{2\pi\i}f_{q'}(z)Q(z,w)\bar{g}_{p'}(z)\d z,
\end{equation*}
where~$q=(z,w)\in \mathcal R$. We also set
\begin{equation}\label{eq:sum_forms}
\omega^{\mathcal Z}=\sum\omega_{q',p'}^{\mathcal Z}, \quad \text{and} \quad \omega^{\vartheta}=\sum\omega_{q',p'}^{\vartheta},
\end{equation}
where the sums are over~$q'\in\{q_0,q_\infty\}$ and~$p'\in\{p_0,p_\infty\}$. Then
\begin{equation}\label{eq:limiting_expression_theta}
\mathcal Z(q)=2a\sqrt{a^2+1}+\int_{\gamma_q}\omega^{\mathcal Z}, \quad \text{and} \quad \vartheta(q)=\int_{\gamma_q}\omega^{\vartheta}.
\end{equation}
Proposition~\ref{prop:form_theta}, below, shows that~$\omega_{q',p'}^{\mathcal Z}$ and~$\omega_{q',p'}^{\vartheta}$ can naturally be expressed in terms of theta functions and prime forms. The final form for~$\omega^{\mathcal Z}$ and~$\omega^{\vartheta}$, expressed in Corollary~\ref{cor:origami_t-embedding_theta} below, then follows by Fay's identity and summing over all pairs~$q'\in\{q_0,q_\infty\}$ and~$p'\in\{p_0,p_\infty\}$, according to~\eqref{eq:sum_forms}. 
\begin{lemma}\label{lem:residue_omega}
For~$q'\in \{q_0,q_\infty\}$ and~$p'\in \{p_0,p_\infty\}$, the~$1$-forms~$\omega_{q',p'}^{\mathcal Z}$ and~$\omega_{q',p'}^{\vartheta}$ are meromorphic~$1$-forms with simple poles at~$q'$ and~$p'$ and no other poles. Moreover, their residues are given by
\begin{equation*}
\int_{\mathcal C_{q'}}\omega_{q',p'}^{\mathcal Z}=-\int_{\mathcal C_{p'}}\omega_{q',p'}^{\mathcal Z}=c_{q',p'}^{\mathcal Z}
\end{equation*}
and
\begin{equation*}
\int_{\mathcal C_{q'}}\omega_{q',p'}^{\vartheta}=-\int_{\mathcal C_{p'}}\omega_{q',p'}^{\vartheta}=c_{q',p'}^{\vartheta}
\end{equation*}
where
\begin{equation*}
c_{q',p'}^{\mathcal Z}=
\begin{footnotesize}
\begin{cases}
a\sqrt{a^2+1}, & q'=q_0, p'=p_0, \\
\i\sqrt{a^2+1}, & q'=q_0, p'=p_\infty, \\
-\i\sqrt{a^2+1}, & q'=q_\infty, p'=p_0, \\
-a\sqrt{a^2+1}, & q'=q_\infty, p'=p_\infty, \\
\end{cases}
\end{footnotesize}
\quad 
c_{q',p'}^{\vartheta}=
\begin{footnotesize}
\begin{cases}
-a(a+\i), & q'=q_0, p'=p_0, \\
\i(a+\i), & q'=q_0, p'=p_\infty, \\
\i(a+\i), & q'=q_\infty, p'=p_0, \\
-a(a+\i), & q'=q_\infty, p'=p_\infty, \\
\end{cases}
\end{footnotesize}
\end{equation*}
and~$\mathcal C_{q'}$ and~$\mathcal C_{p'}$ are positively oriented small circles in~$\mathcal R$ around~$q'$ and~$p'$, respectively.
\end{lemma}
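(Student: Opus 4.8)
The plan is to regard each one-form $\omega^{\mathcal Z}_{q',p'}$ and $\omega^{\vartheta}_{q',p'}$ as a meromorphic one-form on the compact surface $\mathcal{R}$, to show that its only singularities are simple poles at $q'$ and $p'$, and then to extract the residues by computing the (elementary) residue at $q'$ directly and deducing the one at $p'$ from the residue theorem on $\mathcal{R}$. Meromorphicity is immediate: $f_{q'}$ and $g_{p'}$ (and $\bar g_{p'}$) are rational in $z$, $Q(z,w)$ is meromorphic on $\mathcal{R}$ by \eqref{eq:def_Q}, and $\d z$ is a meromorphic one-form, so the product is meromorphic. The candidate singular points are: the two points $q_0,q_\infty$ over $z=1$ (from the factor $(z-1)^{-1}$ in $f_{q'}$); the ramification points $p_0$ over $z=0$ and $p_\infty$ over $z=\infty$ of the double cover $z\colon\mathcal{R}\to\mathbb{P}^1$ (recall the two-sheeted description of $\mathcal{R}$ from Section~\ref{sec:spectral_curve} and Figure~\ref{fig:rs_sheets}); and the branch points of this cover, at which $Q=\adj(wI-\Phi)/(2w-\Tr\Phi)$ — being the spectral projector — acquires simple poles.

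The analysis over $z=1$ is the cleanest and also produces the residue values. Near $q'$ the point is unramified, $z-1$ is a local coordinate, $Q(z,w)\to Q(q')$ is finite by \eqref{eq:q_10}--\eqref{eq:q_1infty}, and $g_{p'}$ is regular; hence $\omega^{\mathcal Z}_{q',p'}$ has a simple pole there with
\[
\int_{\mathcal C_{q'}}\omega^{\mathcal Z}_{q',p'}=\lim_{z\to 1}(z-1)\,f_{q'}(z)\,Q(q')\,g_{p'}(1).
\]
For the other sheet over $z=1$ the would-be pole cancels, because the row vectors of $f_{q_0},f_{q_\infty}$ annihilate the opposite projector: from \eqref{eq:q_10}--\eqref{eq:q_1infty} one has $\begin{pmatrix}1 & -\alpha_1^{-1}\end{pmatrix}Q(q_\infty)=0$ and $\begin{pmatrix}1 & \beta_\ell^{-1}\end{pmatrix}Q(q_0)=0$, so $f_{q_0}Q$ and $f_{q_\infty}Q$ extend holomorphically across $q_\infty$ and $q_0$ respectively. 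Carrying out the limit above — using $\begin{pmatrix}1 & -\alpha_1^{-1}\end{pmatrix}Q(q_0)=\begin{pmatrix}1 & -\alpha_1^{-1}\end{pmatrix}$, the analogous identity at $q_\infty$, and $a=\sqrt{\alpha_1/\beta_\ell}$ from Lemma~\ref{lem:bdry_bdry} — reproduces the table; for instance $\int_{\mathcal C_{q_0}}\omega^{\mathcal Z}_{q_0,p_0}=-\i\alpha_1\sqrt{a+\i}\cdot \i\sqrt{a-\i}/\sqrt{\alpha_1\beta_\ell}=a\sqrt{a^2+1}$. The $\vartheta$-forms are handled identically, replacing $g_{p'}$ by $\bar g_{p'}$, which has the same poles and the same defining column vectors and differs only by conjugated scalar prefactors.

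The subtler points are $p_0$ and $p_\infty$, and this is where I expect the main work. Away from $z=0,\infty$ every finite branch point is harmless: there $\d z$ has a simple zero while $Q$ has a simple pole, so $Q\,\d z$ is holomorphic, and since $f_{q'},g_{p'}$ are regular there the form has no pole. At $p_0$ (where $z=t^{2}$ in a local coordinate $t$) the factor $z^{-1}$ in $g_{p_0}$ gives an order-two pole and $Q\sim A_0/t$, so $\omega^{\mathcal Z}_{q',p_0}$ is a priori of order $t^{-2}$; however \eqref{eq:phi_0} forces $\Phi(0)$ to be upper-triangular, hence the residue matrix satisfies $A_0\propto\begin{pmatrix}0&1\\0&0\end{pmatrix}$ and $A_0\begin{pmatrix}1\\0\end{pmatrix}=0$, which cancels the leading term and leaves a simple pole; for the mismatched form $\omega^{\mathcal Z}_{q',p_\infty}$ the simple zero of $\d z$ wins and there is no pole at $p_0$. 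The point $p_\infty$ (where $z=\tau^{-2}$, so $\d z$ has a triple pole) is dual: now $f_{q'}=O(\tau^{2})$ decays, $\Phi(\infty)$ is lower-triangular by \eqref{eq:phi_infty} so $A_\infty\propto\begin{pmatrix}0&0\\1&0\end{pmatrix}$ with $A_\infty\begin{pmatrix}0\\1\end{pmatrix}=0$, and a power count gives a simple pole at $p_\infty$ for $\omega^{\mathcal Z}_{q',p_\infty}$ and regularity for $\omega^{\mathcal Z}_{q',p_0}$. I would organize these four cases around the nilpotent structure of $A_0,A_\infty$ coming from \eqref{eq:phi_0}--\eqref{eq:phi_infty} together with the ramification orders of $z$, with \eqref{eq:w_at_1} fixing the relevant normalizations.

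Having shown that each $\omega^{\mathcal Z}_{q',p'}$ and $\omega^{\vartheta}_{q',p'}$ has exactly two simple poles, at $q'$ and $p'$, the residue theorem on the compact Riemann surface $\mathcal{R}$ gives $\int_{\mathcal C_{q'}}\omega+\int_{\mathcal C_{p'}}\omega=0$, that is $\int_{\mathcal C_{p'}}\omega=-\int_{\mathcal C_{q'}}\omega$. Combined with the explicit residue at $q'$ computed above, this yields all entries of both tables and completes the proof. The main obstacle is precisely the local analysis at the two ramification points $p_0,p_\infty$: one must confirm that the a priori double poles (from $z^{-1}$ at $p_0$, and from the triple pole of $\d z$ at $p_\infty$) degenerate to simple poles, which relies on the interplay between the triangular structure of $\Phi$ at $0$ and $\infty$ and the order of vanishing of $\d z$ there.
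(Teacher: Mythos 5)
Your proof is correct, and it coincides with the paper's argument at the points over $z=1$: both compute the residue at $q'$ from the limit $\lim_{z\to1}(z-1)f_{q'}(z)Q(q')g_{p'}(1)$ and both use the projector identities following from \eqref{eq:q_10}--\eqref{eq:q_1infty} to see that $f_{q_0}Q$ and $f_{q_\infty}Q$ extend holomorphically across the ``wrong'' point over $z=1$. Where you diverge is at $p_0$ and $p_\infty$. The paper exploits the fact that these are the unique points over the branch points $z=0,\infty$, takes $\mathcal C_p$ to be the double cover of a planar circle $C_p$, and uses $\sum_{w}Q(z,w)=I$ (equation \eqref{eq:Q_sum}) to collapse $\int_{\mathcal C_p}\omega_{q',p'}$ into the planar residue of the rational function $f_{q'}(z)g_{p'}(z)$ at $z=0$ or $z=\infty$; this yields the value of the integral around $p$ in one stroke, with no local analysis of $Q$ at the ramification points. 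You instead do the local power count in the uniformizing coordinate, using the nilpotent structure of $\adj(wI-\Phi)$ at $p_0,p_\infty$ forced by \eqref{eq:phi_0}--\eqref{eq:phi_infty} to cancel the a priori double pole, and then recover the residue at $p'$ from the global residue theorem on the compact surface $\mathcal R$. Your route is more work, but it buys something the paper's computation does not literally deliver: it verifies that the pole at $p'$ is genuinely \emph{simple} (the planar-reduction argument only evaluates $\int_{\mathcal C_{p}}\omega$, which would not by itself exclude a higher-order pole with the stated residue), and this simplicity is exactly what is needed later in Proposition~\ref{prop:form_theta} to count the $2g$ zeros of the form. Conversely, the paper's trick is shorter and computes the residue at $p'$ directly without appealing to compactness. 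Both arguments are sound; yours is slightly more self-contained on the pole-order claim.
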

\begin{proof}
It is clear that~$\omega_{q',p'}^{\mathcal Z}$ and~$\omega_{q',p'}^{\vartheta}$ are meromorphic~$1$-forms with possible poles only at~$p_0$,~$p_{\infty}$,~$q_0$ and~$q_{\infty}$. Let us first focus on~$\omega_{q',p'}^{\mathcal Z}$.

Let~$q\in\{q_0,q_\infty\}$. Equations~\eqref{eq:q_10} and~\eqref{eq:q_1infty} imply that
\begin{equation*}
\begin{pmatrix}
1 & -\alpha_1^{-1}
\end{pmatrix}
Q(q_0)=
\begin{pmatrix}
1 & -\alpha_1^{-1}
\end{pmatrix},
\quad 
\begin{pmatrix}
1 & -\alpha_1^{-1}
\end{pmatrix}
Q(q_\infty)=0,
\end{equation*}
and
\begin{equation*}
\begin{pmatrix}
1 & \beta_\ell^{-1}
\end{pmatrix}
Q(q_0)=0, \quad  
\begin{pmatrix}
1 & \beta_\ell^{-1}
\end{pmatrix}
Q(q_\infty)=
\begin{pmatrix}
1 & \beta_\ell^{-1}
\end{pmatrix}.
\end{equation*}
In particular, from~\eqref{eq:f_p}, we conclude that~$\omega_{q',p'}$ has a simple pole at~$q$ if~$q=q'$ and no pole at~$q$ if~$q\neq q'$, and the residue is given by
\begin{equation*}
\int_{\mathcal C_{q'}}\omega_{q',p'}=
\left((z-1)f_{q'}(z)\right)|_{z=1}
g_{p'}(1).
\end{equation*} 
The right hand side is computed from~\eqref{eq:f_p} and~\eqref{eq:g_p} (recall that~$\sqrt{a-\i}\sqrt{a+\i}=\sqrt{a^2+1}$).

It remains to show that the residue of~$\omega_{q',p'}^{\mathcal Z}$ at~$p\in\{p_0,p_\infty\}$ is zero if~$p\neq p'$. Since~$z=0$ and~$z=\infty$ are branch points, we may take~$\mathcal C_p$ to be a double cover of a circle~$C_p$ in~$\CC$. By~\eqref{eq:Q_sum},
\begin{equation*}
\int_{\mathcal C_p}\omega_{q',p'}^{\mathcal Z}=\frac{1}{2\pi\i}\int_{C_p}f_{q'}(z)g_{p'}(z)\d z,
\end{equation*}
where the right hand side is an integral on~$\CC$. The explicit expressions~\eqref{eq:f_p} and~\eqref{eq:g_p} show that, indeed, the residue is zero if~$p\neq p'$.

By a similar calculation with~$\bar g_{p'}$ instead of~$g_{p'}$ and using that~$\sqrt{a+\i}\overline{\sqrt{a-\i}}=a+\i$, we get the result for~$\omega_{q',p'}^{\vartheta}$. 
\end{proof}

We use the previous lemma to write~$\omega_{q',p'}^{\mathcal Z}$ and~$\omega_{q',p'}^{\vartheta}$ in terms of theta functions and prime forms. 
\begin{proposition}\label{prop:form_theta}
Let~$q'\in \{q_0,q_\infty\}$ and~$p'\in\{p_0,p_\infty\}$. The~$1$-form~$\omega_{q',p'}^{\mathcal Z/\vartheta}$ is given by
\begin{align*}
\omega_{q',p'}^{\mathcal Z/\vartheta}(q)&=-\frac{c_{q',p'}^{\mathcal Z/\vartheta}}{2\pi\i}\frac{E(q',p')\theta(u(q)-u(q')+t)\theta(u(q)-u(p')-t)}{\theta(t)\theta(t+u(p')-u(q'))E(q,q')E(q,p')} \\
& =-\frac{c_{q',p'}^{\mathcal Z/\vartheta}}{2\pi\i}\omega_{p'-q'}-\frac{c_{q',p'}^{\mathcal Z/\vartheta}}{2\pi\i}\left(\nabla \log \theta(t+u(p')-u(q'))-\nabla \log \theta(t)\right)\vec{\omega},
\end{align*}
where~$c_{q',p'}^{\mathcal Z}$ and~$c_{q',p'}^{\vartheta}$ are defined in Lemma~\ref{lem:residue_omega}, and~$t\in \RR^g/\ZZ^g$,~$\omega_{q'-p'}$ and~$\vec{\omega}$ are defined in Section~\ref{sec:spectral_curve}.
\end{proposition}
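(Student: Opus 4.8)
The plan is to establish the stated formula for $\omega_{q',p'}^{\mathcal Z/\vartheta}$ by identifying it, via Riemann's theorem on meromorphic differentials, with the right-hand side of Fay's identity \eqref{eq:fays_identity}. The starting point is Lemma~\ref{lem:residue_omega}, which tells us exactly what $\omega_{q',p'}^{\mathcal Z/\vartheta}$ is as a meromorphic $1$-form: it has precisely two simple poles, at $q'$ and $p'$, with residues $c_{q',p'}^{\mathcal Z/\vartheta}$ and $-c_{q',p'}^{\mathcal Z/\vartheta}$ respectively, and no other poles. Thus $\omega_{q',p'}^{\mathcal Z/\vartheta}$ is a differential of the third kind. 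A differential of the third kind with prescribed simple poles and residues is determined up to an additive holomorphic $1$-form, i.e.\ up to a combination of the basis forms $\vec\omega$. So the strategy is: (i) show that the meromorphic function of $q$ given by the first displayed line (the Fay kernel with parameter $t$) is a $1$-form with exactly the same poles and residues as $\omega^{\mathcal Z/\vartheta}_{q',p'}$; (ii) invoke Fay's identity to rewrite that kernel in the $\omega_{p'-q'}+(\nabla\log\theta\cdots)\vec\omega$ form; and (iii) pin down that there is no leftover discrepancy, i.e.\ that the two differentials agree exactly and not merely up to a holomorphic form.

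For step (i), I would apply Fay's identity \eqref{eq:fays_identity} with the substitution $p_1=q'$, $p_2=p'$, and $e=t$. The left-hand side of \eqref{eq:fays_identity} is then (up to the constant $-c^{\mathcal Z/\vartheta}_{q',p'}/(2\pi\i)$) exactly the theta/prime-form expression in the first line of the proposition, and the right-hand side is exactly the second line, namely $\omega_{p'-q'}(q)$ plus the gradient-of-log-theta correction times $\vec\omega(q)$. This immediately gives the equality between the two displayed lines of the proposition \emph{for free}, reducing the whole proposition to proving that $\omega_{q',p'}^{\mathcal Z/\vartheta}$ equals this common expression. Now the right-hand side $-\frac{c^{\mathcal Z/\vartheta}_{q',p'}}{2\pi\i}\omega_{p'-q'}$ plus a holomorphic correction is, by the very definition of $\omega_{p'-q'}$ in Section~\ref{sec:spectral_curve}, a third-kind differential with residue $-\frac{c^{\mathcal Z/\vartheta}_{q',p'}}{2\pi\i}\cdot 2\pi\i=-c^{\mathcal Z/\vartheta}_{q',p'}$ at... — here one must track the $2\pi\i$ normalization carefully; since $\omega_{p'-q'}$ has residue $+1$ at $p'$ and $-1$ at $q'$, and since $\int_{\mathcal C}$ of a form picks up $2\pi\i$ times the residue, the normalization $\int_{\mathcal C_{q'}}\omega^{\mathcal Z/\vartheta}_{q',p'}=c^{\mathcal Z/\vartheta}_{q',p'}$ from Lemma~\ref{lem:residue_omega} forces the coefficient $-c^{\mathcal Z/\vartheta}_{q',p'}/(2\pi\i)$ in front of $\omega_{p'-q'}$. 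So both $\omega^{\mathcal Z/\vartheta}_{q',p'}$ and the claimed expression are third-kind differentials with identical poles and residues.

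The remaining, and genuinely load-bearing, point is step (iii): two third-kind differentials with the same poles and residues differ by a holomorphic $1$-form, so I must show this difference vanishes. The natural way is to check that both sides have vanishing $A$-periods, since a holomorphic form on $\mathcal R$ with all $A$-periods zero is identically zero. The right-hand Fay expression has vanishing $A$-periods \emph{by construction}: $\omega_{p'-q'}$ is normalized to have zero $A$-periods (that is its defining property in Section~\ref{sec:spectral_curve}), and the correction term $\bigl(\nabla\log\theta(\cdots)-\nabla\log\theta(t)\bigr)\vec\omega$ is exactly the holomorphic piece that Fay's identity produces to enforce this normalization, so the full right-hand side has zero $A$-periods. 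Hence it suffices to prove that $\omega^{\mathcal Z/\vartheta}_{q',p'}$ itself has vanishing $A$-periods. This is where I expect the main obstacle to lie, and I would address it by a contour-deformation argument on $\mathcal R$: the $A$-cycles are the compact ovals $A_j$, and $\int_{A_j}\omega^{\mathcal Z/\vartheta}_{q',p'}=\frac{1}{2\pi\i}\int_{A_j}f_{q'}(z)Q(z,w)g_{p'}(z)\d z$ should be shown to vanish using the reality/symmetry structure of $Q$, $f_{q'}$, $g_{p'}$ along the real ovals (all the data are real on $\re\mathcal R$, and $A_j$ is a closed loop on which the integrand is a well-defined single-valued form with no enclosed poles other than possibly $q',p'$, which do not lie on the compact ovals). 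Concretely I would argue that the integrand, restricted to a compact oval $A_j$, is an exact form or that the oval bounds a region in $\mathcal R_0\cup\sigma(\mathcal R_0)$ across which the two sheets cancel by the conjugation symmetry $\sigma$; alternatively, one can use that $Q(z,w)$ summed over the sheets equals the identity \eqref{eq:Q_sum} to collapse the oval integral to an honest residue computation on $\CC$ that picks up no poles. Once the $A$-period vanishing of $\omega^{\mathcal Z/\vartheta}_{q',p'}$ is secured, the difference of the two differentials is a holomorphic form with zero $A$-periods, hence zero, completing the proof.
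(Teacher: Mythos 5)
Your reduction to ``two third-kind differentials with the same poles and residues differ by a holomorphic form'' is fine, and so is the use of Fay's identity to pass between the two displayed lines. The proof breaks at step (iii). You claim that the right-hand side of Fay's identity has vanishing $A$-periods because the $\nabla\log\theta$ term is ``exactly the holomorphic piece that enforces this normalization.'' It is not: since $\int_{A_j}\omega_{p'-q'}=0$ and $\int_{A_j}\omega_i=\delta_{ij}$, the $A_j$-period of the claimed expression is $-\frac{c^{\mathcal Z/\vartheta}_{q',p'}}{2\pi\i}\bigl(\nabla_j\log\theta(t+u(p')-u(q'))-\nabla_j\log\theta(t)\bigr)$, which is generically nonzero. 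Correspondingly, $\omega^{\mathcal Z/\vartheta}_{q',p'}$ itself does \emph{not} have vanishing $A$-periods, so the contour/symmetry argument you sketch cannot succeed: on a compact oval the integral reduces to $\int f(z)\bigl(Q(z,w_1)-Q(z,w_2)\bigr)g(z)\,\d z=\int f(z)\bigl(2Q(z,w_1)-I\bigr)g(z)\,\d z$ over the projected segment, and there is no cancellation. A symptom of the same problem: your scheme never explains why $e=t$ is the correct parameter in Fay's identity --- any choice of $e$ yields a kernel with the same poles and residues, and your (false) $A$-period normalization would make the holomorphic correction vanish and erase the $e$-dependence altogether, contradicting the statement.

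The missing ingredient is the identification of the \emph{zeros} of $\omega^{\mathcal Z/\vartheta}_{q',p'}$, which is what actually pins down the holomorphic ambiguity and forces $e=t$. The form has $2g$ zeros; $g$ of them are the common zeros of the relevant column of $Q$ (visible from the factor $g_{p'}$), i.e.\ the standard divisor $D_{p'}$ of Lemma~\ref{lem:divisor_Q}, whose Abel image is computed via the discrete Abel map to be $u(D_{p'})=t+u(p')+\Delta$. Abel's theorem for the divisor of a meromorphic $1$-form, $2\Delta=u(D_g)+u(D_{p'})-u(q')-u(p')$, then locates the remaining $g$ zeros at $u(D_g)=-t+u(q')+\Delta$. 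These two divisors are exactly the zero divisors of $\theta(u(q)-u(p')-t)$ and $\theta(u(q)-u(q')+t)$, so the form agrees with the theta/prime-form kernel up to a constant, which is fixed by matching residues via \eqref{eq:prime_diagonal}. Without this divisor computation --- which is where the spectral data and the parameter $t$ genuinely enter --- the argument cannot close.
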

\begin{remark}
The interpretation of the second line in the previous proposition is given in~\eqref{eq:nabla_notation}.
\end{remark}
\begin{proof}
From Lemma~\ref{lem:residue_omega} we know that~$\omega_{q',p'}^{\mathcal Z}$ and~$\omega_{q',p'}^{\vartheta}$ have simple poles at~$q'$ and~$p'$ and no other poles, and, hence,~$2g$ zeros. Moreover, we know~$g$ of these zeros. Namely, by~\eqref{eq:g_p}, the common zeros of the first (second) column of~$Q$ are zeros of the~$1$-forms, if~$p'=p_0$ ($p'=p_\infty$). We denote the zero divisor by~$D_{p'}$. This means that we are missing only~$g$ zeros, and the divisor of these zeros can be determined from Abel's theorem. 

The divisor of the common zeros of the~$j$th column of~$Q$ is mapped, under the Abel map, to~$-e_{\mathrm b_{0,j-1}}+\Delta$, where~$e_{\mathrm b_{0,j-1}}=-t-\mathbf d(\mathrm b_{0,j-1})$ with~$t\in \RR^g/\ZZ^g$ and~$\mathbf d$ is the discrete Abel map, see Lemma~\ref{lem:divisor_Q}. The discrete Abel map in our setting is discussed in~\cite[Section 5.4]{BB23}, and following that discussion, we get that
\begin{equation*}
\mathbf d(\mathrm b_{0,0})=u(p_0), \quad \text{and} \quad \mathbf d(\mathrm b_{0,1})=2u(p_0)-u(p_\infty)\equiv u(p_\infty).
\end{equation*}
In the last equality we have used that~$2\left(u(p_\infty)-u(p_0)\right)=u((z))\equiv 0$, where~$(z)$ is the divisor of the meromorphic function~$(z,w)\mapsto z$. In particular,~$u(D_{p'})=t+u(p')+\Delta$.

Let~$D_g$ be the divisor of the remaining~$g$ zeros of~$\omega_{q',p'}^{\mathcal Z/\vartheta}$. By Abel's theorem
\begin{equation*}
2\Delta=u(D_g)+u(D_{p'})-u(q')-u(p'),
\end{equation*}
where the right hand side is the image under the Abel map of the zero divisor of~$\omega_{q',p'}^{\mathcal Z/\vartheta}$. Hence,
\begin{equation*}
u(D_g)=-t+u(q')+\Delta.
\end{equation*}

The above discussion concludes that the zeros and poles of the right and left hand side of the first equality in the statement coincide, implying the equality up to a constant. By Lemma~\ref{lem:residue_omega} and~\eqref{eq:prime_diagonal}, the residues coincide as well, so the equality holds.
 
The second equality in the statement is Fay's identity, see~\eqref{eq:fays_identity}.
\end{proof}
Summing up the forms from the previous proposition according to~\eqref{eq:sum_forms}, gives us an alternative expression for~$\mathcal Z$ and~$\vartheta$ using~\eqref{eq:limiting_expression_theta}.
\begin{corollary}\label{cor:origami_t-embedding_theta}
Let~$D_p=p_0-p_\infty$,~$D_q=q_0-q_\infty$, and~$D=p_0+p_\infty-q_0-q_\infty$. The 1-forms~$\omega^{\mathcal Z}$ and~$\omega^{\vartheta}$ can be expressed as
\begin{multline*}
\omega^{\mathcal Z}=\frac{\sqrt{a^2+1}(a+\i)}{2\pi\i}\omega_{D_q}-\frac{\sqrt{a^2+1}(a-\i)}{2\pi\i}\omega_{D_p} \\
+\frac{\sqrt{a^2+1}}{2\pi\i}\left(a\nabla \log\theta(t+u(p_\infty)-u(q_\infty))-a\nabla\log\theta(t+u(p_0)-u(q_0)) \right.\\
\left.+\i\nabla \log\theta(t+u(p_0)-u(q_\infty))-\i\nabla \log\theta(t+u(p_\infty)-u(q_0))\right)\vec{\omega},
\end{multline*}
and
\begin{multline*}
\omega^{\vartheta}=\frac{(a^2+1)}{2\pi\i}\omega_{D} \\
+\frac{(a+\i)}{2\pi\i}\left(a\nabla \log \theta(t+u(p_0)-u(q_0))+a\nabla \log \theta(t+u(p_\infty)-u(q_\infty))-2a\nabla \log\theta(t)\right. \\
\left.-\i\nabla \log\theta(t+u(p_0)-u(q_\infty))-\i\nabla \log \theta(t+u(p_\infty)-u(q_0))+2\i\nabla\log \theta(t)\right)\vec{\omega}.
\end{multline*}
In particular,~$\vartheta$ is real-valued if and only if
\begin{multline*}
\nabla \log\theta(t+u(p_0)-u(q_\infty))+\nabla \log\theta(t+u(p_\infty)-u(q_0)) \\
-\nabla \log\theta(t+u(p_0)-u(q_0))-\nabla \log\theta(t+u(p_\infty)-u(q_\infty))=0.
\end{multline*}
\end{corollary}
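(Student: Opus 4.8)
The plan is to obtain the two displayed expressions for $\omega^{\mathcal Z}$ and $\omega^{\vartheta}$ by directly summing the formula of Proposition~\ref{prop:form_theta} over the four pairs $(q',p')\in\{q_0,q_\infty\}\times\{p_0,p_\infty\}$ according to~\eqref{eq:sum_forms}, and then to read off the reality criterion from the resulting holomorphic part. For the summation I would use the second (Fay) form in Proposition~\ref{prop:form_theta}, $\omega^{\mathcal Z/\vartheta}_{q',p'}=-\tfrac{c^{\mathcal Z/\vartheta}_{q',p'}}{2\pi\i}\omega_{p'-q'}-\tfrac{c^{\mathcal Z/\vartheta}_{q',p'}}{2\pi\i}(\nabla\log\theta(t+u(p')-u(q'))-\nabla\log\theta(t))\vec{\omega}$, together with the explicit residues $c^{\mathcal Z/\vartheta}_{q',p'}$ tabulated in Lemma~\ref{lem:residue_omega} and the additivity $\omega_{D_1}+\omega_{D_2}=\omega_{D_1+D_2}$ of normalized third-kind differentials recalled in Section~\ref{sec:spectral_curve}. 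The third-kind parts then collapse: for $\mathcal Z$ the combination $a\,\omega_{p_0-q_0}+\i\,\omega_{p_\infty-q_0}-\i\,\omega_{p_0-q_\infty}-a\,\omega_{p_\infty-q_\infty}$ rewrites, via additivity, as $(a-\i)\omega_{D_p}-(a+\i)\omega_{D_q}$, which after the prefactor $-\tfrac{\sqrt{a^2+1}}{2\pi\i}$ yields the first two terms of $\omega^{\mathcal Z}$; for $\vartheta$ the analogous combination collapses to $-(a^2+1)\omega_D$, giving $\tfrac{a^2+1}{2\pi\i}\omega_D$. The $\nabla\log\theta(t)$ contributions are governed by $\sum_{q',p'}c^{\mathcal Z}_{q',p'}=0$ and $\sum_{q',p'}c^{\vartheta}_{q',p'}=-2(a^2+1)$, so they cancel for $\omega^{\mathcal Z}$ and produce exactly the $\pm2a$, $\pm2\i$ weighted $\nabla\log\theta(t)$ terms for $\omega^{\vartheta}$; the surviving $\nabla\log\theta(t+u(p')-u(q'))$ terms are the four summands with their coefficients. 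This is bookkeeping that I would record in a short table.

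For the reality criterion I would introduce the anti-holomorphic involution $\sigma(z,w)=(\bar z,\bar w)$ and call a meromorphic $1$-form $\omega$ \emph{real} if $\sigma^{*}\omega=\overline{\omega}$. Two structural facts will be used: first, since $A_0$ and the compact ovals $A_1,\dots,A_g$ are the $\sigma$-fixed real ovals, the normalized holomorphic differentials $\omega_j$ (with real $A$-periods $\delta_{ij}$) and the third-kind differential $\omega_D$ (whose poles $p_0,p_\infty,q_0,q_\infty$ all lie on the real oval $A_0$ with real residues $\pm1$ and zero $A$-periods) are all real forms; second, because $\gamma_q$ is symmetric under $\sigma$ and runs from $q$ to $\bar q$, one has $\sigma\circ\gamma_q=\gamma_q$ with reversed orientation, so for any real form $\omega$,
\[
\overline{\int_{\gamma_q}\omega}=\int_{\gamma_q}\sigma^{*}\omega=\int_{\sigma\circ\gamma_q}\omega=-\int_{\gamma_q}\omega,
\]
that is, $\int_{\gamma_q}\omega\in\i\RR$. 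In particular the $\tfrac{a^2+1}{2\pi\i}\omega_D$ part of $\omega^{\vartheta}$ already integrates to a real number.

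It remains to analyze the holomorphic part $\tfrac{a+\i}{2\pi\i}V\cdot\vec{\omega}$ of $\omega^{\vartheta}$, where $V=aS_1-\i S_2$ and $S_1,S_2$ are the two $\nabla\log\theta$-combinations appearing in the statement (the displayed criterion is precisely $S_2-S_1=0$). I would first check that $S_1,S_2$ are \emph{real} vectors: the angles lie on $A_0$, so with base point in $A_0$ the Abel images $u(p_0),u(p_\infty),u(q_0),u(q_\infty)$ lie in $\RR^g\pmod{\ZZ^g}$, and $t\in\RR^g/\ZZ^g$; since the period matrix $B$ is purely imaginary, both $\theta$ and each component of $\nabla\log\theta$ are real at real arguments, whence $S_1,S_2\in\RR^g$. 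Writing $\int_{\gamma_q}\omega_j=\i\rho_j(q)$ with $\rho_j$ real, the imaginary part of $\vartheta$ equals $\tfrac1{2\pi}\sum_j\im[(a+\i)V_j]\,\rho_j(q)$; as the functions $\rho_1,\dots,\rho_g$ are linearly independent on $\mathcal R_0$ (the $\omega_j$ being linearly independent), $\vartheta$ is real-valued for all $q$ if and only if $\im[(a+\i)V]=0$. Finally $(a+\i)V=a^2S_1+S_2+\i a(S_1-S_2)$ with $S_1,S_2$ real and $a=\sqrt{\alpha_1/\beta_\ell}>0$, so this holds exactly when $S_1=S_2$, i.e. the displayed identity.

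The routine half is the summation of the first paragraph; the substantive part, and the main obstacle, is the reality package: establishing that $\omega_j$ and $\omega_D$ are genuinely real with respect to $\sigma$, the orientation argument giving $\int_{\gamma_q}(\text{real form})\in\i\RR$, the reality of the theta arguments via the purely imaginary $B$, and the linear-independence step needed to pass from ``$\vartheta$ real for every $q$'' to the vector identity $\im[(a+\i)V]=0$. I expect the sign bookkeeping in the conjugation identity and the precise normalization of the Abel map along $A_0$ to demand the most care.
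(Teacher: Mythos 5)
Your proposal is correct and follows essentially the same route as the paper: the displayed formulas for $\omega^{\mathcal Z}$ and $\omega^{\vartheta}$ are obtained exactly as the paper does, by summing Proposition~\ref{prop:form_theta} over the four pairs $(q',p')$ via~\eqref{eq:sum_forms} with the residues of Lemma~\ref{lem:residue_omega} and additivity of the normalized third-kind differentials, and your bookkeeping (the divisor collapse to $(a-\i)D_p-(a+\i)D_q$ and $-(a^2+1)D$, and the residue sums $0$ and $-2(a^2+1)$) checks out. Your second half, proving the reality criterion via the anti-holomorphic involution $\sigma$, the $\sigma$-symmetry of $\gamma_q$ forcing $\int_{\gamma_q}\omega\in\i\RR$ for real forms, the purely imaginary period matrix making the $\nabla\log\theta$ vectors real, and linear independence of the $\omega_j$ to pass to $\im[(a+\i)V]=0$, is a sound and complete justification of a step the paper states without proof, fully consistent with its framework.
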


\begin{remark}
The forms~$D_p$,~$D_q$ and~$D$ in Corollary~\ref{cor:origami_t-embedding_theta} are, in fact, the forms defining~$\d F$, the differential of the action function. It was proven in~\cite{Ber21, BB23, BB24a}, that the limit of the height function is the integral over a certain curve of~$\d F$, hence, these forms define the limit shape of the dimer model. 
\end{remark}

\begin{remark}\label{rem:forms_a_t}
Corollary~\ref{cor:origami_t-embedding_theta} indicates that, in addition to the Riemann surface~$\mathcal R$ and its marked points, the functions~$\mathcal Z$ and~$\vartheta$ are determined from the two parameters~$a$ and~$t$. Note that these parameters are not necessarily independent. A natural question is whether the converse holds. That is, given a maximal surface described by the parametrization in~\eqref{eq:limiting_expression_theta} and Corollary~\ref{cor:origami_t-embedding_theta} -- or more generally, given a space-like maximal surface with boundary points~$P_{0,j}$,~$j=1,\dots,4$, and cusps with apexes~$P_i$,~$j=1,\dots,g$, arising as the limit of a t-surface defined from the Coulomb gauge functions in Proposition~\ref{prop:FG} -- are the parameters $a$ and $t$ uniquely determined? It is clear from~\eqref{eq:boundary_points_2xell} that the parameter~$a$ is determined from the boundary points~$P_{0,j}$. We conjecture that the cusp locations~$P_i$ uniquely determine the parameter~$t$.
\end{remark}

\subsection{Maximal surface in the Minkowski space~$\RR^{2,2}$}\label{sec:max_surf_2xell}
In this section we will show that~$(\cT,\cO)$ converges to a maximal surface with cusps in the Minkowski space~$\RR^{2,2}$. In contrast to Section~\ref{sec:frozen}, as well as~\cite{BNR23, BNR24}, the liquid region is not simply connected, meaning that the arguments developed in~\cite{BNR23, BNR24} do not apply. However, after some natural adaptations, much of the theory can be recovered. 

One essential difference is that~$\mathcal R_0$ plays the role of the upper~$\mathbb H^-$. Moreover, the functions~$f$ and~$g$ are vector-valued, in contrast to~$f$ and~$g$ in Section~\ref{sec:frozen}. To adapt the proofs, we use the scalar valued functions~$F(z,w)=f(z)u(z,w)$ and~$G(z,w)=v^T(z,w)g(z)$ where~$u$ and~$v$ are meromorphic vectors such that~$uv^t=Q$. For instance, we may take~$u$ as the first column of~$Q$, and~$v^T$ as the first row vector of~$Q$ divided by the inner product of the two.

In~\cite{BNR23, BNR24}, the location of the zeros away from the real line of~$f$ and~$g$ (here~$F$ and~$G$) where used to prove that the surface is space-like. Here, we do not have that information, and the following lemma will be used instead. The proof is an adaptation of the proof of~\cite[Lemma 3.1]{Ber21}.
\begin{lemma}\label{lem:eigenvectors}
For~$z\in \CC\backslash \RR$, let~$u=(u_1,u_2)$ and~$v=(v_1,v_2)$ be right and left eigenvectors of~$\Phi(z)$, respectively, associated with the same eigenvalue. Then  
\begin{equation*}
\im(u_1\overline{u_2})\im \left(\frac{v_1\overline{v_2}}{z}\right)>0.
\end{equation*}
\end{lemma}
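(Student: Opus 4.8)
The plan is to follow the strategy of the proof of \cite[Lemma 3.1]{Ber21}, exploiting the special structure of the transition matrices~$\phi_m$ in~\eqref{eq:transition_matrix_2xell}. Recall that~$\Phi=\prod_{m=1}^\ell\phi_{2m-1}\phi_{2m}$ and that~$\det\phi_{2m-1}(z)=1-\alpha_m^{-1}z^{-1}$ while~$(1-z^{-1})\phi_{2m}(z)$ has determinant~$1-\beta_m^{-1}z^{-1}$, so~$\det\Phi(z)=1$. Since~$u$ and~$v$ are right and left eigenvectors of~$\Phi(z)$ associated with the same eigenvalue, the key algebraic fact I would use is that the quantity~$\im(u_1\overline{u_2})$ transforms in a controlled, sign-definite way as one applies the factors~$\phi_m$ one at a time. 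Concretely, I would track how the Hermitian form~$\im(u_1\overline{u_2})$ (equivalently~$\tfrac{1}{2\i}(u_1\overline{u_2}-\overline{u_1}u_2)$) changes under~$u\mapsto \phi_m(z)^{-1}u$ as we peel off factors from~$\Phi$, and do the analogous computation for~$v\mapsto v\phi_m(z)$, the left multiplication.

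The main steps, in order, are as follows. First I would reduce to a single quadratic form: writing~$u^{(m)}$ for the vector obtained by applying the first~$m$ inverse factors, compute the difference~$\im(u^{(m)}_1\overline{u^{(m)}_2})-\im(u^{(m-1)}_1\overline{u^{(m-1)}_2})$ explicitly using the~$2\times 2$ matrices in~\eqref{eq:transition_matrix_2xell}. For the factor~$\phi_{2m-1}$ one has
\begin{equation*}
\phi_{2m-1}(z)^{-1}=\frac{1}{1-\alpha_m^{-1}z^{-1}}
\begin{pmatrix}
1 & -\alpha_m^{-1}z^{-1}\\
-\alpha_m & 1
\end{pmatrix},
\end{equation*}
and a direct expansion shows the increment is a real multiple (with a definite sign, controlled by~$\im(z^{-1})$ hence by~$\im z$) of~$|u_1|^2$, $|u_2|^2$, or~$\im(u_1\overline{u_2})$ itself. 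Doing the same for~$\phi_{2m}$ and for the left action on~$v$, I would verify that every increment has the same sign when~$\im z<0$ (and the opposite sign when~$\im z>0$), so that the signs of~$\im(u_1\overline{u_2})$ and of~$\im(v_1\overline{v_2}/z)$ are each monotone along the telescoping product. Second, I would pin down the base case: at the point where~$u$ and~$v$ are eigenvectors of a single explicit~$2\times2$ factor, or by using the eigenvector relation~$\Phi u=wu$, $v\Phi=wv$ together with~$w\overline{w}=|w|^2$ and~$\det\Phi=1$, one can evaluate the product~$\im(u_1\overline{u_2})\,\im(v_1\overline{v_2}/z)$ directly and read off its sign.

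The cleanest route is likely to avoid peeling factors one by one and instead argue directly from the eigen-relations: from~$\Phi(z)u=wu$ and~$v\Phi(z)=wv$ one gets, upon conjugating and using that~$\Phi(\bar z)=\overline{\Phi(z)}$ with eigenvalue~$\bar w$, a relation between the Hermitian pairing of~$u$ with the conjugate eigenvectors. I would compute~$v\,\overline{\Phi(z)}^{\,t}$-type pairings and use the identity~$w\overline w=|w|^2$ to isolate an expression of the form~$(|w|^2-1)$ (or~$\im w$) times the desired product; since~$\det\Phi=1$ forces the two eigenvalues to be reciprocal and, for~$z\notin\RR$ in~$\mathcal R_0$, the relevant eigenvalue satisfies~$|w|\neq1$, this yields the strict inequality. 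The main obstacle I anticipate is bookkeeping the signs correctly: ensuring that the factor~$1/z$ appearing in the~$v$-statement (reflecting that~$v$ pairs against columns carrying a~$z^{-1}$) is exactly what makes the two imaginary parts co-rotate rather than cancel, and confirming that the strictness does not degenerate, i.e.\ that neither~$u_1\overline{u_2}$ nor~$v_1\overline{v_2}/z$ is real. This last point requires knowing that~$u$ and~$v$ are genuinely complex (not proportional to real vectors) for~$z\in\CC\backslash\RR$, which should follow from~$w\notin\RR$ off the real slice of~$\mathcal R$, or from the explicit structure of~$\phi_m$ combined with~$\im z\neq0$.
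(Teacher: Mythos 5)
Your proposal correctly identifies the relevant reference (the paper adapts the proof of \cite[Lemma 3.1]{Ber21}) and the right general idea of exploiting the factorization $\Phi=\prod\phi_m$, but both concrete routes you sketch have genuine gaps. For the factor-by-factor route: the increment of $\im(u_1\overline{u_2})$ under $u\mapsto\phi_{2m-1}(z)^{-1}u$ is \emph{not} a sign-definite combination of $|u_1|^2$, $|u_2|^2$ and $\im(u_1\overline{u_2})$. A direct expansion gives
\begin{equation*}
\im(u_1'\overline{u_2'})=\frac{1}{|1-z^{-1}|^2}\Bigl((1-\re(z^{-1}))\im(u_1\overline{u_2})+\im(z^{-1})\bigl(\re(u_1\overline{u_2})-\alpha_m^{-1}|u_2|^2\bigr)\Bigr),
\end{equation*}
which involves $\re(u_1\overline{u_2})$; the sign of $\im(u_1\overline{u_2})$ alone is not preserved by a single factor, so there is no telescoping monotonicity. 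The invariant that \emph{is} preserved is two-sided: the ratio $u_1/u_2$ stays in the cone $C_{z^{-1}}$ generated by $1$ and $z^{-1}$, i.e.\ its \emph{argument} is pinched between $0$ and $\arg(z^{-1})$. The paper's proof establishes exactly this, by showing that the M\"obius maps $\psi_m$ associated to the $\phi_m$ preserve $\overline{C_{z^{-1}}}$ (and $\psi_m^T$ preserve $\overline{C_z}$), then invoking Brouwer's fixed point theorem to produce an eigenvector whose ratio lies in the \emph{open} cone; this is also what pins down the base case and the strictness, two points your proposal leaves open. The second eigenvalue is then handled by biorthogonality, placing the corresponding ratios in $-C_{z^{-1}}$.

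Your ``cleanest route'' via eigen-relations also does not go through as stated. Conjugating $\Phi(z)u=wu$ gives $\Phi(\bar z)\overline{u}=\bar w\,\overline{u}$, so $\overline{u}$ is an eigenvector of the \emph{different} matrix $\Phi(\bar z)$; there is no Hermitian orthogonality between $v$ and $\overline{u}$ available, since $\Phi(z)$ is neither normal nor unitary. Moreover the strictness cannot be extracted from ``$|w|\neq 1$ for $z\notin\RR$'': points of the spectral curve with $|w|=1$ and $z\notin\RR$ do occur (the line $\log|w|=0$ meets the amoeba away from its boundary), so that dichotomy fails. The strict inequality in the lemma really comes from the fixed point of the composed M\"obius map lying in the interior of the cone, not from any non-degeneracy of the eigenvalue.
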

\begin{proof}
We will first construct eigenvectors using the construction from~\cite[Lemma 3.1]{Ber21} and prove the statement for those. We will then use the orthogonality of left and right eigenvectors with different eigenvalues to prove the statement in full generality.

For a M\H{o}bius map~$\psi(\xi)=\frac{a\xi+b}{c\xi+d}$, we define 
\begin{equation*}
\psi^T(\xi)=\frac{a\xi+c}{b\xi+d}, \quad \text{and} \quad M_\psi=
\begin{pmatrix}
a & b \\
c & d
\end{pmatrix}.
\end{equation*}
Then,~$M_{\psi_1}M_{\psi_2}=M_{\psi_1\circ\psi_2}$ and~$M_{\psi^T}=M_\psi^T$. Moreover,
\begin{equation}\label{eq:mobius_eigenvector}
M_\psi
\begin{pmatrix}
\xi \\
1
\end{pmatrix}
=
(c\xi+d)
\begin{pmatrix}
\psi(\xi) \\
1
\end{pmatrix},
\end{equation}
and, in particular,~$(\xi,1)^T$ is an eigenvector of~$M_\psi$ if~$\xi$ is an fixed point of~$\psi$, that is, if~$\psi(\xi)=\xi$. 

Let us fix~$z\not \in \RR$. For~$i=1,\dots,\ell$, let
\begin{equation*}
\psi_{2i-1}(\xi)=\frac{\xi+\alpha_i^{-1}z^{-1}}{\alpha_i\xi+1}, \quad \text{and} \quad \psi_{2i}(\xi)=\frac{\xi+\beta_i^{-1}z^{-1}}{\beta_i\xi+1}.
\end{equation*}
Then,
\begin{equation*}
M_\Psi=(1-z^{-1})^{\ell}\Phi(z), \quad \text{and} \quad M_{\Psi^T}=(1-z^{-1})^{\ell}\Phi(z)^T,
\end{equation*}
where~$\Psi=\psi_1\circ\psi_2\circ\dots\circ\psi_{2\ell}$, and~$\Psi^T=\psi_{2\ell}^T\circ\dots\circ\psi_1^T$. We also define~$C_z\subset \CC$ as the cone generated by~$1$ and~$z$, that is,~$C_z=\{\xi\in \CC:0<\arg\xi<\arg z\}$ if~$\im z>0$ and~$C_z=\{\xi\in \CC:-\arg z<\arg\xi<0\}$ if~$\im z<0$. We define~$C_{z^{-1}}$ similarly.

For all~$i=1,\dots,2\ell$, the map~$\psi_i$ takes~$\overline{C_{z^{-1}}}$ to itself and~$\psi_i^T$ takes~$\overline{C_z}$ to itself. Brouwer's fixed point theorem tells us that~$\Psi$ and~$\Psi^T$ have a fixed point in~$\overline{C_{z^{-1}}}$ and~$\overline{C_z}$, respectively, moreover, it is clear that they lie in the interior of respectively sets. We denote the fixed points by~$\xi_0\in C_{z^{-1}}$ and~$\xi_{0,T}\in C_z$, and conclude from~\eqref{eq:mobius_eigenvector} that
\begin{equation*}
\Phi(z)
\begin{pmatrix}
\xi_0\\
1
\end{pmatrix}
=w_0
\begin{pmatrix}
\xi_0\\
1
\end{pmatrix}
\quad \text{and} \quad
\Phi(z)^T
\begin{pmatrix}
\xi_{0,T}\\
1
\end{pmatrix}
=w_{0,T}
\begin{pmatrix}
\xi_{0,T}\\
1
\end{pmatrix},
\end{equation*}
for some eigenvalues~$w_0$ and~$w_{0,T}$. In fact,~$w_0=w_{0,T}$. Indeed,~$\xi_0\xi_{0,T}+1\neq 0$ while
\begin{equation}\label{eq:eigenvectors_orthogonality}
(w_0-w_{0,T})
\begin{pmatrix}
\xi_{0,T} & 1
\end{pmatrix}
\begin{pmatrix}
\xi_0\\
1
\end{pmatrix}
=
\begin{pmatrix}
\xi_{0,T} & 1
\end{pmatrix}
\Phi(z)
\begin{pmatrix}
\xi_0\\
1
\end{pmatrix}
-
\begin{pmatrix}
\xi_{0,T} & 1
\end{pmatrix}
\Phi(z)
\begin{pmatrix}
\xi_0\\
1
\end{pmatrix}
=0.
\end{equation}
This proves the statement for the eigenvectors we have constructed, since~$\xi_0,\xi_{0,T}z^{-1}\in C_{z^{-1}}$, and, hence, it is true for all left and right eigenvectors with eigenvalue~$w_0$. 

Let~$u=(u_1,u_2)$ and~$v=(v_1,v_2)$ be right and left eigenvectors of~$\Phi(z)$ associated with the eigenvalue~$w_0'\neq w_0$. From a similar calculation as in~\eqref{eq:eigenvectors_orthogonality} and since~$w_0'\neq w_0$, we get that
\begin{equation*}
v_1\xi_0+v_2=0, \quad \text{and} \quad u_1\xi_{0,T}+u_2=0.
\end{equation*}
In particular,~$u_1\overline{u_2},v_1\overline{v_2}z^{-1}\in (-C_{z^{-1}})$, which proves the statement.
\end{proof}

The following properties of the parametrization~$q\in \mathcal R_0\mapsto (\mathcal Z(q),\vartheta(q))$ corresponds to~\cite[Lemma 5.2]{BNR24}.
\begin{lemma}\label{lem:parametrization}
Let~$\mathcal Z$ and~$\vartheta$ be as in Definition~\ref{def:limit_surface_2xell}. For any local coordinate~$\zeta\mapsto q(\zeta)$ in the interior of~$\mathcal R_0$, 
\begin{equation}\label{eq:harmonic_2xell}
\partial_{\bar \zeta}\partial_\zeta \mathcal Z(q(\zeta))=0, \quad \partial_{\bar \zeta}\partial_\zeta \mathcal \vartheta(q(\zeta))=0,
\end{equation}
and
\begin{equation}\label{eq:minimal_2xell}
\partial_\zeta\mathcal Z(q(\zeta))\partial_\zeta\overline{\mathcal Z(q(\zeta))}-\partial_\zeta\vartheta(q(\zeta))\partial_\zeta\overline{\vartheta(q(\zeta))}=0.
\end{equation}
Moreover, in the interior of~$\mathcal R_0$, 
\begin{equation}\label{eq:space-like_2xell}
|\d \mathcal Z|^2-|\d \vartheta|^2>0,
\end{equation}
and the inequality becomes an equality on~$\partial \mathcal R_0$.
\end{lemma}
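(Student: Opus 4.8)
The plan is to treat $\mathcal Z$ and $\vartheta$ as primitives of meromorphic $1$-forms integrated along the $\sigma$-symmetric path $\gamma_q$ from $q$ to $\bar q=\sigma(q)$. Writing $\omega^{\mathcal Z}=\frac{1}{2\pi\i}f(z)Q(z,w)g(z)\,\d z$ and $\omega^{\vartheta}=\frac{1}{2\pi\i}f(z)Q(z,w)\bar g(z)\,\d z$, one has $\mathcal Z(q)=2a\sqrt{a^2+1}+\int_{\gamma_q}\omega^{\mathcal Z}$ and $\vartheta(q)=\int_{\gamma_q}\omega^{\vartheta}$. By Lemma~\ref{lem:residue_omega} both forms are holomorphic away from the angles $q_0,q_\infty,p_0,p_\infty$, all of which lie on $A_0\subset\partial\mathcal R_0$, so they are holomorphic in the interior of $\mathcal R_0$. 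Fixing a holomorphic local coordinate $\zeta\mapsto q(\zeta)$ and differentiating the integral at its two endpoints, the lower endpoint $q(\zeta)$ contributes $\partial_\zeta\mathcal Z=-\frac{1}{2\pi\i}f(z)Q(z,w)g(z)\,z'(\zeta)$, which is holomorphic in $\zeta$; hence $\partial_{\bar\zeta}\partial_\zeta\mathcal Z=0$, and the same argument applies to $\vartheta$, proving \eqref{eq:harmonic_2xell}.

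For \eqref{eq:minimal_2xell} I would compute the remaining derivatives via the upper endpoint $\bar q$. Using the chart $\tilde\zeta$ near $\bar q$ determined by $\tilde\zeta\circ\sigma=\overline{\zeta}$ together with $z\circ\sigma=\bar z$, the upper endpoint gives $\partial_{\bar\zeta}\mathcal Z=\frac{1}{2\pi\i}f(z)Q(z,w)g(z)\big|_{\sigma(q)}\,\overline{z'(\zeta)}$, so that $\partial_\zeta\overline{\mathcal Z}=\overline{\partial_{\bar\zeta}\mathcal Z}$. Since the weights are real, $\Phi$ has real coefficients and $\overline{Q(\bar z,\bar w)}=Q(z,w)$; together with $\bar f(z):=\overline{f(\bar z)}$, $\bar g(z)=\overline{g(\bar z)}$ this yields $\partial_\zeta\overline{\mathcal Z}=-\frac{1}{2\pi\i}\bar f(z)Q(z,w)\bar g(z)\,z'$, and a parallel computation gives $\partial_\zeta\overline{\vartheta}=-\frac{1}{2\pi\i}\bar f(z)Q(z,w)g(z)\,z'$. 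The decisive simplification is that at a simple eigenvalue $Q$ has rank one, $Q=uv^{t}$ with $u,v$ right/left eigenvectors of $\Phi(z)$ (this is \eqref{eq:def_Q}, normalized by $\Tr Q=1$). Factoring through $fu$ and $v^{t}g$, the combination
\[
(fQg)(\bar fQ\bar g)-(fQ\bar g)(\bar fQg)=(fu)(v^tg)(\bar fu)(v^t\bar g)-(fu)(v^t\bar g)(\bar fu)(v^tg)
\]
vanishes identically, which is exactly \eqref{eq:minimal_2xell}.

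For the space-like inequality \eqref{eq:space-like_2xell} I would assemble the conformal factor $\lambda^2:=|\d\mathcal Z|^2-|\d\vartheta|^2=|\partial_\zeta\mathcal Z|^2+|\partial_{\bar\zeta}\mathcal Z|^2-|\partial_\zeta\vartheta|^2-|\partial_{\bar\zeta}\vartheta|^2$ from the four derivatives. Writing $Q=uv^{t}$ and separating the contributions at $q$ and $\sigma(q)$, the factors $fu$, $f(\bar z)\bar u$ and $v^{t}g$, $v^{t}\bar g$ decouple, and substituting the explicit forms \eqref{eq:f_def_2xell}--\eqref{eq:g_def_2xell} collapses everything to
\[
\lambda^2=\frac{16(a^2+1)^2\,|z'(\zeta)|^2}{(2\pi)^2\,|z-1|^2}\,\im(u_1\overline{u_2})\,\im\!\Big(\frac{v_1\overline{v_2}}{z}\Big).
\]
The two imaginary parts are precisely the quantities controlled by Lemma~\ref{lem:eigenvectors}, which gives $\im(u_1\overline{u_2})\,\im(v_1\overline{v_2}/z)>0$ for $z\notin\RR$, so $\lambda^2>0$ at interior points with non-real $z$. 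On $\partial\mathcal R_0$ both $z$ and $w$ are real, hence $\Phi(z)$, $w$ and the eigenvectors are real, both imaginary parts vanish, and $\lambda^2=0$, as claimed.

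The main obstacle is the last step, because the interior of $\mathcal R_0$ also meets the real $z$-axis (points with $z<0$ real but $w\notin\RR$), where Lemma~\ref{lem:eigenvectors} does not literally apply since the cone construction in its proof degenerates. I would close this gap by continuity: the product $\im(u_1\overline{u_2})\,\im(v_1\overline{v_2}/z)$ extends continuously across the real-$z$ locus and remains nonzero there, because for such points $w$ is non-real and an eigenvector of the real matrix $\Phi(z)$ for a non-real eigenvalue has genuinely complex, non-proportional coordinates; being positive on the dense complementary set $\{z\notin\RR\}$, it therefore stays positive throughout the interior. Everything else is bookkeeping with the explicit rational functions $f,g,\bar f,\bar g$ and the residue identities already recorded; the conceptual content lies entirely in the rank-one factorization of $Q$ (for conformality) and in Lemma~\ref{lem:eigenvectors} (for the space-like sign).
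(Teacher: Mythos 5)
Your proposal is correct and follows essentially the same route as the paper: harmonicity from the endpoint derivatives of the contour integral of a holomorphic form, conformality from the rank-one factorization $Q=uv^{t}$ (the paper's scalar functions $F=fu$, $G=v^{t}g$), reduction of $|\d\mathcal Z|^2-|\d\vartheta|^2$ to a positive multiple of $\im(u_1\overline{u_2})\,\im(v_1\overline{v_2}/z)$, and Lemma~\ref{lem:eigenvectors} plus a continuity argument across the real-$z$ locus (where the paper likewise notes that a real matrix with a non-real eigenvalue cannot have an eigenvector proportional to a real vector). The only differences are cosmetic: an overall sign convention on the endpoint derivative and working in a general coordinate $\zeta$ rather than the coordinate $z$ itself, neither of which affects the three claims.
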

\begin{proof}
Note that~\eqref{eq:harmonic_2xell} and~\eqref{eq:minimal_2xell} are independent of the choice of local coordinates. Since we consider neighborhoods in the interior of~$\mathcal R_0$, we may, and will, choose~$q=(z,w)\mapsto z$ as the local coordinates.

It is clear from Definition~\ref{def:limit_surface_2xell} that differentiating~$\mathcal Z$ and~$\vartheta$ with respect to~$z$ and then~$\bar z$ is zero. So~\eqref{eq:harmonic_2xell} holds.

Locally, we define~$F(z)=f(z)u(z,w)$ and~$G(z)=v^T(z,w)g(z)$, where~$u$ and~$v^T$ are meromorphic vectors such that~$uv^T=Q$ and so that~$\overline{u(z,w)}=u(\bar z,\bar w)$ and~$\overline{v(z,w)}=v(\bar z,\bar w)$. Recall that~$\bar F(z)=\overline{F(\bar z)}$ and similarly for~$\bar G$. By Defintion~\ref{def:limit_surface_2xell}, we have~$2\pi\i\partial_z\mathcal Z(q(z))=F(z)G(z)$ and~$2\pi\i\partial_z\overline{\mathcal Z(q(z))}=\bar F(z)\bar G(z)$, and similarly for~$\vartheta$, but with~$G$ and~$\bar G$ interchanged. So
\begin{equation*}
(2\pi\i)^2\left(\partial_z\mathcal Z\partial_z\overline{\mathcal Z}-\partial_z\vartheta\partial_z\overline{\vartheta}\right) \\
=F(z)G(z)\bar F(z)\bar G(z)-F(z)\bar G(z)\bar F(z)G(z)=0
\end{equation*}
which proves~\eqref{eq:minimal_2xell}.

In the local coordinate~$(z,w)\mapsto z$, the differentials are expressed as~$\d \mathcal Z(q)=\partial_z\mathcal Z(q(z))\d z+\partial_{\bar z}\mathcal Z(q(z))\d \bar z$ and~$\d \vartheta(q)=\partial_z\vartheta(q(z))\d z+\partial_{\bar z}\vartheta(q(z))\d \bar z$. This yields
\begin{multline*}
|\d \mathcal Z|^2-|\d \vartheta |^2=|\partial_z\mathcal Z\d z|^2+|\partial_{\bar z}\mathcal Z\d z|^2-|\partial_z\vartheta\d z|^2-|\partial_{\bar z}\vartheta\d z|^2 \\
+\left(\partial_z\mathcal Z\d z\overline{\partial_{\bar z}\mathcal Z\d \bar z}-\partial_z\vartheta\d z\overline{\partial_{\bar z}\vartheta\d \bar z}\right)
+\overline{\left(\partial_z\mathcal Z\d z\overline{\partial_{\bar z}\mathcal Z\d \bar z}-\partial_z\vartheta\d z\overline{\partial_{\bar z}\vartheta\d \bar z}\right)},
\end{multline*}
where the second line on the right hand side is zero by~\eqref{eq:minimal_2xell}. The inequality~\eqref{eq:space-like_2xell} is therefore determined from
\begin{equation*}
|\partial_z\mathcal Z|^2+|\partial_{\bar z}\mathcal Z|^2-|\partial_z\vartheta|^2-|\partial_{\bar z}\vartheta|^2.
\end{equation*}
Similarly, as above, we have
\begin{multline}\label{eq:space_like_integrand}
4\pi^2\left(|\partial_z\mathcal Z|^2+|\partial_{\bar z}\mathcal Z|^2-|\partial_z\vartheta|^2-|\partial_{\bar z}\vartheta|^2\right) 
=F(z)G(z)\overline{F(z)}\overline{G(z)}
+F(\bar z)G(\bar z)\overline{F(\bar z)}\overline{G(\bar z)}\\
-F(z)\bar G(z)\overline{F(z)}\overline{\bar G(z)}
-F(\bar z)\bar G(\bar z)\overline{F(\bar z)}\overline{\bar G(\bar z)}
=\left(|F(z)|^2-|F(\bar z)|^2\right)\left(|G(z)|^2-|G(\bar z)|^2\right).
\end{multline}

A direct computation, using that~$\overline{u(z,w)}=u(\bar z,\bar w)$ and~\eqref{eq:f_def_2xell}, shows that
\begin{multline*}
|F(z)|^2-|F(\bar z)|^2=\overline{u(z,w)}^T\left(\overline{f(z)}^Tf(z)-f(\bar z)^T\overline{f(\bar z)}\right)u(z,w) \\
=\frac{2\i\sqrt{\alpha_1\beta_\ell}(a^2+1)^{\frac{3}{2}}}{|z-1|^2}\overline{u(z,w)}^T
\begin{pmatrix}
0 & -1 \\
1 & 0
\end{pmatrix}
u(z,w)
=-\frac{4\sqrt{\alpha_1\beta_\ell}(a^2+1)^{\frac{3}{2}}}{|z-1|^2}\im\left(u_1\overline{u_2}\right),
\end{multline*}
where~$u_1$ and~$u_2$ are the components of~$u$. Similarly, using~\eqref{eq:g_def_2xell},
\begin{equation*}
|G(z)|^2-|G(\bar z)|^2=-\frac{4\sqrt{a^2+1}}{\sqrt{\alpha_1\beta_\ell}}\im\left(\frac{v_1\overline{v_2}}{z}\right),
\end{equation*}
where~$v=(v_1,v_2)$. If~$(z,w)\in \mathcal R_0$,~$\im\left(\frac{v_1\overline{v_2}}{z}\right)\im\left(u_1\overline{u_2}\right)>0$, which proves that~\eqref{eq:space_like_integrand} is strictly positive, and, hence,~$|\d \mathcal Z|-|\d \vartheta|>0$ in~$\mathcal R_0$. Indeed, if~$z\in \CC\backslash \RR$, the inequality is given in Lemma~\ref{lem:eigenvectors}. If~$z\in \RR$ and either~$\frac{v_1\overline{v_2}}{z}$ or~$u_1\overline{u_2}$ is real, we get that~$w\in \RR$, since~$\Phi(z)$ is real. Hence, by continuity, the inequality holds also if~$z\in \RR$ and~$w\in \CC\backslash \RR$, that is, if~$(z,w)\in \mathcal R_0$.

If~$z\to \RR$ such that~$(z,w)\to \partial \mathcal R_0$, it is clear that~\eqref{eq:space_like_integrand} tends to~$0$, which proves that~$|\d \mathcal Z|-|\d \vartheta|\to 0$.
\end{proof}

Recall the definition of~$P_i$ and~$P_{0,j}$ in~\eqref{eq:boundary_points_2xell}. Using Corollary~\ref{cor:origami_t-embedding_theta}, we get that 
\begin{align*}
P_{0,1}=\left(2a\sqrt{a^2+1},0\right), \quad P_{0,2}=\left((a-\i)\sqrt{a^2+1},-(a^2+1)\right) \\
P_{0,3}=(0,0), \quad \text{and} \quad P_{0,4}=\left((a+\i)\sqrt{a^2+1},-(a^2+1)\right),
\end{align*}
which coincide with the boundary points from Remark~\ref{rmk:bdry_points}, as they should. If we want to compute the points~$P_i$,~$i=1,\dots,g$, the same corollary implies that we need to compute 
\begin{equation*}
\frac{1}{2\pi\i}\int_{\gamma_{q'}}\omega_{D_q}, \quad \frac{1}{2\pi\i}\int_{\gamma_{q'}}\omega_{D_p}, \quad \frac{1}{2\pi\i}\int_{\gamma_{q'}}\omega_D, \quad \text{and} \quad \int_{\gamma_{q'}}\vec{\omega},
\end{equation*}
for some~$q'\in A_i$. This can be done, using symmetries of the spectral curve and, for instance,~\cite[Equation (44)]{Ber21}. However, we will not pursue this computation, instead, we refer to Section~\ref{sec:2x2} below, where the point~$P_1$ will be computed in a special case with~$g=1$. See also Remark~\ref{rem:forms_a_t}. We will use the notation~$P_i=(\mathcal Z_i,\vartheta_i)$.

We define~$C_{\operatorname{Romb}}$ as the closed curve in~$\RR^{2,2}$ consisting of the union of line segments connecting~$P_{0,j}$ with~$P_{0,j+1}$,~$j=1,\dots,4$, where~$P_{0,5}=P_{0,1}$. We also define~${\operatorname{Romb}}\subset \CC$ as the region bounded by the projection of~$C_{\operatorname{Romb}}$ to its first coordinate and
\begin{equation*}
S_{\operatorname{Romb}}=\left\{(\mathcal Z(q),\vartheta(q)): q\in \mathcal R_0 \right\}\subset \RR^{2,2}.
\end{equation*}
Recall that~$\mathcal R_0$ does not contain its boundary. So, by definition,~$S_{\operatorname{Romb}}$ does not contain the points~$P_{0,j}$,~$j=1,\dots,$, and~$P_i$,~$i=1,\dots,g$, defined in~\eqref{eq:boundary_points_2xell}.
\begin{theorem}\label{thm:maximal_surface_2xell}
Let~$P_i$,~$i=1,\dots,g$,~$\operatorname{Romb}$,~$C_{\operatorname{Romb}}$,~$S_{\operatorname{Romb}}$ be defined above, and~$\mathcal Z$ and~$\vartheta$ be given in Definition~\ref{def:limit_surface_2xell}. The surface~$S_{\operatorname{Romb}}$ is a space-like maximal surface in~$\RR^{2,2}$ with boundary~$C_{\operatorname{Romb}}$ and~$g$ cusps with apex~$P_i$,~$i=1,\dots,g$. Moreover, the map~$\mathcal Z:\mathcal R_0\to {\operatorname{Romb}}^\circ\backslash \{\mathcal Z_i\}_{i=1}^g$ is an orientation reversing diffeomorphism.
\end{theorem}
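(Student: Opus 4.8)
The plan is to build the statement out of the infinitesimal data already recorded in Lemma~\ref{lem:parametrization} together with a global degree argument, treating in turn the four assertions: space-like maximality, the boundary, the cusps, and the diffeomorphism property. The point is that Lemma~\ref{lem:parametrization} supplies everything needed \emph{locally}: in any interior coordinate the map $q\mapsto(\mathcal Z(q),\vartheta(q))$ is harmonic by~\eqref{eq:harmonic_2xell}, conformal with respect to the $\RR^{2,2}$ metric $|\d\mathcal Z|^2-|\d\vartheta|^2$ by~\eqref{eq:minimal_2xell}, and space-like by~\eqref{eq:space-like_2xell}. A conformal harmonic space-like map has vanishing mean curvature, so $S_{\operatorname{Romb}}$ is maximal as soon as it is an immersion; the immersion property will follow for free from the Jacobian computation below, since a nonvanishing Jacobian of $\mathcal Z$ already forces $\d(\mathcal Z,\vartheta)$ to have rank two.

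The heart of the local analysis is the sign of the Jacobian of $\mathcal Z$ in the coordinate $(z,w)\mapsto z$. Using $2\pi\i\,\partial_z\mathcal Z=F(z)G(z)$ together with $\partial_{\bar z}\mathcal Z=-\tfrac{1}{2\pi\i}F(\bar z)G(\bar z)$ (both read off from the proof of Lemma~\ref{lem:parametrization}), the Jacobian is a positive multiple of $|F(z)|^2|G(z)|^2-|F(\bar z)|^2|G(\bar z)|^2$. I would then reuse the factorization from the space-like computation, writing $|F(z)|^2-|F(\bar z)|^2$ and $|G(z)|^2-|G(\bar z)|^2$ in terms of $\im(u_1\overline{u_2})$ and $\im(v_1\overline{v_2}/z)$: by Lemma~\ref{lem:eigenvectors} these two quantities have the same nonzero sign throughout $\mathcal R_0$, so each has constant sign on the connected set $\mathcal R_0$, and hence $|F(z)G(z)|$ and $|F(\bar z)G(\bar z)|$ are ordered consistently. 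This shows the Jacobian of $\mathcal Z$ is nowhere zero and of constant sign on $\mathcal R_0$; evaluating at one convenient point (or reading it off from the boundary correspondence) fixes the sign to be negative, so $\mathcal Z$ is an orientation-reversing local diffeomorphism.

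Next I would study $\mathcal Z$ on $\partial\mathcal R_0=A_0\cup A_1\cup\dots\cup A_g$. On each arc $A_{0,j}$ the curve $\gamma_q$ degenerates to a loop and the residue computations of Lemma~\ref{lem:residue_omega} and Corollary~\ref{cor:origami_t-embedding_theta} show that $\mathcal Z$ carries $A_{0,j}$ homeomorphically onto the edge of $\operatorname{Romb}$ joining the $\mathcal Z$-coordinates of $P_{0,j}$ and $P_{0,j+1}$, so $A_0$ is sent once around $\partial\operatorname{Romb}$; meanwhile each compact oval $A_i$ is collapsed to the single point $\mathcal Z_i$, the limit in~\eqref{eq:boundary_points_2xell} being independent of the approach point because $\omega^{\mathcal Z}$ has no poles on $A_i$. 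This produces a continuous extension $\overline{\mathcal R_0}\to\overline{\operatorname{Romb}}$ sending $\partial\mathcal R_0$ into $\partial\operatorname{Romb}\cup\{\mathcal Z_i\}$, from which $\mathcal Z:\mathcal R_0\to\operatorname{Romb}^\circ\setminus\{\mathcal Z_i\}$ is proper. A proper orientation-reversing local diffeomorphism onto the connected surface $\operatorname{Romb}^\circ\setminus\{\mathcal Z_i\}$ is a covering map, and its number of sheets equals the degree, which I would compute from the boundary winding: the ovals $A_i$ contribute nothing since they map to points, while $A_0$ winds once about any interior non-puncture point, so the degree is one and $\mathcal Z$ is a diffeomorphism. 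For the cusps, since both $\mathcal Z\to\mathcal Z_i$ and $\vartheta\to\vartheta_i$ as $q\to A_i$, the whole oval collapses to $P_i=(\mathcal Z_i,\vartheta_i)$ in $S_{\operatorname{Romb}}$, and the induced metric degenerates there because the inequality~\eqref{eq:space-like_2xell} becomes an equality on $\partial\mathcal R_0$; this is precisely a cusp with apex $P_i$, its light-like character being the separate content of Proposition~\ref{prop:cusp_light-like}.

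I expect the genuine obstacle to be the global step. In the simply connected settings of~\cite{BNR23,BNR24} the target is a disc, so degree one is essentially automatic, whereas here both domain and target are $(g+1)$-connected planar domains and I must actively exclude a covering of degree greater than one, justify properness uniformly up to the ovals, and control the boundary degeneration as $q\to A_i$. This is exactly where the multiply connected topology enters, and the arguments of~\cite{BNR24} will have to be adapted rather than merely quoted.
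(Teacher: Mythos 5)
Your overall architecture---local data from Lemma~\ref{lem:parametrization}, a boundary analysis, and a global degree argument---is the same as the paper's, which cuts~$\mathcal R_0$ along the $B$-cycles and invokes the argument principle for harmonic functions where you use a proper-covering-map argument; these are interchangeable. Your explicit verification that the Jacobian of~$\mathcal Z$ is nowhere zero and of constant sign on~$\mathcal R_0$ (from the fact that $|F(z)|^2-|F(\bar z)|^2$ and $|G(z)|^2-|G(\bar z)|^2$ have the same nonzero sign by Lemma~\ref{lem:eigenvectors}, so that $|F(z)G(z)|$ and $|F(\bar z)G(\bar z)|$ are strictly ordered) is a genuinely useful supplement: the argument principle for harmonic mappings requires exactly this sense-reversing property, and the paper leaves it implicit in the proof of~\eqref{eq:space-like_2xell}.

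There is, however, one concrete error: your description of the boundary correspondence is backwards. By~\eqref{eq:boundary_points_2xell} and Corollary~\ref{cor:frozen_gas_2xell}, each open arc~$A_{0,j}$ of the non-compact oval collapses to the \emph{single vertex}~$P_{0,j}$ of the rhombus---exactly as each compact oval~$A_i$ collapses to~$P_i$---because $\omega^{\mathcal Z}$ and~$\omega^{\vartheta}$ have no poles on~$A_{0,j}$, so the defining integral over~$\gamma_q$ is locally constant there; it does not map~$A_{0,j}$ homeomorphically onto an edge. The four edges of~$C_{\operatorname{Romb}}$ are instead swept out at the four angles~$p_0,p_\infty,q_0,q_\infty$, where the forms have the simple poles computed in Lemma~\ref{lem:residue_omega}: as~$q$ approaches such a point along a ray, the integral picks up a fraction of the residue proportional to the angle of approach, so~$(\mathcal Z,\vartheta)$ interpolates affinely between the two adjacent vertices as the direction of approach rotates. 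In particular there is no continuous extension of~$\mathcal Z$ to~$\overline{\mathcal R_0}$ (the cluster set at each angle is a whole closed edge), so your properness and winding-number steps cannot be routed through such an extension; they must be argued via cluster sets, or, as in the paper, by tracking the image of a loop in~$\mathcal R_0$ hugging the boundary, which lingers near the vertex~$P_{0,j}$ while running along~$A_{0,j}$ and traverses an edge while rounding each angle, hence goes around~$C_{\operatorname{Romb}}$ exactly once. With this correction the ovals still contribute zero to the degree, the outer boundary still contributes one, and your covering-map argument closes as intended.
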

\begin{proof}
Let us first study the behavior of~$(\mathcal Z(q), \vartheta(q))$ as~$q\to q'\in \partial \mathcal R_0$ along some ray. Recall, if~$q'\in A_{0,j}$, for some~$j=1,\dots 4$, then the limit is~$P_{0,j}$ and if~$q'\in A_i$, for some~$i=1,\dots,g$, the limit is~$P_i$, see~\eqref{eq:boundary_points_2xell}. If~$q'$ lies on the boundary of some~$A_{0,j}$, that is, if~$q' \in \{p_0, p_\infty, q_0, q_\infty\}$, then~$(\mathcal Z(q),\vartheta(q))$ depends approximately linearly on the angle between the ray approaching~$q'$ and the boundary of~$\mathcal R_0$. This follows from the integral representation in Definition~\ref{def:limit_surface_2xell}: approaching~$q'$ along a ray corresponds to integrating over a portion of a shrinking contour encircling~$q'$, with the proportion determined by the angle at which the ray intersects~$q'$. To leading order, the contribution of such an integral is given by this fraction times the residue at~$q'$. As a result, both~$\mathcal Z$ and~$\vartheta$ behave, to leading order, like affine functions of the angle~$\theta$; that is, they take the form~$a\theta+b$ for suitable constants~$a$ and~$b$.

The above discussion implies that as we traverse around the outer boundary of~$\mathcal R_0$,~$(\mathcal Z,\vartheta)$ (or rather its limit as we traverse a loop very close to the boundary) traverses~$C_{\operatorname{Romb}}$ once. Recall Equation~\eqref{eq:harmonic_2xell}, which means that~$\mathcal Z$ is harmonic. After cutting~$\mathcal R_0$ along the B-cycles so the resulting region is simply connected, the argument principle for harmonic functions shows that~$\mathcal Z:\mathcal R_0\to \operatorname{Romb}\backslash \{\mathcal Z_i\}_{i=1}^g$ is a bijection.

The argument above shows that the boundary of~$S_{\operatorname{Romb}}$ consists of~$C_{\operatorname{Romb}}\cup\{P_i\}_{i=1}^g$. Moreover, as we traverse the boundary, the orientation is reversed. To see this, we note that if we follow the boundary from~$p_\infty$ to~$q_\infty$, and so on,~$\mathcal R_0$ lies to the left, while if we traverse along~$C_{\operatorname{Romb}}$ from~$P_{0,1}$ to~$P_{0,2}$ and so on, the surface~$S_{\operatorname{Romb}}$ is to the right.

The above says that~$q\mapsto (\mathcal Z(q),\vartheta(q))$ is a parametrization of the surface~$S_{\operatorname{Romb}}$. The equations~\eqref{eq:harmonic_2xell} and~\eqref{eq:minimal_2xell}, say that this is a harmonic and conformal parametrization, so~$S_{\operatorname{Romb}}$ is a maximal surface, that is, it is locally a surface area maximizer. Moreover,~\eqref{eq:space-like_2xell} says that the surface is space-like.
\end{proof}

The previous proposition tells us that~$S_{\operatorname{Romb}}$ is equal to the graph~$\{\left(z,\vartheta\circ\mathcal Z^{-1}(z)\right)\in \RR^{2,1}: z\in {\operatorname{Romb}}\}$. Note, $z\in \operatorname{Romb}$ should not be confused with~$\mathcal R\ni(z,w)\to z$. Theorem~\ref{thm:main_asymptotic_2xell} now implies the following asymptotic result. See~\cite[Corollary 5.5]{BNR23} and~\cite[Corollary 5.13]{BNR24} for details of the proof.
\begin{corollary}
The origami maps converge
\begin{equation*}
\cO(z)\to \vartheta\circ\mathcal Z^{-1}(z)
\end{equation*}
uniformly on compact subsets of~${\operatorname{Romb}}$ as~$N\to \infty$.
\end{corollary}

We end this section by discussing the cusps in~$S_{\operatorname{Romb}}$. We will study the cusp as~$\mathcal Z\to \mathcal Z_i$ along rays. Recall that if~$q=(z,w)$, then~$F$ and~$G$ are defined so that~$f(z)Q(z,w)g(z)=F(q)G(q)$.

Consider a local coordinate~$\zeta$, for~$q$ near a point~$q'=(z',w')$ in~$A_i$, such that~$\zeta$ is real if and only if~$q \in A_i$. Then, by Taylor expanding~$\mathcal Z$ as given in Definition~\ref{def:limit_surface_2xell} around~$q'$, or more precisely, around a point in~$\mathcal R_0$ arbitrary close to~$q'$,
\begin{equation*}
\mathcal Z(q)-\mathcal Z_i=F(q')G(q')(\zeta-\bar \zeta)+\Ordo\left((\zeta-\bar \zeta)^2\right).
\end{equation*}
This shows that the angle of the ray on which~$\mathcal Z$ is approaching~$\mathcal Z_i$ is determined from the argument of~$F(q')G(q')$. Let us denote rays at~$\mathcal Z_i$ in~$\operatorname{Romb}$ by~$r_{i,q'}$ for~$i=1,\dots, g$ and~$q'\in A_i$. 

The following proposition follows from Lemma~\ref{lem:parametrization}.
\begin{proposition}\label{prop:cusp_light-like}
The cusps of the surface~$S_{\operatorname{Romb}}$ are light-like. In particular, as~${r_{i,q'}\ni z\to \mathcal Z_i}$, 
\begin{equation*}
\vartheta\circ\mathcal Z^{-1}(z)-\vartheta_i=\frac{\overline{G} (q')}{G(q')}(z-\mathcal Z_i)+\Ordo\left((z-\mathcal Z_i)^2\right),
\end{equation*}
where
\begin{equation*}
\left|\frac{\overline{G}(q')}{G(q')}\right|=1.
\end{equation*}
\end{proposition}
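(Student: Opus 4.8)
The plan is to read off the leading-order behaviour of both $\mathcal Z$ and $\vartheta$ as $q$ approaches a point $q'=(z',w')\in A_i$, eliminate the local coordinate to extract the slope of $\vartheta\circ\mathcal Z^{-1}$ at the apex, and then recognize that slope as having modulus one. The structural observation that makes this work is the scalar reformulation from the start of this section: writing $Q=uv^T$ and $F(q)=f(z)u(z,w)$, $G(q)=v^T(z,w)g(z)$, and using $\overline{v(z,w)}=v(\bar z,\bar w)$ to get $v^T(z,w)\bar g(z)=\bar G(z)$, the two defining $1$-forms of Definition~\ref{def:limit_surface_2xell} become
\begin{equation*}
\frac{1}{2\pi\i}f(z)Q(z,w)g(z)=\frac{1}{2\pi\i}F(q)G(q),\qquad \frac{1}{2\pi\i}f(z)Q(z,w)\bar g(z)=\frac{1}{2\pi\i}F(q)\bar G(q).
\end{equation*}
Thus $\vartheta$ arises from $\mathcal Z$ by the single replacement $G\mapsto\bar G$ in the integrand, which is precisely what lets the two Taylor expansions be compared.

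First I would run the Taylor-expansion argument already recorded just above the proposition (for $\mathcal Z$) verbatim for $\vartheta$. In the local coordinate $\zeta$, which is real iff $q\in A_i$, this yields
\begin{equation*}
\mathcal Z(q)-\mathcal Z_i=F(q')G(q')(\zeta-\bar\zeta)+\Ordo\big((\zeta-\bar\zeta)^2\big),\qquad \vartheta(q)-\vartheta_i=F(q')\bar G(q')(\zeta-\bar\zeta)+\Ordo\big((\zeta-\bar\zeta)^2\big),
\end{equation*}
the second expansion following from the first because the integrand changes only by $G\mapsto\bar G$. Dividing the two expansions cancels the common factor $F(q')(\zeta-\bar\zeta)$ (this uses $F(q')\neq 0$ and $G(q')\neq0$), and letting $\zeta\to\bar\zeta$, i.e.\ $q\to q'$, gives $(\vartheta(q)-\vartheta_i)/(\mathcal Z(q)-\mathcal Z_i)\to \bar G(q')/G(q')$. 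Since $\mathcal Z:\mathcal R_0\to\operatorname{Romb}^\circ\setminus\{\mathcal Z_i\}_{i=1}^g$ is a diffeomorphism by Theorem~\ref{thm:maximal_surface_2xell}, I set $z=\mathcal Z(q)$ and convert the error via $\zeta-\bar\zeta\sim (z-\mathcal Z_i)/(F(q')G(q'))$ to obtain the claimed expansion of $\vartheta\circ\mathcal Z^{-1}$ along the ray $r_{i,q'}$.

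It then remains to verify $|\bar G(q')/G(q')|=1$ and to interpret this as light-likeness, which is where Lemma~\ref{lem:parametrization} enters. Because $A_i\subset\re\mathcal R$ is fixed by the involution $\sigma(z,w)=(\bar z,\bar w)$, the point $q'$ satisfies $q'=\bar q'$, whence $\bar G(q')=\overline{G(\bar q')}=\overline{G(q')}$ and $|\bar G(q')/G(q')|=1$. With respect to the Minkowski metric on $\RR^{2,2}$ in the coordinates $(\mathcal Z,\vartheta)$, a direction $(\d\mathcal Z,\d\vartheta)$ is null exactly when $|\d\mathcal Z|^2-|\d\vartheta|^2=0$; the expansion gives $\d\vartheta=(\bar G(q')/G(q'))\,\d\mathcal Z$ to leading order, so $|\d\mathcal Z|^2-|\d\vartheta|^2=0$ and the tangent at the apex is light-like. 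This is exactly the boundary degeneration $|\d\mathcal Z|^2-|\d\vartheta|^2\to0$ on $\partial\mathcal R_0$ established in \eqref{eq:space-like_2xell}, specialized to the oval $A_i$ whose image is $P_i$.

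I expect the only genuinely delicate points to be the treatment of the pinching contour $\gamma_q$ as $q\to q'$ (so that $\mathcal Z_i$ and $\vartheta_i$ are correctly identified with the full-loop integrals and the arc contribution gives the stated linear term) and the handling of the finitely many exceptional $q'\in A_i$ where $F$ or $G$ vanishes, coming from the column zero-divisors of $Q$ in Lemma~\ref{lem:divisor_Q}. At such points the leading term in the ratio degenerates and one must pass to the next order; however, since $q'\mapsto \bar G(q')/G(q')=e^{-2\i\arg G(q')}$ is continuous across a simple zero of $G$ on the real oval (the phase jump of $\pi$ in $\arg G$ doubles to $2\pi$), the modulus-one slope — and hence light-likeness — persists by continuity along $A_i$. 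Everything else reduces to the algebraic identity $\bar G(q')=\overline{G(q')}$ valid on the real locus.
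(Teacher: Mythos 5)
Your proposal is correct and follows essentially the same route as the paper: the expansion of $\vartheta\circ\mathcal Z^{-1}$ comes from the Taylor expansions of $\mathcal Z$ and $\vartheta$ near $q'\in A_i$ (which differ only by $G\mapsto\bar G$ in the integrand), and light-likeness is the boundary degeneration $|\d\mathcal Z|^2-|\d\vartheta|^2\to 0$ of Lemma~\ref{lem:parametrization}, equivalently $|\overline{G}(q')/G(q')|=1$ since $q'=\bar q'$ on the real oval. Your extra care about the pinching contour and the exceptional zeros of $F$ and $G$ on $A_i$ is a reasonable refinement of details the paper leaves implicit, but it does not change the argument.
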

\begin{proof}
According to Lemma~\ref{lem:parametrization},~$|d \cT|^2-|d \cO|^2\to 0$ as we approach~$\partial R_0$, in particular, as we approach~$A_i$. This tells us that the cusps are light-like.

The behavior of~$\vartheta\circ \mathcal Z^{-1}$ as~$z\to \mathcal Z_i$ follows from a Taylor expansion.
\end{proof}
It is natural to ask if these cusps locally live in a lower dimensional subspace of~$\RR^{2,2}$. We address this question in Proposition~\ref{prop:cusps_2x2} below.

\section{The two-periodic Aztec diamond}\label{sec:2x2}
  \begin{figure}
        \subfloat[`weights 1']{%
             \includegraphics[width=.45\linewidth]{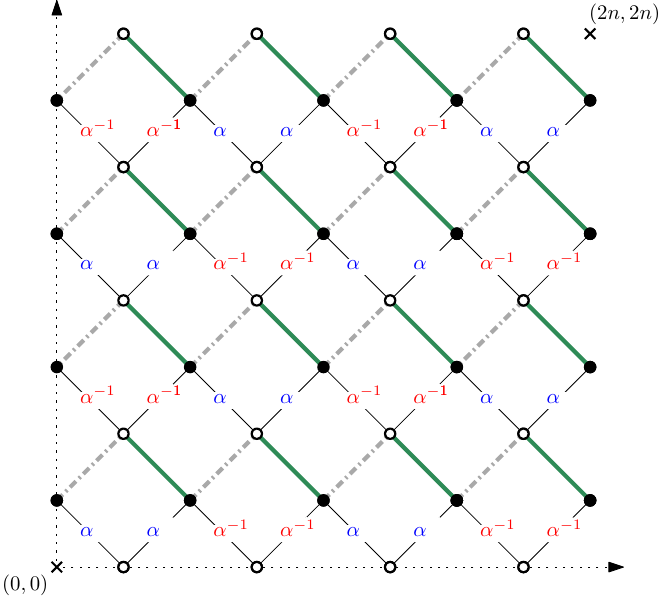}%
            \label{subfig:a}%
        }\hfill
        \subfloat[`weights 2']{%
            \includegraphics[width=.45\linewidth]{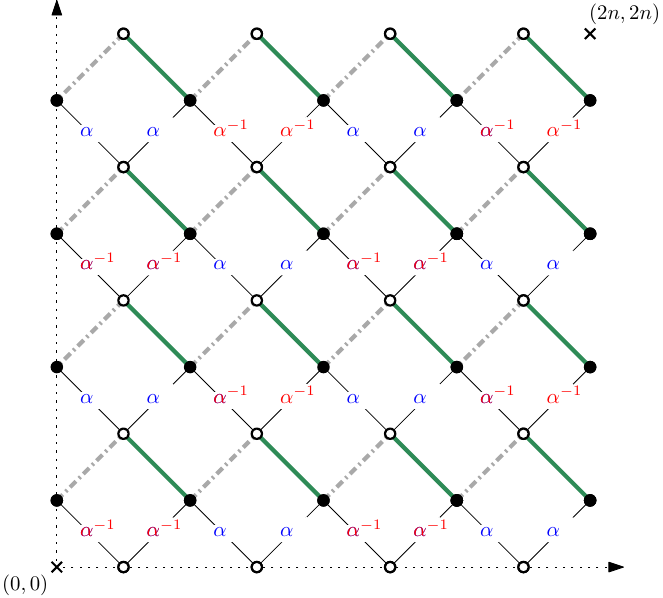}%
            \label{subfig:b}%
        }\\
        \subfloat[`weights 3']{%
             \includegraphics[width=.45\linewidth]{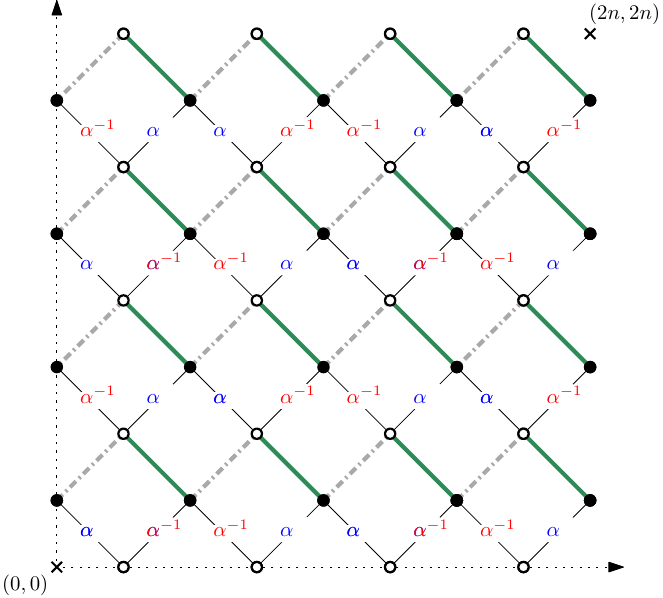}%
            \label{subfig:c}%
        }\hfill
        \subfloat[`weights 4']{%
          \includegraphics[width=.45\linewidth]{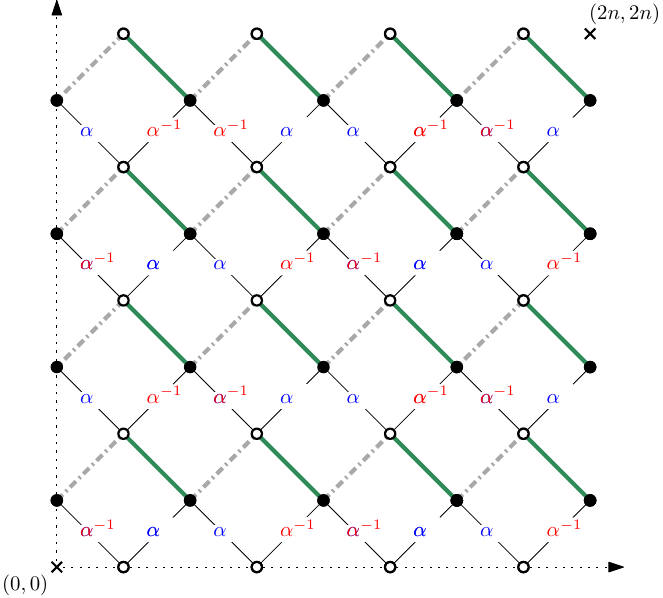}%
            \label{subfig:d}%
        }
        \caption{Four different~$(2\times 2)$-periodic weights differs from each other by a shift. Kasteleyn weights on all green edges are~$-1$, on all dashed-dotted grey ones are~$1$, and on all other edges are as marked, where~$0<\alpha<1$.} 
        \label{fig:weights_2_by_2}
    \end{figure}

    \begin{figure}
        \subfloat[weights 1]{%
             \includegraphics[width=.48\linewidth]{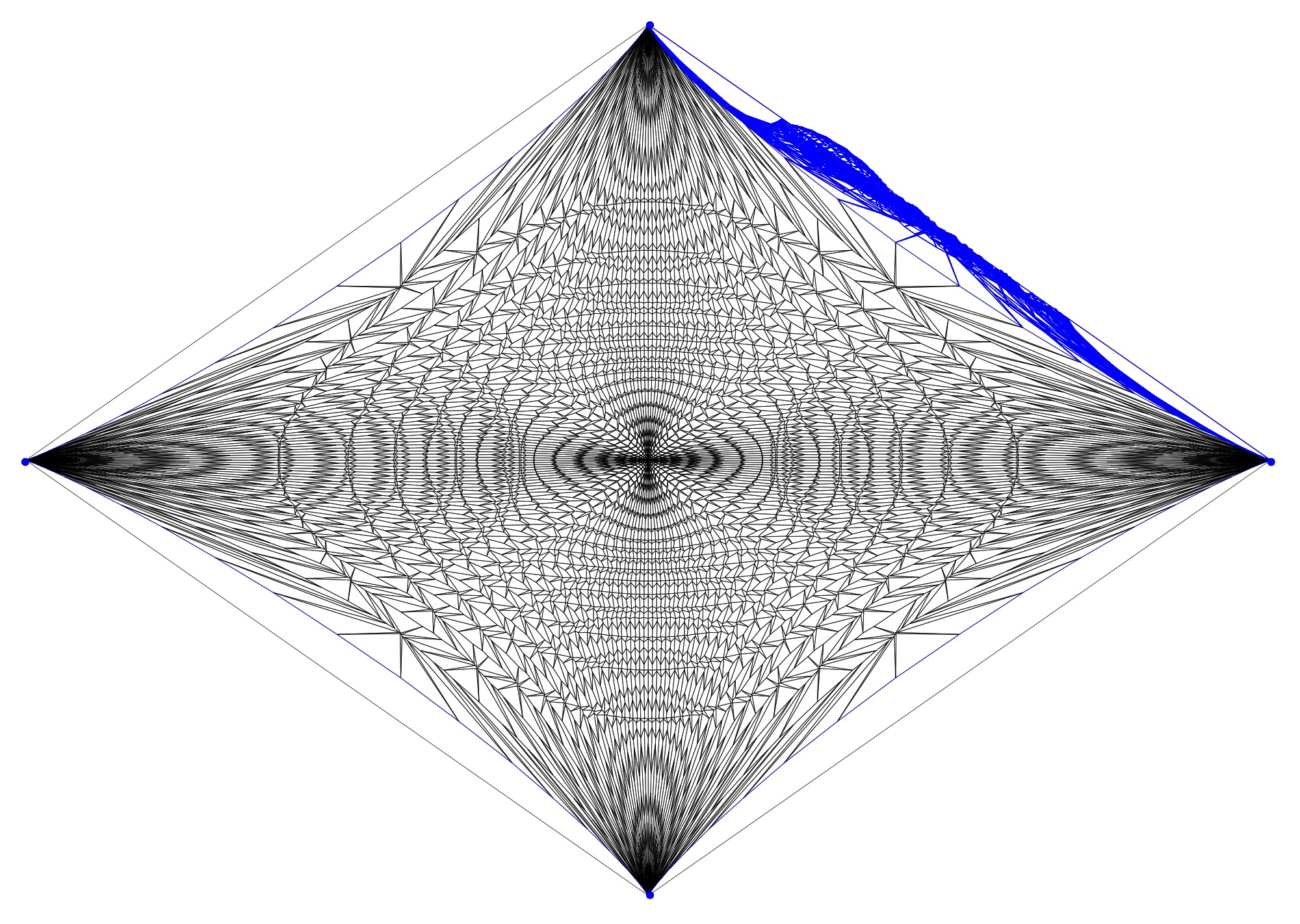}%
            \label{subfig:a}%
        }\hfill
        \subfloat[weights 2]{%
            \includegraphics[width=.35\linewidth]{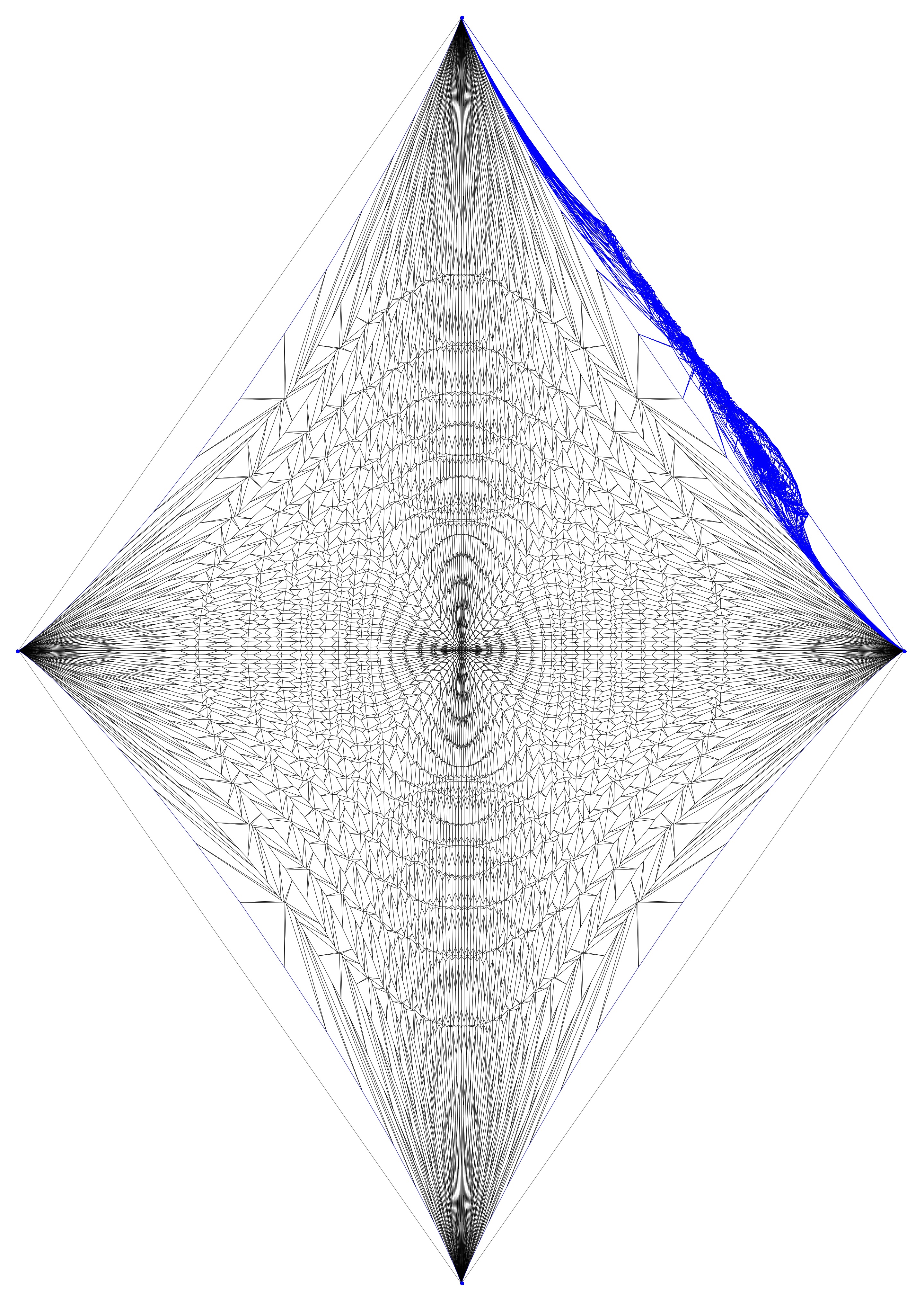}%
            \label{subfig:b}%
        }\\
        \subfloat[weights 3]{%
             \includegraphics[width=.48\linewidth]{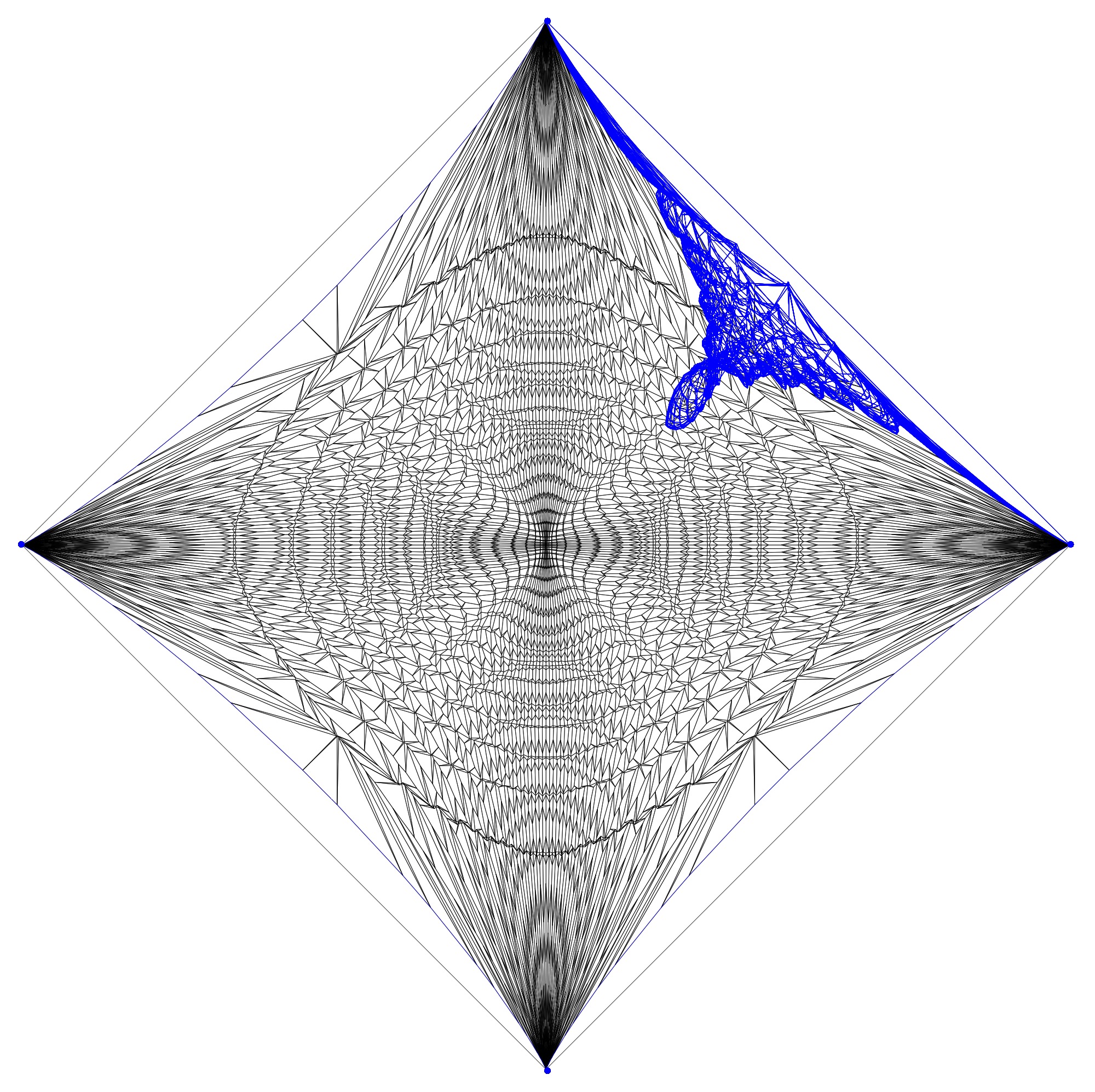}%
            \label{subfig:c}%
        }\hfill
        \subfloat[weights 4]{%
             \includegraphics[width=.48\linewidth]{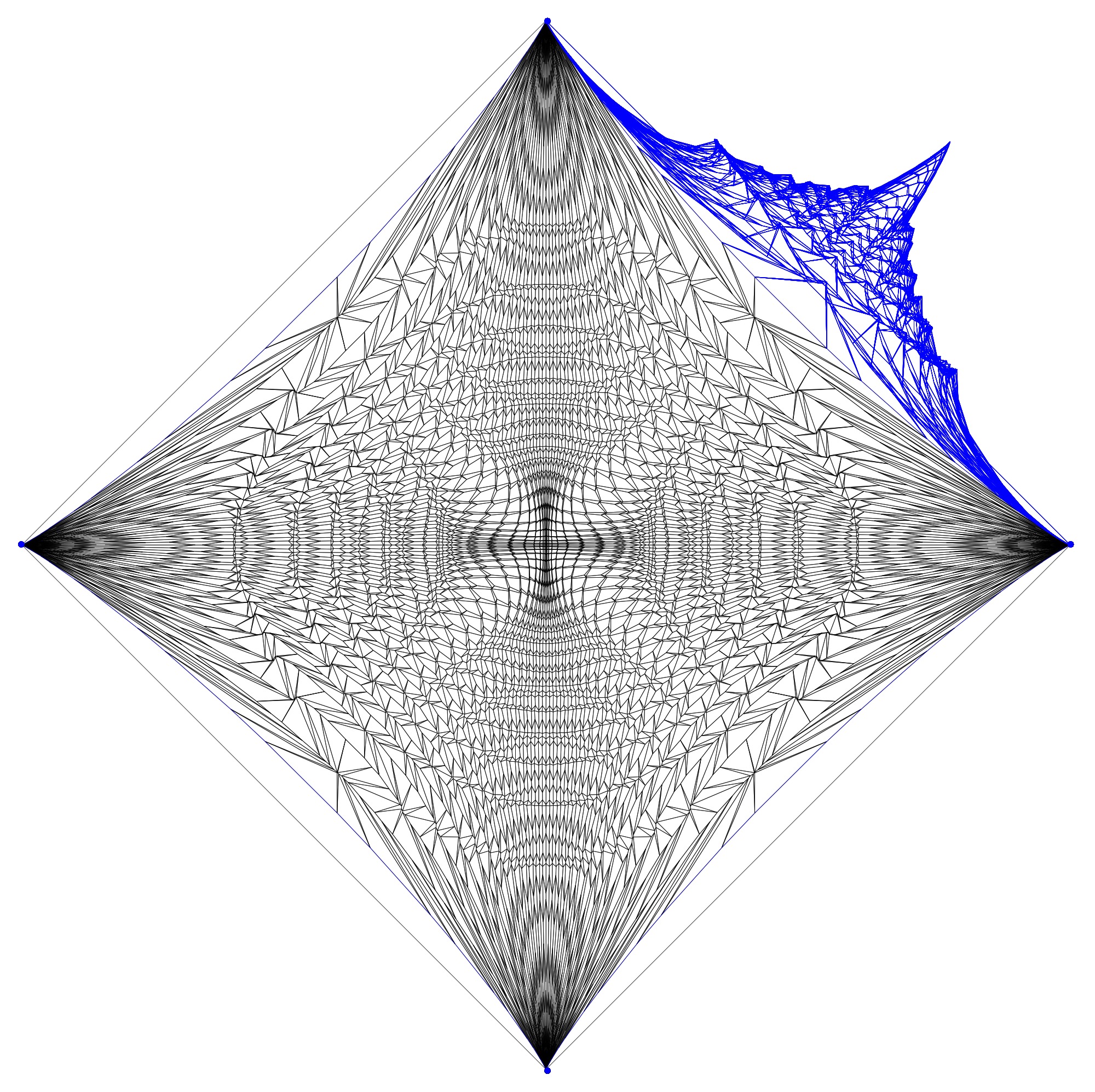}%
            \label{subfig:d}%
        }
        \caption{A t-embedding (black) and its origami map (blue) of the reduced Aztec diamond of size~$70$ with four choices of weights described by~\eqref{eq:weights_2x2},~$\alpha=0.7$. The origami map in the images differs from the origami map studied in the text by a rotation and a translation, see Remark~\ref{rem:cusps_2x2} in the text.
        }
        \label{fig:2by2}
    \end{figure}

In this section, we specialize the weights to what is known as the two-periodic Aztec diamond. This is probably the most well studied dimer model containing all three faces, see~\cite{Bai23a, Bai23b, BCJ18, BCJ20, CY14, CJ16, JM23, DK21, Rue22} for an incomplete list. This is a~$(2\times 2)$-periodic setting with a 1-parameter family of weights, see Definition~\ref{def:weights_2x2}. In fact, there are four, slightly different, variations of this model, defined by the weights
\begin{equation}\label{eq:weights_2x2}
    \begin{cases}
        \text{`weights 1'}: & \alpha_1^{-1} = \beta_1^{-1} = \alpha_2 = \beta_2 = \alpha, \\
        \text{`weights 2'}: & \alpha_1^{-1} = \beta_1^{-1} = \alpha_2 = \beta_2 = \alpha^{-1}, \\
        \text{`weights 3'}: & \alpha_1^{-1} = \beta_1 = \alpha_2 = \beta_2^{-1} = \alpha, \\
        \text{`weights 4'}: & \alpha_1^{-1} = \beta_1 = \alpha_2 = \beta_2^{-1} = \alpha^{-1},
    \end{cases}
\end{equation}
for some parameter~$\alpha>0$, see Figure~\ref{fig:weights_2_by_2}. In~\cite{CJ16}, the `weights~$1$' was used, and the parameter~$\alpha$ was denoted by~$a$. 

In each of the four weight choices described above, the liquid region has one hole in the scaling limit, touches the boundary at four points, and is surrounded by frozen regions in the corners of the Aztec diamond. The hole in the liquid region corresponds to the gas region, whose size varies with the parameter~$\alpha$. In fact, the limit shape, or arctic curves, are the same for all four cases.

It is easy to think that we may choose one of these weights without loss of generality since the `weights~$1$-$4$' can be obtained from each other by simply shifting the weights horizontally or vertically. Indeed, the spectral curve is the same for these four cases, as well as the limit shape. However, it was noted in~\cite{BN25}, that the height fluctuations are not the same. In the same spirit, we will see that their maximal surfaces are not the same. 

We assume throughout this section that~$\alpha<1$. If~$\alpha>1$, the~$t$ parameter given below would change. The parameters~$a$ and~$t$ that determine the limit of the t-surface as shown in Corollary~\ref{cor:frozen_gas_2xell} can be explicitly computed in the current setting. More precisely, the value of~$a$ follows from Lemma~\ref{lem:bdry_bdry}, and the parameter~$t$ was computed in~\cite[Section 4.6]{BN25}, and are given by  
\begin{equation}\label{eq:two-periodi_t_and_a}
 a=
 \begin{cases}
  \alpha^{-1}, & \text{`weights 1'}, \\
  \alpha, & \text{`weights 2'}, \\
  1, & \text{`weights 3'}, \\
  1, & \text{`weights 4'},
 \end{cases}
\quad \text{and} \quad  
 t=
 \begin{cases}
  \frac{1}{4}, & \text{`weights 1'}, \\
  \frac{3}{4}, & \text{`weights 2'}, \\
  0, & \text{`weights 3'}, \\
  \frac{1}{2}, & \text{`weights 4'}.
 \end{cases}
\end{equation}
We use these values to specialize the expressions from Corollary~\ref{cor:origami_t-embedding_theta} as well as computing the apex~$P_1$ of the cusp. Let us denote the functions~$\mathcal Z$ and~$\vartheta$ from Definition~\ref{def:limit_surface_2xell} defined by the `weights~$j$',~$j=1,\dots,4$, from~\eqref{eq:weights_2x2}, by~$\mathcal Z^{(j)}$ and~$\vartheta^{(j)}$. Similarly, we denote the apex~\eqref{eq:boundary_points_2xell} of the cusp by~$P_1^{(j)}$, and the parameters~$a$ and~$t$ for `weights~$j$' by~$a_j$ and~$t_j$. Recall, the objects defined in Section~\ref{sec:spectral_curve}: the angles of the Riemann surface~$\mathcal R$ are denoted by~$p_0$,~$p_\infty$,~$q_0$,~$q_\infty$, the~$1$-form~$\omega_1$ is the appropriately normalized holomorphic~$1$-form on~$\mathcal R$,~$\omega_D$ is the unique~$1$-form with zero integrals over the~$A$-cycles and poles and residues determined from~$D$, and~$B$ is the period matrix. Since, in the current setting, the genus of~$\mathcal R$ is~$g=1$, the period matrix~$B$ is scalar and~$B=\i|B|$.

\begin{theorem}\label{thm:2x2_para}
Set~$D_p=p_0-p_\infty$,~$D_q=q_0-q_\infty$, and~$D=p_0+p_\infty-q_0-q_\infty$. Then, for~$j=1,\dots,4$ and~$q\in \mathcal R$, 
\begin{equation*}
\mathcal Z^{(j)}(q)=2a_j\sqrt{a_j^2+1}+\frac{\sqrt{a_j^2+1}(a_j+\i)}{2\pi\i}\int_{\gamma_q}\omega_{D_q}-\frac{\sqrt{a_j^2+1}(a_j-\i)}{2\pi\i}\int_{\gamma_q}\omega_{D_p},
\end{equation*}
and
\begin{equation*}
\vartheta^{(j)}(q)=\frac{a_j^2+1}{2\pi\i}\int_{\gamma_q}\omega_D-\frac{(a_j^2+1)}{\pi}|B|\frac{\d\log \theta}{\d z_1}\left(\frac{1}{4}\right)\e^{-2\pi\i (t_j-\frac{1}{4})}\int_{\gamma_q}B^{-1}\omega_1,
\end{equation*}
where~$\gamma_q$ is given in Definition~\ref{def:curve}. In particular,
\begin{equation*}
P_1^{(j)}=\left(a_j\sqrt{a_j^2+1},-(a_j^2+1)\left(\frac{1}{2}-\frac{|B|}{\pi}\frac{\d\log \theta}{\d z_1}\left(\frac{1}{4}\right)\e^{-2\pi\i (t_j-\frac{1}{4})}\right)\right).
\end{equation*}
\end{theorem}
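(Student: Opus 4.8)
The plan is to specialize Corollary~\ref{cor:origami_t-embedding_theta} to the genus-one situation $g=\ell-1=1$ and to feed in the explicit values of $a$ and $t$ from~\eqref{eq:two-periodi_t_and_a}. Since $g=1$, the gradient $\nabla\log\theta$ is the single scalar derivative $h:=\frac{\d\log\theta}{\d z_1}$, the dual form $\vec\omega$ is the scalar holomorphic form $\omega_1$, and the period matrix is the scalar $B=\i|B|$ (purely imaginary, as $\mathcal R$ is an M-curve). The statement then splits into three tasks: (i) identify the Abel-map images of the four angles $p_0,p_\infty,q_0,q_\infty$; (ii) use them to show that the $\omega_1$-coefficient of $\omega^{\mathcal Z}$ in Corollary~\ref{cor:origami_t-embedding_theta} vanishes and to collapse the $\omega_1$-coefficient of $\omega^{\vartheta}$ to a single term proportional to $h(\tfrac14)$; and (iii) integrate the resulting forms over the degenerate curve $\gamma_q$ of Definition~\ref{def:curve} to read off $P_1$ from~\eqref{eq:boundary_points_2xell}.

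For (i), all four angles lie on the oval $A_0$, so their pairwise Abel-map differences are real modulo $\ZZ+B\ZZ$ (the image of $A_0$ is a real circle in $J(\mathcal R)$, shifted by $B/2$). The identity $2(u(p_\infty)-u(p_0))=u((z))\equiv 0$ recorded in Section~\ref{sec:spectral_curve} shows $u(p_\infty)-u(p_0)$ is a $2$-torsion point; being real and nonzero it must equal $\tfrac12$. Applying the same argument to the meromorphic function $w$ gives $u(q_\infty)-u(q_0)=\tfrac12$. The remaining input is the quarter-period relation $u(q_0)-u(p_0)\equiv\pm\tfrac14\pmod 1$, which I would extract from the explicit description of the genus-one spectral curve and its real structure in~\cite[Section~3.2]{BB23} and~\cite[Section~4.6]{BN25}; it reflects the extra symmetry of the two-periodic weights, under which the four angles are equally spaced along $A_0$. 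Together these give $u(p_0)-u(q_0)\equiv u(p_\infty)-u(q_\infty)\equiv -\tfrac14$ and $u(p_0)-u(q_\infty)\equiv u(p_\infty)-u(q_0)\equiv \tfrac14\pmod 1$.

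For (ii), substituting these four differences into the $\omega_1$-coefficient of $\omega^{\mathcal Z}$ makes the two $a$-terms and the two $\i$-terms cancel in pairs (their theta-arguments coincide mod $\ZZ$), so $\omega^{\mathcal Z}$ reduces to the stated combination of $\omega_{D_q}$ and $\omega_{D_p}$ and yields $\mathcal Z^{(j)}$. For $\omega^{\vartheta}$ the same substitution turns its $\omega_1$-coefficient into $\frac{a+\i}{2\pi\i}\big(2a[h(t-\tfrac14)-h(t)]-2\i[h(t+\tfrac14)-h(t)]\big)$. Here I would use that $\theta(\cdot;B)$ is even, symmetric about $\tfrac12$, and $1$-periodic, so that $h$ is odd and $1$-periodic with $h(0)=h(\tfrac12)=0$; evaluating at each $t_j\in\{0,\tfrac14,\tfrac12,\tfrac34\}$ together with the matching $a_j$ from~\eqref{eq:two-periodi_t_and_a} collapses the bracket to $-2(a-\i)\,h(\tfrac14)\,\e^{-2\pi\i(t-\frac14)}$. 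Multiplying by $\frac{a+\i}{2\pi\i}$ and rewriting $\vec\omega=\omega_1=\i|B|\,B^{-1}\omega_1$ produces exactly the coefficient $-\frac{a^2+1}{\pi}|B|\,h(\tfrac14)\,\e^{-2\pi\i(t-\frac14)}$ of $\int_{\gamma_q}B^{-1}\omega_1$ in the claimed $\vartheta^{(j)}$. As a consistency check, this phase is $\pm1$ for weights $1,2$ and $\pm\i$ for weights $3,4$, matching the fact that $S^{(j)}_{\operatorname{Romb}}\subset\RR^{2,1}$ exactly for $j=1,2$ from Theorem~\ref{prop:2x2_surface_intro}.

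For (iii), as $q\in\mathcal R_0$ tends to $A_1$ the symmetric curve $\gamma_q$ (from $q$ to $\bar q$ through $A_{0,1}$) degenerates to the cycle $-B_1$, the reverse of the arc $B_1\cap\mathcal R_0$ running from $A_{0,1}$ to $A_1$; hence $P_1=\big(2a\sqrt{a^2+1}-\int_{B_1}\omega^{\mathcal Z},\,-\int_{B_1}\omega^{\vartheta}\big)$. I would then evaluate the $B_1$-periods of the third-kind forms by the reciprocity law $\int_{B_1}\omega_{P-Q}=2\pi\i\int_Q^P\omega_1$, which with the Abel values of step (i) gives $\int_{B_1}\omega_{D_q}=\pi\i$, $\int_{B_1}\omega_{D_p}=-\pi\i$, $\int_{B_1}\omega_{D}=\pi\i$, together with $\int_{B_1}\omega_1=B=\i|B|$; substituting into the forms of step (ii) yields $\mathcal Z_1=a\sqrt{a^2+1}$ and $\vartheta_1=-(a^2+1)\big(\tfrac12-\tfrac{|B|}{\pi}h(\tfrac14)\,\e^{-2\pi\i(t-\frac14)}\big)$, the asserted $P_1^{(j)}$. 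The two places I expect real difficulty are the determination of the angle Abel-values in step (i) — in particular justifying the quarter-period, which is special to $\ell=2$ and must be read off from the concrete spectral curve rather than from the general formalism — and the sign/lift bookkeeping in step (iii): the orientation of the limiting cycle ($-B_1$ rather than $B_1$) and a consistent choice of lifts entering the reciprocity law are precisely what fix the overall signs of the two coordinates of $P_1$, and they must be tracked uniformly so that the same conventions govern both $\mathcal Z_1$ and $\vartheta_1$.
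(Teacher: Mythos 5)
Your proposal is correct and follows essentially the same route as the paper's proof: specialize Corollary~\ref{cor:origami_t-embedding_theta} to genus one, substitute the Abel-map values of the four angles (the paper asserts $u(p_0)-u(q_0)=u(p_\infty)-u(q_\infty)=\tfrac34$ and $u(p_0)-u(q_\infty)=u(p_\infty)-u(q_0)=\tfrac14$ "by symmetry of the spectral curve", agreeing mod $\ZZ$ with your $\mp\tfrac14$), use oddness and periodicity of $\frac{\d\log\theta}{\d z_1}$ together with the specific pairs $(a_j,t_j)$ to collapse the $\omega_1$-coefficient, and finally evaluate the periods over $\gamma_{q'}=-B_1$ via the reciprocity law. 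All your intermediate values ($\int_{B_1}\omega_{D_q}=\pi\i$, $\int_{B_1}\omega_{D_p}=-\pi\i$, $\int_{\gamma_{q'}}B^{-1}\omega_1=-1$, etc.) match the paper's, and the sign and path-lift issues you flag are handled there in exactly the way you anticipate.
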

\begin{proof} 
Recall that~$A_1$ and~$B_1$ are the~$A$- and~$B$-cycles, and~$u$ is the Abel map, as defined in Section~\ref{sec:spectral_curve}. Recall also that the functions~$\mathcal Z$ and~$\vartheta$ can be obtained by integrating the~$1$-forms~$\omega^{\mathcal Z}$ and~$\omega^{\vartheta}$ given in~\eqref{eq:sum_forms}, see~\eqref{eq:limiting_expression_theta}.
Let us denote the~$1$-forms~$\omega^{\mathcal Z}$ and~$\omega^{\vartheta}$ defined by the `weights~$j$', by~$\omega^{\mathcal Z,j}$ and~$\omega^{\vartheta,j}$.

By symmetry of the spectral curve, we have, for `weights~$1$-$4$',
\begin{equation*}
u(p_0)-u(q_0)=\frac{3}{4}, \quad u(p_0)-u(q_\infty)=\frac{1}{4}, \quad u(p_\infty)-u(q_0)=\frac{1}{4}, \quad u(p_\infty)-u(q_\infty)=\frac{3}{4}.
\end{equation*}
Substituting these values into the expressions in Corollary~\ref{cor:origami_t-embedding_theta} yield
\begin{equation*}
\omega^{\mathcal Z,j}=\frac{\sqrt{a_j^2+1}(a_j+\i)}{2\pi\i}\omega_{D_q}-\frac{\sqrt{a_j^2+1}(a_j-\i)}{2\pi\i}\omega_{D_p},
\end{equation*}
and
\begin{multline*}
\omega^{\vartheta,j}=\frac{(a_j^2+1)}{2\pi\i}\omega_{D}
+\frac{a_j+\i}{\pi\i}\left(a_j\frac{\d\log \theta}{\d z_1}\left(t_j+\frac{3}{4}\right)-a_j\frac{\d\log \theta}{\d z_1}(t_j)\right. \\
\left.-\i\frac{\d\log \theta}{\d z_1}\left(t_j+\frac{1}{4}\right)+\i\frac{\d\log \theta}{\d z_1}(t_j)\right)\omega_1.
\end{multline*}
The second equality can be simplified further. Recall that~$\theta$ is even and periodic, and, hence,~$\frac{\d\log \theta}{\d z_1}$ is odd, and~$\frac{\d\log \theta}{\d z_1}\left(\frac{3}{4}\right)=-\frac{\d\log \theta}{\d z_1}\left(\frac{1}{4}\right)$,~$\frac{\d\log \theta}{\d z_1}(0)=0$, and~$\frac{\d\log \theta}{\d z_1}\left(\frac{1}{2}\right)=0$. We get that
\begin{equation*}
\omega^{\vartheta,j}=\frac{a_j^2+1}{2\pi\i}\left(\omega_D+2c_j\frac{\d\log \theta}{\d z_1}\left(\frac{1}{4}\right)\omega_1\right),
\end{equation*}
where
\begin{equation*}
c_1=-1, \quad c_2=1, \quad c_3=-\i, \quad \text{and} \quad c_4=\i.
\end{equation*}
To see the previous equality, we use the specific values of~$a_j$ given in~\eqref{eq:two-periodi_t_and_a}. The first part of the statement now follows from~\eqref{eq:limiting_expression_theta} and by noting that~$\i c_j=\e^{-2\pi\i t_j}$. 

The second part of the statement follows by taking the integrals over the curve~$\gamma_{q'}$ where~${q'\in A_1}$. By the \emph{reciprocity law} (see~\cite[Equation (7)]{Fay73}) or, from \emph{Riemann bilinear relation} (see~\cite[Section III.3]{FK92}). We have
\begin{equation*}
\frac{1}{2\pi\i}\int_{\gamma_{q'}}\omega_{D_q}=-\frac{1}{2\pi\i}\int_{B_1}\omega_{D_q}=-\int_{q_\infty}^{q_0}\omega_1,
\end{equation*}
where we in the first equality have used that we may take~$\gamma_{q'}=-B_1$. The integration contour in the integral on the right most side is a simple curve from~$q_\infty$ to~$q_0$ that does not intersect the A- or B-cycle, so~$\int_{q_\infty}^{q_0}\omega_1=\frac{1}{2}$. The other integrals are computed in a similar way, leading to the following equalities, 
\begin{equation*}
\frac{1}{2\pi\i}\int_{\gamma_{q'}}\omega_{D_q}=-\frac{1}{2}, \quad \frac{1}{2\pi\i}\int_{\gamma_{q'}}\omega_{D_p}=\frac{1}{2}, \quad \frac{1}{2\pi\i}\int_{\gamma_{q'}}\omega_D=-\frac{1}{2}, \quad \text{and} \quad \int_{\gamma_{q'}}B^{-1}\omega_1=-1,
\end{equation*}
which proves the second part of the statement.
\end{proof}
\begin{remark}
`Weights~$1$' and `weights~$2$' are related to each other by taking~$\alpha\mapsto \alpha^{-1}$, and `weights~$3$' and `weights~$4$' are related by the same map. 
It is therefore not surprising that the limits of their t-surfaces are related by simple maps. Indeed, the image of
\begin{equation*}
\mathcal R_0\ni q\mapsto \left(\i\alpha^2\mathcal Z^{(1)}(q)+(\alpha-\i)\sqrt{\alpha^2+1},-\alpha^2\vartheta^{(1)}(q)-(\alpha^2+1)\right)
\end{equation*}
is equal to~$S_{\operatorname{Romb}}^{(2)}$ and the image of
\begin{equation*}
\mathcal R_0\ni q\mapsto\left(\i\mathcal Z^{(3)}(q)+(1-\i)\sqrt{2},-\vartheta^{(3)}(q)-2\right)
\end{equation*}
is equal to~$S_{\operatorname{Romb}}^{(4)}$, the maximal surface obtained from `weights~$2$' and `weights~$4$', respectively.
\end{remark}
\begin{remark}\label{rem:cusps_2x2}
The previous corollary shows that the value of~$\vartheta^{(j)}$ at apex~$P_1^{(j)}$ of the cusp 
is
\begin{equation*}
-(a_j^2+1)/2-r\e^{-2\pi\i(t_j-\frac{1}{4})},
\end{equation*}
for some~$r>0$ and where~$-(a_j^2+1)/2$ is the center of the boundary values of the origami map. Here we have used that~$\frac{\d\log \theta}{\d z_1}\left(\frac{1}{4}\right)<0$, which can be seen from a numeric computation.
This expression of~$\vartheta^{(j)}$ is supported by Figure~\ref{fig:2by2}. Note that the origami maps in Figure~\ref{fig:2by2} should be rotated by~$(a+\i)/\sqrt{a_j^2+1}$ and translated by~$-2a_j(a_j+\i)$ to align with the origami maps in the corollary. 
\end{remark}

We end this section by discussing the question raised at the very end of Section~\ref{sec:gas}. It turns out that the special cases considered in this section are enough to show that, locally, the cusps, may or may not be embedded into a lower dimensional subspace of~$\RR^{2,2}$. 
\begin{proposition}\label{prop:cusps_2x2}
Let~$S_{\operatorname{Romb}}^{(j)}$,~$j=1,\dots,4$, be the maximal surface defined in Section~\ref{sec:gas} specialized to the `weights~$j$' in~\eqref{eq:weights_2x2}. For~$j=1,2$, the surfaces are contained in~$\RR^{2,1}$,~$S_{\operatorname{Romb}}^{(j)}\subset \RR^{2,1}$, in particular, their cusps are locally in~$\RR^{2,1}$. For~$j=3, 4$, there is no lower dimensional subspace of~$\RR^{2,2}$ such that the cusp is locally in that subspace.
\end{proposition}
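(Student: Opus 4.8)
The plan is to extract everything from the explicit formulas in Theorem~\ref{thm:2x2_para} and to track the real/imaginary structure of the two ``building block'' period integrals that occur there. Write $\RR^{2,2}=\CC_{\mathcal Z}\times\CC_{\vartheta}$, the first factor recording the t-embedding and the second the origami map, and $\RR^{2,1}=\CC_{\mathcal Z}\times\RR$. Since $\mathcal R$ is an $M$-curve with the canonical basis adapted to its real structure, the normalized holomorphic form $\omega_1$ and the third-kind forms $\omega_{D_q},\omega_{D_p},\omega_D$ (whose poles sit at the real angles $p_0,p_\infty,q_0,q_\infty$ with real residues and vanishing $A$-periods) all satisfy $\sigma^*\omega=\overline{\omega}$; I will call such forms \emph{real}. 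Because $\gamma_q$ is symmetric under $\sigma$ and runs from $q$ to $\bar q$, splitting it at its crossing of $A_{0,1}$ and using $\sigma^*\omega=\overline\omega$ shows immediately that $\int_{\gamma_q}\omega$ is purely imaginary for every real $\omega$. As $B$ is purely imaginary on an $M$-curve, $B^{-1}\omega_1$ instead satisfies $\sigma^*(B^{-1}\omega_1)=-\overline{B^{-1}\omega_1}$, so the same splitting gives that $\int_{\gamma_q}B^{-1}\omega_1$ is purely real. Finally, from \eqref{eq:two-periodi_t_and_a} the phase $\e^{-2\pi\i(t_j-\frac14)}$ equals $+1,-1$ for $j=1,2$ and $+\i,-\i$ for $j=3,4$.

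For $j=1,2$ this makes the claim immediate. In $\vartheta^{(j)}$ the first term $\frac{a_j^2+1}{2\pi\i}\int_{\gamma_q}\omega_D$ is real (an imaginary integral divided by $\i$), and the second term is a real multiple of $\e^{-2\pi\i(t_j-\frac14)}\int_{\gamma_q}B^{-1}\omega_1$, hence real as well. Thus $\vartheta^{(j)}(q)\in\RR$ for all $q\in\mathcal R_0$, so $S_{\operatorname{Romb}}^{(j)}\subset\CC_{\mathcal Z}\times\RR=\RR^{2,1}$, and in particular the apex $P_1^{(j)}$ and a surface-neighborhood of it lie in $\RR^{2,1}$.

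For $j=3,4$ the second term of $\vartheta^{(j)}$ becomes purely imaginary, and the idea is to show that the four real coordinate functions of the surface are, up to an invertible affine change of variables, the four real period functions $S_q:=\frac{1}{\i}\int_{\gamma_q}\omega_{D_q}$, $S_p:=\frac{1}{\i}\int_{\gamma_q}\omega_{D_p}$, $R_1:=\frac{1}{\i}\int_{\gamma_q}\omega_D$, $R_2:=\int_{\gamma_q}B^{-1}\omega_1$. Expanding Theorem~\ref{thm:2x2_para} gives $\Re\mathcal Z^{(j)}=\mathrm{const}+\kappa a_j(S_q-S_p)$, $\Im\mathcal Z^{(j)}=\kappa(S_q+S_p)$ with $\kappa=\frac{\sqrt{a_j^2+1}}{2\pi}$, together with $\Re\vartheta^{(j)}=\frac{a_j^2+1}{2\pi}R_1$ and $\Im\vartheta^{(j)}=\mp\,c\,R_2$, where $c=\frac{a_j^2+1}{\pi}|B|\frac{\d\log\theta}{\d z_1}(\frac14)\neq0$. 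The resulting $4\times4$ map is block-diagonal with nonzero determinant precisely because $a_j=1\neq0$ and $\frac{\d\log\theta}{\d z_1}(\frac14)\neq0$ (the latter by the numerics quoted in Remark~\ref{rem:cusps_2x2}). Consequently a neighborhood of the cusp lies in a $3$-dimensional affine subspace of $\RR^{2,2}$ if and only if there is a nontrivial affine relation $\lambda_qS_q+\lambda_pS_p+\lambda_1R_1+\lambda_2R_2=\mathrm{const}$ valid for $q$ in an open set $U\subset\mathcal R_0$ abutting $A_1$.

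The main step is to rule this relation out, and here I would differentiate. Applying $\partial_z$ in the local coordinate $q=(z,w)\mapsto z$ replaces each period by the coefficient of its integrand at the moving endpoint $q$, turning the putative affine relation into the identity of meromorphic forms $\lambda_q\omega_{D_q}+\lambda_p\omega_{D_p}+\lambda_1\omega_D+\i\lambda_2B^{-1}\omega_1=0$ on $U$, hence on all of $\mathcal R$ by analytic continuation. Reading off residues at the four angles — $\omega_{D_q}$ has residues $(0,0,+1,-1)$ at $(p_0,p_\infty,q_0,q_\infty)$, $\omega_{D_p}$ has $(+1,-1,0,0)$, $\omega_D$ has $(+1,+1,-1,-1)$, and $\omega_1$ is holomorphic — forces $\lambda_q-\lambda_1=0$, $-\lambda_q-\lambda_1=0$, $\lambda_p+\lambda_1=0$, $-\lambda_p+\lambda_1=0$, whence $\lambda_q=\lambda_p=\lambda_1=0$, and then $\lambda_2=0$ since $\omega_1\not\equiv0$. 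So no such relation exists, the cusp neighborhood spans all of $\RR^{2,2}$, and it is contained in no proper affine subspace. I expect this residue bookkeeping to be the only genuinely delicate point, the rest being linear algebra; the one external input required is $\frac{\d\log\theta}{\d z_1}(\frac14)\neq0$, which guarantees invertibility of the change of variables in the $j=3,4$ case.
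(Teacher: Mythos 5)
Your proof is correct and follows essentially the same strategy as the paper's: for $j=1,2$ the reality of $\vartheta^{(j)}$ comes from the phase $\e^{-2\pi\i(t_j-\frac14)}=\pm1$ together with the reality of $\frac{1}{2\pi\i}\int_{\gamma_q}\omega_D$ and $\int_{\gamma_q}B^{-1}\omega_1$ (which you justify via the real structure $\sigma^*\omega=\overline{\omega}$ and the $\sigma$-symmetry of $\gamma_q$, a point the paper states without proof), and for $j=3,4$ everything reduces to the linear independence over $\RR$ of the four $1$-forms underlying $(\Re\mathcal Z,\Im\mathcal Z,\Re\vartheta,\Im\vartheta)$. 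The only place where your route genuinely diverges is the final non-degeneracy step: the paper localizes at the apex, arguing that a hyperplane containing the cusp must annihilate all tangent directions $(F(q')G(q'),F(q')\bar G(q'))$ for $q'\in A_1$, and then simply \emph{asserts} that $\{\omega_{D_q-D_p},\omega_{D_q+D_p},\omega_D,\omega_1\}$ are linearly independent; you instead phrase the obstruction as an affine relation among the period functions on a collar of $A_1$, differentiate it into an identity of meromorphic $1$-forms valid on all of $\mathcal R$, and kill it by reading off residues at the four angles. The two reductions are equivalent here (a hyperplane containing a surface neighborhood of $P_1$ is exactly an affine relation among the coordinate functions on the collar), and your residue bookkeeping supplies a clean proof of the linear independence the paper leaves implicit. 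Both arguments rest on the same external input $\frac{\d\log\theta}{\d z_1}\bigl(\frac14\bigr)\neq0$ from Remark~\ref{rem:cusps_2x2}.
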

\begin{proof}
For~$j=1,2$, we have that~$\frac{\e^{-2\pi\i t_j}}{\pi\i}\in \RR$. Moreover, the integrals~$\frac{1}{2\pi\i}\int_{\gamma_q}\omega_D$ and~$\int_{\gamma_q}B^{-1}\omega_1$ are real for all~$q\in \mathcal R_0$. Hence,~$\vartheta^{(j)}$ is real and~$S_{\operatorname{Romb}}^{(j)}\subset \RR^{2,1}$.

Let us continue by studying the cusp of~$S_{\operatorname{Romb}}^{(j)}$ for~$j=3,4$. To lighten the notation, let us drop the~$j$ dependence. Recall the notation~$P_1=(\mathcal Z_1,\vartheta_1)$. Let~$\zeta$ be a local coordinate close to a point~$q'\in A_1$ mapping points in~$A_1$ to~$\RR$. Similarly to the discussion leading to Proposition~\ref{prop:cusp_light-like}, we have, to leading order
\begin{equation*}
\mathcal Z(q)-\mathcal Z_i\approx F(q')G(q')(\zeta-\bar \zeta), \quad \text{and} \quad \vartheta(q)-\vartheta_i\approx F(q')\bar G(q')(\zeta-\bar \zeta).
\end{equation*}
The cusp is in a lower dimensional subspace of~$\RR^{2,2}$, if there is a vector~$\vec{v}\in \RR^{2,2}$ orthogonal to~$(F(q)G(q),F(q)\bar G(q))\in \RR^{2,2}$ for all~$q\in A_1$. We will see momentarily that such vector~$\vec{v}$ cannot exist.

Recall that
\begin{equation*}
\omega^{\mathcal Z}=F(q)G(q)\d z,\quad \text{and} \quad \omega^{\vartheta}=F(q)\bar G(q)\d z.
\end{equation*}
Theorem~\ref{thm:2x2_para} implies that, for~$q\in A_1$, the real part of~$\omega^{\mathcal Z}$ is a constant times~$\omega_{D_q-D_p}$, and the imaginary part is a constant times~$\omega_{D_q+D_p}$. Similarly, the real and imaginary parts of~$\omega^{\vartheta}$, for~$q\in A_1$, are given by constants times~$\omega_D$ and~$\omega_1$, respectively. However,
\begin{equation*}
\{\omega_{D_q-D_p},\omega_{D_q+D_p},\omega_D,\vec{\omega}\}
\end{equation*}
are linearly independent, in particular, there is no linear combination of these forms that is constant zero on~$A_1$. Hence,~$\vec{v}$ does not exists.
\end{proof}

\bibliographystyle{plain}
\bibliography{bibliotek}

\end{document}